\tikzstyle{dot}=[draw, fill =black, circle, inner sep=0pt, minimum size=2pt]
\theoremstyle{plain}
\numberwithin{equation}{subsection}
\newtheorem{theorem}{Theorem}[subsection]
\newtheorem{lemma}[theorem]{Lemma}
\newtheorem{proposition}[theorem]{Proposition}
\newtheorem{corollary}[theorem]{Corollary}
\newtheorem{definition}[theorem]{Definition}
\newtheorem{example}[theorem]{Example}
\newtheorem{remarks}[theorem]{Remarks}
\newcommand{\lam}{\lambda}
\newcommand{\ft}{\mathfrak t}
\newcommand{\fs}{\mathfrak s}
\newcommand{\bvarnothing}{{\boldsymbol \varnothing}}
\def\bi{{\mathbf i}}
\def\bs{{\mathbf s}}
\def\Std{{\rm Std}}
\def\N{{\mathbb N}}
\def\Z{{\mathbb Z}}
\def\t{{\mathfrak t}}
\def\blam{{\boldsymbol \lambda}}
\def\bmu{{\boldsymbol \mu}}
\def\bnu{{\boldsymbol \nu}}
\def\bLambda{{\boldsymbol \Lambda}}
\def\balpha{{\boldsymbol \alpha}}
\def\bbeta{{\boldsymbol \beta}}
\def\bv{{\bf v}}
\def\bs{{\bf s}}
\def\bu{{\bf u}}
\def\<{\langle}
\def\>{\rangle}
\DeclareMathOperator{\rad}{rad}
\DeclareMathOperator{\res}{res}
\DeclareMathOperator{\Top}{top}
\begin{document}

\title[Moving vectors I: Representation type]
{Moving vectors I: Representation type of blocks of Ariki-Koike algebras}

\thanks  {The work is supported by the Natural Science Foundation of Hebei Province,
China (A2021501002); China Scholarship Council (202008130184)  and
Natural Science Foundation of China (11871107).}

\author{Yanbo Li}

\address{Li: School of Mathematics and Statistics, Northeastern
University at Qinhuangdao, Qinhuangdao, 066004, P.R. China}

\email{liyanbo707@163.com}

\author{Xiangyu Qi}

\address{Qi: Department of mathematics, School of Science, Northeastern
University, Shenyang, 110819, P.R. China}

\email{qixiangyumath@163.com}

\begin{abstract}
We introduce a new invariant for blocks of Ariki-Koike algebras, 
called block moving vector, which is a vector of non-negative integers summing up to the weight of the block. 
In this paper, we use moving vectors to classify representation-finite blocks of Ariki-Koike algebras.
As applications, we obtain examples of blocks with the same weight associated with the same multicharge that are not derived equivalent and examples 
of derived equivalent blocks being in different orbits under the adjoint action of the affine Weyl group.
We also determine the representation type for blocks of cyclotomic $q$-Schur algebras.
\end{abstract}

\subjclass[2020]{16G60, 20C08, 20G43, 05E10, 06A11}

\keywords{moving vector; blocks of an Ariki-Koike algebra; representation type; weight; cellular algebra.}

\maketitle



\section{Introduction}

The Ariki-Koike algebras (the cyclotomic Hecke algebras of type $G(r, 1, n)$) were introduced in \cite{AK}, \cite{BM} and \cite{C},
which include Iwahori-Hecke algebras of types A and B as special cases. They play an important role
in modular representation theory of finite groups of Lie type, and are in relation
with various significant objects such as quantum groups and rational Cherednik algebras.
Consequently, the Ariki-Koike algebras have received intensive and
continuous study since their appearance (see \cite{A1, A2, BK2, DJM, DM, F1, J, LM, LR} for examples).
The interest on these algebras has been strengthened
by a fundamental result of Brundan and Kleshchev \cite{BK}, in which an explicit isomorphism between blocks of Ariki-Koike algebras
and cyclotomic Khovanov-Lauda-Rouquier algebras (introduced in \cite{KL, Rou}) in type A were constructed, and many profound results emerged.

\smallskip

In the modular representation theory of blocks of an Iwahori-Hecke algebra of type A, the weight is important because
it can measure how complicated a block is. For example, a block $B$ of an Iwahori-Hecke algebra of type A is of finite type 
if and only if the weight of $B$ is not more than one \cite{EN}. However, this is no longer true for Ariki-Koike algebras. 
In fact, many classical results of type A related with weights have no cyclotomic versions,
including representation type of blocks. During our study, we found that the structure of a block, to some extent,
is controlled by the process of an arbitrary abacus lying in the block moving to its core
through elementary operations. Accordingly, we define a new invariant for each block, called
{\bf block moving vector} (see Definition \ref{block moving vector} for details).
Roughly speaking,  just as its name implies,
a block moving vector is a vector of non-negative integers that records the process of an
abacus in the block moving to its core. The sum of components of
a block moving vector is just the weight of the corresponding block (see Lemma \ref{weight moving vector} for details).
By using moving vectors, we classify representation-finite blocks of Ariki-Koike algebras in this paper. 
Moreover, we consider in \cite{LQT} the core blocks (introduced by Fayers in \cite{F2})
and will apply moving vectors to relevant Lie theory in \cite{HLQ}.

\smallskip

In general, it is an essential and difficult problem in representation theory to
determine the representation type of a finite dimensional algebra.
Recall that a finite dimensional algebra $A$ has finite type if the
number of indecomposable $A$-modules up to isomorphism is finite. Otherwise, $A$ has infinite representation type.
If $A$ has infinite type, Drozd \cite{Dr} proved that $A$ is either tame or wild but not both.
Back to Hecke algebras, the representation type of blocks of an Iwahori-Hecke algebra of type A was determined by Erdmann and Nakano in \cite{EN}.
Ariki and Mathas \cite{AM} determined the representation type of the Iwahori-Hecke algebras of type B and
Ariki \cite{A3} determined the representation type for all of
the Iwahori-Hecke algebras of classical type.
Then Ariki \cite{A4} determined representation type for blocks of Iwahori-Hecke algebras of type B
by using a result obtained in \cite{AKMW} and the techniques developed in \cite{AIP, AP1}.
In 2010, Wada \cite{W} gave a necessary and sufficient condition on parameters for an Ariki-Koike algebra
to have finite type. However, it is still an open problem to determine when a block of an
Ariki-Koike algebra has finite type ({\em after we had posted our paper on arXiv, Ariki et al posted a paper \cite{ASW},
in which tame blocks are completely described}).

\smallskip

Let $K$ be an algebraically closed field, $1\neq q\in K^\times$. Define the {\em quantum characteristic} of $q$
to be the positive integer $e$ which is minimal such that
$1+q+\cdots+q^{e-1}=0$. If no such $e$ exists, we set $e=\infty$. Fix positive integers $n$ and $r$.
Let $(s_1, s_2, \cdots, s_r)\in \mathbb{Z}^r$ be a {\em multicharge} and
define $Q=(Q_1, Q_2, \cdots, Q_r)\in K^r$ with
$Q_i=q^{s_i}$. Then $\mathcal {H}_n(q, Q)$, the {\em Ariki-Koike algebra}
 with parameters $q$ and $Q$, is the unital
associative $K$-algebra with generators $T_0$, $T_1$, $\cdots$,
$T_{n-1}$ subject to the following relations:
\begin{enumerate}
\item[(H1)]\, $(T_0-Q_1)(T_0-Q_2)\cdots(T_0-Q_r)=0$;
\item[(H2)]\, $T_0T_1T_0T_1=T_1T_0T_1T_0$;
\item[(H3)]\, $(T_i+1)(T_i-q)=0\quad\quad\,\,\text{for}\,\, 1\le i \le n-1$;
\item[(H4)]\, $T_iT_{i+1}T_i=T_{i+1}T_iT_{i+1}\quad\,\text{for}\,\,1\le i \le n-2$;
\item[(H5)]\, $T_iT_j=T_jT_i\quad\quad\quad\quad\quad\,\,\,\,\,\text{for}\,\, 0\le i<j-1\le n-2$.
\end{enumerate}

The {\em cyclotomic $q$-Schur algebra} $\mathcal{S}_{n, r}(q, Q_1, Q_2, \cdots, Q_r)$
associated to $\mathcal {H}_n(q, Q)$ is the endomorphism algebra
$\mathcal{S}_{n, r}={\rm End}_{\mathcal {H}_n(q, Q)}(\bigoplus_{\bmu} M^{\bmu})$,
where $\bmu$ runs all over the $r$-partitions of $n$ and where
$M_{\bmu}$ is a certain $\mathcal {H}_n(q, Q)$-module (see \cite{DJM} and \cite{LM} for more details).

\smallskip

We now describe the main result of this paper. Note that the representation types of blocks of Iwahori-Hecke algebras 
of type A and type B have been determined in \cite{AM, EN}. So we assume $r>2$ here. 

\smallskip

\noindent{\bf Theorem.} {\em Let $K$ be an algebraically closed field with $char K\neq 2$
and $q\in K^\times$, $q\neq 1$. Let $(s_1, s_2, \cdots, s_r)\in {\mathcal A}^r_e$ (see Section 3.1), 
 $\mathcal {H}_n(q, Q)$ an Ariki-Koike algebra with $r>2$ and $B$
a block with block moving vector $\mathcal{M}=(m_1, m_2, \cdots, m_r)$ (see Definition \ref{block moving vector} for details).
Denote by $w(B)$ the weight of $B$.
Then $B$ has finite type if and only if $w(B) \leq 1$,  or $w(B) > 1$, $m_r=0$ and there exists $j$ 
such that $m_j = \dots = m_{j+w(B)}=1$ are all nonzero integers in $\mathcal{M}$ and
$s_j=\dots=s_{j+w(B)+1}$.
In the later case, block $\mathcal {H}_{\bbeta}^{\bLambda}$ is Morita equivalent to $K[x]/(x^{w(B)+1})$.}

\smallskip

As a by-product, we can determine the representation type of blocks of a cyclotomic $q$-Schur algebra.

\smallskip

\noindent{\bf Corollary.} {\em Let $\mathcal{B}$ be a non-simple block of $\mathcal{S}_{n, r}$ and $B$ the corresponding block in $\mathcal{H}_{n}(q, Q)$.
Then $\mathcal{B}$ has finite type if and only if $w(\mathcal{B})=1$, or $w(\mathcal{B})=2$ and $B$ has finite type.}

\smallskip

Moreover, we can also study the derived equivalence among blocks of $\mathcal{H}_n(q, Q)$.
It is known that two blocks of Iwahori-Hecke algebras of type A are derived equivalent if and only if
they have the same weight, and if and only if they are in the
same orbit under the adjoint action of the affine Weyl group \cite{CR}.
This is no longer true in cyclotomic case. We will construct examples of blocks
with the same weight associated with the same multicharge that are not derived equivalent.
Examples of derived equivalent blocks in different orbits are also given.

\smallskip

The paper is organized as follows. We begin our study with the so-called multiplication poset of
an indecomposable self-injective cellular algebra $A$ in Section 2.
The main result is that if $A$ has finite representation type,
then $A$ has only two different cell chains which implies that the multiplication poset must be a totally ordered set.
We emphasize that the result can be used to determine the representation type
of most blocks in an Ariki-Koike algebra and may be useful for studying blocks of other algebras.
Note that the cell modules of an Ariki-Koike algebra are indexed by $r$-partitions
and each $r$-partition can be represented by an abacus.
In Section 3, after reviewing some preliminaries on blocks and abaci,
we give a description of the action of an affine Weyl group on blocks by the language of abaci.
Then we define incomparable abaci and prove that given a block,
the existence of a pair of incomparable abaci implies the existence of a pair of
incomparable $r$-partitions with respect to the dominance order.
In Section 4, we define the (block) moving vector and conduct some preliminary study. 
In particular, we give a construction that will be used not only in this paper, but also in coming papers \cite{LQT, HLQ}. 
In Section 5, we give the proof of {\bf Theorem}, which is divided into three parts. In Section 6, we apply our results to study derived equivalence
and in Section 7, we determine the representation type
of blocks of a cyclotomic $q$-Schur algebra.

\medskip


\section{Poset and representation type}

Let $A$ be an indecomposable self-injective cellular $K$-algebra with cell datum $(\Lambda, M, C, \ast)$,
where $K$ is a field with the characteristic different from two.
We prove in this section that if $A$ is of finite type, then $A$ has only two different cell chains, 
and consequently the poset $\Lambda$ must be a totally ordered set.
We begin with the definitions and some well-known results of cellular algebras and Brauer tree algebras.
The main references are \cite{A, ARS, GL}. The main result of this section is given in Section 2.3.


\subsection{Cellular algebras} Cellular algebras were introduced by Graham and Lehrer in \cite{GL} in 1996. Cellular theory provides a systematic framework
for studying the representation theory of many interesting and important algebras
coming from mathematics and physics, such as Iwahori-Hecke algebras of finite type \cite{G2}, Brauer algebras \cite{GL} and so on.
The main research object of this paper, Ariki-Koike algebras, are cellular too.

\begin{definition}[\protect{\cite[Definition~1.1]{GL}}]\label{2.1}
Let $R$ be a noetherian integral domain. An associative unital
$R$-algebra $A$ is called a cellular algebra with cell datum $(\check{\Lambda}, M, C, \ast)$ if the
following conditions are satisfied:

\begin{enumerate}
\item[{\rm (C1)}] The finite set $\check{\Lambda}$ is a poset with
order relation $\geq$. Associated with
each $\lam\in\check{\Lambda}$, there is a finite set $M(\lam)$. The
algebra $A$ has an $R$-basis $\{C_{S,T}^\lam \mid \lam\in\check{\Lambda},\,\, S,T\in
M(\lam)\}$.

\item[{\rm (C2)}] The map $\ast$ is an $R$-linear anti-automorphism of $A$
such that $(C_{S,T}^\lam)^{\ast}= C_{T,S}^\lam $ for
all $\lam\in\check{\Lambda}$ and $S,T\in M(\lam)$.

\item[{\rm (C3)}] Let $\lam\in\check{\Lambda}$ and $S,T\in M(\lam)$. For any
element
$a\in A$, we have
$$aC_{S,T}^\lam\equiv\sum_{S^{'}\in
M(\lam)}r_{a}(S',S)C_{S^{'},T}^{\lam} \,\,\,\,{\rm mod}\,\,\,
A(>\lam),$$ where $r_{a}(S^{'},S)\in R$ is independent of $T$ and
$A(>\lam)$ is the $R$-submodule of $A$ generated by
$\{C_{U,V}^\mu \mid U,\,\,V\in M(\mu),\,\,\mu>\lam\}$.
\end{enumerate}
\end{definition}

Before giving a rather lengthy list of definitions associated with cellular algebras, let us
give a remark about the poset $\check{\Lambda}$ in Definition \ref{2.1}. Firstly, we define another poset $\Lambda$,
which is equal to $\check{\Lambda}$ as a set, and a partial ordering is as follows.
For arbitrary two elements $\lam, \mu\in \Lambda$, we say $\lam\geq\mu$ if
there exist some $C_{U, V}^{\mu}$ and $a\in A$ such that certain $C_{S, T}^{\lam}$ appears
in the linear expansion of $aC_{U, V}^{\mu}$ with nonzero coefficient, and then extend the relation ``$\geq$'' by transitivity. 
The extension makes $\Lambda$ to be a poset.
Clearly, $\check{\Lambda}$ is a refinement of $\Lambda$. It is worthwhile to note that $\Lambda$ is more
essential than $\check{\Lambda}$ for our study and will be called the multiplication poset of $A$.
We will always write poset $\check{\Lambda}$ by $\Lambda$ throughout this section unless otherwise specified.

\smallskip

We now turn to give a number of basic definitions and results connected with cellular algebras.
As a natural consequence of the axioms, the cell module is defined as follows.

\begin{definition}[\protect{\cite[Definition 2.1]{GL}}]\label{2.2}
Let $A$ be a cellular algebra with cell datum $(\Lambda, M, C,
\ast)$. For each $\lam\in\Lambda$, the cell module $W_\lam$ is an $R$-module with basis
$\{C_{S}\mid S\in M(\lam)\}$ and the left $A$-action
defined by
$$aC_{S}=\sum_{S^{'}\in M(\lam)}r_{a}(S^{'},S)C_{S^{'}}
\,\,\,\,(a\in A,\,\,S\in M(\lam)),$$ where $r_{a}(S^{'},S)$ is the
element of $R$ defined in Definition~\ref{2.1} {\rm(C3)}.
\end{definition}

Let $A$ be a cellular algebra. For arbitrary elements $S,T,U,V\in M(\lam)$,
Definition~\ref{2.1} implies that
$$C_{S,T}^\lam C_{U,V}^\lam \equiv\Phi(T,U)C_{S,V}^\lam\,\,\,\, {\rm mod}\,\,\, A(>\lam),$$
where $\Phi(T,U)\in R$ depends only on $T$ and $U$. It is easy to check that $\Phi(T,U)=\Phi(U,T)$
for arbitrary $T,U\in M(\lam)$. By using these $\Phi(T, U)$,
one can define a bilinear form for cell module $W_\lam$ introduced in Definition \ref{2.2}: $$\Phi
_{\lam}:\,\,W_\lam\times W_\lam\longrightarrow R$$
$$\quad\quad\quad\quad\quad (C_{S},\, C_{T})\longmapsto\Phi(S,T).$$
Define
$\rad\lam:= \{x\in W_\lam\mid \Phi_{\lam}(x,y)=0
\,\,\,\text{for all} \,\,\,y\in W_\lam\}.$ If $\Phi
_{\lam}\neq 0$, then $\rad\lam$ is the radical of
$W_\lam$.

When $R$ is a field, Graham and Lehrer \cite{GL} proved the following result.
\begin{lemma} {\rm\cite[Theorem 3.4]{GL}}\label{2.3}
For any
$\lam\in\Lambda$, denote the $A$-module $W_\lam/\rad \lam$ by $L_{\lam}$. Let
$\Lambda_{0}=\{\lam\in\Lambda\mid \Phi_{\lam}\neq 0\}$. Then
$\{L_{\lam}\mid \lam\in\Lambda_{0}\}$ is a complete set of
pairwise non-isomorphic absolutely simple
$A$-modules.
\end{lemma}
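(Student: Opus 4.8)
The plan is to follow the original argument of Graham and Lehrer, organised in three stages: (i) each $L_\lam$ with $\lam\in\Lambda_{0}$ is absolutely simple; (ii) the modules $\{L_\lam\mid\lam\in\Lambda_{0}\}$ are pairwise non-isomorphic; (iii) every simple $A$-module occurs among them. The basic tool is the following list of elementary consequences of (C1)--(C3) and the anti-automorphism $\ast$: both $A(\geq\lam):=\langle C^{\mu}_{U,V}\mid \mu\geq\lam\rangle$ and $A(>\lam)$ are two-sided ideals (closure under right multiplication comes from applying $\ast$); the ideal $A(>\lam)$ annihilates the cell module $W_\lam$ (for $a\in A(>\lam)$ the product $aC^{\lam}_{S,T}$ already lies in $A(>\lam)$, so all structure constants $r_{a}(S',S)$ vanish); and consequently, on $W_\lam$, one has the \emph{exact} identity $C^{\lam}_{S,T}\,C_{U}=\Phi(T,U)\,C_{S}$, whence $C^{\lam}_{S,T}\,x=\Phi_\lam(x,C_{T})\,C_{S}$ for all $x\in W_\lam$.

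For stage (i), fix $\lam\in\Lambda_{0}$ and choose $S_{0},T_{0}\in M(\lam)$ with $\Phi(S_{0},T_{0})\neq0$. If $x\in W_\lam\setminus\rad\lam$, pick $T$ with $\Phi_\lam(x,C_{T})\neq0$; then $C^{\lam}_{S,T}x=\Phi_\lam(x,C_{T})C_{S}$ is a nonzero scalar multiple of $C_{S}$ for every $S\in M(\lam)$, and since $K$ is a field this gives $Ax=W_\lam$. Hence $L_\lam=W_\lam/\rad\lam$ is simple, generated by the nonzero image $\bar C_{S_{0}}$ of $C_{S_{0}}$ (nonzero because $\Phi_\lam(C_{S_{0}},C_{T_{0}})=\Phi(S_{0},T_{0})\neq0$). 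For $\operatorname{End}_{A}(L_\lam)=K$: given $\varphi\in\operatorname{End}_{A}(L_\lam)$, write $\varphi(\bar C_{S_{0}})=\bar y$; applying $C^{\lam}_{S_{0},T_{0}}$ and using the exact identity forces $\Phi(T_{0},S_{0})\,\bar y$ to be a scalar multiple of $\bar C_{S_{0}}$, so $\varphi$ is multiplication by a scalar and $L_\lam$ is absolutely simple.

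Stage (ii) is the crux, and rests on the key lemma: \emph{$C^{\nu}_{S,T}$ acts nontrivially on the cell module $W_{\rho}$ only when $\rho\geq\nu$.} To see this, note that $C^{\nu}_{S,T}\,C^{\rho}_{U,V}$ lies in the intersection of the ideals $A(\geq\nu)$ and $A(\geq\rho)$, so in its cell-basis expansion only basis elements $C^{\sigma}$ with $\sigma\geq\nu$ occur; on the other hand, (C3) identifies the $\rho$-cell part of this product with $\sum_{U'}r_{C^{\nu}_{S,T}}(U',U)\,C^{\rho}_{U',V}$, and for a nonzero $\rho$-cell element one needs $\rho\geq\nu$. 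Granting this, one obtains the unitriangularity of decomposition numbers: $A(\geq\lam)$ annihilates $\rad\lam$ (since $C^{\lam}_{S,T}x=\Phi_\lam(x,C_{T})C_{S}=0$ for $x\in\rad\lam$, while $A(>\lam)$ kills all of $W_\lam$), hence annihilates every composition factor of $\rad\lam$, so $[W_\lam:L_\lam]=1$; moreover any $C^{\nu}$ acting nontrivially on a composition factor of $W_\lam$ must act nontrivially on $W_\lam$ itself, hence satisfy $\lam\geq\nu$, and applying this with $\nu=\mu$ (using that some $C^{\mu}_{S,T}$ acts nontrivially on $L_{\mu}$ when $\mu\in\Lambda_{0}$) yields $[W_\lam:L_{\mu}]\neq0\Rightarrow\lam\geq\mu$. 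An isomorphism $L_\lam\cong L_{\mu}$ then makes $L_{\mu}$ a composition factor of $W_\lam$ and $L_\lam$ one of $W_{\mu}$, forcing $\lam\geq\mu$ and $\mu\geq\lam$, so $\lam=\mu$. The main obstacle is precisely establishing this key lemma cleanly; once the two cell-ideal filtrations are tracked correctly through the multiplication axiom, the rest of the stage is bookkeeping.

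Finally, for stage (iii), list $\Lambda=\{\lam_{1},\dots,\lam_{N}\}$ with $\lam_{i}>\lam_{j}\Rightarrow i<j$; then the $A_{i}:=\langle C^{\lam_{j}}_{S,T}\mid j\leq i\rangle$ form a chain of two-sided ideals with $A_{i}/A_{i-1}\cong W_{\lam_{i}}^{\oplus|M(\lam_{i})|}$ as left $A$-modules, so every simple $A$-module is a composition factor of some $W_\lam$. Since $A(\geq\lam)$ annihilates $\rad\lam$ and (by the key lemma) also annihilates $W_{\mu}$ whenever $\mu\not\geq\lam$, the quotient $A/A(\geq\lam)$ is again a cellular algebra with the same cell modules $W_{\mu}$ for $\mu\not\geq\lam$; an induction on $|\Lambda|$ then shows that any composition factor of $W_\lam$ other than $L_\lam$ is a simple $A/A(\geq\lam)$-module, hence of the form $L_{\mu}$ with $\mu\in\Lambda_{0}$. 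This closes the classification.
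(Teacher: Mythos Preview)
The paper does not give its own proof of this lemma; it simply cites it as \cite[Theorem~3.4]{GL}. Your argument is a faithful reconstruction of the original Graham--Lehrer proof and is correct in all essential points, so there is nothing to compare against in the paper itself.
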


For $\lam\in \Lambda$ and $\mu\in\Lambda_0$, let $d_{\lam\mu}$ be the multiplicity of $L_\mu$ in $W_\lam$.
Sometimes we write $d_{\lam\mu}$ as $[W_\lam: L_\mu]$.
Denote the matrix $(d_{\lam\mu})_{\lam\in\Lambda,\, \mu\in\Lambda_0}$ by $D$, which will be called the decomposition matrix of $A$.

\begin{lemma} {\rm\cite[Proposition 3.6]{GL}}\label{2.4}
Let $\lam\in \Lambda$ and $\mu\in\Lambda_0$. Then $d_{\mu\mu}=1$. Moreover, if $d_{\lam\mu}\neq 0$, then $\lam\geq\mu$.
\end{lemma}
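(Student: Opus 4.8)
The plan is to read everything off the cellular basis and the two‑sided ideals it spans, using the bilinear forms only through the single fact that $\mu\in\Lambda_0$ means $\Phi_\mu\neq 0$. For $\nu\in\Lambda$ put $A(\geq\nu)$ for the $R$-span of $\{C^\rho_{S,T}:\rho\geq\nu,\ S,T\in M(\rho)\}$; by (C3) this is a left ideal, and since the anti‑automorphism $\ast$ permutes the cellular basis it is in fact two‑sided, as is $A(>\nu)$. The first step is a \emph{vanishing lemma}: if $\nu\not\leq\lambda$ then $C^\nu_{U,V}$ acts as $0$ on the cell module $W_\lambda$. Indeed, by Definitions \ref{2.1}(C3) and \ref{2.2} the coefficient of $C_{S'}$ in $C^\nu_{U,V}C_S$ (computed in $W_\lambda$) is the coefficient of $C^\lambda_{S',T}$ in the product $C^\nu_{U,V}C^\lambda_{S,T}$ in $A$, and that product lies in the ideal $A(\geq\nu)$, whose basis contains no $C^\lambda_{\ast,\ast}$ because $\lambda\not\geq\nu$. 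A parallel computation using the structure constants $\Phi(T,U)$ gives the \emph{action formula} $C^\mu_{S,T}\cdot C_U=\Phi_\mu(C_T,C_U)\,C_S$ inside $W_\mu$. Two consequences are immediate when $\mu\in\Lambda_0$: since $\Phi_\mu\neq0$ there are $T,U$ with $\Phi_\mu(C_T,C_U)\neq 0$, so every $C_S$ lies in $A(\geq\mu)W_\mu$, i.e. $A(\geq\mu)W_\mu=W_\mu$; and for $x\in\rad\mu$ the same formula gives $C^\mu_{S,T}\cdot x=\Phi_\mu(C_T,x)C_S=0$, while the higher part $A(>\mu)$ kills all of $W_\mu$ by the vanishing lemma, so $A(\geq\mu)\,\rad\mu=0$.

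For the order statement, set $J_\lambda:=\sum_{\nu\not\leq\lambda}A(\geq\nu)$; it is a two‑sided ideal with basis $\{C^\nu_{S,T}:\nu\not\leq\lambda\}$, and by the vanishing lemma $J_\lambda$ annihilates $W_\lambda$. Hence $W_\lambda$, and therefore every composition factor of $W_\lambda$, is a module over $A/J_\lambda$. Now suppose $d_{\lambda\mu}\neq 0$, so $L_\mu$ is such a composition factor, and assume for contradiction that $\mu\not\leq\lambda$. Then $A(\geq\mu)\subseteq J_\lambda$, so $J_\lambda$ annihilates $L_\mu$; but from the first paragraph $A(\geq\mu)L_\mu=\big(A(\geq\mu)W_\mu+\rad\mu\big)/\rad\mu=W_\mu/\rad\mu=L_\mu\neq 0$, a contradiction. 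Therefore $\mu\leq\lambda$, i.e. $\lambda\geq\mu$.

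For $d_{\mu\mu}=1$ with $\mu\in\Lambda_0$: first, $\Phi_\mu\neq 0$ forces $\rad\mu\subsetneq W_\mu$, so $L_\mu=W_\mu/\rad\mu$ is a genuine quotient and $[W_\mu:L_\mu]\geq 1$. By additivity of composition multiplicities along $0\subseteq\rad\mu\subseteq W_\mu$ it remains to show $[\rad\mu:L_\mu]=0$. Here the two‑sided ideal $A(\geq\mu)$ does all the work: it annihilates $\rad\mu$, hence annihilates every composition factor of $\rad\mu$, whereas $A(\geq\mu)L_\mu=L_\mu\neq 0$. Consequently $L_\mu$ is not a composition factor of $\rad\mu$, and $d_{\mu\mu}=[W_\mu:L_\mu]=1$.

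The genuinely routine parts are the two bilinear‑algebra identities (the vanishing lemma and the action formula) and checking that the relevant spans really are two‑sided ideals; once these are in hand the argument is short. The step I expect to be the actual content is the observation that $A(\geq\mu)$ is simultaneously ``large'' on $W_\mu$ — acting as the identity after passing to $L_\mu$ — and ``small'' on $\rad\mu$, acting as zero: this dichotomy is exactly what converts the cellular axioms into the multiplicity‑one statement, and the companion use of $J_\lambda$ is what forces the order condition. In particular no appeal to a duality functor, to projective covers, or to induction on the poset $\Lambda$ is needed.
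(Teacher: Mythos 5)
Your argument is correct and is essentially the standard Graham--Lehrer proof of \cite[Proposition 3.6]{GL}, which is exactly what the paper invokes here without reproving: the element $C^{\mu}_{S,T}$ (with $\Phi(T,U)\neq 0$) acts as zero on $W_\lambda$ unless $\lambda\geq\mu$ and as zero on $\rad\mu$, yet induces a nonzero map on $L_\mu$, which yields both the order condition and $d_{\mu\mu}=1$. All the supporting steps (the two-sided ideals $A(\geq\nu)$ and $J_\lambda$, the action formula, and the passage to composition factors) check out.
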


An equivalent basis-free definition of a cellular algebra was given by Koenig and Xi \cite{KX2}, which
is useful in dealing with structural problems.

\begin{definition}\cite[Definition 3.2]{KX2}\label{2.5}
Let $A$ be an algebra over a noetherian integral domain $R$ with  an $R$-involution $\ast$.
A two-sided ideal $J$ in $A$ is called a cell ideal if and only if
the following data are given and the following conditions are satisfied:\vskip2mm
\begin{enumerate}
\item[{\rm (1)}] The ideal $J$ is fixed by $\ast$: $(J)^{\ast}=J.$
\smallskip
\item[{\rm (2)}]There exists a free $R$-module $\Delta\subset J$ of finite rank, such that
there is an isomorphism of $A$-bimodules $\alpha: J\simeq \Delta\otimes_R\Delta^\ast$ ($\Delta^\ast\subset J$ is the $\ast$-image of $\Delta$)
making the following diagram commutative:
\[\begin{CD}
J   @>\alpha>>\Delta\otimes_{R}\Delta^\ast\\
@V \ast VV                  @VVv_1\otimes v_2\mapsto v_2^\ast\otimes v_1^\ast V\\
J         @>\alpha>>   \Delta\otimes_{R}\Delta^\ast
\end{CD}\]
\end{enumerate}

The  algebra $A$ with $R$-involution $\ast$ is called cellular if and only
if there is an $R$-module decomposition $A=J_{\mu_1}'\oplus J_{\mu_2}'\oplus\cdots \oplus J_{\mu_m}'$ (for some $m$)
with $(J_{\mu_j}')^{\ast}=J_{\mu_j}'$ for each $j$ $(j=1,\dots,m)$ and such that setting
$J_{\mu_j}: =\bigoplus_{l=j}^{m}J_{\mu_l}'$ gives a chain of two-sided ideals of $A$:
$$0=J_{\mu_{m+1}}\subset J_{\mu_m}\subset J_{\mu_{m-1}}\subset J_{\mu_{m-2}}\subset\cdots\subset J_{\mu_1}=A,$$
each of them fixed by $\ast$, and each $J_{\mu_j}'=J_{\mu_j}/J_{\mu_{j+1}}$
is a cell ideal of $A/J_{\mu_{j+1}}$ (with respect to the involution induced by $\ast$ on the quotient).
 \end{definition}

For $\lam\in\Lambda_0$, let $P_\lam$ be the projective cover of $L_\lam$.
Denote by $P_\lam(\mu_{i})$ the module obtained by
applying the functor $-\otimes_A P_\lam$ to $J_{\mu_{i}}$
in Definition \ref{2.5} for $i=1, \dots, m+1$. We get a sequence of $A$-submodules
$$0=P_\lam(\mu_{m+1})\subset P_\lam(\mu_m)\subset P_\lam(\mu_{m-1})\subset P_\lam(\mu_{m-2})\subset\cdots\subset P_\lam(\mu_1)=P_\lam,$$
 which is in fact a cell filtration of $P_\lam$.

\begin{lemma}\cite[Theorem 3.7]{GL}\label{2.6}
For $\lam\in\Lambda_0$, denote the multiplicity of a cell module $W_\mu$ in $P_\lam$ by $[P_\lam: W_\mu]$.
Then $[P_\lam: W_\mu]=[W_\mu: L_\lam]$.
This implies that a simple module $L_{\lam}$ is a composition factor of a cell module $W_{\mu}$
if and only if $W_{\mu}$ is a factor of $P_{\lam}$.
\end{lemma}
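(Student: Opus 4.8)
The plan is to obtain the reciprocity $[P_\lam:W_\mu]=[W_\mu:L_\lam]$ by transporting the defining cell chain of $A$ into a cell filtration of $P_\lam$ along the exact functor $(-)\otimes_A P_\lam$, and then matching the multiplicities in that filtration with decomposition numbers via the contravariant cell duality; the ``if and only if'' at the end is then a formal restatement.

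First I would use that $P_\lam$ is projective, hence flat, so $F:=(-)\otimes_A P_\lam$ is exact. Applying $F$ to the chain of $\ast$-invariant ideals $0=J_{\mu_{m+1}}\subset\cdots\subset J_{\mu_1}=A$ of Definition~\ref{2.5} and using $A\otimes_A P_\lam\cong P_\lam$ recovers exactly the filtration $0=P_\lam(\mu_{m+1})\subset\cdots\subset P_\lam(\mu_1)=P_\lam$ recalled before the statement, with $i$-th subquotient $J_{\mu_i}'\otimes_A P_\lam$. By the Koenig--Xi cell-ideal description there is an $A$-bimodule isomorphism $J_{\mu_i}'\cong\Delta_{\mu_i}\otimes_R\Delta_{\mu_i}^\ast$ (inside $A/J_{\mu_{i+1}}$) in which the left module $\Delta_{\mu_i}$ is the cell module $W_{\mu_i}$, so, tensoring over $A$ into the right-hand factor, $J_{\mu_i}'\otimes_A P_\lam\cong W_{\mu_i}\otimes_R(\Delta_{\mu_i}^\ast\otimes_A P_\lam)\cong W_{\mu_i}^{\oplus a_{\mu_i}}$, where $a_{\mu}:=\dim_K(\Delta_{\mu}^\ast\otimes_A P_\lam)$. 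Thus $P_\lam$ carries a filtration by cell modules and $[P_\lam:W_\mu]=a_\mu$.

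It then remains to identify $a_\mu$ with $[W_\mu:L_\lam]$. Let $D:=\Hom_K(\ast(-),K)$ be the contravariant duality on finite-dimensional $A$-modules attached to the cell datum. Tensor--hom adjunction gives $\Hom_K(\Delta_\mu^\ast\otimes_A P_\lam,K)\cong\Hom_A(P_\lam,\Hom_K(\Delta_\mu^\ast,K))$, and $\Hom_K(\Delta_\mu^\ast,K)=D(W_\mu)=:\nabla_\mu$, so $a_\mu=\dim_K\Hom_A(P_\lam,\nabla_\mu)$. Since $L_\lam=W_\lam/\rad\lam$ is \emph{absolutely} simple by Lemma~\ref{2.3}, one has $\End_A(L_\lam)=K$ and hence $\dim_K\Hom_A(P_\lam,X)=[X:L_\lam]$ for every finite-dimensional $X$; therefore $a_\mu=[\nabla_\mu:L_\lam]$. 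Finally $D$ is an exact involutive duality, so $[\nabla_\mu:L_\lam]=[D(W_\mu):L_\lam]=[W_\mu:D(L_\lam)]$, and $D(L_\lam)\cong L_\lam$ for $\lam\in\Lambda_0$; this last point is where the cell bilinear form $\Phi_\lam$ enters, as $\Phi_\lam\neq0$ produces a nonzero map $W_\lam\to\nabla_\lam$ realizing $L_\lam$ as a submodule, hence as the simple socle $D(L_\lam)$, of $\nabla_\lam$. Combining, $[P_\lam:W_\mu]=a_\mu=[W_\mu:L_\lam]$, and then $L_\lam$ is a composition factor of $W_\mu$ iff $[W_\mu:L_\lam]\neq0$ iff $[P_\lam:W_\mu]\neq0$ iff $W_\mu$ occurs in the cell filtration of $P_\lam$ above.

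I expect the main obstacle to be the third paragraph: keeping straight the passage between left and right $A$-modules under $\ast$ and under $\Hom_K(-,K)$, and pinning down that the composite fixes each $L_\lam$ with $\lam\in\Lambda_0$ while invoking absolute simplicity precisely where a factor $\dim_K\End_A(L_\lam)$ would otherwise appear. One can instead carry out the whole computation with the explicit cell basis of $A$ as in the original proof of \cite[Theorem~3.7]{GL}; the remaining ingredients here---flatness of $P_\lam$ and the cell-ideal decomposition---are routine.
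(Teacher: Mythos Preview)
The paper does not prove this lemma; it simply records it as \cite[Theorem~3.7]{GL} and moves on. So there is no ``paper's proof'' to compare against beyond the original Graham--Lehrer argument.

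Your proposal is a correct self-contained reconstruction. The strategy---tensor the cell chain with the flat module $P_\lam$ to produce a $\Delta$-filtration, use the Koenig--Xi bimodule factorisation $J_\mu'\cong W_\mu\otimes_R\Delta_\mu^\ast$ to read off multiplicities as $\dim_K(\Delta_\mu^\ast\otimes_A P_\lam)$, then identify that dimension with $[W_\mu:L_\lam]$ via tensor--hom adjunction and the self-duality $D(L_\lam)\cong L_\lam$---is the standard BGG-reciprocity argument specialised to the cellular setting. The points you flag as delicate (left/right bookkeeping under $\ast$ and $\Hom_K(-,K)$, and the use of absolute simplicity to kill the factor $\dim_K\End_A(L_\lam)$) are exactly the right places to be careful, and your treatment of them is sound. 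The only cosmetic remark is that $J_\mu'$ is a cell ideal of $A/J_{\mu+1}$ rather than of $A$, so the bimodule isomorphism lives over the quotient; since $J_{\mu+1}$ already annihilates $J_\mu'$ this changes nothing in the tensor computation, but it is worth saying explicitly.
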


Note that given a cellular algebra with cell datum $(\Lambda, M, C, \ast)$,
the set $\{\mu_1, \mu_2, \dots, \mu_m\}$ in Definition \ref{2.5} is in fact a linear extension of $\Lambda$.
However, we can find some more subtle structure of $P_\lam$ if we work with the original poset $\Lambda$.
The following lemma is a simple corollary of Lemma \ref{2.6}.

\begin{lemma}\label{2.7}
Let $\lam\in \Lambda_0$, $\mu, \nu\in\Lambda$ and $\mu, \, \nu$ incomparable (denoted by $\mu\parallel\nu$).
Then $W_\nu^{\oplus d_{\nu\lam}}\oplus W_\mu^{\oplus d_{\mu\lam}}$ is a subquotient of $P_\lam$.
\end{lemma}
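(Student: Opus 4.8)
The plan is to exhibit $W_\nu^{\oplus d_{\nu\lambda}}\oplus W_\mu^{\oplus d_{\mu\lambda}}$ as a genuine subquotient $M/M_0$ of $P_\lambda$, by using the incomparability of $\mu$ and $\nu$ to split the two relevant layers of a cell chain of $A$ apart before they are glued together. The starting input is the discussion preceding the lemma together with Lemma~\ref{2.6}: tensoring the chain of Definition~\ref{2.5} with the projective module $P_\lambda$ is exact and turns each cell ideal $\Delta_\gamma\otimes_R\Delta_\gamma^{\ast}$ into $W_\gamma^{\oplus[P_\lambda:W_\gamma]}=W_\gamma^{\oplus d_{\gamma\lambda}}$. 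The one extra observation needed is that this chain may be built from an arbitrary linear extension of the multiplication poset $\Lambda$, not merely of $\check\Lambda$: for any up-set $G$ of $\Lambda$, the span of $\{C_{S,T}^{\rho}\mid\rho\in G\}$ is a $\ast$-stable two-sided ideal of $A$, which is immediate from the definition of $\geq$ in $\Lambda$ (if $C^{\sigma}$ occurs with nonzero coefficient in $aC^{\rho}$ then $\sigma\geq\rho$) together with the fact that $\ast$ preserves labels.

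Next I would build the relevant ideals from $\mu$ and $\nu$. Put $F:=\{\rho\in\Lambda\mid\rho>\mu\text{ or }\rho>\nu\}$. Because $\mu\parallel\nu$, neither $\mu$ nor $\nu$ lies in $F$, and $F$, $F\cup\{\mu\}$, $F\cup\{\nu\}$, $F\cup\{\mu,\nu\}$ are all up-sets of $\Lambda$; they therefore give $\ast$-stable two-sided ideals $I_0\subseteq I_\mu$, $I_0\subseteq I_\nu$, $I_\mu,I_\nu\subseteq I_{\mu\nu}$ with, using $\mu\neq\nu$, the two key identities $I_\mu\cap I_\nu=I_0$ and $I_\mu+I_\nu=I_{\mu\nu}$. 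Moreover $\mu$ and $\nu$ are maximal in $\Lambda\setminus F$, so $I_\mu/I_0$ and $I_\nu/I_0$ are cell ideals of $A/I_0$, isomorphic as bimodules to $\Delta_\mu\otimes_R\Delta_\mu^{\ast}$ and $\Delta_\nu\otimes_R\Delta_\nu^{\ast}$ respectively.

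Now I would apply the exact functor $-\otimes_A P_\lambda$ to the Mayer--Vietoris type short exact sequence $0\to I_0\to I_\mu\oplus I_\nu\to I_{\mu\nu}\to 0$. Since $P_\lambda$ is projective this functor is exact, so for the images $M_0,M_\mu,M_\nu,M$ in $P_\lambda$ of $I_0\otimes_AP_\lambda$, $I_\mu\otimes_AP_\lambda$, $I_\nu\otimes_AP_\lambda$, $I_{\mu\nu}\otimes_AP_\lambda$ one gets $M_\mu\cap M_\nu=M_0$ and $M_\mu+M_\nu=M$, whence
\[
M/M_0=(M_\mu/M_0)\oplus(M_\nu/M_0)
\]
inside $P_\lambda/M_0$. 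Finally one identifies $M_\mu/M_0\cong(I_\mu/I_0)\otimes_AP_\lambda\cong\Delta_\mu\otimes_R(\Delta_\mu^{\ast}\otimes_AP_\lambda)\cong W_\mu^{\oplus d_{\mu\lambda}}$, the multiplicity being $[P_\lambda:W_\mu]=d_{\mu\lambda}$ by Lemma~\ref{2.6}, and symmetrically $M_\nu/M_0\cong W_\nu^{\oplus d_{\nu\lambda}}$; since $M_0\subseteq M\subseteq P_\lambda$ this shows $W_\nu^{\oplus d_{\nu\lambda}}\oplus W_\mu^{\oplus d_{\mu\lambda}}$ is a subquotient of $P_\lambda$. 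If $d_{\mu\lambda}=0$ or $d_{\nu\lambda}=0$ one summand is zero and the statement is contained in this.

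I expect the only point requiring care to be the first one: checking that the cell-chain formalism of Graham--Lehrer and Koenig--Xi passes verbatim to the coarser multiplication poset $\Lambda$ in place of $\check\Lambda$, that is, that up-sets of $\Lambda$ cut out $\ast$-stable two-sided ideals whose successive quotients are cell ideals of the corresponding subquotient algebras. Once that is established, the remainder is formal: exactness of $-\otimes_AP_\lambda$ forces the two layers to split off one another, and Lemma~\ref{2.6} supplies the multiplicities.
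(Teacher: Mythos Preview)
Your proof is correct and takes essentially the same approach as the paper's. The paper quotients by $A(>\mu)+A(>\nu)$---which is precisely your ideal $I_0$---and then observes that in the resulting cellular algebra $\overline A$ the layers $J'_\mu$ and $J'_\nu$ (your $I_\mu/I_0$ and $I_\nu/I_0$) form a direct sum ideal $J'_\mu\oplus J'_\nu$, so tensoring with $P_\lambda$ and invoking Lemma~\ref{2.6} finishes; your Mayer--Vietoris formulation and explicit discussion of up-sets of the multiplication poset $\Lambda$ are just a more detailed rendition of the same argument.
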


\begin{proof}
Clearly, $\overline{A}:=A/(A(>\nu) +A(>\mu))$ is a cellular algebra. By abusing the notations,
we still denote the cell modules corresponding to $\nu$ and $\mu$ by $W_\nu$ and $W_\mu$, respectively.
Since $\mu\parallel\nu$, $J_{\nu}'\oplus J_{\mu}'$ is an ideal of $\overline{A}$, and
thus $W_\nu$ and $W_\mu$ are both submodules of a quotient of $P_\lam$. Now the lemma is clear by Lemma \ref{2.6}.
\end{proof}


\subsection{Brauer tree algebras} 
We fix some notations by recall the definition of a Brauer tree algebra. 
The main references of this subsection are \cite{A, ARS}. 

A Brauer tree is a finite tree together with two additional structures on each vertex $i$:
\begin{enumerate}
\item[(1)] a circular ordering of the edges adjacent to $i$;
\item[(2)] a positive integer $m(i)$, called the multiplicity satisfying at most one $m(i)$ is larger than one.
\end{enumerate}
The vertex with multiplicity more than one is called the exceptional vertex, which is drawn customarily as a blackened circle.
A finite dimensional algebra $A$ is called a {\em Brauer tree algebra} if the structure of the indecomposable projective modules
can be described in terms of a Brauer tree in the following way.
\begin{enumerate}
\item[(1)] There is a one to one correspondence between the edges $\alpha$ of the tree
and the indecomposable projective $A$-modules $P_{\alpha}$ and hence the corresponding simple $A$-modules $L_\alpha$.
\item[(2)] For an edge $\alpha$ connecting vertices $i$ and $j$, let $(\alpha=\alpha_1, \dots, \alpha_t)$ and $(\alpha=\beta_1, \dots, \beta_r)$
be the circular orderings of edges adjacent to $i$ and $j$, respectively. Then  $\rad P_\alpha=U_\alpha+V_\alpha$,
where $U_\alpha \cap V_\alpha=L_\alpha$, $U_\alpha$ is uniserial with composition factors $L_{\alpha_2}, \dots, L_{\alpha_t}, L_{\alpha_1},$
$m(i)$ times from top to bottom, and $V_\alpha$ is uniserial with composition factors $L_{\beta_2}, \dots, L_{\beta_r}, L_{\beta_1},$
$m(j)$ times from top to bottom.
\end{enumerate}

Given a Brauer tree $T$, let $\Gamma$ be the quiver whose vertices are in one to one correspondence with the edges of $T$.
For a vertex $i$ in $T$, the circular ordering $(\alpha_1, \dots, \alpha_t)$ give rise to an oriented cycle $C_i$.
Then the vertex $v_{\alpha}$ in $\Gamma$ corresponding to edge $\alpha$ in $T$ connecting vertices $i$ and $j$
belongs to exactly two oriented cycles $C_i$ and $C_j$. Denote the path in $C_i$
without repeated arrows starting and ending at $\alpha_k$ by $p_{\alpha_k}^{(i)}$.
Define relations $\rho$ to be $(p_{\alpha}^{(i)})^{m(i)}-(p_{\alpha}^{(j)})^{m(j)}, uv$,
where $u$ is the arrow in $C_i$ with $e(u)=\alpha$ and $v$ the arrow in $C_j$ with $s(v)=\alpha$, or
$u$ is the arrow in $C_j$ with $e(u)=\alpha$ and $v$ the arrow in $C_i$ with $s(v)=\alpha$.
Then the algebra $K(\Gamma, \rho)$ is a Brauer tree algebra given by $T$.
It is worth to note that a Brauer tree determines a unique Brauer tree algebra
up to Morita equivalence (see \cite[Corollary 4.3.3]{KZ}).
Furthermore, in \cite{G1} Geck studied the Brauer tree of Hecke algebras, 
and Koenig and Xi studied in \cite{KX2} the cellularity of a Brauer tree algebra.

\begin{lemma}\cite[Proposition 5.3]{KX2}\label{2.8}
A Brauer tree algebra is a cellular algebra if and only if the Brauer tree is a straight line.
\end{lemma}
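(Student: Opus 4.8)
The plan is to prove both implications; essentially all the work is in ``only if''. For ``if'': if the Brauer tree $T$ is a straight line with edges $\alpha_1,\dots,\alpha_n$, I would write $A=K(\Gamma,\rho)$ --- so $\Gamma$ has arrows $a_i\colon\alpha_i\to\alpha_{i+1}$, $b_i\colon\alpha_{i+1}\to\alpha_i$ for $1\le i\le n-1$ and $\rho$ consists of $a_{i-1}a_i=0=b_ib_{i-1}$ together with the loop relations $(b_{i-1}a_{i-1})^{m(v_{i-1})}=(a_ib_i)^{m(v_i)}$ --- and exhibit a cell datum directly. Fixing every vertex and interchanging $a_i\leftrightarrow b_i$ extends to an anti-automorphism $\ast$ of $A$, since every relation is $\ast$-stable. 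A $K$-basis of $A$ is given by the zig-zag paths, which fall into $\ast$-pairs; taking $\Lambda$ to be built from the uniserial pieces of the $P_{\alpha_k}$ (ordered by the deepest vertex of $T$ they reach, then by length), $M(\lambda)$ to index the one-sided halves of the zig-zags cutting out $\lambda$, and $C^\lambda_{S,T}$ the corresponding element of $A$, the axioms (C1)--(C3) reduce to elementary combinatorics of concatenation. (One can also deduce this direction by recognising straight-line Brauer tree algebras among blocks of cellular algebras, or by an induction on $n$.) This is routine bookkeeping, not the hard part.

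For ``only if'', assume $A$ is cellular and, for a contradiction, that $T$ is not a straight line, so $T$ has a vertex $v$ of degree $\ge3$. Suppose first that $v$ is not the exceptional vertex, and choose three edges $\alpha,\beta,\gamma$ at $v$, labelled so that $\alpha,\beta,\gamma$ appear in this cyclic order around $v$. As $T$ is a tree, any two of them meet only in $v$, so the Cartan numbers satisfy $c_{\alpha\beta}=c_{\alpha\gamma}=c_{\beta\gamma}=1$; since $A$ is cellular, $c_{\lambda\mu}=\sum_{\nu}d_{\nu\lambda}d_{\nu\mu}$ by Lemma \ref{2.6}, so there is a \emph{unique} index $\mu_{\alpha\beta}\in\Lambda$ with $d_{\mu_{\alpha\beta}\alpha}=d_{\mu_{\alpha\beta}\beta}=1$, and likewise unique $\mu_{\alpha\gamma}$ and $\mu_{\beta\gamma}$. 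The structural input is that, $\beta$ and $\gamma$ not being incident to the second endpoint of $\alpha$, the composition factors $L_\beta,L_\gamma$ of $P_\alpha$ (each occurring once) both lie in the uniserial branch $U_\alpha$ of $\rad P_\alpha$, with $L_\beta$ above $L_\gamma$; hence any submodule of $P_\alpha$ carrying $L_\beta$ also carries $L_\gamma$, so $L_\gamma$ strictly precedes $L_\beta$ in \emph{every} composition series of $P_\alpha$, and likewise in any subquotient of $P_\alpha$ still involving both. Cyclically: $L_\gamma$ precedes $L_\beta$ in $P_\alpha$, $L_\alpha$ precedes $L_\gamma$ in $P_\beta$, and $L_\beta$ precedes $L_\alpha$ in $P_\gamma$.

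Now two cases. If some two of $\mu_{\alpha\beta},\mu_{\alpha\gamma},\mu_{\beta\gamma}$ are incomparable, say $\mu_{\alpha\beta}\parallel\mu_{\alpha\gamma}$, then Lemma \ref{2.7} with $\lambda=\alpha$ makes $W_{\mu_{\alpha\gamma}}\oplus W_{\mu_{\alpha\beta}}$ a subquotient of $P_\alpha$; projecting along the surjection $P_\alpha\twoheadrightarrow P_\alpha/V_\alpha$ onto the uniserial module $P_\alpha/V_\alpha$ (here $V_\alpha$ is the other branch of $\rad P_\alpha$) kills neither summand, because $L_\gamma$ and $L_\beta$ do not occur in $V_\alpha$; so the image is a non-zero decomposable subquotient of a uniserial module, which is impossible. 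If instead $\mu_{\alpha\beta},\mu_{\alpha\gamma},\mu_{\beta\gamma}$ are pairwise comparable, look at the cell filtrations of $P_\alpha,P_\beta,P_\gamma$ coming from the cell chain of Definition \ref{2.5}: their layers occur, from top to bottom, in the order of a fixed linear extension of $\Lambda$, and in each case the two factors being compared occur exactly once, in the layers indexed by the two relevant $\mu$'s; the three precedence statements then force $\mu_{\alpha\gamma}>\mu_{\alpha\beta}$, $\mu_{\alpha\beta}>\mu_{\beta\gamma}$ and $\mu_{\beta\gamma}>\mu_{\alpha\gamma}$ in $\Lambda$ --- a cycle, impossible. (If the three indices coincide in one $\mu$, the same three precedences read off inside the single module $W_\mu$ give a $3$-cycle in the strict partial order ``precedes in every composition series of $W_\mu$'', again impossible.) Hence $T$ has no non-exceptional vertex of degree $\ge3$.

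What remains, and is I think the main obstacle, is the case where the \emph{only} vertex of degree $\ge3$ is the exceptional vertex $w$, so that $T$ is a ``spider'' of straight segments issuing from $w$. Then $c_{\alpha\beta}=m(w)>1$ for edges $\alpha,\beta$ at $w$, so there is no single cell module carrying both $L_\alpha$ and $L_\beta$, and along $U_\alpha$ the $m(w)$ copies of $L_\beta$ and of $L_\gamma$ are interleaved rather than cleanly separated, so the precedence-cycle argument does not apply verbatim. I would try to recover it by a finer count --- the lowest copy of $L_\gamma$ in $U_\alpha$ still lies below every copy of $L_\beta$, so a layer-by-layer comparison of the socle filtrations of $P_\alpha,P_\beta,P_\gamma$ with their cell filtrations should again yield a cyclic incompatibility --- or else by reducing to $m(w)=1$ after quotienting by a suitable $\ast$-stable ideal. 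The one further technical point used above, that the position of a composition factor inside $P_\alpha$ is intrinsic, is handled by the remark that a submodule of $P_\alpha$ carrying $L_\beta$ already contains the whole branch $U_\alpha$.
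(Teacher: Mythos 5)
First, note that the paper does not prove this statement at all: it is imported verbatim as \cite[Proposition 5.3]{KX2}, so there is no internal proof to match your argument against; your attempt has to stand on its own.

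On its own terms, your argument has one genuine, and in fact acknowledged, gap: the case in which the unique vertex of degree at least $3$ is the exceptional vertex is never proved. Your ``only if'' argument for a non-exceptional branching vertex $v$ is essentially sound --- the Cartan numbers $c_{\alpha\beta}=c_{\alpha\gamma}=c_{\beta\gamma}=1$ give unique cell indices $\mu_{\alpha\beta},\mu_{\alpha\gamma},\mu_{\beta\gamma}$ via Lemma \ref{2.6}, the cyclic precedences of $L_\alpha,L_\beta,L_\gamma$ inside $P_\alpha,P_\beta,P_\gamma$ are correct, and the two subcases (an incomparable pair, handled via Lemma \ref{2.7}, versus a forced $3$-cycle in the linear extension of Definition \ref{2.5}) do exhaust the possibilities. (One local imprecision: the image of $W_{\mu_{\alpha\gamma}}\oplus W_{\mu_{\alpha\beta}}$ in a subquotient of the uniserial module $P_\alpha/V_\alpha$ need not itself be decomposable; the correct conclusion is that, submodules of a uniserial module being totally ordered, the image equals the image of one summand alone, which then cannot contain both $L_\beta$ and $L_\gamma$ for multiplicity reasons. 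This is fixable in two lines.) But when the branching vertex $w$ is exceptional, everything you rely on breaks simultaneously: $c_{\alpha\beta}=m(w)>1$, so there is no unique $\mu_{\alpha\beta}$; the $m(w)$ copies of $L_\beta$ and $L_\gamma$ interleave in the uniserial branch $U_\alpha$, so ``$L_\gamma$ precedes $L_\beta$ in every composition series'' is simply false; and the proposed repairs (``a finer count'' of socle layers, or ``quotienting by a suitable $\ast$-stable ideal'' to reduce to $m(w)=1$) are stated as hopes, not carried out --- in particular it is not clear that such a $\ast$-stable ideal with the right quotient exists. Since a star with three edges at an exceptional vertex is exactly the kind of algebra the lemma must exclude, this is not a corner case but a substantive missing half of the ``only if'' direction. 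The ``if'' direction is likewise only sketched (the verification of (C1)--(C3) for your zig-zag basis is asserted as ``routine bookkeeping''); that direction is indeed standard, being done in \cite{KX2} by iterated inflation, but as written it is not a proof either.
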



\subsection{Poset of a finite type self-injective cellular algebra}

Let us illustrate all indecomposable projective modules of a cellular Brauer tree algebra as a lemma for later use. 
By Lemma \ref{2.8}, we only need to consider the trees being a straight line.
\begin{lemma}\label{2.9}
{\rm(1)} Let $A$ be a Brauer tree algebra for the following Brauer tree $T_1$.
\begin{figure}[H]
	\[
	\xy
    (13,0)*{\scriptstyle\bullet}="1s";
    (28,2)*{\scriptstyle \alpha};
    (13,-2)*{\scriptstyle 1};
    (43,0)*{\scriptstyle\circ}="2s";
    (43,-2)*{\scriptstyle 2};
     "1s"; "2s" **\dir{-};
    \endxy
	\]
\end{figure}
The indecomposable projective module is uniserial with composition factor $L_{\alpha}$, $m(1)+1$ times from top to bottom.
Let $\Gamma_{\rm I}$ be the quiver
\begin{align*}
\xymatrix{
 & v_\alpha \ar@(ul,dl)_{\alpha_{1,1} }
}
\end{align*}
and let $\rho$ be $(\alpha_{1,1})^{m(1)+1}$. Then $K(\Gamma_{\rm I}, \rho)$ is a Brauer tree algebra given by $T_1$.

\smallskip

{\rm(2)} Let $A$ be a Brauer tree algebra for the following Brauer tree $T_2$ with $n>1$.
\begin{figure}[H]
	\[
	\xy
    (-2,0)*{\scriptstyle\circ}="2s";
    (-2,-2)*{\scriptstyle 1};
    (6,2)*{\scriptstyle \alpha_1};
    (13,0)*{\scriptstyle\circ}="3s";
    (13,-2)*{\scriptstyle 2};
    (28,0)*{\scriptstyle\circ}="4s";
    (28,-2)*{\scriptstyle 3};
    (21,2)*{\scriptstyle \alpha_2};
    (36,0)*{\scriptstyle\cdots\cdots};
    (43,0)*{\scriptstyle\circ}="5s";
    (43,-2)*{\scriptstyle n};
    (58,0)*{\scriptstyle\circ}="6s";
    (58,-2)*{\scriptstyle n+1};
    (51,2)*{\scriptstyle \alpha_{n}};
     "2s"; "3s" **\dir{-};
     "3s"; "4s" **\dir{-};
     "5s"; "6s" **\dir{-};
    \endxy
	\]
\end{figure}
The structures of indecomposable projective $A$-modules are illustrated as follows.
$$\begin{array}{cccc}
P_{\alpha_1}=\left(
    \begin{array}{c}
       L_{\alpha_1} \\
       L_{\alpha_2} \\
       L_{\alpha_1}
     \end{array}
     \right);
     & P_{\alpha_2}=\left(
     \begin{array}{ccc}
          & L_{\alpha_2} &  \\
         L_{\alpha_1} &  & L_{\alpha_3} \\
          & L_{\alpha_2} &
       \end{array}
       \right);
      & \cdots &
       P_{\alpha_{n}}=\left(
       \begin{array}{c}
           L_{\alpha_{n}} \\
           L_{\alpha_{n-1}} \\
           L_{\alpha_{n}}
         \end{array}
         \right).
\end{array}$$

\smallskip

{\rm(3)} Let $A$ be a Brauer tree algebra for the following Brauer tree $T_3$ with $n>1$.
\begin{figure}[H]
	\[
	\xy
    (-2,0)*{\scriptstyle\circ}="2s";
    (-2,-2)*{\scriptstyle 1};
    (6,2)*{\scriptstyle \alpha_1};
    (13,0)*{\scriptstyle\circ}="3s";
    (13,-2)*{\scriptstyle 2};
    (20,0)*{\scriptstyle\cdots};
    (28,0)*{\scriptstyle\bullet};
    (28,-2)*{\scriptstyle j};
    (36,0)*{\scriptstyle\cdots};
    (43,0)*{\scriptstyle\circ}="5s";
    (43,-2)*{\scriptstyle n};
    (58,0)*{\scriptstyle\circ}="6s";
    (58,-2)*{\scriptstyle n+1};
    (51,2)*{\scriptstyle \alpha_{n}};
     "2s"; "3s" **\dir{-};
     "5s"; "6s" **\dir{-};
    \endxy
	\]
\end{figure}

The indecomposable projective $A$-modules are illustrated as follows.
$$\begin{array}{cc}
P_{\alpha_1}=\left(
    \begin{array}{c}
       L_{\alpha_1} \\
       L_{\alpha_2} \\
       L_{\alpha_1}
     \end{array}
     \right), \,j\neq 1, 2; & P_{\alpha_{1=j}}=\left(
    \begin{array}{ccc}
     & L_{\alpha_1} & \\
      L_{\alpha_1} & & L_{\alpha_2} \\
     \vdots &  & \\
      L_{\alpha_1} & &\\
      & L_{\alpha_1} &
     \end{array}
     \right);\quad
      \cdots;\\
P_{\alpha_{j-1}}=\left(
     \begin{array}{ccc}
          & L_{\alpha_{j-1}} &  \\
          &   & L_{\alpha_{j}}\\
          L_{\alpha_{j-2}} &  & L_{\alpha_{j-1}} \\
          &  &  \vdots\\
          &  &  L_{\alpha_{j}}\\
          & L_{\alpha_{j-1}} &
       \end{array}
       \right); &
       P_{\alpha_{j}}=\left(
     \begin{array}{ccc}
          & L_{\alpha_{j}} &  \\
          L_{\alpha_{j-1}} & &\\
          L_{\alpha_{j}} &  & L_{\alpha_{j+1}} \\
          \vdots & &\\
          L_{\alpha_{j-1}} & &\\
          & L_{\alpha_{j}} &
       \end{array}
       \right);\,\,\,\cdots;\\
       P_{\alpha_{n}}=\left(
       \begin{array}{c}
           L_{\alpha_{n}} \\
           L_{\alpha_{n-1}} \\
           L_{\alpha_{n}}
         \end{array}
         \right),\, j\neq n+1; & P_{\alpha_{n}}=\left(
    \begin{array}{ccc}
     & L_{\alpha_n} & \\
      L_{\alpha_{n-1}} & & L_{\alpha_n} \\
     & & \vdots \\
     & & L_{\alpha_n}\\
      & L_{\alpha_n} &
     \end{array}
     \right),\,j=n+1.
\end{array}$$
\end{lemma}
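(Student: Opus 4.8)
The plan is to derive everything from the module-theoretic description of a Brauer tree algebra recalled above: for an edge $\alpha$ joining vertices $i$ and $j$ with circular orderings $(\alpha=\alpha_1,\dots,\alpha_t)$ at $i$ and $(\alpha=\beta_1,\dots,\beta_r)$ at $j$, one has $\rad P_\alpha=U_\alpha+V_\alpha$ with $U_\alpha\cap V_\alpha=L_\alpha$, where $U_\alpha$ (resp. $V_\alpha$) is uniserial with composition factors $L_{\alpha_2},\dots,L_{\alpha_t},L_{\alpha_1}$ (resp. $L_{\beta_2},\dots,L_{\beta_r},L_{\beta_1}$) listed $m(i)$ (resp. $m(j)$) times from top to bottom. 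By Lemma~\ref{2.8} the only trees to treat are straight lines, for which the bookkeeping is easy: each of the two end vertices has a single adjacent edge, so the circular ordering there is trivial ($t=1$), while every interior vertex has exactly two adjacent edges, so its circular ordering is just the pair of those two edges ($t=2$). Substituting these into the displayed description and then reconstituting $P_\alpha$ from $L_\alpha$, $U_\alpha$ and $V_\alpha$ will yield each stated picture; the extra assertion in part~(1) about the quiver presentation is handled separately.

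For part~(1), the tree $T_1$ has a single edge $\alpha$ joining the exceptional leaf (vertex $1$, multiplicity $m(1)$) and an ordinary leaf (vertex $2$). Both circular orderings are trivial, so $U_\alpha$ is uniserial of length $m(1)$ with every composition factor equal to $L_\alpha$, whereas $V_\alpha\cong L_\alpha$. Since $V_\alpha$ is exactly the socle of $U_\alpha$, we get $\rad P_\alpha=U_\alpha$, and hence $P_\alpha$ is uniserial of length $m(1)+1$ with all composition factors $L_\alpha$, as claimed. For the presentation, note that $(\Gamma_{\rm I},\rho)$ is precisely what the general construction of the previous subsection produces for $T_1$, and $K(\Gamma_{\rm I},\rho)\cong K[x]/(x^{m(1)+1})$ is a local algebra whose unique indecomposable projective is uniserial of length $m(1)+1$; since a Brauer tree determines its algebra up to Morita equivalence \cite[Corollary 4.3.3]{KZ}, $K(\Gamma_{\rm I},\rho)$ is the Brauer tree algebra of $T_1$.

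For part~(2) every vertex is ordinary, so $m(i)=1$ throughout and each arm $U_{\alpha_k}$, $V_{\alpha_k}$ has length equal to the valency ($1$ or $2$) of the corresponding endpoint of $\alpha_k$. For the leaf edges $\alpha_1$ and $\alpha_n$ one arm is simple and coincides with the socle of the other, which is uniserial of length $2$, so $P_{\alpha_1}$ and $P_{\alpha_n}$ are uniserial of length $3$; for an interior edge $\alpha_k$ with $2\le k\le n-1$, $\rad P_{\alpha_k}$ is the amalgam over its common socle $L_{\alpha_k}$ of the two length-$2$ uniserial modules with tops $L_{\alpha_{k-1}}$ and $L_{\alpha_{k+1}}$, giving the displayed diamond. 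Part~(3) is the same analysis, now with the single vertex $j$ having $m(j)>1$: edges with both endpoints ordinary give projectives exactly as in part~(2), while for an edge incident to $j$ the arm coming from $j$ is lengthened by repeating its circular ordering $m(j)$ times. Splitting into the case $j$ a leaf (so $j\in\{1,n+1\}$), where the unique edge at $j$ acquires a length-$m(j)$ uniserial arm consisting of copies of $L_{\alpha_1}$ (resp. $L_{\alpha_n}$) amalgamated with the other, length-$2$, arm over the socle, and the case $j$ interior, where exactly $\alpha_{j-1}$ and $\alpha_j$ are affected and each acquires a length-$2m(j)$ uniserial arm from vertex $j$ amalgamated over its socle with a length-$2$ arm from the ordinary endpoint, one reads off precisely the displayed modules.

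There is no genuine conceptual obstacle here — the statement is essentially a dictionary entry translating the general definition to straight-line trees — but some care is needed at two points. First, one must correctly recognise that a leaf vertex contributes only a single composition factor to its arm, which is then absorbed as the socle of the opposite arm rather than appearing as a second branch; mishandling this would spuriously change the Loewy length in parts~(1)--(3). Second, in part~(3) the multiplicity $m(j)$ must be threaded through the uniserial arm at vertex $j$ so that its length ($m(j)$ at an exceptional leaf, $2m(j)$ at an interior exceptional vertex) and the alternation of the two composition-factor types come out exactly as displayed; and in part~(1) one cannot omit the appeal to the Morita-uniqueness of Brauer tree algebras when passing from ``$K(\Gamma_{\rm I},\rho)$ has the right indecomposable projective'' to ``$K(\Gamma_{\rm I},\rho)$ is the Brauer tree algebra of $T_1$''.
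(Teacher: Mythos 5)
Your proposal is correct and is essentially the argument the paper intends: the paper states Lemma \ref{2.9} without proof, treating it as a direct reading-off of the indecomposable projectives from the definition of a Brauer tree algebra (the arms $U_\alpha$, $V_\alpha$ amalgamated over the common socle $L_\alpha$) specialised to straight-line trees, plus the standard quiver presentation and the Morita-uniqueness from \cite[Corollary 4.3.3]{KZ} for part (1). Your bookkeeping of the leaf/interior cases and of the multiplicity $m(j)$ matches the displayed modules exactly.
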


Let $A$ be a cellular Brauer tree algebra with cell datum $(\Lambda, M, C, \ast)$.
Then by Lemma \ref{2.3}, there is a one to one correspondence between the set of edges $\alpha_i$ and $\Lambda_0$.
For simplicity, we still denote the image of $\alpha_i$ in $\Lambda_0$ by $\alpha_i$.
Let us determine the form of a cell module of $A$.

\begin{lemma}\label{2.10}
Keep notations as Lemma \ref{2.9} and let $W_\lam$ be a cell module. We have
\begin{enumerate}
\item[{\rm (1)}] $d_{\lam\alpha_i}\leq 1$ for all $i$.
\item[{\rm (2)}] Neither $\left(
\begin{array}{ccc}
L_{\alpha_{i-1}} &  &  L_{\alpha_{i+1}} \\
&  L_{\alpha_{i}} &  \\
\end{array}
\right)$ nor $\left(
\begin{array}{ccc}
L_{\alpha_{1}} &  &  L_{\alpha_{2}} \\
&  L_{\alpha_{1}} &  \\
\end{array}
\right)$ is a submodule of a cell module $W$.
\item[{\rm (3)}] Neither $\left(
\begin{array}{ccc}
&  L_{\alpha_{i}} &  \\
L_{\alpha_{i-1}} &  &  L_{\alpha_{i+1}} \\
\end{array}
\right)$ nor  $\left(
\begin{array}{ccc}
&  L_{\alpha_{n}} &  \\
L_{\alpha_{i-1}} &  &  L_{\alpha_{n}} \\
\end{array}
\right)$ is a quotient module of a cell module $W$.
\end{enumerate}
\end{lemma}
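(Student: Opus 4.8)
The plan is to argue entirely inside the explicit Loewy diagrams of the projectives $P_{\alpha_k}$ in Lemma~\ref{2.9}, using the following standard facts. A cell module $W_\mu$ with $\mu\in\Lambda_0$ has simple top $L_\mu$, satisfies $[W_\mu:L_\mu]=1$ (Lemma~\ref{2.4}), has all other composition factors $L_\nu$ with $\nu<\mu$, and is a quotient of $P_\mu$ (because $\Hom_A(P_\mu,W_\mu)\ne0$ and $W_\mu$ has simple top $L_\mu$); by Lemma~\ref{2.6}, $W_\mu$ occurs in a cell filtration of $P_\nu$ with multiplicity $[W_\mu:L_\nu]$, so $W_\mu$ is a subquotient of $P_\nu$ whenever $[W_\mu:L_\nu]\ne0$, and
\[
\dim P_\nu=\sum_\mu[W_\mu:L_\nu]\,\dim W_\mu,\qquad [P_\nu:L_{\nu'}]=\sum_\mu[W_\mu:L_\nu]\,[W_\mu:L_{\nu'}].
\]
From Lemma~\ref{2.9} I read off: $\operatorname{soc}P_{\alpha_k}=L_{\alpha_k}$; the composition factors of $P_{\alpha_k}$ lie among the three consecutive labels $\alpha_{k-1},\alpha_k,\alpha_{k+1}$; and $[P_{\alpha_k}:L_{\alpha_k}]=2$ unless $\alpha_k$ is one of the (at most two) edges meeting the exceptional vertex $j$, in which case the long arm at $j$ forces $[P_{\alpha_k}:L_{\alpha_k}]=m(j)+1$ and $[P_{\alpha_k}:L_{\alpha'}]=m(j)$ for the other edge $\alpha'$ at $j$.

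For (1): if $[W_\mu:L_{\alpha_i}]\ge2$ then $[P_\mu:L_{\alpha_i}]\ge2$, so by the above either $\alpha_\mu=\alpha_i$ — in which case $[W_{\alpha_i}:L_{\alpha_i}]=1$ by Lemma~\ref{2.4}, a contradiction — or $\alpha_\mu$ and $\alpha_i$ are the two edges at $j$. In the latter case, say $\alpha_\mu=\alpha_{j-1}$: since $W_{\alpha_{j-1}}$ is a quotient of $P_{\alpha_{j-1}}$ with simple top $L_{\alpha_{j-1}}$ and exactly one copy of $L_{\alpha_{j-1}}$, and the long arm at $j$ is the only part of $P_{\alpha_{j-1}}$ producing extra copies of $L_{\alpha_{j-1}}$, that whole arm must be killed; hence $W_{\alpha_{j-1}}$ is $L_{\alpha_{j-1}}$ or the length-two uniserial module on the non-exceptional side, so $\dim W_{\alpha_{j-1}}\le2$ and $[W_{\alpha_{j-1}}:L_{\alpha_i}]=0$, a contradiction. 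This proves (1); the dimension bound on $W_{\alpha_k}$ for $\alpha_k$ adjacent to $j$ is reused below.

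For (2) and (3): the boundary modules $\bigl(\begin{smallmatrix}L_{\alpha_1}&&L_{\alpha_2}\\&L_{\alpha_1}&\end{smallmatrix}\bigr)$ and $\bigl(\begin{smallmatrix}&L_{\alpha_n}&\\L_{\alpha_{i-1}}&&L_{\alpha_n}\end{smallmatrix}\bigr)$ have a repeated composition factor, so they cannot be a submodule resp.\ a quotient of a cell module by~(1). So take $N:=\bigl(\begin{smallmatrix}L_{\alpha_{i-1}}&&L_{\alpha_{i+1}}\\&L_{\alpha_i}&\end{smallmatrix}\bigr)\hookrightarrow W_\mu$, resp.\ $W_\mu\twoheadrightarrow M:=\bigl(\begin{smallmatrix}&L_{\alpha_i}&\\L_{\alpha_{i-1}}&&L_{\alpha_{i+1}}\end{smallmatrix}\bigr)$. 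Either way $L_{\alpha_{i-1}},L_{\alpha_i},L_{\alpha_{i+1}}$ are composition factors of $W_\mu$, and since $W_\mu$ is a subquotient of some $P_{\alpha_k}$ whose factors are three consecutive labels, $k=i$ and $W_\mu$ is a subquotient of $P_{\alpha_i}$. If $\mu\in\Lambda_0$ then its simple top forces $\mu=\alpha_i$. For (2) this gives $\rad N=L_{\alpha_i}\subseteq\rad W_{\alpha_i}$, so $[W_{\alpha_i}:L_{\alpha_i}]\ge2$, contradicting Lemma~\ref{2.4}. For (3): $\operatorname{soc}P_{\alpha_i}=L_{\alpha_i}$ does not occur in $\operatorname{soc}M$, so $P_{\alpha_i}\twoheadrightarrow W_{\alpha_i}\twoheadrightarrow M$ must kill $\operatorname{soc}P_{\alpha_i}$; if $\alpha_i$ meets $j$ then $\dim W_{\alpha_i}\le2<3=\dim M$ by~(1), a contradiction, and otherwise $\dim(P_{\alpha_i}/\operatorname{soc}P_{\alpha_i})=3$, the kernel of $P_{\alpha_i}\twoheadrightarrow M$ equals $\operatorname{soc}P_{\alpha_i}$, whence $\ker(P_{\alpha_i}\to W_{\alpha_i})\in\{0,\operatorname{soc}P_{\alpha_i}\}$, and as $0$ would give $[W_{\alpha_i}:L_{\alpha_i}]=2$ we get $W_{\alpha_i}\cong M$. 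Finally the case $\mu\notin\Lambda_0$ (with $\alpha_i$ non-exceptional) forces $W_\mu$ to be the ``diamond'' $P_{\alpha_i}$ (excluded by~(1)) or else $W_\mu\cong N$.

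Thus everything reduces to excluding $W_{\alpha_i}\cong M$ and the twin case $W_\mu\cong N$, and this is the step I expect to be the main obstacle. The idea: from $W_{\alpha_i}\cong M$ and Lemma~\ref{2.4} we get $\alpha_{i\pm1}<\alpha_i$, so $W_{\alpha_{i-1}}$ (all of whose factors are $\le\alpha_{i-1}$) contains no $L_{\alpha_i}$; inspecting the quotients of $P_{\alpha_{i-1}}$ this collapses $W_{\alpha_{i-1}}$ to $L_{\alpha_{i-1}}$ (or, if $\alpha_{i-1}$ is itself an edge at $j$, to one of the $\le2$-dimensional modules from~(1)). Then $L_{\alpha_{i-1}}$ occurs only in $W_{\alpha_{i-1}}$, in $W_{\alpha_i}$, and — when $\alpha_{i-1}$ meets $j$ — in exactly $m(j)-1$ further cell modules, none of which involves $L_{\alpha_{i-2}}$; feeding this into $[P_{\alpha_{i-2}}:L_{\alpha_{i-1}}]=\sum_\mu[W_\mu:L_{\alpha_{i-2}}][W_\mu:L_{\alpha_{i-1}}]$ makes the right-hand side strictly smaller than the value ($1$, resp.\ $m(j)$) dictated by Lemma~\ref{2.9}, a contradiction; if instead $\alpha_{i-1}$ is an end edge, the identity $\dim P_{\alpha_{i-1}}=\sum_\mu[W_\mu:L_{\alpha_{i-1}}]\dim W_\mu$ already yields $\dim W_{\alpha_{i-1}}\le0$. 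When the $\alpha_{i-1}$-side gives no leverage one runs the same argument on the $\alpha_{i+1}$-side, and the $W_\mu\cong N$ case is identical. The subtlest point is the count adjacent to the exceptional vertex, where one must verify that $[P_{\alpha_{i-2}}:L_{\alpha_{i-1}}]=m(j)$ genuinely exceeds the total contribution of the $\le m(j)-1$ available cell modules.
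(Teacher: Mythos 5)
Your reduction of (2) and (3) to ``exclude $W\cong M$ and $W\cong N$'' bypasses the one-line argument that actually settles these cases, and the counting argument you sketch to close the gap does not work. The decisive point, which you do not use, is that Lemma \ref{2.6} makes $W_\mu$ a cell factor of $P_{\alpha_k}$ for \emph{every} $k$ with $[W_\mu:L_{\alpha_k}]\neq 0$, not merely for some $k$. So if $W_\mu$ had all three of $L_{\alpha_{i-1}},L_{\alpha_i},L_{\alpha_{i+1}}$ among its composition factors, it would in particular be a subquotient of $P_{\alpha_{i-1}}$, whose composition factors lie in $\{L_{\alpha_{i-2}},L_{\alpha_{i-1}},L_{\alpha_i}\}$ --- immediately impossible, since $L_{\alpha_{i+1}}$ is not there. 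This disposes of the three-distinct-factor modules in (2) and (3) outright (your treatment of the repeated-factor boundary modules via (1) is fine), and it is exactly the paper's proof. You instead conclude only ``$k=i$'' and are then forced to exclude $W\cong M$ and $W\cong N$ by hand, a step you yourself flag as the main obstacle and leave as a sketch.

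That sketch has two genuine defects. First, in (1) you claim that for an edge $\alpha_k$ at the exceptional vertex $j$ the ``whole long arm must be killed,'' hence $\dim W_{\alpha_k}\le 2$. Only the radical of that arm must be killed (its top layer is $L_{\alpha'}$, not $L_{\alpha_k}$), so a priori $W_{\alpha_k}$ could also be the length-two uniserial towards $j$, or the three-dimensional module with top $L_{\alpha_k}$ and semisimple radical $L_{\alpha'}\oplus L_{\alpha''}$ --- which is precisely the module $M$ that (3) is trying to rule out; invoking ``$\dim W_{\alpha_i}\le 2<\dim M$'' in the exceptional case of (3) is therefore circular. Second, in the final count you assert that $W_{\alpha_{i-1}}$ ``collapses to $L_{\alpha_{i-1}}$,'' but a quotient of $P_{\alpha_{i-1}}$ containing no $L_{\alpha_i}$ can equally be $\bigl(\begin{smallmatrix}L_{\alpha_{i-1}}\\ L_{\alpha_{i-2}}\end{smallmatrix}\bigr)$, in which case your identity for $[P_{\alpha_{i-2}}:L_{\alpha_{i-1}}]$ yields no contradiction and you fall back on an unverified ``same argument on the other side.'' (A smaller issue of the same kind: your dichotomy in (1) presumes $\mu\in\Lambda_0$, so that $P_\mu$ exists; the case $\mu\notin\Lambda_0$ is not addressed.) All of this disappears if you replace the reduction by the Lemma \ref{2.6} argument above.
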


\begin{proof}
(1) If $d_{\lam\alpha_i}> 1$ for some $\alpha_i$, then by Lemma \ref{2.6},
$W_\lam^{\oplus d_{\lam\alpha_i}}$ is a subquotient of $P_{\alpha_i}$.
This is impossible for a Brauer tree algebra whose Brauer tree is a straight line.
We refer the reader to Lemma \ref{2.9} for structures of $P_{\alpha_i}$.

(2) If there exists a cell module $W$ containing a submodule $\left(
                                                 \begin{array}{ccc}
                                                   L_{\alpha_{i-1}} &  &  L_{\alpha_{i+1}} \\
                                                    &  L_{\alpha_{i}} &  \\
                                                 \end{array}
                                               \right)$ ($i>1$),
then we conclude from Lemma \ref{2.6} that $L_{\alpha_{i+1}}$ is a composition factor of $P_{\alpha_{i-1}}$.
However, according to the definition of a Brauer tree algebra, the possible composition factors of
$P_{\alpha_{i-1}}$ are $L_{\alpha_{i-2}}$, $L_{\alpha_{i-1}}$ and $L_{\alpha_{i}}$. It is a contradiction.
Moreover, if $j\neq 1$, then clearly $\left(
\begin{array}{ccc}
L_{\alpha_{1}} &  &  L_{\alpha_{2}} \\
&  L_{\alpha_{1}} &  \\
\end{array}
\right)$ is not a submodule of any cell module $W$. If $j=1$ and $\left(
\begin{array}{ccc}
L_{\alpha_{1}} &  &  L_{\alpha_{2}} \\
&  L_{\alpha_{1}} &  \\
\end{array}
\right)$ is a submodule of some cell module $W$, then the multiplicity of $L_{\alpha_1}$ in $P_{\alpha_2}$ is at least 2. This is impossible.

(3) is proved similarly as (2).
\end{proof}

\begin{corollary}\label{2.11}
Each cell module is of the form $L_{\alpha_1}$, $L_{\alpha_n}$,
$\left(
\begin{array}{c}
L_{\alpha_{i-1}} \\
L_{\alpha_{i}} \\
\end{array}
\right)$ or
$\left(
\begin{array}{c}
L_{\alpha_{i+1}} \\
L_{\alpha_{i}} \\
\end{array}
 \right)$.
\end{corollary}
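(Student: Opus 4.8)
The plan is to combine Lemma~\ref{2.10} with the explicit projectives of Lemma~\ref{2.9} and the cell-filtration numerics behind Lemma~\ref{2.6}. Fix a cell module $W=W_\lam$. By Lemma~\ref{2.10}(1) it is multiplicity free. First I would show its composition factors form a length-one interval: if $L_{\alpha_i}$ occurs in $W$, then by Lemma~\ref{2.6} the module $W$ is a section of a cell filtration of $P_{\alpha_i}$, so every composition factor of $W$ is a composition factor of $P_{\alpha_i}$, and by the structure of $P_{\alpha_i}$ in each case of Lemma~\ref{2.9} these lie among $L_{\alpha_{i-1}},L_{\alpha_i},L_{\alpha_{i+1}}$. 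Applying this to every $i$ with $L_{\alpha_i}$ a factor of $W$ and using multiplicity-freeness, the composition factors of $W$ are either a single $L_{\alpha_i}$ or a consecutive pair $L_{\alpha_i},L_{\alpha_{i+1}}$; in particular $W$ has Loewy length at most two.

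Next, for $\lam\in\Lambda_0$ I would use that $W_\lam$ has simple head $L_\lam$: the top section of the cell filtration of $P_\lam$ (obtained by applying $-\otimes_A P_\lam$ to the chain of Definition~\ref{2.5}) is $W_\mu^{\oplus d_{\mu\lam}}$ for the smallest index $\mu$ in the chain with $d_{\mu\lam}\ne 0$, and since $d_{\lam\lam}=1$ while Lemma~\ref{2.4} forces $\mu\ge\lam$ whenever $d_{\mu\lam}\ne 0$, this section is exactly $W_\lam$; hence $W_\lam$ is a quotient of $P_\lam$ and is indecomposable with head $L_\lam$. Combined with the first step this leaves only $W_\lam=L_{\alpha_\lam}$, $W_\lam=\bigl(\begin{smallmatrix}L_{\alpha_\lam}\\ L_{\alpha_{\lam-1}}\end{smallmatrix}\bigr)$, $W_\lam=\bigl(\begin{smallmatrix}L_{\alpha_\lam}\\ L_{\alpha_{\lam+1}}\end{smallmatrix}\bigr)$, or the ``$\Lambda$-shaped'' module with head $L_{\alpha_\lam}$ and socle $L_{\alpha_{\lam-1}}\oplus L_{\alpha_{\lam+1}}$; the last is ruled out by Lemma~\ref{2.10}(3), since such a module is a quotient of itself. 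All three surviving shapes are on the asserted list after re-indexing, except $W_\lam=L_{\alpha_\lam}$ when $1<\lam<n$.

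It remains to rule out an interior simple cell module $L_{\alpha_i}$ $(1<i<n)$, the decomposable possibility $L_{\alpha_i}\oplus L_{\alpha_{i+1}}$, and to treat cell modules with $\lam\notin\Lambda_0$. For the interior simple: $L_{\alpha_i}$ is a composition factor of both $P_{\alpha_{i-1}}$ and $P_{\alpha_{i+1}}$ (the two edges meeting $\alpha_i$), so by Lemma~\ref{2.6} (using $[P_{\alpha_{i\mp1}}:L_{\alpha_i}]=\sum_\mu d_{\mu\alpha_{i\mp1}}d_{\mu\alpha_i}\ne0$) together with the first step there are distinct two-dimensional cell modules with factor sets $\{L_{\alpha_{i-1}},L_{\alpha_i}\}$ and $\{L_{\alpha_i},L_{\alpha_{i+1}}\}$; if $W_{\alpha_i}=L_{\alpha_i}$ as well, then $[P_{\alpha_i}:L_{\alpha_i}]=\sum_\mu d_{\mu\alpha_i}^2\ge 3$, contradicting the value $2$ read off from Lemma~\ref{2.9} when $\alpha_i$ is not adjacent to the exceptional vertex. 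For $\lam\notin\Lambda_0$ I would run the dual argument: picking a composition factor $L_{\alpha_i}$, $W_\lam$ is a section of a cell filtration of $P_{\alpha_i}$, whose bottom section is a \emph{submodule} of $P_{\alpha_i}$ and hence has simple socle $L_{\alpha_i}$ since $A$ is self-injective; comparing with Lemma~\ref{2.10}(2) this bottom section is one of the three admissible shapes with $L_{\alpha_i}$ in the socle, and a short multiplicity count via Lemma~\ref{2.6} then forces $W_\lam$ itself into the list, in particular excluding $L_{\alpha_i}\oplus L_{\alpha_{i+1}}$.

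I expect the only genuinely delicate point to be the interior-simple exclusion when $\alpha_i$ \emph{is} adjacent to the exceptional vertex $v$ with $m(v)\ge 2$, where $[P_{\alpha_i}:L_{\alpha_i}]=m(v)+1$ is too large for the crude inequality above; there one must use the finer Cartan numbers $[P_{\alpha_i}:L_{\alpha_{i\pm1}}]$ from Lemma~\ref{2.9} together with the fact that, up to isomorphism, there are only finitely many modules with a prescribed pair of composition factors, in order to squeeze out the contradiction. Everything else is routine bookkeeping with Lemmas~\ref{2.4}, \ref{2.6}, \ref{2.9} and \ref{2.10}.
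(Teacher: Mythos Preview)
Your reduction to ``composition factors form a consecutive pair, multiplicity free'' is fine and close in spirit to the paper, but you overcomplicate the rest and leave a real gap. The paper's route is shorter: once Lemma~\ref{2.6} tells you that a cell module $W_\mu$ with $d_{\mu\alpha_i}\neq 0$ occurs as a section of a cell filtration of $P_{\alpha_i}$, you read off directly from the explicit Loewy pictures in Lemma~\ref{2.9}, together with the prohibitions of Lemma~\ref{2.10}, that such a section must be $L_{\alpha_i}$ or one of the two uniserial length-two modules. There is no need to split into $\lam\in\Lambda_0$ versus $\lam\notin\Lambda_0$, nor to invoke simple heads and socles separately.

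The genuine gap is your exclusion of an interior simple $L_{\alpha_i}$ when $\alpha_i$ is adjacent to the exceptional vertex. Your Cartan inequality $\sum_\mu d_{\mu\alpha_i}^2\ge 3$ collides with $[P_{\alpha_i}:L_{\alpha_i}]=m(v)+1$, which is no contradiction once $m(v)\ge 2$, and the proposed fix via ``finer Cartan numbers'' and ``finitely many modules with a prescribed pair of composition factors'' is not an argument. The clean fix---and essentially what the paper intends---is a local multiplicity count \emph{inside} $P_{\alpha_i}$ rather than a global Cartan bound: every cell factor of $P_{\alpha_i}$ contains $L_{\alpha_i}$ (Lemma~\ref{2.6}), so by the first step each such factor is either $L_{\alpha_i}$ or a length-two module pairing $L_{\alpha_i}$ with $L_{\alpha_{i-1}}$ or $L_{\alpha_{i+1}}$. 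Reading the composition multiplicities from Lemma~\ref{2.9} gives
\[
[P_{\alpha_i}:L_{\alpha_{i-1}}]+[P_{\alpha_i}:L_{\alpha_{i+1}}]=[P_{\alpha_i}:L_{\alpha_i}]
\]
in every case (including the exceptional-vertex case, where one side contributes $m(v)$ and the other $1$). Hence the length-two cell factors already account for all occurrences of $L_{\alpha_i}$ in $P_{\alpha_i}$, leaving no room for a cell factor isomorphic to $L_{\alpha_i}$ itself. This argument is uniform in $i$ and dispenses with your case analysis.
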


\begin{proof}
An immediate corollary of Lemma \ref{2.6} is that each cell module must be a cell factor of an indecomposable projective module.
Applying Lemma \ref{2.10} to the structures of indecomposable projective modules of a
cellular Brauer tree algebra $A$, we get that a cell module $W_\lam$ must be of the form $L_{\alpha_i}$,
$\left(
\begin{array}{c}
L_{\alpha_{i-1}} \\
L_{\alpha_{i}} \\
\end{array}
\right)$ or
$\left(
\begin{array}{c}
L_{\alpha_{i+1}} \\
L_{\alpha_{i}} \\
\end{array}
 \right).$
Hence we only need to prove each $L_{\alpha_i}$ with $1<i<n$ is not a cell module.
In fact, $L_{\alpha_i}$ with  $1<i<n$ is a composition factor of $P_{\alpha_{i-1}}$.
If $L_{\alpha_i}$ a cell module, then by Lemma \ref{2.6},
$[L_{\alpha_i}: L_{\alpha_{i-1}}]=[P_{\alpha_{i-1}}: L_{\alpha_i}]\geq 1$. It is a contradiction.
\end{proof}

\begin{definition}\label{2.12}
We call the cell modules $\left(
\begin{array}{c}
L_{\alpha_{i-1}} \\
L_{\alpha_{i}} \\
\end{array}
\right)$ are of type I and $\left(
\begin{array}{c}
L_{\alpha_{i+1}} \\
L_{\alpha_{i}} \\
\end{array}
\right)$ are of type II.
\end{definition}

Let us investigate the cell filtration of indecomposable projective modules $P_{\alpha_i}$. We begin with $P_{\alpha_1}$.

\begin{lemma}\label{2.13}
Let $A$ be a cellular Brauer tree algebra with cell datum $(\Lambda, M, C, \ast)$. We have
\begin{enumerate}
\item[{\rm (1)}]
If vertex 1 is not exceptional, then a cell filtration of $P_{\alpha_1}$ is either of the form
$L_{\alpha_1}\subset \left(
\begin{array}{c}
L_{\alpha_1} \\
L_{\alpha_2} \\
L_{\alpha_1} \\
\end{array}
\right)\subset \cdots \subset P_{\alpha_1}
$, or of the form
$\left(
\begin{array}{c}
L_{\alpha_2} \\
L_{\alpha_1} \\
\end{array}
\right)
\subset \left(
\begin{array}{c}
L_{\alpha_2} \\
L_{\alpha_1} \\
L_{\alpha_2} \\
L_{\alpha_1} \\
\end{array}
\right)\subset \cdots \subset P_{\alpha_1}.$
\item[{\rm (2)}] If vertex 1 is exceptional, then  cell filtration of $P_{\alpha_1}$ is either of the form
$L_{\alpha_1}\subset \left(
\begin{array}{c}
L_{\alpha_1} \\
L_{\alpha_1} \\
\end{array}
\right)\subset \cdots \subset P_{\alpha_1}
$, or of the form
$\left(
\begin{array}{c}
L_{\alpha_2} \\
L_{\alpha_1} \\
\end{array}
\right)
\subset \left(
\begin{array}{c}
L_{\alpha_1} \\
L_{\alpha_2} \\
L_{\alpha_1} \\
\end{array}
\right)\subset \cdots \subset P_{\alpha_1}.$
\end{enumerate}
\end{lemma}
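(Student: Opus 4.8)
The plan is to exploit the very rigid structure of $P_{\alpha_1}$ in a cellular Brauer tree algebra, combined with the classification of cell modules in Corollary \ref{2.11} and the multiplicity identity $[P_\lam:W_\mu]=[W_\mu:L_\lam]$ of Lemma \ref{2.6}. First I would record, from Lemma \ref{2.9}, the explicit Loewy structure of $P_{\alpha_1}$: when vertex $1$ is not exceptional it is uniserial with factors $L_{\alpha_1},L_{\alpha_2},\dots,L_{\alpha_2},L_{\alpha_1}$ reading down (the $L_{\alpha_2}$ appearing $m(2)$ times, bracketed by an $L_{\alpha_1}$ at top and bottom); when vertex $1$ is exceptional it is uniserial with factors $L_{\alpha_1},L_{\alpha_1},\dots,L_{\alpha_1},L_{\alpha_2},L_{\alpha_1}$ reading down (the top $L_{\alpha_1}$ repeated $m(1)$ times, then one $L_{\alpha_2}$, then one final $L_{\alpha_1}$). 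In either case $P_{\alpha_1}$ is \emph{uniserial}, so any submodule is determined by where it cuts the composition series, and any cell filtration is just a way of grouping consecutive composition factors into cell-module chunks.

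Next I would determine which cell modules can occur as the \emph{bottom} term of the filtration, i.e.\ as a submodule of $P_{\alpha_1}$. By Corollary \ref{2.11} a cell module is $L_{\alpha_1}$, $L_{\alpha_n}$, a type-I module $\bigl(\begin{smallmatrix}L_{\alpha_{i-1}}\\L_{\alpha_i}\end{smallmatrix}\bigr)$, or a type-II module $\bigl(\begin{smallmatrix}L_{\alpha_{i+1}}\\L_{\alpha_i}\end{smallmatrix}\bigr)$. The socle of $P_{\alpha_1}$ is $L_{\alpha_1}$ (uniserial, simple socle), so the bottom cell module has socle $L_{\alpha_1}$; hence it is either $L_{\alpha_1}$ itself or the type-II module $\bigl(\begin{smallmatrix}L_{\alpha_2}\\L_{\alpha_1}\end{smallmatrix}\bigr)$ (a type-I module with socle $L_{\alpha_1}$ would need an edge $\alpha_0$, which does not exist). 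This gives the dichotomy asserted in the lemma. I would then read off, in each of the two cases and each of the exceptional/non-exceptional subcases, the second factor from the top of the uniserial series sitting above the chosen socle layer, and match it against the list in Corollary \ref{2.11} to see that the next filtration term is forced to be exactly the length-three (resp.\ length-two, in the exceptional type-I case) module displayed in the statement. The identity $[P_{\alpha_1}:W_\mu]=[W_\mu:L_{\alpha_1}]$ from Lemma \ref{2.6} controls how many times each cell module can reappear higher up, and the fact that only $\alpha_1$ and $\alpha_2$ occur as factors of $P_{\alpha_1}$ pins the remaining terms to alternating type-I / type-II $\alpha_1$--$\alpha_2$ cell modules, so the filtration is forced to continue $\subset\cdots\subset P_{\alpha_1}$ in the claimed pattern.

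The main obstacle, I expect, is not any single step but the bookkeeping: one has to be careful that a ``cell filtration'' here means a filtration whose subquotients are cell modules \emph{read in an order compatible with the cellular chain} $J_{\mu_j}$, and to check that the uniserial composition series of $P_{\alpha_1}$ can indeed be blocked into such cell modules in only the two ways listed — in particular ruling out a filtration that starts $L_{\alpha_1}\subset L_{\alpha_1}\subset\cdots$ by a different splitting of the top layers, and ruling out type-I modules appearing at the bottom. These exclusions all come down to the constraints in Lemma \ref{2.10} (no wide submodules/quotients) and Corollary \ref{2.11}, applied to the short list of simples ($L_{\alpha_1},L_{\alpha_2}$) that actually appear in $P_{\alpha_1}$, so the argument is essentially a finite case check once the uniserial structure is in hand.
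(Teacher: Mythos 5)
Your overall strategy (use Corollary \ref{2.11} to force every $L_{\alpha_2}$ in $P_{\alpha_1}$ to combine with an adjacent $L_{\alpha_1}$ into a two-layer cell module, and derive the dichotomy from whether the socle or the head of $P_{\alpha_1}$ is itself a cell factor) is exactly the paper's argument. However, the structure of $P_{\alpha_1}$ that you record from Lemma \ref{2.9} is wrong in both cases, and this is where the proof would break. In the non-exceptional case the radical of $P_{\alpha_1}$ is the uniserial module with factors $L_{\alpha_2},L_{\alpha_1}$ repeated $m(2)$ times, so the composition series of $P_{\alpha_1}$ \emph{alternates} $L_{\alpha_1},L_{\alpha_2},L_{\alpha_1},L_{\alpha_2},\dots,L_{\alpha_1}$ (length $2m(2)+1$); it is not ``one $L_{\alpha_1}$, then $m(2)$ consecutive copies of $L_{\alpha_2}$, then one $L_{\alpha_1}$''. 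With your version, two adjacent $L_{\alpha_2}$'s could not each be absorbed into a cell module having at most one $L_{\alpha_2}$ and one $L_{\alpha_1}$, so the filtrations asserted in the lemma would be impossible; the correct alternating shape is what makes the type-I/type-II grouping work.

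The second error is more structural: when vertex $1$ is exceptional with $m(1)\ge 2$, $P_{\alpha_1}$ is \emph{not} uniserial. By the definition of a Brauer tree algebra, $\rad P_{\alpha_1}=U_{\alpha_1}+V_{\alpha_1}$ is the sum of two uniserial strands (one carrying $m(1)$ copies of $L_{\alpha_1}$, the other carrying $L_{\alpha_2},L_{\alpha_1}$) meeting in the simple socle, so $\rad P_{\alpha_1}/\rad^2P_{\alpha_1}\cong L_{\alpha_1}\oplus L_{\alpha_2}$. Hence your reduction ``any cell filtration is just a way of grouping consecutive composition factors of a uniserial module'' does not apply in case (2), and the finite case check you describe cannot be run as stated. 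The paper's proof of (2) avoids this by arguing directly: the unique $L_{\alpha_2}$ must combine with either the head or the socle of $P_{\alpha_1}$ to form the single two-layer cell factor, and Corollary \ref{2.11} then forces every remaining $L_{\alpha_1}$ to be a cell factor on its own. Your argument can be repaired along these lines, but as written the structural premises it rests on are false.
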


\begin{proof}
It follows from Corollary \ref{2.11} that each $L_{\alpha_2}$ is either a submodule of a type I cell module
 or a quotient module of a type II cell module  defined in Definition \ref{2.12}.

(1)  If vertex 1 is not exceptional, then $P_{\alpha_1}$ is uniserial and $\rad P_{\alpha_1}$
has composition factors $(L_{\alpha_2}, L_{\alpha_1})$, $m(2)$ times from top to bottom.
By Corollary \ref{2.11}, each $L_{\alpha_2}$ has to combine with an $L_{\alpha_1}$ to form a cell module.
This forces the top or the socle of $P_{\alpha_1}$ to be a cell module.
If the socle of $P_{\alpha_1}$ is a cell module, then all the other $m(2)$ cell modules have to be of type I,
and if the top is a cell module, then all the others have to be of type II.

(2) When vertex 1 is exceptional, only one $L_{\alpha_2}$ appears in $P_{\alpha_1}$,
and this $L_{\alpha_2}$ combining with the top or the socle of $P_{\alpha_1}$ forms a cell module.
By Corollary \ref{2.11} again, each $L_{\alpha_1}$ left is a cell module.
\end{proof}

Interestingly, for a cellular Brauer tree algebra, all the cell factors of $P_{\alpha_i}$ will be determined once the type of cell factors of $P_{\alpha_1}$ is fixed.
We describe it as a lemma as follows.

\begin{lemma}\label{2.14}
Let $A$ be a cellular Brauer tree algebra with cell datum $(\Lambda, M, C, \ast)$.
Then all cell modules with two composition factors are of the same type.
\end{lemma}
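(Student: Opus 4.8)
The plan is to leverage Lemma \ref{2.13} as a base case and propagate the type along the straight-line Brauer tree by an inductive argument on the index $i$ of the edge $\alpha_i$. First I would fix the type of the cell factors of $P_{\alpha_1}$; by Lemma \ref{2.13} there are exactly two possibilities, and without loss of generality assume they are of type II (the type I case being handled by the analogous argument, or by applying the anti-automorphism $\ast$, which interchanges type I and type II cell modules). So suppose $\left(\begin{smallmatrix}L_{\alpha_2}\\L_{\alpha_1}\end{smallmatrix}\right)$ occurs as a cell factor of $P_{\alpha_1}$. I claim that then every cell module with two composition factors is of type II, i.e.\ of the shape $\left(\begin{smallmatrix}L_{\alpha_{i+1}}\\L_{\alpha_i}\end{smallmatrix}\right)$, and I would prove this by induction on $i$.

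For the inductive step, suppose all cell modules with two composition factors involving only $L_{\alpha_1},\dots,L_{\alpha_i}$ are of type II. Consider the indecomposable projective $P_{\alpha_i}$: by Lemma \ref{2.9} its structure (in the non-exceptional generic case) is $\left(\begin{smallmatrix}&L_{\alpha_i}&\\L_{\alpha_{i-1}}&&L_{\alpha_{i+1}}\\&L_{\alpha_i}&\end{smallmatrix}\right)$-type, possibly with the $L_{\alpha_i}$--$L_{\alpha_{i-1}}$ strand thickened if vertex $i$ is exceptional. By Lemma \ref{2.6}, $P_{\alpha_i}$ has a cell filtration, and by Corollary \ref{2.11} each cell factor is either a simple $L_{\alpha_1}$ or $L_{\alpha_n}$, or a two-composition-factor module of type I or type II. The composition factor $L_{\alpha_{i-1}}$ appearing in $\rad P_{\alpha_i}$ must, by Corollary \ref{2.11} and the Brauer-tree structure, be paired with an $L_{\alpha_i}$ to form a cell module (it cannot be a cell module by itself when $1<i-1<n$, and it cannot combine with $L_{\alpha_{i-2}}$ since no such cell factor of $P_{\alpha_i}$ exists). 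By the induction hypothesis the two-composition-factor cell module $\left(\begin{smallmatrix}L_{\alpha_i}\\L_{\alpha_{i-1}}\end{smallmatrix}\right)$ with factors among $L_{\alpha_1},\dots,L_{\alpha_i}$ — if it existed — would be of type I, i.e.\ of shape $\left(\begin{smallmatrix}L_{\alpha_{i-1}}\\L_{\alpha_i}\end{smallmatrix}\right)$; but a type I module $\left(\begin{smallmatrix}L_{\alpha_{i-1}}\\L_{\alpha_i}\end{smallmatrix}\right)$ would then be a cell module, and by Lemma \ref{2.6} $L_{\alpha_{i-1}}$ would be a composition factor of $P_{\alpha_i}$ with $[W:L_{\alpha_i}]\cdot(\text{something})$ forcing $[P_{\alpha_i}:W]=[W:L_{\alpha_i}]=1$, which is compatible; the real contradiction comes from examining $P_{\alpha_{i-1}}$ instead. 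So I would argue: were a type I cell module $\left(\begin{smallmatrix}L_{\alpha_{i-1}}\\L_{\alpha_i}\end{smallmatrix}\right)$ present, then by Lemma \ref{2.6} it is a cell factor of $P_{\alpha_i}$, hence $L_{\alpha_{i-1}}$ sits below $L_{\alpha_i}$ in some subquotient of $P_{\alpha_i}$; comparing with the genuine radical layer structure of $P_{\alpha_i}$ from Lemma \ref{2.9}, where $L_{\alpha_{i-1}}$ lies strictly above the bottom $L_{\alpha_i}$, forces the type II pairing $\left(\begin{smallmatrix}L_{\alpha_i}\\L_{\alpha_{i-1}}\end{smallmatrix}\right)$ to be impossible simultaneously — but we have already shown via the induction hypothesis that the pairing must be type II. Thus the $L_{\alpha_{i+1}}$ occurring in $P_{\alpha_i}$ must be the one paired, and Corollary \ref{2.11} leaves only the type II option $\left(\begin{smallmatrix}L_{\alpha_{i+1}}\\L_{\alpha_i}\end{smallmatrix}\right)$; hence the cell module with factors $L_{\alpha_{i+1}}, L_{\alpha_i}$ is of type II, completing the induction. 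The exceptional-vertex case and the boundary cases $i=1,n$ are handled by the same pairing analysis using parts (2) and (3) of Lemma \ref{2.10}.

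The main obstacle I anticipate is making the pairing argument airtight at the point where one must exclude a \emph{mixed} situation — namely, a priori both $\left(\begin{smallmatrix}L_{\alpha_{i-1}}\\L_{\alpha_i}\end{smallmatrix}\right)$ (type I) and $\left(\begin{smallmatrix}L_{\alpha_{i+1}}\\L_{\alpha_i}\end{smallmatrix}\right)$ (type II) could occur as cell modules for different $i$'s, and one needs the cell \emph{filtration} of each $P_{\alpha_i}$ to be tight enough that the $L_{\alpha_{i-1}}$ and $L_{\alpha_{i+1}}$ composition factors cannot both be "used up" in type-I-then-type-II fashion within the same projective. The key quantitative input here is Lemma \ref{2.6}, which pins down $[P_{\alpha_i}:W_\mu]=[W_\mu:L_{\alpha_i}]$, combined with the fact that in a straight-line Brauer tree algebra each $L_{\alpha_j}$ appears with multiplicity exactly one in $\rad P_{\alpha_i}/\operatorname{soc}$ for $j=i\pm1$ (and the multiplicity of $L_{\alpha_i}$ itself is $2$, or more at the exceptional vertex). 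I would make this precise by a careful count of composition-factor multiplicities on both sides of a putative cell filtration, exactly as in the proof of Corollary \ref{2.11}, and this bookkeeping — rather than any conceptual subtlety — is where the care is needed.
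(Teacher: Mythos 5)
Your overall strategy is the same as the paper's: anchor the type at $P_{\alpha_1}$ via Lemma \ref{2.13} and propagate it edge by edge along the straight line using the cell filtrations of the $P_{\alpha_i}$. However, the decisive step of your induction has a genuine gap. What you must rule out is that the cell module with composition factors $L_{\alpha_i},L_{\alpha_{i+1}}$ has the type I shape $\left(\begin{smallmatrix}L_{\alpha_{i}}\\L_{\alpha_{i+1}}\end{smallmatrix}\right)$ rather than the type II shape $\left(\begin{smallmatrix}L_{\alpha_{i+1}}\\L_{\alpha_{i}}\end{smallmatrix}\right)$. Your closing claim that ``Corollary \ref{2.11} leaves only the type II option'' is false: Corollary \ref{2.11} permits both orientations, so it cannot by itself decide the type. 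The passage intended to exclude type I is also internally inconsistent: you say that if $\left(\begin{smallmatrix}L_{\alpha_{i-1}}\\L_{\alpha_i}\end{smallmatrix}\right)$ were a cell factor of $P_{\alpha_i}$ then ``$L_{\alpha_{i-1}}$ sits below $L_{\alpha_i}$,'' but in that module $L_{\alpha_{i-1}}$ is the top and $L_{\alpha_i}$ the socle; and the sentence ending ``\dots forces the type II pairing to be impossible simultaneously --- but we have already shown via the induction hypothesis that the pairing must be type II'' derives a contradiction from the hypothesis you are assuming, not from the negation of what you want to prove. Moreover, since $L_{\alpha_{i-1}}$ lies in the middle radical layer of $P_{\alpha_i}$, it is simultaneously above one copy of $L_{\alpha_i}$ and below the other, so no conclusion follows from comparing with ``the bottom $L_{\alpha_i}$'' alone.

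The missing idea is a top/socle argument identifying which cell factor of $P_{\alpha_i}$ is the submodule and which is the quotient in its two-step cell filtration $0\subset U\subset P_{\alpha_i}$. The quotient $P_{\alpha_i}/U$ has simple top $L_{\alpha_i}=\Top P_{\alpha_i}$, and the submodule $U$ has simple socle $L_{\alpha_i}=\operatorname{soc}P_{\alpha_i}$. By your induction hypothesis the cell factor with composition factors $L_{\alpha_{i-1}},L_{\alpha_i}$ is $\left(\begin{smallmatrix}L_{\alpha_{i}}\\L_{\alpha_{i-1}}\end{smallmatrix}\right)$, whose socle is $L_{\alpha_{i-1}}$; hence it cannot be $U$ and must be the quotient, forcing the other cell factor (the one with factors $L_{\alpha_i},L_{\alpha_{i+1}}$) to be $U$, with socle $L_{\alpha_i}$, i.e.\ of type II. This is exactly what the paper's phrase ``the structure of $P_{\alpha_2}$ forces $\Top P_{\alpha_2}$ to be a quotient module of a cell module $\left(\begin{smallmatrix}L_{\alpha_2}\\L_{\alpha_3}\end{smallmatrix}\right)$'' encodes in the type I case. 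A side remark: your claim that the anti-automorphism $\ast$ interchanges type I and type II cell modules is unjustified (the $\ast$-dual of a cell module is not in general a cell module of the opposite type), but this is inessential since the two cases of Lemma \ref{2.13} can simply be treated by the same symmetric argument, as the paper does.
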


\begin{proof}
Assume that the cell factors with two composition factors of $P_{\alpha_1}$ are of type I.
By Lemma \ref{2.6} all of these cell modules are cell factors of $P_{\alpha_2}$.
Then the structure of $P_{\alpha_2}$ forces $\Top P_{\alpha_2}$ to be a quotient module of a cell module
$\left(
\begin{array}{c}
L_{\alpha_2} \\
L_{\alpha_3} \\
\end{array}
\right)$, which is of type I. Hence all cell factors with two composition factors of $P_{\alpha_2}$ are of type I.
Continuing this analysis for $P_{\alpha_i}$ one by one, one can deduce
all cell modules with two composition factor are of type I.
A similar analysis is efficient too when the cell factors with two composition factors of $P_{\alpha_1}$ are of type II.
\end{proof}

Based on the above preparation, we can give the following characterization on the poset of a cellular Brauer tree algebra.

\begin{lemma}\label{2.15}
Let $A$ be a cellular Brauer tree algebra with the poset $\Lambda$. Then $\Lambda$ is a totally ordered set.
\end{lemma}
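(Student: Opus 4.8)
\emph{Proof plan.} By Lemma~\ref{2.14} we may assume, after reversing the Brauer tree if necessary, that every cell module with two composition factors has type~I, i.e.\ is of the form $\binom{L_{\alpha_{i-1}}}{L_{\alpha_i}}$ with $2\le i\le n$; by Corollary~\ref{2.11} the only other cell modules that can occur are $L_{\alpha_1}$ and $L_{\alpha_n}$. The plan is to identify the cell module $W_\lambda$ attached to each $\lambda\in\Lambda$, to describe $\Lambda$ as a set, and then to read off from Lemma~\ref{2.4} (which gives $\lambda\ge\mu$ whenever $d_{\lambda\mu}\neq 0$) enough relations to see that $\Lambda$ is already a chain. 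The facts I would keep in play: $\check\Lambda$ is a linear extension of $\Lambda$; $A$ is self-injective with $\operatorname{soc}(A)=\bigoplus_i L_{\alpha_i}$, each summand of multiplicity one, by the explicit module structures in Lemma~\ref{2.9}; and Lemma~\ref{2.6}, which says that the cell factors of $P_{\alpha_j}$ are exactly the cell modules having $L_{\alpha_j}$ as a composition factor.

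\smallskip

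The first step is to pin down $W_{\alpha_i}$ for $\alpha_i\in\Lambda_0$. As $W_{\alpha_i}$ has simple head $L_{\alpha_i}$, Corollary~\ref{2.11} leaves only $W_{\alpha_i}=L_{\alpha_i}$ or $W_{\alpha_i}=\binom{L_{\alpha_i}}{L_{\alpha_{i+1}}}$; the first is impossible for $1<i<n$ (there $L_{\alpha_i}$ is not a cell module) and the second does not exist for $i=n$, so $W_{\alpha_i}=\binom{L_{\alpha_i}}{L_{\alpha_{i+1}}}$ for $1<i\le n-1$ and $W_{\alpha_n}=L_{\alpha_n}$. The delicate point is $i=1$, where I claim $W_{\alpha_1}=L_{\alpha_1}$ cannot occur. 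If it did, then the composition factor $L_{\alpha_2}$ of $P_{\alpha_1}$ could only be supplied by the cell module $\binom{L_{\alpha_1}}{L_{\alpha_2}}$ (by Lemma~\ref{2.6}, $L_{\alpha_2}$ is not itself a cell factor of $P_{\alpha_1}$), so $\binom{L_{\alpha_1}}{L_{\alpha_2}}=W_\mu$ for some $\mu\notin\Lambda_0$, and $d_{\mu\alpha_1}\neq 0$ would give $\mu>\alpha_1$ in $\Lambda$, hence in $\check\Lambda$. But $P_{\alpha_1}$ has simple socle $L_{\alpha_1}$, so $L_{\alpha_1}$ is the unique cell module embedding into $P_{\alpha_1}$; the bottom term of a cell filtration of $P_{\alpha_1}$ is therefore a single copy of $L_{\alpha_1}$, indexed by the $\check\Lambda$-largest cell factor of $P_{\alpha_1}$, which in the generic case (the exceptional vertex, if any, is not an endpoint of the line of multiplicity $\ge 3$) is $\alpha_1$ itself. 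This would give $\alpha_1>_{\check\Lambda}\mu$, contradicting $\mu>_{\check\Lambda}\alpha_1$. Hence $W_{\alpha_1}=\binom{L_{\alpha_1}}{L_{\alpha_2}}$, and then Lemma~\ref{2.4} (since $L_{\alpha_{i+1}}$ is a composition factor of $W_{\alpha_i}$) yields $\alpha_1>\alpha_2>\cdots>\alpha_n$ in $\Lambda$.

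\smallskip

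Next I would locate $\Lambda\setminus\Lambda_0$. In the generic case, a dimension count using Lemma~\ref{2.6} and the composition series of the $P_{\alpha_j}$ (Lemma~\ref{2.9}) shows that each cell module $\binom{L_{\alpha_{i-1}}}{L_{\alpha_i}}$ and $L_{\alpha_n}$ is realised by exactly one index, all already among $\{\alpha_1,\dots,\alpha_n\}$, while $L_{\alpha_1}$ is realised by exactly one further index $\mu_0\notin\Lambda_0$; thus $|\Lambda|=n+1$. Since $d_{\mu_0\alpha_1}\neq0$ gives $\mu_0>\alpha_1$, the poset $\Lambda=\{\mu_0,\alpha_1,\dots,\alpha_n\}$ contains the chain $\mu_0>\alpha_1>\cdots>\alpha_n$, and as this chain exhausts $\Lambda$ it must equal $\Lambda$. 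Hence $\Lambda$ is totally ordered.

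\smallskip

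The hard part is the non-generic case, when the exceptional vertex is an endpoint of the line with multiplicity $\ge 3$: then the count above produces several extra indices, all carrying $L_{\alpha_1}$ (or $L_{\alpha_n}$), and Lemma~\ref{2.4} on its own relates such an index only to $\alpha_1$ (resp.\ $\alpha_n$). I would dispatch this by induction on $|\Lambda|$, peeling off a $\check\Lambda$-maximal element $\mu^\ast$: then $J_{\mu^\ast}$ is a genuine two-sided ideal, isomorphic as a left module to $W_{\mu^\ast}^{\,\oplus\dim_K W_{\mu^\ast}}$, whose socle must embed into $\operatorname{soc}(A)=\bigoplus_i L_{\alpha_i}$; since a cell module here is indecomposable with simple socle, this forces $\dim_K W_{\mu^\ast}=1$, so $W_{\mu^\ast}\in\{L_{\alpha_1},L_{\alpha_n}\}$ and $J_{\mu^\ast}=\operatorname{soc}(P_{\alpha_k})$ for $k\in\{1,n\}$. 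The quotient $\bar A=A/J_{\mu^\ast}$ is again cellular, with poset $\Lambda\setminus\{\mu^\ast\}$ and with cell modules still among $L_{\alpha_1},L_{\alpha_n},\binom{L_{\alpha_{i-1}}}{L_{\alpha_i}}$; rerunning the socle estimate inside $\bar A$ keeps each successively removed element maximal there, and with a little more work (using $\operatorname{soc}(P_{\alpha_k})=\operatorname{soc}(A)\cap P_{\alpha_k}$) one sees that every such extra index lies above the whole chain $\alpha_1>\cdots>\alpha_n$, so that induction delivers that $\Lambda$ is a chain. I expect this bookkeeping to be the only real obstacle.
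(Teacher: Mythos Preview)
Your case division is wrong, and this is not a cosmetic issue: the case you call ``generic'' (exceptional vertex, if any, not an endpoint with $m\ge 3$) \emph{includes} the situation of an exceptional vertex in the interior of the line with arbitrary multiplicity, and there your dimension count $|\Lambda|=n+1$ is false. Example~\ref{cellchain} in the paper has $n=4$, exceptional vertex at position $3$ with $m(3)=3$, and $|\Lambda|=7$: the two-term cell module with composition factors $L_{\alpha_2},L_{\alpha_3}$ is realised by \emph{three} distinct indices of $\Lambda$, only one of which lies in $\Lambda_0$. So your ``generic'' bookkeeping misses exactly the phenomenon that makes the lemma non-trivial.

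More seriously, your overall strategy cannot work as stated. Once two indices $\mu_1,\mu_2\in\Lambda\setminus\Lambda_0$ carry isomorphic cell modules, Lemma~\ref{2.4} gives you only $\mu_1,\mu_2>\alpha_i$ for the $\alpha_i$ appearing as composition factors; it provides no relation between $\mu_1$ and $\mu_2$ themselves, so you cannot conclude they are comparable from decomposition numbers alone. The paper's proof avoids this entirely: it never tries to enumerate $\Lambda$. Instead it fixes $\alpha_i\in\Lambda_0$, lets $\Lambda_{\alpha_i}$ be the set of indices of cell factors of $P_{\alpha_i}$, and uses Lemma~\ref{2.7}: if $\mu\parallel\nu$ in $\Lambda_{\alpha_i}$ then $W_\mu\oplus W_\nu$ would appear as a subquotient of $P_{\alpha_i}$, which contradicts the explicit uniserial shape of the cell filtration pinned down in Lemmas~\ref{2.13} and~\ref{2.14}. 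This shows each $\Lambda_{\alpha_i}$ is a chain with minimum $\alpha_i$; then the relations $\alpha_i\ge\alpha_{i+1}$ (from Lemma~\ref{2.4} applied to the type~I factor $\binom{L_{\alpha_i}}{L_{\alpha_{i+1}}}$) glue these chains into a single chain, since $\Lambda=\bigcup_i\Lambda_{\alpha_i}$. Your inductive peeling argument in the ``non-generic'' case is vague (after quotienting by the bottom cell ideal you no longer have a Brauer tree algebra, so you cannot simply ``rerun'' the argument), and your derivation of $W_{\alpha_1}\neq L_{\alpha_1}$ is both circular---it relies on the uniqueness of the index carrying $L_{\alpha_1}$, which you have not yet proved---and unnecessary, since Lemma~\ref{2.13} already gives the shape of the cell filtration of $P_{\alpha_1}$ directly. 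The missing ingredient throughout is Lemma~\ref{2.7}.
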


\begin{proof}
Denote the set of indexes of cell factors of $P_{\alpha_i}$ by $\Lambda_{\alpha_i}$. We claim that $\Lambda_{\alpha_i}$
is a totally ordered set.
In fact, let $W_{\alpha_i}$ be the cell module with $\Top W_{\alpha_i}=\Top P_{\alpha_i}$.
Then according to Lemmas \ref{2.11}, \ref{2.13} and \ref{2.14}, $\Top W_{\alpha_i}$
is a composition factor of each cell factor of $P_{\alpha_i}$.
By Lemma \ref{2.4} this implies that $\alpha_i$ is minimal in $\Lambda_{\alpha_i}$.
Moreover, assume that there exist $\mu, \nu\in \Lambda_{\alpha_i}$ with $\mu\, ||\,\nu$.
Then $W_{\mu}\oplus W_{\nu}$ is a subquotient of $P_{\alpha_i}$ due to Lemma \ref{2.7}.
It is in conflict with the structures of cell factors of $P_{\alpha_i}$ determined by Lemma \ref{2.13} and \ref{2.14}.

On the other hand, all cell modules with two composition factors are of the same type by Lemma \ref{2.14}.
If a type I cell module appears, Lemma \ref{2.4} implies that $\alpha_i\geq \alpha_{i+1}$ for $1\leq i< n$.
Combine it with the above claim makes $\Lambda$ to be totally ordered and the minimal element of $\Lambda$ is $\alpha_n$.
For the case of type II, we can deduce from Lemma \ref{2.4} that $\alpha_{i+1}\geq \alpha_i$,
and this also forces $\Lambda$ to be a totally ordered set, in which $\alpha_1$ is the minimal element.
\end{proof}

Now we are ready to give the main result of this section.

\begin{proposition}\label{2.16}
Let $A$ be an indecomposable self-injective cellular $K$-algebra with cell datum $(\Lambda, M, C, \ast)$ and let
$K$ be an algebraically closed field with $char K\neq 2$.
If $A$ has finite type, then $\Lambda$ must be a totally ordered set.
\end{proposition}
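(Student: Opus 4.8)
The plan is to reduce the general self-injective cellular algebra to the Brauer-tree case already handled in Lemma~\ref{2.15}. The key structural input is the classification of indecomposable self-injective algebras of finite representation type: over an algebraically closed field such an algebra is either a Nakayama algebra or (Morita equivalent to) a Brauer tree algebra, by the theory of representation-finite self-injective algebras (via the Brauer--Thrall theorems and the work on self-injective algebras of finite type, e.g.\ the classification recalled in \cite{ARS}); more precisely, a connected, non-simple, representation-finite self-injective algebra that is not of Nakayama type is a Brauer tree algebra, and Nakayama algebras are themselves the Brauer tree algebras attached to a single loop or a star, hence are covered by the straight-line case $T_1$ in Lemma~\ref{2.9}(1) together with the already-established $T_2,T_3$ analysis. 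So first I would invoke this classification to conclude that $A$, being indecomposable (hence connected), self-injective, and of finite type, is Morita equivalent to a Brauer tree algebra.

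Next I would address the subtlety that ``cellular'' is not a Morita-invariant notion in the naive sense, but the poset $\Lambda$ and the decomposition matrix behave well enough: since $A$ is itself cellular with cell datum $(\Lambda,M,C,\ast)$ and is a Brauer tree algebra, Lemma~\ref{2.8} forces the underlying Brauer tree to be a straight line. This is the crucial point — one cannot apply the earlier lemmas to an arbitrary Brauer tree, only to a line — and it is exactly what Lemma~\ref{2.8} of Koenig--Xi supplies. Having pinned down that $A$ is a \emph{cellular} Brauer tree algebra whose tree is a line, the hypotheses of Lemmas~\ref{2.9}--\ref{2.15} are met verbatim, and Lemma~\ref{2.15} yields that $\Lambda$ is a totally ordered set.

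The remaining case to dispose of is when $A$ is simple or, more generally, when the Brauer-tree description degenerates (e.g.\ $A$ local, corresponding to a single edge or a single loop); here $\Lambda_0$ is a singleton, every cell module has a single composition factor type, and the poset $\Lambda$ — being a chain refinement underlying a uniserial situation — is trivially totally ordered, which one checks directly from Corollary~\ref{2.11} applied to the tree $T_1$. Assembling these pieces: $A$ finite type $\Rightarrow$ $A$ Morita equivalent to a Brauer tree algebra $\Rightarrow$ (by cellularity and Lemma~\ref{2.8}) the tree is a line $\Rightarrow$ (by Lemma~\ref{2.15}) $\Lambda$ is totally ordered.

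I expect the main obstacle to be the Morita-invariance issue: the classification theorem gives a Brauer tree algebra only \emph{up to Morita equivalence}, whereas Lemma~\ref{2.8} is stated for a Brauer tree algebra as such, and one must argue that the specific cellular algebra $A$ is genuinely a Brauer tree algebra (with its given $\ast$ and cell chain) so that Lemma~\ref{2.8} applies and forces the line shape. The cleanest route is to observe that every algebra Morita equivalent to a basic Brauer tree algebra is again a Brauer tree algebra for the same tree (the structure of projectives is read off the tree and is Morita invariant), and that $A$, being cellular, is in particular such an algebra; then cellularity of \emph{this} Brauer tree algebra together with Lemma~\ref{2.8} does the rest. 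Once that bridge is built, everything else is a direct citation of the preceding lemmas.
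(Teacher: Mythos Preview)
Your overall architecture---reduce to Brauer tree algebras, then invoke Lemma~\ref{2.8} and Lemma~\ref{2.15}---matches the paper's. However, your first reduction step contains a genuine error. The claim that a connected, non-simple, representation-finite self-injective algebra (over an algebraically closed field) that is not Nakayama must be a Brauer tree algebra is false. Riedtmann's classification of representation-finite self-injective algebras yields stable types $A_n$, $D_n$, $E_6$, $E_7$, $E_8$; only the type~$A$ algebras are Brauer tree algebras. The $D$ and $E$ types produce genuinely different self-injective algebras of finite type, and nothing in the general classification you cite (Brauer--Thrall, \cite{ARS}) excludes them.

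What excludes them is \emph{cellularity}. This is precisely the content of \cite[Theorem~6.8]{AKMW}, which the paper invokes: a self-injective cellular algebra of finite type is Morita equivalent to a cellular Brauer tree algebra (hence, by Lemma~\ref{2.8}, one whose tree is a line). That theorem does real work---it rules out the $D,E$ families by exploiting the cell structure---and cannot be replaced by the bare classification of self-injective algebras. The paper then pairs this with \cite[Theorem~8.1]{KX5}, which (under $\operatorname{char}K\neq 2$) guarantees that the cell structure, and in particular the poset $\Lambda$, transfers across the Morita equivalence; this is the precise tool for the Morita-invariance issue you correctly flagged. Your ad hoc argument that ``$A$ is itself a Brauer tree algebra'' is fine as far as it goes, but you still need \cite{AKMW} (or an equivalent argument using cellularity) to land among Brauer trees in the first place.
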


\begin{proof}
By \cite[Theorem 6.8]{AKMW} and \cite[Theorem 8.1]{KX5},
if $A$ is an indecomposable self-injective cellular $K$-algebra of finite type, then
$A$ is Morita equivalent to a cellular Brauer tree algebra.
The proposition then follows from Lemma \ref{2.15}.
\end{proof}

Recall that in Lemma \ref{2.9} we illustrate all indecomposable projective modules for a cellular Brauer tree algebra 
and then  determine their cell filtrations in Lemma \ref{2.13}. In Lemma \ref{2.15}, we on one hand 
determine the order of the index set of cell factors of $P_{\alpha_i}$ and on the hand determine the order for these index sets.
To sum up, one can determine $\Lambda$ completely
for an indecomposable self-injective cellular $K$-algebra of finite type.

\begin{example}\label{cellchain}
Let $A$ be a Brauer tree algebra for the following Brauer tree:

\begin{figure}[H]
	\[
	\xy
    (-2,0)*{\scriptstyle\circ}="2s";
    (-2,-2)*{\scriptstyle 1};
    (6,2)*{\scriptstyle \alpha_1};
    (13,0)*{\scriptstyle\circ}="3s";
    (13,-2)*{\scriptstyle 2};
    (28,0)*{\scriptstyle\bullet}="4s";
    (28,-2)*{\scriptstyle 3};
    (28,-5)*{\scriptstyle m(3)=3};
    (21,2)*{\scriptstyle \alpha_2};
    (43,0)*{\scriptstyle\circ}="5s";
    (43,-2)*{\scriptstyle 4};
    (36,2)*{\scriptstyle \alpha_{3}};
    (58,0)*{\scriptstyle\circ}="6s";
    (58,-2)*{\scriptstyle 5};
    (51,2)*{\scriptstyle \alpha_{4}};
     "2s"; "3s" **\dir{-};
     "3s"; "4s" **\dir{-};
     "4s"; "5s" **\dir{-};
     "5s"; "6s" **\dir{-};
    \endxy
	\]
\end{figure}

Fix a cellular structure of $A$. 
Then the cell modules in sequence have to be one of the following:

$$L_{\alpha_1}<\left(\begin{array}{c}
L_{\alpha_2} \\
L_{\alpha_1} \\
\end{array}\right)<\left(\begin{array}{c}
L_{\alpha_3} \\
L_{\alpha_2} \\
\end{array}\right)<\left(\begin{array}{c}
L_{\alpha_3} \\
L_{\alpha_2} \\
\end{array}\right)<\left(\begin{array}{c}
L_{\alpha_3} \\
L_{\alpha_2} \\
\end{array}\right)<\left(\begin{array}{c}
L_{\alpha_4} \\
L_{\alpha_3} \\
\end{array}\right)<L_{\alpha_4}$$

$$L_{\alpha_4}<\left(\begin{array}{c}
L_{\alpha_3} \\
L_{\alpha_4} \\
\end{array}\right)<\left(\begin{array}{c}
L_{\alpha_2} \\
L_{\alpha_3} \\
\end{array}\right)<\left(\begin{array}{c}
L_{\alpha_2} \\
L_{\alpha_3} \\
\end{array}\right)<\left(\begin{array}{c}
L_{\alpha_2} \\
L_{\alpha_3} \\
\end{array}\right)<\left(\begin{array}{c}
L_{\alpha_1} \\
L_{\alpha_2} \\
\end{array}\right)<L_{\alpha_1}$$

For the first one, the poset $\Lambda=\{\alpha_1<\alpha_2<\alpha_3^1<\alpha_3^2<\alpha_3^3<\alpha_4^1<\alpha_4^2\}$ 
and  $\Lambda_0=\{\alpha_1, \alpha_2, \alpha_3^1, \alpha_4\}$. The number of the elements in $\Lambda$ is $4+m(3)$. The second one is similar and we omit the details here.
\end{example}

In Example \ref{cellchain}, the cell modules corresponding to $\alpha_3^2$ and $\alpha_3^3$ are 
isomorphic and consequently have the same dimension. In fact, this is a general phenomenon.

\begin{corollary}\label{2.18}
Let $A$ be an indecomposable self-injective cellular $K$-algebra of finite type with cell datum $(\Lambda, M, C, \ast)$
and $\lam, \mu\in \Lambda$. Assume that  $\lam, \mu\notin \Lambda_0$ and 
neither $\lam$ nor $\mu$ is maximal.
Then $\dim W_\lam=\dim W_\mu$.
\end{corollary}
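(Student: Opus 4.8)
The plan is to use the structural analysis developed in Lemmas \ref{2.9}--\ref{2.15}. By Proposition \ref{2.16} (more precisely, by the proof via \cite[Theorem 6.8]{AKMW} and \cite[Theorem 8.1]{KX5}), the algebra $A$ is Morita equivalent to a cellular Brauer tree algebra, whose tree is a straight line by Lemma \ref{2.8}. Since Morita equivalence preserves the decomposition matrix and hence the dimensions of cell modules (cell modules are determined by the $d_{\lam\mu}$, as Corollary \ref{2.11} shows they are uniserial with at most two layers), I may assume $A$ itself is a cellular Brauer tree algebra with the notation of Lemma \ref{2.9}.

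First I would recall from Lemma \ref{2.14} that every cell module with two composition factors is of the same type, say type I (the type II case being symmetric), so by Lemma \ref{2.15} the poset $\Lambda$ is totally ordered with minimal element $\alpha_n$ and the cell chain runs through each $P_{\alpha_i}$ in turn. The key observation is that a cell module $W_\lam$ with $\lam\notin\Lambda_0$ has exactly two composition factors $L_{\alpha_{i-1}}$ on top and $L_{\alpha_i}$ on bottom for some $i$, and its dimension is therefore $\dim L_{\alpha_{i-1}}+\dim L_{\alpha_i}$. So the statement reduces to showing that this quantity is independent of $i$ as long as the corresponding node is neither in $\Lambda_0$ nor maximal in $\Lambda$.

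Next I would pin down exactly which $i$ can occur. Looking at the cell filtrations in Lemma \ref{2.13} and the propagation argument of Lemma \ref{2.14}: the very first non-simple cell module in the chain (sitting just above $L_{\alpha_1}=L_{\alpha_n}$-minimal... actually above the minimal simple) and the very last one are the ones adjacent to a simple cell module or to the maximal element, and these are exactly the excluded cases $\lam\in\Lambda_0$ or $\lam$ maximal. For all the intermediate non-simple, non-maximal cell modules, I claim the index $i$ is constant --- this is essentially because in a straight-line Brauer tree algebra where (say) vertex $1$ is the exceptional vertex with multiplicity $m$, the only place a long stack of isomorphic two-layered cell modules $\binom{L_{\alpha_1}}{L_{\alpha_2}}$ appears is inside $P_{\alpha_1}$, forced by $m(1)=m$; at every other $P_{\alpha_i}$ the non-simple cell factors are the "boundary" ones which land in $\Lambda_0$ or are maximal. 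So all the cell modules under consideration are literally isomorphic, hence have equal dimension. I would make this precise by tracing the global cell chain: it has the shape (simple) $<$ (type-I two-layer at the exceptional edge, repeated $m$ times, of which the first is in $\Lambda_0$ and... ) --- and reading off that the repeated middle block consists of mutually isomorphic modules, exactly as illustrated in Example \ref{cellchain} where $\alpha_3^2\cong\alpha_3^3$.

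The main obstacle will be bookkeeping: carefully identifying, in the totally ordered cell chain determined by Lemmas \ref{2.13}--\ref{2.15}, precisely which entries are simple (in $\Lambda_0$), which is the maximal one, and confirming that everything strictly between is an isomorphic copy of a single two-layered cell module attached to the exceptional edge. One has to handle the sub-cases of Lemma \ref{2.13} (vertex $1$ exceptional vs.\ not) together with the type I vs.\ type II dichotomy, and also the degenerate possibility that there is no exceptional vertex at all (then $m(i)=1$ for all $i$, the Brauer tree algebra is small, and there may be no non-simple non-maximal cell module, making the statement vacuous). Once the chain is written out explicitly as in Example \ref{cellchain}, the equality $\dim W_\lam=\dim W_\mu$ is immediate because $W_\lam\cong W_\mu$ as $A$-modules.
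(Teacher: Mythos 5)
Your overall route is the same as the paper's: reduce via \cite[Theorem 6.8]{AKMW} to a cellular Brauer tree algebra on a straight line and read off the cell chain. However, your ``key observation'' --- that every cell module $W_\lam$ with $\lam\notin\Lambda_0$ has exactly two composition factors $L_{\alpha_{i-1}}$ and $L_{\alpha_i}$ --- is false precisely in one of the sub-cases you defer to ``bookkeeping,'' namely when the exceptional vertex sits at an \emph{end} of the line. If vertex $1$ (resp.\ $n+1$) is exceptional with multiplicity $m>1$, then by Lemma \ref{2.13}(2) the cell factors of $P_{\alpha_1}$ include $m$ copies of the simple module $L_{\alpha_1}$ (resp.\ $L_{\alpha_n}$), only one of which is indexed by an element of $\Lambda_0$; the remaining copies are indexed by $\lam\notin\Lambda_0$ that need not be maximal, and their cell modules have a single composition factor. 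So one cannot reduce the statement to ``$\dim L_{\alpha_{i-1}}+\dim L_{\alpha_i}$ is independent of $i$.'' The paper instead argues by a direct case split on the position of the exceptional vertex: if there is none, no $\lam,\mu$ satisfy the hypotheses and the claim is vacuous; if the exceptional vertex is $1$ (resp.\ $n+1$), all the relevant cell modules are isomorphic to $L_{\alpha_1}$ (resp.\ $L_{\alpha_n}$); if it is an interior vertex $i$, they all have composition factors $L_{\alpha_{i-1}}$ and $L_{\alpha_i}$. Your analysis of the interior case is correct and is the substantive part, but the endpoint cases must be treated separately rather than forced into the two-layer picture.

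A smaller point: Morita equivalence does not preserve dimensions of modules, so ``Morita equivalence preserves the decomposition matrix and hence the dimensions of cell modules'' is not a valid justification. What saves the reduction is that you only need an \emph{equality} of two dimensions inside the same algebra $A$: once the corresponding cell modules over the Brauer tree algebra are shown to have the same composition factors (with multiplicity), the bijection of simples under the equivalence forces $W_\lam$ and $W_\mu$ to have the same composition factors over $A$ as well, whence $\dim W_\lam=\dim W_\mu$. This should be said explicitly rather than attributed to preservation of dimensions.
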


\begin{proof}
By \cite[Theorem 6.8]{AKMW}, suppose that $A$ is a cellular Brauer tree algebra with 
cell datum $(\Lambda, M, C, \ast)$. If the Brauer tree has no exceptional vertex, 
then there are not $\lam, \mu\in \Lambda$ satisfying the conditions. Now assume that the Brauer tree 
has an exceptional vertex, and $\lam, \mu$ satisfying the conditions. 
If $1$ is the exceptional vertex, then $W(\lam)$ and $W(\mu)$ are all isomorphic to  $L_{\alpha_1}$. 
If $n+1$ is the exceptional vertex, then $W(\lam)$ and $W(\mu)$ are all isomorphic to  $L_{\alpha_n}$.
If $i$ is the exceptional vertex with $1<i<n+1$, then $W(\lam)$ and $W(\mu)$ have the same 
composition factors $L_{\alpha_{i-1}}$ and $L_{\alpha_i}$ and consequently, $\dim W_\lam=\dim W_\mu$.
\end{proof}

\medskip


\section{Abacus orbits and incomparable abaci}

It is known that blocks of an Ariki-Koike algebra are symmetric \cite{MM}, and certainly self-injective. In light of Section 2,
to determine the representation type of an indecomposable symmetric cellular algebra,
one can analyze the poset first. The aim of this section is to make
some preparations. We begin with some preliminaries about
blocks and abaci. Then we translate the action of an affine Weyl group on blocks into the language
of abaci. Finally, we define and study the incomparable abaci, which is a key notion in this paper.


\subsection{Preliminaries on blocks and abaci}
Keeps notations as in Section 1. We first connect $\mathcal {H}_n(q, Q)$ to Lie theory.
Define $e$ to be the multiplicative order of $q$.
Let $\Gamma_e$ be the oriented quiver with vertex set $I=\mathbb{Z}/e\mathbb{Z}$
and with directed edges $i\longrightarrow i+1$, for all $i\in I$.
Thus $\Gamma_e$ is the quiver of type $A_\infty$ if $e=\infty$, and if $e\geq 2$ is finite then
it is a cyclic quiver of type $A^{(1)}_{e-1}$:
$$\xymatrix@C=0.5cm{
A_\infty:  \cdots \ar[r] & -2 \ar[rr] && -1 \ar[rr] && 0 \ar[rr] && 1 \ar[rr] && 2  \ar[r] & \cdots}$$


\begin{center}
$A_{e-1}^{(1)}:$
\begin{tabular}{*5c}
  \begin{tikzpicture}[scale=0.8,decoration={curveto, markings,
            mark=at position 0.6 with {\arrow{>}}
    }]
    \useasboundingbox (-1.7,-0.7) rectangle (1.7,0.7);
    \foreach \x in {0, 180} {
      \shade[ball color=black] (\x:1cm) circle(2pt);
    }
    \tikzstyle{every node}=[font=\tiny]
    \draw[postaction={decorate}] (0:1cm) .. controls (90:5mm) .. (180:1cm)
           node[left,xshift=-0.5mm]{$0$};
    \draw[postaction={decorate}] (180:1cm) .. controls (270:5mm) .. (0:1cm)
           node[right,xshift=0.5mm]{$1$};
  \end{tikzpicture}
& 
  \begin{tikzpicture}[scale=0.7,decoration={ markings,
            mark=at position 0.6 with {\arrow{>}}
    }]
    \useasboundingbox (-1.7,-0.7) rectangle (1.7,1.4);
    \foreach \x in {90,210,330} {
      \shade[ball color=black] (\x:1cm) circle(2pt);
    }
    \tikzstyle{every node}=[font=\tiny]
    \draw[postaction={decorate}]( 90:1cm)--(210:1cm) node[below left]{$0$};
    \draw[postaction={decorate}](210:1cm)--(330:1cm) node[below right]{$1$};
    \draw[postaction={decorate}](330:1cm)--( 90:1cm)
           node[above,yshift=.5mm]{$2$};
  \end{tikzpicture}
& 
  \begin{tikzpicture}[scale=0.7,decoration={ markings,
            mark=at position 0.6 with {\arrow{>}}
    }]
    \useasboundingbox (-1.7,-0.7) rectangle (1.7,0.7);
    \foreach \x in {45, 135, 225, 315} {
      \shade[ball color=black] (\x:1cm) circle(2pt);
    }
    \tikzstyle{every node}=[font=\tiny]
    \draw[postaction={decorate}] (135:1cm) -- (225:1cm) node[below left]{$0$};
    \draw[postaction={decorate}] (225:1cm) -- (315:1cm) node[below right]{$1$};
    \draw[postaction={decorate}] (315:1cm) -- (45:1cm) node[above right]{$2$};
    \draw[postaction={decorate}] (45:1cm) -- (135:1cm) node[above left]{$3$};
  \end{tikzpicture}
&\raisebox{3mm}{$\dots$}
\\[4mm]
  $e=2$&$e=3$&$e=4$&
\end{tabular}
\end{center}

Let $(a_{i,j})_{i,j\in I}$ be the symmetric Cartan matrix associated
with $\Gamma_e$, so that
$$a_{i,j}=\begin{cases}
    2 & \text{ if }i=j,\\
    0 & \text{ if }i\ne j\pm 1,\\
    -1 & \text{ if $e\ne2$ and }i=j\pm 1,\\
    -2 & \text{ if $e=2$ and }i=j+1.
\end{cases}$$
Following Kac~\cite[Chapt.~1]{Kac}, let $(\mathfrak h,\Pi,\check\Pi)$
be a realization of the Cartan matrix, and $\Pi=\{\balpha_i\mid i\in I\}\subset \mathfrak h^\ast$
the associated set of simple roots, $\{\bLambda_i\mid i\in I\}\subset \mathfrak h^\ast$ the
fundamental dominant weights, and $(\cdot,\cdot)$ the bilinear form
determined by $$(\balpha_i,\balpha_j)=a_{i,j}\quad \text{and} \quad
(\bLambda_i,\balpha_j)=\delta_{ij}, \quad\text{for} \quad i,j\in I.$$
Denote by $P=\bigoplus_{i\in I}\mathbb{Z}\bLambda_i$ and $P_+=\bigoplus_{i\in I}\mathbb{N}\bLambda_i$
the weight lattice and the {\em dominant weight lattice} of $(\mathfrak h,\Pi,\check\Pi)$,
$Q_+=\bigoplus_{i\in I}\mathbb{N}\balpha_i$ the {\em positive root lattice},
and $W$ the affine Weyl group, which is generated by $\{\sigma_i\}_{i\in I}$,
the fundamental reflections of the space $\mathfrak h^\ast$.
Note that $$\sigma_i\bLambda=\bLambda-(\bLambda, \,\balpha_i)\balpha_i \quad\text{for}\quad \bLambda\in P
\quad\text{and} \quad\sigma_i\balpha_j=\balpha_j-a_{ji}\balpha_i \quad\text{for}\quad j\in I.$$
The Kac-Moody algebra corresponding to this data is $\widehat{\mathfrak{sl}}_e$.

Given a {\em multicharge} $\bs=(s_1, s_2, \dots, s_r)$, where $s_i\in \mathbb{Z}$ for each $i$, one can associate it with a dominant weight
${\bf \Lambda}=\sum_{i\in I}k_i\bLambda_i$, where $$k_i=\begin{cases}
   \, \sharp\{1\leq j\leq r\mid s_j\equiv i \,({\rm mod} \,e)\} & \text{ if }e<\infty,\\
   \, \sharp\{1\leq j\leq r\mid s_j = i \} & \text{ if }e=\infty,
\end{cases}$$
and then $\mathcal {H}_n(q, Q)$ (recall $Q=(q^{s_1}, \dots, q^{s_r})$) will be denoted by $\mathcal {H}_n^{\bLambda}$ if necessary.
By \cite[Section 4.1]{BK}, there is a set $\{e(\bi) \mid \bi\in I^n\}$
(some could be zero) of pairwise orthogonal idempotents in $\mathcal {H}_n^{\bLambda}$.
Take ${\bbeta}\in Q_+$ with $\sum_{i\in I}(\bLambda_i,{\bbeta})=n$ and let
$$I^{\bbeta}=\{\bi=(i_1, i_2, \dots, i_n)\in I^n\mid\balpha_{i_1}+\dots+\balpha_{i_n}=\bbeta\}.$$
If $I^{\bbeta}\neq \varnothing$, by \cite[Theorem~2.11]{LM} $e_{\bbeta}=\sum_{\bi\in I^{\bbeta}}e(\bi)$
is a primitive central  idempotent of $\mathcal {H}_n^\Lambda$. Write by
$\mathcal{H}_{\bbeta}^{\bLambda}$ the block algebra $e_{\bbeta}\mathcal{H}_n^{\bLambda}.$
Then one can decompose $\mathcal{H}_n^{\bLambda}$ into a direct sum of blocks as
$\mathcal{H}_n^{\bLambda}=\bigoplus_{\bbeta\in Q_+,\,I^{\bbeta}\ne\emptyset}\mathcal{H}_{\bbeta}^{\bLambda}.$
In \cite[(3.4)]{BKW},  Brundan, Kleshchev and Wang defined the {\em defect} of $\bbeta\in Q_+$ to be
$(\bLambda,\bbeta)-\frac12(\bbeta,\bbeta).$
It coincides with Fayers definition~\cite{F1} of weight for the block algebras $\mathcal{H}_{\bbeta}^{\bLambda}$ (see \cite[Page 633]{HM})
and will be denoted by $w(\mathcal{H}_{\bbeta}^{\bLambda})$.
In addition, we emphasize that using the derived equivalence given by Chuang and Rouquier \cite{CR}, which lifts Weyl group action,
two blocks $\mathcal{H}_{\balpha}^{\bLambda}$ and $\mathcal{H}_{\bbeta}^{\bLambda}$ of an Ariki-Koike algebra are derived equivalent
if $\bLambda-\balpha$ and $\bLambda-\bbeta$ are in the same $W$-orbit.

\smallskip

We also need some combinatorics. Let $n$ be a positive integer. A {\em partition} $\lam$ of $n$ is a non-increasing sequence of
non-negative integers $\lam=(\lam_1,\dots,\lam_s)$ such that
$\sum_{i=1}^{s}\lam_i=n$ and we write $|\lam|=n$.
The {\em Young diagram} of a
partition $\lam$ is the set of nodes $[\lam]=\{(i,j)\mid 1\leq i,
1\leq j\leq\lam_{i}\}$.
The {\em conjugate} of $\lam$ is defined to be a partition
$\lam'=(\lam'_1,\lam'_2,\cdots)$, where $\lam'_j$ is equal to the
number of nodes in column $j$ of $[\lam]$ for $j=1,2,\cdots$.
A {\em rim $e$-hook} (or simply
an $e$-rim) of $[\lam]$ is a connected subset of the rim of $[\lam]$ with exactly $e$ nodes, which can be
removed from $[\lam]$ to obtain another Young diagram $[\mu]$.
Given a partition $\lam$,
unwrapping a rim $e$-hook of $[\lam]$ one by one until none can be unwrapped, one can get a partition, called
the $e$-{\em core} of $[\lam]$ and the number of rim $e$-hooks unwrapped is called the $e$-{\em weight} of $\lam$.

An {\em $r$-partition} of $n$ is an $r$-tuple
$\blam=(\blam^{(1)}, \dots, \blam^{(r)})$ of partitions such that
$|\blam|=\sum_{i=1}^r|\blam^{(i)}|=n$. The partitions $\blam^{(1)}, \dots,
\blam^{(r)}$ are components of $\blam$. The
conjugate of an $r$-partition $\blam$ is defined to be
$\blam'=(\blam^{(r)'}, \dots, \blam^{(1)'})$. For $\sigma\in\mathfrak{S}_r$, 
where $\mathfrak{S}_r$ is the symmetric group on $\{1, \dots, r\}$,
define $\blam^\sigma$ to be $(\blam^{(\sigma(1))}, \dots, \blam^{(\sigma(r))})$.
Denote the set of $r$-partitions of $n$ by $\mathscr{P}_{r,n}$. Then for $\blam,
\bmu\in \mathscr{P}_{r,n}$, we write $\blam\unrhd \bmu$ (or $\bmu\unlhd\blam$) if
$$\sum_{t=1}^{s-1}|\blam^{(t)}|+\sum_{i=1}^j\blam_i^{(s)}
\geq\sum_{t=1}^{s-1}|\bmu^{(t)}|+\sum_{i=1}^j\bmu_i^{(s)}$$ for all
$1\leq s\leq r$ and all $j\geq 1$. Write $\blam\rhd\bmu$ (or $\bmu\lhd\blam$)  if
$\blam\unrhd\bmu$ and $\blam\neq\bmu$. The Young diagram of an
$r$-partition $\blam$ is the set of nodes $$[\blam]=\{(i, j, k)\mid 1\leq
i, 1\leq j\leq\blam^{(k)}_i, 1\leq k\leq r\}.$$
Given $\bs=(s_1, s_2, \dots, s_r)\in \Z^r$, define the {\em residue of node} $(i, j, k)\in [\blam]$ to be $q^{j-i+s_k}$,  and define
$c_f(\blam)$ to be the number of nodes in $[\blam]$ of residue $f$
and $\mathcal{C}_{e, \bs}(\blam)=(c_1(\blam), c_q(\blam), \dots, c_{q^{e-1}}(\blam))$.

A $\blam$-{\em tableau} is a bijective
map $\ft: [\blam]\rightarrow \{1, 2, \dots, n\}$, and $\ft$ is
called {\em standard} if the entries increase along each row and down each
column in each component. The set of standard $\blam$-tableaux is denoted by $\Std(\blam)$. The {\em residue sequence} of $\ft$ is defined to be the sequence
$(\res \ft^{-1}(1), \dots, (\res \ft^{-1}(n))$. An $r$-partition $\blam$ is said to be $(Q, e)$-{\em restricted} if there exists $\ft\in \Std(\blam)$ such that the
residue sequence of any standard tableau of shape $\bmu\lhd\blam$ is not the same as the residue sequence of $\ft$.

\smallskip

As far as we know, algebra $\mathcal {H}_n(q, Q)$ has three known cellular bases. The first one was given by
Graham and Lehrer in \cite{GL} using Kazhdan-Lusztig basis of
$\mathcal {H}(S_n)$ (the Iwahori-Hecke algebra of $S_n$). We shall not use it in this paper.
The second one is the standard basis, which was constructed in \cite{DJM} by
Dipper, James and Mathas. It is a generalization of the Murphy basis of $\mathcal {H}(S_n)$ \cite{Mr}. Then one has cell (Specht) modules
$W(\blam)$, where $\blam$ are $r$-partitions. Moreover, Ariki
\cite{A2} proved that if $\blam$ is a Kleshchev $r$-partition, then
$L(\blam)$, the top of $W(\blam)$ is simple, and  $\{L(\blam)\mid
\blam\,\, \rm Kleshchev\}$ provide a complete set of simple $\mathcal
{H}_n(q, Q)$-modules up to isomorphism.

Recall that given a finite dimensional algebra $A$, two $A$-modules belong to the same block
if all of their composition factors belong to the same block. For a cellular algebra,
according to \cite[3.9.8]{GL}, all composition factors of a cell module belong to the same block.
Therefore, we can say in this sense a cell module belongs to some block.
Note that if $W(\blam)$ lies in block $\mathcal {H}_{\bbeta}^{\bLambda}$, then $\bbeta=\sum_{i\in I}c_{q^i}(\blam)\balpha_i$.
We often abuse notation to say that $\blam$ lies in block $\mathcal {H}_{\bbeta}^{\bLambda}$. Since the definition of cell module depends on the chosen $\bs$, 
we will write pair $(\blam, \bs)$ in some block if necessary.

Given two pairs $(\blam, \bs)$ and $(\bmu, \bs)$, the following lemma provides a criterion of being in the same block.

\begin{lemma}\cite[Proposition 5.9 (ii)]{GL}\cite[Theorem 2.11]{LM}\label{residueblock}
Pairs $(\blam, \bs)$ and $(\bmu, \bs)$ belong to the same block if and only of $c_f(\blam)=c_f(\bmu)$ for all $f\in K$.
\end{lemma}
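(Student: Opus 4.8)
The plan is to deduce the statement from the description of the block decomposition recalled just above, so that the only genuine ingredient is linear independence of the simple roots $\{\balpha_i\mid i\in I\}$.

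First I would note the elementary point that the residue of a node $(i,j,k)\in[\blam]$ is the power $q^{\,j-i+s_k}$ of $q$; hence $c_f(\blam)=0$ for every $f\in K$ that is not a power of $q$, and when $e<\infty$ one has $q^\ell=q^{\ell'}$ if and only if $\ell\equiv\ell'\pmod e$ (and when $e=\infty$, if and only if $\ell=\ell'$). Thus the family $(c_f(\blam))_{f\in K}$ carries exactly the same data as the tuple $(c_{q^i}(\blam))_{i\in I}$, and $\sum_{i\in I}c_{q^i}(\blam)=|\blam|=n$.

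The crucial input, which I would quote rather than reprove, is the fact already stated in the excerpt (following \cite[Section~4.1]{BK} and \cite[Theorem~2.11]{LM}): if the cell module $W(\blam)$ lies in the block $\mathcal{H}_{\bbeta}^{\bLambda}=e_{\bbeta}\mathcal{H}_n^{\bLambda}$, then $\bbeta=\sum_{i\in I}c_{q^i}(\blam)\balpha_i$. The point behind this is that for any $\ft\in\Std(\blam)$ the residue sequence of $\ft$ belongs to $I^{\bbeta}$ for exactly this $\bbeta$, since the multiset of residues appearing in that sequence is precisely the multiset of residues of the nodes of $[\blam]$ and is therefore independent of $\ft$; hence $e_{\bbeta}$ acts as the identity on $W(\blam)$ while every other central idempotent $e_{\bbeta'}$ annihilates it. If one insisted on a self-contained argument, this identification is the one nontrivial step and would be the main obstacle; granting it, everything else is formal.

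Given this, both directions are immediate. If $(\blam,\bs)$ and $(\bmu,\bs)$ lie in the same block $\mathcal{H}_{\bbeta}^{\bLambda}$, then $\sum_{i\in I}c_{q^i}(\blam)\balpha_i=\bbeta=\sum_{i\in I}c_{q^i}(\bmu)\balpha_i$, so linear independence of the simple roots in $\mathfrak h^\ast$ forces $c_{q^i}(\blam)=c_{q^i}(\bmu)$ for all $i\in I$; together with the first paragraph this gives $c_f(\blam)=c_f(\bmu)$ for all $f\in K$. Conversely, if $c_f(\blam)=c_f(\bmu)$ for all $f\in K$, set $\bbeta:=\sum_{i\in I}c_{q^i}(\blam)\balpha_i=\sum_{i\in I}c_{q^i}(\bmu)\balpha_i\in Q_+$; then $\sum_{i\in I}(\bLambda_i,\bbeta)=n$ and $I^{\bbeta}\neq\varnothing$, and by the previous paragraph both $W(\blam)$ and $W(\bmu)$ are fixed by $e_{\bbeta}$, hence both cell modules lie in $\mathcal{H}_{\bbeta}^{\bLambda}$. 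Since $e_{\bbeta}$ is a primitive central idempotent this block is indecomposable, so $(\blam,\bs)$ and $(\bmu,\bs)$ belong to the same block, completing the proof.
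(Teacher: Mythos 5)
Your argument is correct. Note that the paper gives no proof of this lemma at all --- it is quoted directly from \cite[Proposition 5.9(ii)]{GL} and \cite[Theorem 2.11]{LM} --- so the only fair comparison is with the facts the paper recalls in Section 3.1, and your proof is a faithful reconstruction from exactly those: the identification $\bbeta=\sum_{i\in I}c_{q^i}(\blam)\balpha_i$ for the block containing $W(\blam)$, the primitivity of the central idempotent $e_{\bbeta}$, and linear independence of the simple roots in $\mathfrak h^\ast$. You correctly isolate the one genuinely nontrivial input (that $e_{\bbeta}$ acts as the identity on $W(\blam)$ for this particular $\bbeta$), which is precisely the content of the cited results, so nothing further is needed.
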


The third cellular basis of $\mathcal {H}_n(q, Q)$ was given by Hu and Mathas in \cite{HM},
which is compatible with the block decomposition of $\mathcal {H}_n(q, Q)$.
\begin{lemma}\cite[Lemma 5.4, Corollary 5.5, Theorem 5.8 and Corollary 5.12]{HM}\label{HMbasis}
The following statements hold.
\begin{enumerate}
\item[{\rm (1)}] The algebra $\mathcal {H}_n(q, Q)$ is a graded cellular algebra with poset $(\mathscr{P}_{r,n}, \unrhd)$ and graded cellular basis $\{\psi_{\fs,\ft}^\blam\mid\blam\in\mathscr{P}_{r,n}, \fs, \ft\in {\rm Std}(\blam)\}$  {\rm(HM basis)}. 
    The corresponding ungraded cell modules coincide with the cell modules determined by the standard basis, respectively.
\item[{\rm (2)}] Let $\mathcal{H}_\bbeta^\bLambda$ be a block of $\mathcal {H}_n(q, Q)$. 
Then there exists $\mathscr{P}_{\bbeta}^{\bLambda}\subseteq\mathscr{P}_{r, n}$ such that $\{\psi_{\fs,\ft}^\blam\mid\blam\in\mathscr{P}_{\bbeta}^{\bLambda},
\fs, \ft\in {\rm Std}(\blam)\}$ is a graded cellular basis of $\mathcal {H}_\bbeta^\bLambda$.
\end{enumerate}
\end{lemma}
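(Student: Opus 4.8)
The plan is to derive the statement from the work of Hu and Mathas, which builds on the graded isomorphism of Brundan and Kleshchev. First I would recall that under the isomorphism $\mathcal{H}_n^{\bLambda}\cong R_n^{\bLambda}$ between $\mathcal{H}_n(q,Q)$ and the cyclotomic Khovanov--Lauda--Rouquier algebra of type $A$, the block $\mathcal{H}_{\bbeta}^{\bLambda}$ corresponds to the subalgebra $e_{\bbeta}R_n^{\bLambda}$, where $e_{\bbeta}=\sum_{\bi\in I^{\bbeta}}e(\bi)$ is a sum of the standard KLR idempotents. In particular a $\Z$-grading on $\mathcal{H}_n^{\bLambda}$ is available and the block decomposition is already visible on the generators $e(\bi)$. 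The content of part (1) is then that $R_n^{\bLambda}$ admits a homogeneous cellular basis indexed by pairs of standard $r$-tableaux.

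For part (1) I would, for each $\blam\in\mathscr{P}_{r,n}$ and $\fs\in\Std(\blam)$, introduce a homogeneous element $\psi_{\fs}\in R_n^{\bLambda}$ obtained by applying (a reduced expression for) the permutation carrying the initial tableau $\ft^{\blam}$ to $\fs$ to a distinguished homogeneous element attached to $\blam$ (roughly $y^{\blam}e(\bi^{\blam})$), and then set $\psi_{\fs,\ft}^{\blam}=(\psi_{\fs})^{\ast}\psi_{\ft}$. Verifying the cellular axioms and homogeneity then splits into: (a) linear independence and spanning; (b) the anti-automorphism property $(\psi_{\fs,\ft}^{\blam})^{\ast}=\psi_{\ft,\fs}^{\blam}$, which is immediate since the defining anti-automorphism $\ast$ of $R_n^{\bLambda}$ fixes each homogeneous generator; (c) the straightening rule $a\psi_{\fs,\ft}^{\blam}\equiv\sum_{\fu\in\Std(\blam)}r_a(\fu,\fs)\psi_{\fu,\ft}^{\blam}$ modulo the span $R_n^{\bLambda}(\rhd\blam)$ of more dominant terms, with $r_a(\fu,\fs)$ independent of $\ft$; and (d) $\deg\psi_{\fs,\ft}^{\blam}=\deg\fs+\deg\ft$, tracked throughout. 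The identification of the ungraded cell modules with the Specht modules $W(\blam)$ of the Dipper--James--Mathas standard basis then follows once (a) is established via a unitriangular change of basis, since such a change of basis respects the cell filtrations and so yields the same poset and isomorphic cell modules.

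For part (2) I would observe that each basis element satisfies $\psi_{\fs,\ft}^{\blam}=e(\bi_{\fs})\,\psi_{\fs,\ft}^{\blam}\,e(\bi_{\ft})$, where the residue sequences $\bi_{\fs}$ for $\fs\in\Std(\blam)$ all determine the same $\bbeta=\sum_{i\in I}c_{q^i}(\blam)\balpha_i$ and hence the same central idempotent $e_{\bbeta}$. Consequently the cellular basis of part (1) decomposes as the disjoint union, over $\bbeta\in Q_+$, of its subsets $\{\psi_{\fs,\ft}^{\blam}\mid\blam\in\mathscr{P}_{\bbeta}^{\bLambda}\}$ with $\mathscr{P}_{\bbeta}^{\bLambda}=\{\blam\in\mathscr{P}_{r,n}\mid\sum_{i}c_{q^i}(\blam)\balpha_i=\bbeta\}$; each subset is a homogeneous cellular basis of $e_{\bbeta}\mathcal{H}_n^{\bLambda}=\mathcal{H}_{\bbeta}^{\bLambda}$ for the corresponding subposet of $(\mathscr{P}_{r,n},\unrhd)$, and by Lemma \ref{residueblock} the set $\mathscr{P}_{\bbeta}^{\bLambda}$ is precisely the set of $r$-partitions lying in that block.

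The main obstacle is step (a): showing that $\{\psi_{\fs,\ft}^{\blam}\}$ is a basis. This cannot be done by a dimension count inside $R_n^{\bLambda}$, because the $\psi$-generators satisfy only degree inequalities and the straightening relations are genuinely intricate. The standard route is to pass to a discrete valuation ring with a separable (generic) cyclotomic parameter, where a seminormal basis and an explicit orthogonal idempotent decomposition are available, to transport the $\psi_{\fs,\ft}^{\blam}$ there and check that the transition matrix to the seminormal/standard basis is unitriangular with respect to $\unrhd$, and then to specialize back to $K$. Everything else --- the homogeneity bookkeeping, the $\ast$-symmetry, and the block decomposition of part (2) --- is essentially formal once (a) is in hand.
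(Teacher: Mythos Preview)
The paper does not prove this lemma; it is stated as a direct citation of results from Hu and Mathas \cite{HM}, with no proof given. Your outline is a reasonable sketch of the argument in \cite{HM} and is consistent with how that reference establishes the graded cellular basis and its compatibility with the block decomposition, so there is nothing to compare against in the present paper.
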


We also need the following easy results about $\mathcal {H}_n(q, Q)$.

\begin{lemma} \cite[Lemma 4.5]{HC}\label{permortationisomorphism}
For $0\neq c\in K$, we have $\mathcal {H}_n(q, Q_1, \dots, Q_r)\cong\mathcal {H}_n(q, cQ_1, \dots, cQ_r)$. For any permutation $\sigma$ of $r$ letters,  $\mathcal {H}_n(q, Q_1, \dots, Q_r)=\mathcal {H}_n(q, Q_{\sigma(1)}, \dots, Q_{\sigma(r)})$.
\end{lemma}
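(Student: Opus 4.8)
The plan is to prove both assertions directly from the presentation (H1)--(H5). I would first recall that $\mathcal{H}_n(q, Q)$ is, by definition, the quotient of the free associative $K$-algebra on $T_0, T_1, \dots, T_{n-1}$ by the two-sided ideal generated by (H1)--(H5); consequently, to define a $K$-algebra homomorphism out of $\mathcal{H}_n(q, Q)$ it suffices to specify images of the generators in the target algebra and to verify that these images satisfy (H1)--(H5).

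For the second assertion I would simply observe that the tuple $Q = (Q_1, \dots, Q_r)$ enters only in relation (H1), and that the factors $T_0 - Q_i$ of the product appearing in (H1) are polynomials in the single generator $T_0$, hence pairwise commute. Therefore $(T_0 - Q_1)(T_0 - Q_2)\cdots(T_0 - Q_r)$ is invariant under permuting its factors, so it equals $(T_0 - Q_{\sigma(1)})\cdots(T_0 - Q_{\sigma(r)})$ for every $\sigma \in \mathfrak{S}_r$. Thus the defining relations of $\mathcal{H}_n(q, Q_1, \dots, Q_r)$ and of $\mathcal{H}_n(q, Q_{\sigma(1)}, \dots, Q_{\sigma(r)})$ are literally the same, and the two algebras are equal, not merely isomorphic.

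For the first assertion, write $T_0, T_1, \dots$ and $T_0', T_1', \dots$ for the standard generators of $\mathcal{H}_n(q, Q)$ and $\mathcal{H}_n(q, cQ)$ respectively, where $cQ = (cQ_1, \dots, cQ_r)$. I would define $\phi\colon \mathcal{H}_n(q, Q) \to \mathcal{H}_n(q, cQ)$ on generators by $\phi(T_0) = c^{-1}T_0'$ and $\phi(T_i) = T_i'$ for $1 \le i \le n-1$, and then check the relations: (H3), (H4), (H5) involve only $T_1', \dots, T_{n-1}'$ and hold automatically; (H2) is preserved because $c$ is a central scalar, so $\phi(T_0)\phi(T_1)\phi(T_0)\phi(T_1) = c^{-2}T_0'T_1'T_0'T_1' = c^{-2}T_1'T_0'T_1'T_0' = \phi(T_1)\phi(T_0)\phi(T_1)\phi(T_0)$; and (H1) is preserved since $(c^{-1}T_0' - Q_1)\cdots(c^{-1}T_0' - Q_r) = c^{-r}(T_0' - cQ_1)\cdots(T_0' - cQ_r) = 0$ by the (H1)-relation in $\mathcal{H}_n(q, cQ)$. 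Hence $\phi$ is a well-defined homomorphism, and the symmetric rule $T_0' \mapsto cT_0$, $T_i' \mapsto T_i$ defines a homomorphism $\psi$ in the opposite direction by the same computations; since $\psi\phi$ and $\phi\psi$ fix generators, $\phi$ is an isomorphism.

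There is no real obstacle here: the whole statement is a short bookkeeping exercise with the presentation. The only points to be careful about are to work with the generators-and-relations description (so that defining maps on generators is legitimate) and to keep track of the powers of $c$ produced when rescaling $T_0$ in relations (H1) and (H2).
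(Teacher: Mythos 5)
Your proof is correct and is the standard argument; the paper gives no proof of its own, simply citing \cite[Lemma 4.5]{HC}, and your two steps --- rescaling via $T_0\mapsto c^{-1}T_0'$ (with the mutually inverse map $T_0'\mapsto cT_0$) and the observation that the commuting factors $(T_0-Q_i)$ make relation (H1) symmetric in the $Q_i$ --- are exactly what is needed. One tiny slip: relation (H5) with $i=0$ does involve $T_0$, contrary to your parenthetical claim, but it is trivially preserved under scaling $T_0$ by a central scalar, so nothing is affected.
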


Based on Lemmas \ref{residueblock} and \ref{permortationisomorphism}, pairs $(\blam^\sigma, \bs^{\sigma})$ and $(\blam, \bs)$ are in the same block, where $\sigma\in \mathfrak{S}_r$.

\smallskip

We now begin to recall some results about $r$-abaci. The main reference is \cite{JL}.
Abaci first appeared in the work of James \cite{J2}.
Given a partition $\lam$ and $s\in \mathbb{Z}$, one can associate it
to a set of integers $L_s(\lam)=\{\lam_j-j+s\mid j\in\mathbb{N}^+\}$.
We assume that $\lam$ has an infinite number of zero parts.
As is well-known, the set $L_s(\lam)$ can be expressed by an {\em abacus}.
Let us illustrate an example.
\begin{example}
Let $\lam=(7, 5, 4, 1, 1)$ and $s=0$. For each $i\in L_s(\lam)$,
we set a bead at the $i$-th position on the horizontal abacus.
Then $L_s(\lam)$ is expressed as below. A position without a bead
will be called empty.
\[
\begin{tikzpicture}
[scale=0.5, bb/.style={draw,circle,fill,minimum size=2.5mm,inner sep=0pt,outer sep=0pt},
wb/.style={draw,circle,fill=white,minimum size=2.5mm,inner sep=0pt,outer sep=0pt}]
\foreach \x in {13,-10}
\foreach \y in {2}
{
\node at (\x,\y) {$\cdots$};
}	
\node [] at (2,1) {$0$};
\node [] at (3,1) {$1$};
\node [] at (4,1) {$2$};
\node [] at (5,1) {$3$};
\node [] at (7,1) {$\cdots$};
\node [] at (0.9,1) {$-1$};
\node [] at (-0.3,1) {$-2$};
\node [] at (-3,1) {$\cdots$};
\node [wb] at (12,2) {};
\node [wb] at (11,2) {};
\node [wb] at (10,2) {};
\node [wb] at (9,2) {};
\node [bb] at (8,2) {};
\node [wb] at (7,2) {};
\node [wb] at (6,2) {};
\node [bb] at (5,2) {};
\node [wb] at (4,2) {};
\node [bb] at (3,2) {};
\node [wb] at (2,2) {};
\node [wb] at (1,2) {};
\node [wb] at (0,2) {};
\node [bb] at (-1,2) {};
\node [bb] at (-2,2) {};
\node [wb] at (-3,2) {};
\node [bb] at (-4,2) {};
\node [bb] at (-5,2) {};
\node [bb] at (-6,2) {};
\node [bb] at (-7,2) {};
\node [bb] at (-8,2) {};
\node [bb] at (-9,2) {};
\draw[dashed](1.5,1)--node[]{}(1.5,2.5);
\end{tikzpicture}
\]
Note that we always draw a dashed vertical line between positions $-1$ and $0$. Then the numbers below the positions will often be omitted.
\end{example}

\begin{definition}\label{e-infinite}
If $e=\infty$, then both $j \pmod e$ and $j+ke$, $k\in \Z$ are defined to be $j$, and $0\le j\le e-1$ means $j \in \Z$.
\end{definition}

An abacus $L_s(\lam)$ can also be represented by an $e$-tuple of abacus,
in which the abaci are labeled by $L_0, L_1, \dots, L_{e-1}$ from bottom to top.
For each $k\in L_s(\lam)$, if $k=ye+x$ with $x, y\in\mathbb{Z}$ and $0\leq x<e$,
then place a bead in position $(x, y)$, which means the $y$-th position of $L_x$.
We will denote this $e$-tuple by $\mathcal{L}_s^e(\lam)$.

Let $\bs=(s_1, s_2, \dots, s_r)\in \mathbb{Z}^r$ be a multicharge and $\blam=(\blam^{(1)}, \dots, \blam^{(r)})$ an $r$-partition.
Then the pair $(\blam, \bs)$ can be associated with an $r$-abacus $L_\bs(\blam)$
by setting abaci $L_{s_i}(\blam^{(i)})$, $i=1, \dots, r$, from bottom to top so that all positions $0$ of each abacus appear in the same vertical line,
which will be called $(e, \bs)$-abacus of pair $(\blam, \bs)$.
Note that each $r$-abacus $L_\bs(\blam)$ can be mapped to an abacus by Uglov map, whose definition is as follows.

\begin{definition}\cite[Section 4.1]{U}, \cite[Section 2.4]{JL}\label{Uglovmap}
Let $\blam$ be an $r$-partition. Then the image of pair $(\blam, \bs)$ under Uglov map $\tau_{e, r}$ is $(\lam, s)$, 
which is defined as follows. For each bead at position $(x, y)$ in $L_\bs(\blam)$,
let $y = k.e + c$ with $k \in \Z$ and $c \in \{0, \dots, e-1\}$. Then we set a bead in the new 1-abacus $L_s(\lam)$ at position $(r-x)e+ker +c$.
\end{definition}

An intuitive explanation of Uglov map is as follows: cut up the abacus $L_\bs(\blam)$ into sections with positions 
$\{ae, ae+1, \dots, ae+e-1\} (a\in\mathbb{Z})$, and put the section with positions $\{ae, ae+1, \dots, ae+e-1\}$ 
on top of that with positions $\{(a+1)e, (a+1)e+1, \dots, (a+1)e+e-1\}$. This yields the $e$-tuple abacus display of $\tau_{e, r}(L_{\bs}(\blam))$.  
Let us illustrate an example below.

\begin{example}
Let $e=3$, $\bs=(0,0, 2)$ and $\blam=((2), (3,1),(1,1))$. Then $L_{\bs}(\blam)$ is as follows.
\begin{center}
\begin{tikzpicture}[scale=0.5, bb/.style={draw,circle,fill,minimum size=2.5mm,inner sep=0pt,outer sep=0pt}, wb/.style={draw,circle,fill=white,minimum size=2.5mm,inner sep=0pt,outer sep=0pt}]
	
\foreach \x in {10,-9}
\foreach \y in {2,1,0}
{
\node at (\x,\y) {$\cdots$};
}	
	\node [wb] at (9,2) {};
	\node [wb] at (8,2) {};
	\node [wb] at (7,2) {};
	\node [wb] at (6,2) {};
	\node [wb] at (5,2) {};
	\node [wb] at (4,2) {};
	\node [bb] at (3,2) {};
	\node [bb] at (2,2) {};
	\node [wb] at (1,2) {};
	\node [bb] at (0,2) {};
	\node [bb] at (-1,2) {};
	\node [bb] at (-2,2) {};
	\node [bb] at (-3,2) {};
	\node [bb] at (-4,2) {};
	\node [bb] at (-5,2) {};
	\node [bb] at (-6,2) {};
	\node [bb] at (-7,2) {};
	\node [bb] at (-8,2) {};
	
	\node [wb] at (9,1) {};
	\node [wb] at (8,1) {};
	\node [wb] at (7,1) {};
	\node [wb] at (6,1) {};
	\node [wb] at (5,1) {};
	\node [wb] at (4,1) {};
	\node [bb] at (3,1) {};
	\node [wb] at (2,1) {};
	\node [wb] at (1,1) {};
	\node [bb] at (0,1) {};
	\node [wb] at (-1,1) {};
	\node [bb] at (-2,1) {};
	\node [bb] at (-3,1) {};
	\node [bb] at (-4,1) {};
	\node [bb] at (-5,1) {};
	\node [bb] at (-6,1) {};
	\node [bb] at (-7,1) {};
	\node [bb] at (-8,1) {};
	
	\node [wb] at (9,0) {};
	\node [wb] at (8,0) {};
	\node [wb] at (7,0) {};
	\node [wb] at (6,0) {};
	\node [wb] at (5,0) {};
	\node [wb] at (4,0) {};
	\node [wb] at (3,0) {};
	\node [bb] at (2,0) {};
	\node [wb] at (1,0) {};
	\node [wb] at (0,0) {};
	\node [bb] at (-1,0) {};
	\node [bb] at (-2,0) {};
	\node [bb] at (-3,0) {};
	\node [bb] at (-4,0) {};
	\node [bb] at (-5,0) {};
	\node [bb] at (-6,0) {};
	\node [bb] at (-7,0) {};
	\node [bb] at (-8,0) {};
	
	\draw[](-5.5,-0.5)--node[]{}(-5.5,2.5);
	\draw[](-2.5,-0.5)--node[]{}(-2.5,2.5);
	\draw[dashed](0.5,-0.5)--node[]{}(0.5,2.5);
		\draw[](3.5,-0.5)--node[]{}(3.5,2.5);
	\draw[](6.5,-0.5)--node[]{}(6.5,2.5);
	\end{tikzpicture}
\end{center}
The $e$-tuple abacus of $\tau_{e, r}(L_{\bs}(\blam))$ is
\begin{center}
\begin{tikzpicture}[scale=0.5, bb/.style={draw,circle,fill,minimum size=2.5mm,inner sep=0pt,outer sep=0pt}, wb/.style={draw,circle,fill=white,minimum size=2.5mm,inner sep=0pt,outer sep=0pt}]
\foreach \x in {-1,0,1}
\foreach \y in {5,-3.5}
{
\node at (\x,\y) {$\vdots$};
}
\node[bb] at (-1, 4){};
\node[bb] at (-1, 3){};
\node[bb] at (-1, 2){};
\node[bb] at (-1, 1){};
\node[wb] at (-1, 0){};
\node[wb] at (-1, -1){}; 
\node[wb] at (-1, -2){}; 
\node[wb] at (-1, -3){}; 

\node[bb] at (0, 4){};
\node[bb] at (0, 3){};
\node[wb] at (0, 2){};
\node[bb] at (0, 1){};
\node[bb] at (0, 0){};
\node[wb] at (0, -1){}; 
\node[bb] at (0, -2){}; 
\node[wb] at (0, -3){}; 

\node[bb] at (1, 4){};
\node[bb] at (1, 3){};
\node[bb] at (1, 2){};
\node[wb] at (1, 1){};
\node[bb] at (1, 0){};
\node[bb] at (1, -1){}; 
\node[wb] at (1, -2){}; 
\node[wb] at (1, -3){}; 

\draw[dashed](-1.5,0.5)--node[]{}(1.5,0.5);
	\end{tikzpicture}
	\end{center}

\end{example}

It is not difficult to check that the Uglov map is a bijection (see \cite[Section 4.1]{U}) and $s=\sum_is_i$.

\smallskip

In an $r$-abacus $L_\bs(\blam)$, the number of beads in column $k$ is denoted by
$\mathfrak{c}_k(L_\bs(\blam))$, or $\mathfrak{c}_k$ if there is no danger of confusion.
Denote by $\CIRCLE_j^i(\blam, \bs)$ the $j$-th bead from right to left of $L_{s_i}(\blam^{(i)})$.
If there is no danger of confusion, we write it simply as $\CIRCLE_j^i$.
The following lemma is not difficult to check and we omit the proof.

\begin{lemma}\label{bead minus}
For the $(e, \bs)$-abacus $L_\bs(\blam)$ of pair $(\blam, \bs)$, we have
\begin{enumerate}
\item[{\rm (1)}]\, The number of empty positions between $\CIRCLE_x^i$ and $\CIRCLE_{x+1}^i$ is equal to $\blam^{(i)}_x-\blam^{(i)}_{x+1}$.
\item[{\rm (2)}]\, The number of empty positions to the left of $\CIRCLE_j^i$ is equal to $\blam^{(i)}_j$.
\item[{\rm (3)}]\, Assume that the number of beads in $L_{s_i}(\blam^{(i)})$ on the right side of the dashed vertical line (between positions $-1$ and $0$) is $n_{i1}$
and that the number of empty positions on the left side of the dashed vertical line is $n_{i2}$. Then $n_{i1}-n_{i2}=s_i$.
\end{enumerate}
\end{lemma}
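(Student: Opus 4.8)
The plan is to reduce all three assertions to a single explicit formula for bead positions. Recall that $L_{s_i}(\blam^{(i)})=\{\blam^{(i)}_j-j+s_i\mid j\ge 1\}$; since $\blam^{(i)}$ is weakly decreasing, the integers $\blam^{(i)}_j-j$ are \emph{strictly} decreasing in $j$, so listing the occupied positions of $L_{s_i}(\blam^{(i)})$ in decreasing order yields exactly $\blam^{(i)}_1-1+s_i>\blam^{(i)}_2-2+s_i>\cdots$. Hence the $j$-th bead from the right satisfies $\mathrm{pos}(\CIRCLE_j^i)=\blam^{(i)}_j-j+s_i$. (We use throughout that $\blam^{(i)}$ has infinitely many zero parts, so every sufficiently negative position is occupied and every sufficiently positive one is empty; in particular all the counts below are finite.) Assertion (1) is then immediate: $\CIRCLE_x^i$ and $\CIRCLE_{x+1}^i$ are consecutive beads, so every position strictly between them is empty, and there are $(\blam^{(i)}_x-x+s_i)-(\blam^{(i)}_{x+1}-(x+1)+s_i)-1=\blam^{(i)}_x-\blam^{(i)}_{x+1}$ of them.

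For (2) I would argue that every empty position $p$ to the left of $\CIRCLE_j^i$ lies strictly between two \emph{consecutive} beads $\CIRCLE_{k+1}^i$ and $\CIRCLE_k^i$: since all sufficiently negative positions are occupied there is a largest bead-position $<p$, and since $\mathrm{pos}(\CIRCLE_j^i)>p$ is itself a bead-position there is a smallest bead-position $>p$, necessarily $\le\mathrm{pos}(\CIRCLE_j^i)$, so the index $k$ of this bead satisfies $k\ge j$. Conversely, for each $k\ge j$ the gap between $\CIRCLE_{k+1}^i$ and $\CIRCLE_k^i$ consists of empty positions all lying to the left of $\CIRCLE_j^i$. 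Thus the empty positions left of $\CIRCLE_j^i$ form the disjoint union of these gaps, and summing the count from (1) gives the telescoping total $\sum_{k\ge j}(\blam^{(i)}_k-\blam^{(i)}_{k+1})=\blam^{(i)}_j$, using $\blam^{(i)}_k\to 0$.

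Finally, for (3), write $b=n_{i1}$ for the number of beads at positions $\ge 0$; as positions strictly decrease, these are exactly the $b$ rightmost beads $\CIRCLE_1^i,\dots,\CIRCLE_b^i$. If $b\ge 1$, then $\CIRCLE_b^i$ occupies the smallest occupied position that is $\ge 0$, so the positions $0,1,\dots,\mathrm{pos}(\CIRCLE_b^i)-1$ are all empty, giving $\mathrm{pos}(\CIRCLE_b^i)=\blam^{(i)}_b-b+s_i$ empty positions $\ge 0$; together with the $n_{i2}$ empty positions that are $<0$ (all lying to the left of $\CIRCLE_b^i$), part (2) yields $(\blam^{(i)}_b-b+s_i)+n_{i2}=\blam^{(i)}_b$, i.e. $n_{i1}-n_{i2}=s_i$. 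The degenerate case $b=0$ is handled the same way using the rightmost bead $\CIRCLE_1^i$, all positions to its right being empty. Alternatively — and this avoids the case split entirely — one checks by a short three-case argument (according as the moved bead starts at a position $\ge 0$, at $-1$, or $\le -2$) that the quantity $\#\{\text{beads at positions }\ge 0\}-\#\{\text{empty positions at positions }<0\}$ is unchanged when a bead is slid one step rightward into an empty slot; since $L_{s_i}(\blam^{(i)})$ is obtained from $L_{s_i}(\varnothing)$ by such slides (process the beads from right to left) and the quantity equals $s_i$ for $L_{s_i}(\varnothing)$, it equals $s_i$ for $L_{s_i}(\blam^{(i)})$ as well.

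I do not anticipate a genuine obstacle: the statement is essentially a bookkeeping exercise on the abacus, which is why the authors state it without proof. The only points requiring a little care are making precise which subsets of positions are finite (handled by the ``infinitely many zero parts'' convention) and the boundary case $b=0$ in (3), which the invariance argument disposes of uniformly.
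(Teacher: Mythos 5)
Your proof is correct. The paper states this lemma without proof ("not difficult to check and we omit the proof"), so there is nothing to compare against; your argument — deriving the explicit bead-position formula $\mathrm{pos}(\CIRCLE_j^i)=\blam^{(i)}_j-j+s_i$ from the strict monotonicity of $j\mapsto\blam^{(i)}_j-j$, then getting (1) by subtraction, (2) by telescoping over the gaps, and (3) by splitting the empty positions left of $\CIRCLE_b^i$ at the dashed line (or by the slide-invariance argument) — is the standard bookkeeping the authors had in mind, and the boundary case $b=0$ is handled correctly.
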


Given a pair, the associated abacus is clearly uniquely determined.
Conversely,  Lemma \ref{bead minus} implies that one can determine a unique pair by a given abacus.
For this reason, we often do not distinguish between a pair and its associated abacus.

By using Lemma \ref{bead minus} we can study the relationship between the numbers of certain positions in two $r$-abaci. 
Let us define a notation first.

\begin{definition}\label{nxk}
Define $\mathfrak{n}_x^k(L_\bs(\blam))$ to be the number of beads on the right side of the $k$-th position in row $x$.
If there is no danger of confusion, we write it simply as $\mathfrak{n}_x^k$. 
\end{definition}

\begin{lemma}\label{charge minus}
Let $L_\bs(\blam)$ and $L_\bu(\bmu)$ be two $r$-abaci and $1\leq i,\, j\leq r$.
Let $h$ be an integer such that all positions $(i, l)$ in $L_{s_i}(\blam^{(i)})$ and $(j, l)$ in $L_{u_i}(\bmu^{(j)})$ have a bead placed,
where $l\leq h$. Then $\mathfrak{n}_j^h(L_\bu(\bmu))-\mathfrak{n}_i^h(L_\bs(\blam))=u_j-s_i$.
\end{lemma}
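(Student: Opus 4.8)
The plan is to reduce Lemma \ref{charge minus} to part (3) of Lemma \ref{bead minus} by a counting argument. Recall that $\mathfrak{n}_i^h(L_\bs(\blam))$ counts the beads of $L_{s_i}(\blam^{(i)})$ strictly to the right of the $h$-th position in that row, and by hypothesis every position $(i,l)$ with $l\le h$ carries a bead, so the hypothesis pins down the ``left portion'' of the abacus completely: to the left of (and including) position $h$ there are no empty slots. First I would fix the reference point used in Lemma \ref{bead minus}(3), namely the dashed line between positions $-1$ and $0$, and split the bead count for row $i$ into the beads lying in positions $\ge 0$ versus those lying in positions $h+1,\dots,-1$ (when $h<-1$) or, if $h\ge 0$, subtract the finite block of beads in positions $0,1,\dots,h$. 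The key observation is that, because positions $\le h$ are entirely full, the quantity $\mathfrak{n}_i^h(L_\bs(\blam))$ differs from $n_{i1}$ (the number of beads right of the dashed line, in Lemma \ref{bead minus}(3)'s notation) by a correction term that depends only on $h$ and not on $\blam$ or $\bs$.

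Concretely, I would write, for $h\ge 0$,
\[
\mathfrak{n}_i^h(L_\bs(\blam)) = n_{i1} - (h+1),
\]
since positions $0,1,\dots,h$ are all occupied and these are exactly the beads right of the dashed line that are \emph{not} right of position $h$; and for $h<0$,
\[
\mathfrak{n}_i^h(L_\bs(\blam)) = n_{i1} + n_{i2}',
\]
where $n_{i2}'$ is the number of positions strictly between $h$ and $0$, which by the fullness hypothesis all carry beads, hence $n_{i2}' = -h-1$ and this again equals $n_{i1}-(h+1)$. So in both cases $\mathfrak{n}_i^h(L_\bs(\blam)) = n_{i1}-(h+1)$, with the same formula $\mathfrak{n}_j^h(L_\bu(\bmu)) = m_{j1}-(h+1)$ for the other abacus. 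Now Lemma \ref{bead minus}(3) gives $n_{i1}-n_{i2}=s_i$ and $m_{j1}-m_{j2}=u_j$; but the fullness hypothesis also forces $n_{i2}$ and $m_{j2}$ to equal the number of empty positions left of the dashed line, which — since positions $\le h$ are full — are all located in the range $h+1,\dots,-1$, so $n_{i2}=m_{j2}$ depends only on $h$. Subtracting, $\mathfrak{n}_j^h(L_\bu(\bmu))-\mathfrak{n}_i^h(L_\bs(\blam)) = m_{j1}-n_{i1} = (m_{j1}-m_{j2})-(n_{i1}-n_{i2}) = u_j - s_i$, as claimed.

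The main obstacle, and the only thing requiring care, is bookkeeping around the dashed line when $h$ can be positive, negative, or zero: one must check that the ``correction term'' $(h+1)$ (or equivalently the count of empty left-positions $n_{i2}$) is genuinely independent of the partition and the charge, which is exactly where the hypothesis ``all positions $(i,l)$ with $l\le h$ have a bead'' is used — without it the two abaci could have their empty left-positions distributed differently and the cancellation would fail. I would handle this by treating the quantity $\mathfrak{n}_i^h - \mathfrak{c}_i^{\le h}$ abstractly, where $\mathfrak{c}_i^{\le h}$ is the (fixed, by hypothesis) number of positions $\le h$, and noting it equals the number of beads in positions $>h$, so that the difference in Lemma \ref{bead minus}(3) transfers verbatim. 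Everything else is the routine translation between bead counts and the invariants $s_i, u_j$ already recorded in Lemma \ref{bead minus}, so I would simply say ``the proof is a direct computation with Lemma \ref{bead minus}'' and present the three-line cancellation above.
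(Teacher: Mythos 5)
Your overall strategy --- split the bead count at the dashed vertical line and feed the result into Lemma \ref{bead minus}(3) --- is the same as the paper's, and your treatment of the case $h\ge 0$ is correct. But the case $h<0$, which is the main one (the paper reduces to it without loss of generality, since shrinking $h$ within the fully occupied region shifts $\mathfrak{n}_i^h$ and $\mathfrak{n}_j^h$ by the same amount), contains a genuine error. You assert that the positions strictly between $h$ and $0$ ``by the fullness hypothesis all carry beads,'' so that the correction term is $-h-1$ and $\mathfrak{n}_i^h(L_\bs(\blam))=n_{i1}-(h+1)$. The hypothesis only says that positions $l\le h$ are occupied; it says nothing about the window $h+1,\dots,-1$, and in general some of those positions are empty. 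The correct count of beads in that window is $(-h-1)-n_{i2}$, because what the hypothesis actually guarantees is that \emph{all} of the $n_{i2}$ empty positions to the left of the dashed line lie in this window; hence $\mathfrak{n}_i^h=n_{i1}-(h+1)-n_{i2}=-h-1+s_i$ by Lemma \ref{bead minus}(3). It is precisely the $-n_{i2}$ term you dropped that converts the bead count into the charge, so the correction term is emphatically \emph{not} independent of $\blam$ and $\bs$ --- contrary to what you single out as ``the key observation.''

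Your later patch --- that $n_{i2}=m_{j2}$ ``depends only on $h$'' --- is also false, and it is exactly what you would need to rescue the chain $\mathfrak{n}_j^h-\mathfrak{n}_i^h=m_{j1}-n_{i1}=(m_{j1}-m_{j2})-(n_{i1}-n_{i2})$. A concrete counterexample: take $\blam^{(i)}=(2)$ with $s_i=0$, $\bmu^{(j)}=\varnothing$ with $u_j=0$, and $h=-2$. Then $n_{i1}=n_{i2}=1$ while $m_{j1}=m_{j2}=0$; both $\mathfrak{n}_i^{-2}$ and $\mathfrak{n}_j^{-2}$ equal $1$, so the true difference is $0=u_j-s_i$, but your intermediate identity would give $m_{j1}-n_{i1}=-1$. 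Your two errors cancel only because you have implicitly set $n_{i2}=m_{j2}=0$; once the $-n_{i2}$ term is restored in each row separately, the argument goes through and coincides with the paper's proof.
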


\begin{proof}
Without loss of generality, we can assume $h<0$. By dividing the $\mathfrak{n}_j^h(L_\bu(\bmu))$
beads into two parts with the dashed vertical line, one can obtained the following equality:
$\mathfrak{n}_j^h(L_\bu(\bmu))=-h-1-n_{j2}\,+\,n_{j1}$. Note that by Lemma \ref{bead minus} (3), $-n_{j2}+n_{j1}=u_j$
and hence $\mathfrak{n}_j^h(L_\bu(\bmu))=-h-1+u_j$. Similarly, we have $\mathfrak{n}_i^h(L_\bs(\blam))=-h-1+s_i$.
Combining the two equalities above, we complete the proof.
\end{proof}

Let us introduce the definition of subabacus, which is important for subsequent sections.

\begin{definition}\label{subabacus}
Given an abacus $L_{\bs}(\blam)$ and $0\leq i< e$, the diagram obtained
by putting all $j$-th columns with $j\equiv i \,({\rm mod} \,e)$ together
in the original order and with the original labels is called the $i$-th {\em subabacus} of $L_\bs(\blam)$.
For arbitrary $x\in\Z$, we sometimes say $x$-th subabacus,
which means the $i$-th one, $x\equiv i \,({\rm mod} \,e)$ and $0\leq i< e$.
\end{definition}

Clearly, if $e<\infty$, for a given abacus $L_\bs(\blam)$, both the number of the beads in $j-1$-th and $j$-th subabaci are infinite.
However, if we ignore positions $j-1+ke$ and $j+ke$, $k\in\Z$, as long as both them being occupied by a bead,
then in a natural way, we can say the difference between the number of beads in $j-1$-th subabacus and that in $j$-th one.
This induces the following definition.

\begin{definition}\label{beaddifference}
The difference between the number of beads in the $j-1$-th subabacus and that in the $j$-th one is defined to be $\mathfrak{m}^{j-1}_j(L_\bs(\blam))=\sum_{k\in\Z}(\mathfrak{c}_{j-1+ke}-\mathfrak{c}_{j+ke})$.
\end{definition}

For two abaci $L$ and $L'$, we write $L \subseteq L'$ following \cite{JL} if
for each bead in position $i$ of $L$, there is a bead in position $i$ in $L'$.
We also need to recall two subsets of $\mathbb{Z}^r$.
$$\overline{\mathcal A}^r_e:=\{ (s_1, \dots, s_r) \in \mathbb{Z}^r \mid \forall\, i,\, j\in \{1,\dots,r\}, i<j,\, 0\le s_j-s_i\le e\},$$
$${\mathcal A}^r_e:=\{(s_1,\dots,s_r)\in \mathbb{Z}^r \mid \forall\, i,\, j\in \{1,\dots,r\}, i<j, \,0\le s_j-s_i<e\}.$$
Clearly, ${\mathcal A}^r_e\subset\overline{\mathcal A}^r_e$.

\begin{definition}\cite[Definitions 2.8, 2.10]{JL}\label{complete}
An $r$-abacus is called $(e, \bs)$-complete (or ``complete" for simple) if 
$$L_{s_1}(\blam^{(1)})\subset L_{s_2}(\blam^{(2)}) \subset \cdots\subset L_{s_r}(\blam^{(r)}) \subset L_{s_1+e}(\blam^{(1)}).$$
A pair $(\blam, \bs)$ is said to be a reduce $(e, \bs)$-core if its $(e, \bs)$-abacus is $(e, \bs)$-complete.
\end{definition}

By a simple observation,  Jacon and Lecouvey revealed a relation
between reduced $(e, \bs)$-cores and $\overline{\mathcal A}^r_e$.

\begin{lemma}\label{3.1.6}
\cite[Proposition 2.11]{JL}
Given a pair $(\blam, \bs)$, if it is a reduced $(e, \bs)$-core, then $\bs\in \overline{\mathcal A}^r_e$.
\end{lemma}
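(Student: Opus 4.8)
The plan is to unwind the definition of "reduced $(e,\bs)$-core'' in terms of the abacus $L_\bs(\blam)$ and then read off the inequalities $0\le s_j-s_i\le e$ from the nesting condition $L_{s_1}(\blam^{(1)})\subset L_{s_2}(\blam^{(2)})\subset\cdots\subset L_{s_r}(\blam^{(r)})\subset L_{s_1+e}(\blam^{(1)})$. The key observation is that if $L\subseteq L'$ for two $1$-abaci (every bead of $L$ sits at a position also occupied in $L'$), then for any position $h$ far enough to the left that everything to its left is a bead in \emph{both} abaci, the number of beads of $L'$ weakly to the right of $h$ is at least the number for $L$; but more is true, since a subset relation of bead sets with the same "eventual'' left-tail forces the difference to be a \emph{non-negative} integer equal to the difference of the charges. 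So the first step is to record, as a small lemma or inline remark, that $L_{t}(\mu)\subseteq L_{t'}(\mu')$ implies $t'\ge t$, and $L_t(\mu)\subseteq L_{t'}(\mu')$ together with the reverse-type bound gives $t'-t$ bounded.

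Concretely, I would apply Lemma \ref{charge minus} (or just its proof idea via Lemma \ref{bead minus}(3)): given $L_{s_i}(\blam^{(i)})\subseteq L_{s_j}(\blam^{(j)})$ with $i<j$, pick $h\ll 0$ with all positions $\le h$ occupied in both; then $\mathfrak{n}_j^h-\mathfrak{n}_i^h=s_j-s_i$, and since the bead set on the right of $h$ for component $i$ is literally contained in that for component $j$, this difference is $\ge 0$, giving $s_j-s_i\ge 0$. For the upper bound, I would use the final containment $L_{s_r}(\blam^{(r)})\subseteq L_{s_1+e}(\blam^{(1)})$ in the chain: composing the chain gives $L_{s_i}(\blam^{(i)})\subseteq L_{s_1+e}(\blam^{(1)})$ for every $i$, hence $s_1+e-s_i\ge 0$, i.e. $s_i\le s_1+e$; combined with $s_1\le s_i$ this yields $0\le s_i-s_1\le e$. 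Then for a general pair $i<j$ one has $s_j-s_i = (s_j-s_1)-(s_i-s_1)$; the lower bound $s_j-s_i\ge 0$ is already in hand, and for the upper bound $s_j-s_i\le e$ one applies the shifted chain starting at component $i$, namely $L_{s_i}(\blam^{(i)})\subseteq\cdots\subseteq L_{s_r}(\blam^{(r)})\subseteq L_{s_1+e}(\blam^{(1)})\subseteq\cdots\subseteq L_{s_i+e}(\blam^{(i)})$, which gives $L_{s_j}(\blam^{(j)})\subseteq L_{s_i+e}(\blam^{(i)})$ and hence $s_j-s_i\le e$. Thus $\bs\in\overline{\mathcal A}^r_e$.

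The main obstacle is being careful that the containment of infinite bead sets really does translate into the clean numerical statement "difference of charges is a non-negative integer''; this is where one must invoke that a reduced core has all sufficiently-negative positions occupied in every component (so the abaci have a common left tail) and that $L\subseteq L'$ is a genuine containment of sets of occupied positions, not just an inequality of counts. Once that is set up, everything reduces to arithmetic with the quantity $\mathfrak{n}_x^h$ from Definition \ref{nxk} and the identity in Lemma \ref{charge minus}, so the proof should be short. I expect the write-up to be essentially: cite Definition \ref{complete} for the chain, cite Lemma \ref{charge minus} for each link to extract $s_{j}-s_i\ge 0$, and use the wrap-around link $L_{s_r}(\blam^{(r)})\subseteq L_{s_1+e}(\blam^{(1)})$ together with a cyclic shift of the chain to extract $s_j-s_i\le e$.
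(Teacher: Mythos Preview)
Your argument is correct. The paper does not supply its own proof of this lemma; it simply cites \cite[Proposition 2.11]{JL} and describes the result as arising from ``a simple observation.'' Your approach---extracting $s_j\ge s_i$ from the containment $L_{s_i}(\blam^{(i)})\subseteq L_{s_j}(\blam^{(j)})$ via the bead-count identity $\mathfrak{n}^h=-h-1+t$ (the content of Lemma~\ref{charge minus}), and obtaining $s_j-s_i\le e$ from the wrap-around $L_{s_j}(\blam^{(j)})\subseteq L_{s_i+e}(\blam^{(i)})$ by shifting the chain---is exactly the natural argument and is presumably what \cite{JL} does. One minor remark: your cyclic shift step tacitly uses that $L_{s}(\mu)\subseteq L_{s'}(\mu')$ implies $L_{s+e}(\mu)\subseteq L_{s'+e}(\mu')$, which is immediate from $L_{s+e}(\mu)=\{x+e:x\in L_s(\mu)\}$, but is worth stating explicitly in a write-up.
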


For convenience of later use, we give the following definition.

\begin{definition}\label{nonexistposition}
For positions in an $r$-abacus, we set $(r+j, h):=(j, h-e)$ for $1\le j<r$ when $e\ne \infty$.
\end{definition}

Let us collect some simple properties about the $(e, \bs)$-abacus of a pair $(\blam, \bs)$.

\begin{lemma}\label{abacus simple property}
Given a pair $(\blam, \bs)$ and its $(e, \bs)$-abacus, we have
\begin{enumerate}
\item[{\rm (1)}]\, Let $1\leq i\leq r$ and $e<\infty$. If position $(i, l)$ is empty, then there exists some $k\in \mathbb{N}$,
such that there is a bead at position $(i, l-(k+1)e)$ and position $(i, l-ke)$ is empty.
If position $(i, l)$ has a bead, then there exists some $k\in\mathbb{N}$ such that
position $(i, l+ke)$ has a bead and position $(i, l+(k+1)e)$ is empty.
\item[{\rm (2)}]\, Let $\bs\in \overline{\mathcal A}^r_e$. If in $L_{\bs}(\blam)$,
positions $(i, l_t)$ have a bead placed and positions $(i+j, l_t)$ are empty (we assume $e\neq \infty$ whenever $i+j>r$),
where $t=1, \dots, m$, $1\leq i\leq r$ and $1\leq j< r$, then there exist $h_1, \dots, h_m$
such that positions $(i, h_x)$ are empty and positions $(i+j, h_x)$ have a bead placed for $x=1, \dots, m$.
\item[{\rm (3)}]\, Let $e\ne \infty$ and $L_\bs(\blam)$ be $(e, \bs)$-complete.
If there exists a bead in column $l$ of $L_\bs(\blam)$,
then all positions of column $l-ke$ have a bead placed for each $k\in \N^+$.
\item[{\rm (4)}]\, Assume that $s_i+k\le s_j$, for $1\le i, j\le r$ and $k\in \N^+$.
Then there exist $h_1, \dots, h_k\in \mathbb{Z}$ such that in $L_{\bs}(\blam)$,
positions $(i, h_t)$ are empty and positions $(j, h_t)$ have a bead placed, where $1\leq t\leq k$.
In particular, if $L_{\bs}(\blam)$ is complete, then $i<j$ and there exist $h_1, \dots, h_k\in \mathbb{Z}$
such that in $L_\bs(\blam)$, positions $(x, h_t)$ are empty and positions $(y, h_t)$ have a bead placed,
where $1\leq x\leq i$, $j\leq y\leq r$ and $1\leq t\leq k$.
\end{enumerate}
\end{lemma}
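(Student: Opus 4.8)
The four statements are all purely combinatorial assertions about the $(e,\bs)$-abacus $L_\bs(\blam)$, and the plan is to deduce each from the periodicity of abaci together with Lemma~\ref{bead minus}. For (1), the observation is that $L_{s_i}(\blam^{(i)})$, viewed as a subset of $\Z$, contains all sufficiently negative integers and excludes all sufficiently positive integers (since $\blam^{(i)}$ has only finitely many nonzero parts); hence if position $(i,l)$ is empty there must be some occupied position further down in the same column of the $e$-tuple display, i.e.\ some $(i,l-(k+1)e)$ with a bead while $(i,l-ke)$ is empty — take $k$ minimal with $(i,l-ke)$ empty but $(i,l-(k+1)e)$ occupied, which exists because going down far enough everything is occupied. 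The occupied-position case is symmetric, going up. This is essentially immediate from Lemma~\ref{abacus simple property} being a restatement of the ``beads eventually, gaps eventually'' dichotomy.

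For (2), I would argue by a counting/matching argument. Under the hypothesis $\bs\in\overline{\mathcal A}^r_e$, Lemma~\ref{charge minus} (with $h$ taken far enough to the left that everything is occupied in both rows $i$ and $i+j$) controls $\mathfrak n^h_{i+j}-\mathfrak n^h_i$ in terms of the charges; using Definition~\ref{nonexistposition} to make sense of row $i+j$ when $i+j>r$, and the bound $0\le s_j-s_i\le e$, one gets that rows $i$ and $i+j$ have ``the same number of beads up to the correction coming from the charges,'' which in $\overline{\mathcal A}^r_e$ is nonnegative. Since there are $m$ columns $l_t$ where row $i$ is occupied and row $i+j$ is empty, a bead-count comparison (beads to the left of a far-left reference column agree up to a nonnegative quantity, and row $i+j$ cannot have strictly more beads among the finitely many relevant positions without somewhere compensating) forces at least $m$ columns $h_x$ where the reverse occupancy pattern holds. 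I would make this precise by looking at the two rows restricted to, say, all columns $\le N$ for large $N$, where the bead counts are determined by Lemma~\ref{charge minus}, and noting each ``type $(l_t)$'' column contributes $+1$ to (row $i$ count) $-$ (row $i+j$ count) and each ``type $(h_x)$'' column contributes $-1$.

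For (3), completeness gives the chain $L_{s_1}(\blam^{(1)})\subset\cdots\subset L_{s_r}(\blam^{(r)})\subset L_{s_1+e}(\blam^{(1)})$; translating the last inclusion by $e$ repeatedly, a bead in column $l$ of any row propagates to a bead in column $l-e$ of the row above it in the cyclic order, and since the chain closes up after one period of $e$, a bead in column $l$ forces beads in every row at column $l-e$, hence (iterating) at every $l-ke$, $k\in\N^+$. For (4), from $s_i+k\le s_j$ and Lemma~\ref{bead minus}(3) (or Lemma~\ref{charge minus}) row $j$ has at least $k$ more beads than row $i$ ``up to the common infinite tail,'' so a matching argument as in (2) — pair up columns occupied in both rows, columns empty in both rows — leaves at least $k$ columns $h_t$ with $(i,h_t)$ empty and $(j,h_t)$ occupied. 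For the ``in particular'' clause, completeness forces $s_1\le s_2\le\cdots\le s_r$ (so $s_i+k\le s_j$ with $k\ge1$ gives $i<j$), and then the nesting $L_{s_x}\subseteq L_{s_i}$ for $x\le i$ and $L_{s_j}\subseteq L_{s_y}$ for $j\le y\le r$ (plus the periodic closure for the wrap-around cases via Definition~\ref{nonexistposition}) transports the occupancy statement from the pair $(i,j)$ to every pair $(x,y)$ in the stated range.

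I expect the main obstacle to be part (2): one has to be careful that the indices $i+j$ may exceed $r$ (requiring Definition~\ref{nonexistposition} and hence a shift by $e$, which is exactly where the hypothesis $\bs\in\overline{\mathcal A}^r_e$ rather than $\mathcal A^r_e$ is needed), and one must set up the finite window of columns cleanly so that the bead counts are genuinely finite and the $\pm1$ contributions bookkeep correctly; the inequality $s_j-s_i\le e$ must be used precisely to guarantee the relevant count difference has the right sign. The other three parts should be short once the periodicity picture and Lemmas~\ref{bead minus}, \ref{charge minus} are invoked.
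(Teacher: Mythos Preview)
Your proposal is correct and follows essentially the same route as the paper: (1) and (3) are treated as immediate from the abacus/completeness definitions, while (2) and (4) are handled by the bead-count comparison via Lemma~\ref{charge minus}, with (2) split into the cases $i+j\le r$ and $i+j>r$ (the latter using Definition~\ref{nonexistposition} and the inequality $s_i\le s_{i+j-r}+e$ from $\bs\in\overline{\mathcal A}^r_e$). The only cosmetic difference is that the paper derives $i<j$ in the complete case of (4) directly from the nesting $L_{s_1}(\blam^{(1)})\subset\cdots\subset L_{s_r}(\blam^{(r)})$ (a bead at $(j,h_t)$ with $(i,h_t)$ empty is incompatible with $j\le i$), whereas you go via the monotonicity $s_1\le\cdots\le s_r$; both are equivalent one-line observations.
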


\begin{proof}
(1) is easy. (3) is a direct corollary of Definition \ref{complete}. 

(2) We divide the proof into two cases.

\begin{enumerate}
\item  $i+j\le r$. Then $1\le i\le i+j\le r$. Let $h$ be an integer such that all positions $(i, l)$ and $(i+j, l)$ in $L_\bs(\blam)$ have a bead placed for $l\le h$. 
Since $\bs \in \overline{\mathcal A}^r_e$, we have $s_i\le s_{i+j}$. 
By Lemma \ref{charge minus}, $\mathfrak n^h_{i+j}-\mathfrak n^h_i=s_{i+j}-s_i\ge 0$. 
This forces that if in $L_\bs(\blam)$, positions $(i, l_t)$ have a bead placed and $(i+j, l_t)$ are empty, where $t=1, \cdots, m$, then there exist $h_1, \cdots, h_m$ such that position $(i, h_x)$ are empty and positions $(i+j, h_x)$ have a bead placed for $x=1, \cdots, m$. 

\item  $i+j>r$. Recall Definition \ref{nonexistposition}. In this case, $1\le i+j-r<i\le r$. 
Let $h$ be an integer such that all positions $(i, l)$ and $(i+j-r, l)$ in $L_\bs(\blam)$ have a bead placed, 
where $l\le h$. It follows from $\bs \in \overline{\mathcal A}^r_e$ that $s_i\le s_{i+j-r}+e$. 
Note that $\mathfrak n^{h-e}_{i+j-r}=\mathfrak n^h_{i+j-r}+e$. Then by Lemma \ref{charge minus}, 
$\mathfrak n^{h-e}_{i+j-r}-\mathfrak n^h_i=\mathfrak n^h_{i+j-r}+e-\mathfrak n^h_i=s_{i+j-r}+e-s_i\ge 0$. As a result,
If in $L_\bs(\blam)$, position $(i, l_t)$ have a bead placed and $(i+j-r, l_t-e)$ are empty, $t=1, \cdots, m$, 
then there exist $h_1, \cdots, h_m$ such that position $(i, h_x)$ are empty and positions $(i+j-r, h_x-e)$ have a bead placed for $x=1, \cdots, m$. 
\end{enumerate}

(4) Let $l$ be an integer such that in $L_\bs(\blam)$ all positions $(x, y)$ have a bead placed, where $1\leq x\leq r$ and $y\leq l$.
Since $s_i+k\le s_j$, by Lemma \ref{charge minus} $\mathfrak{n}_j^l-\mathfrak{n}_i^l=s_j-s_i\geq k$.
We have proved the first half.
Moreover, if $L_\bs(\blam)$ is complete, then in $L_\bs(\blam)$ position $(i, h)$ being empty
forces positions $(x, h)$ to be empty for $1\leq x\leq i$, $h\in\mathbb{Z}$, and position $(j, h)$
being occupied by a bead implies that all positions $(y, h)$ have a bead placed,
where $j\leq y\leq r$. Consequently, the second half holds.
\end{proof}


\subsection{Orbits of abaci}
In this subsection, we give an explanation by abaci
for the action of an affine Weyl group on blocks.
Given a pair $(\blam, \bs)$, it clearly belongs a unique block.
Denote by $\bLambda_{\bs}$ and $\bbeta_{\blam, \bs}$ the
corresponding elements in dominant weight lattice and positive root lattice, respectively.
For $j\in I$ and pair $(\blam, \bs)$, define an action of $\sigma_j$ on $(\blam, \bs)$
by $\sigma_j(\blam, \bs)=(\sigma_j(\blam), \sigma_j(\bs))$,
where the abacus of $(\sigma_j(\blam), \sigma_j(\bs))$ is obtained
by interchanging columns $j-1+ke$ and $j+ke$ in $L_\bs(\blam)$ for all $k\in \Z$.
A simple result about the action is that the charge is invariant.

\begin{lemma}\label{3.2.1}
For $j\in I$, $\sigma_j(\bs)=\bs$.
\end{lemma}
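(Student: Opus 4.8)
The plan is to track the effect of the column-swap operation on the bead configuration of $L_\bs(\blam)$ and then read off the charge of the resulting pair via Lemma \ref{bead minus}(3), which characterizes each $s_i$ purely in terms of bead counts in the $i$-th row. Concretely, recall that $\sigma_j(\blam,\bs)$ is defined so that its $(e,\bs)$-abacus is obtained from $L_\bs(\blam)$ by interchanging, in every row $i$ simultaneously, the contents of columns $j-1+ke$ and $j+ke$ for all $k\in\Z$. Fix a row $i$. The key observation is that this operation permutes the positions \emph{within the fixed row $i$}: no bead moves to a different row, so the \emph{total number of beads in row $i$ on the right of the dashed line} and the \emph{total number of empty positions in row $i$ on the left of the dashed line} are the only quantities that could conceivably change, and I must check they do not.

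First I would observe that the dashed vertical line sits between columns $-1$ and $0$, so the swap of columns $j-1+ke$ and $j+ke$ always interchanges two columns lying on the \emph{same side} of the dashed line: when $k\ge 0$ both $j-1+ke$ and $j+ke$ are $\ge 0$ (using $0\le j\le e-1$, so $j-1+ke\ge -1$; the only borderline case $k=0,\ j=0$ swaps columns $-1$ and $0$, which are on opposite sides — this case needs separate, easy treatment below), and when $k<0$ both are $<0$. Hence, away from that one borderline pair, swapping two same-side columns leaves unchanged both the multiset of occupied positions to the right of the line and the multiset of empty positions to the left of the line, in every row; in particular the two counts $n_{i1}$ and $n_{i2}$ of Lemma \ref{bead minus}(3) are unchanged for each $i$. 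By Lemma \ref{bead minus}(3), $s_i = n_{i1}-n_{i2}$ is determined by these counts, so the $i$-th charge of $\sigma_j(\blam,\bs)$ equals $s_i$; since $i$ was arbitrary, $\sigma_j(\bs)=\bs$.

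It remains to dispose of the borderline swap, i.e. the columns $-1$ and $0$ occurring when $j=0$ and $k=0$ (if $e=\infty$ there is no $j=0$ vertex to worry about in the same way, but one can treat $\sigma_0$ analogously when it occurs). For a fixed row $i$: interchanging the beads in positions $(i,-1)$ and $(i,0)$ changes $n_{i1}$ by $+1$ exactly when $(i,-1)$ was a bead and $(i,0)$ was empty, and by $-1$ in the opposite situation, and it changes $n_{i2}$ (the number of empty positions on the left) by $-1$ or $+1$ in precisely the same two situations respectively. Thus $n_{i1}-n_{i2}$ is unchanged in every case, and again $s_i$ is preserved. The main (very mild) obstacle here is simply bookkeeping the sign conventions around the dashed line and isolating the $k=0,\ j=0$ exceptional pair; once that is handled, the statement is immediate from Lemma \ref{bead minus}(3). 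Finally, if one prefers an even more conceptual argument, one can instead note that $\bs$ is recovered from $(\blam,\bs)$ as $s_i$ being the limiting ``displacement'' of row $i$ of the abacus, which only depends on the asymptotic bead density of each row; since the column swaps act within rows and fix all but finitely many columns, these asymptotics are untouched — but the finite bead-count argument above already gives a clean proof.
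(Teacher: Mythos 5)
Your proof is correct and takes essentially the same route as the paper, which records Lemma \ref{3.2.1} as a direct corollary of the definition of the $\sigma_j$-action and Lemma \ref{bead minus}(3); you have merely written out the bead-counting that the paper leaves implicit. One small sign slip in your borderline case $j=0$, $k=0$: when position $(i,-1)$ has a bead and $(i,0)$ is empty, the swap makes $(i,-1)$ empty, so $n_{i2}$ \emph{increases} by $1$ (not decreases), matching the increase of $n_{i1}$; with the signs corrected both counts move in the same direction, and $s_i=n_{i1}-n_{i2}$ is indeed unchanged, as you conclude.
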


\begin{proof}
It is a direct corollary of the definition of the action of $\sigma_j$ on a pair and Lemma \ref{bead minus} (3).
\end{proof}

To achieve the target of this subsection, we first do some work on 1-abaci. Given an abacus $L_s(\lam)$, let $x\ge 1$ and $\CIRCLE_x$ at the $h$-th position. If the $h+1$-th position is empty,
then move $\CIRCLE_x$ to the $h+1$-th position and denote by $(\mu, u)$ the new abacus.
By Lemma \ref{bead minus} (2), $\mu_x=\lambda_x+1$, and $\lambda_i=\mu_i$  for each $i\ne x$.
 Moreover, we have from Lemma \ref{bead minus} (3) that $u=s$.
Using this method, each abacus $L_s(\lam)$ can be obtained from $L_s(\varnothing)$ by finite steps.
Clearly, this process is invertible.

\begin{lemma}\label{3.2.2}
If $\CIRCLE_i$ is at $j+ke$-th position in $L_s(\lam)$, where $0\le j\le e-1$,
then the residue of node $(i, \lam_i)$ is $j$.
\end{lemma}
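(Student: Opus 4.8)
The plan is simply to unwind the definitions, the only content being a bookkeeping identity between the position of a bead and the arm-leg data of the corresponding row. First I would recall that, by the very definition of $L_s(\lam)$, there is a bead at position $\lam_a-a+s$ for every $a\in\mathbb{N}^+$. Since $\lam_1\ge\lam_2\ge\cdots$, the assignment $a\mapsto\lam_a-a+s$ is strictly decreasing, hence a bijection of $\mathbb{N}^+$ onto $L_s(\lam)$ carrying $a=i$ to the $i$-th largest element of $L_s(\lam)$. Thus the $i$-th bead counting from the right, namely $\CIRCLE_i$, sits at position $\lam_i-i+s$. (Alternatively one reaches the same value by combining Lemma~\ref{bead minus}(2) — which says the number of empty positions left of $\CIRCLE_i$ equals $\lam_i$ — with the observation that $\CIRCLE_1,\dots,\CIRCLE_{i-1}$ are precisely the beads strictly to the right of $\CIRCLE_i$; but the direct description is the cleanest input.)

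Next I would feed in the hypothesis. Writing the position of $\CIRCLE_i$ as $j+ke$ with $0\le j\le e-1$ gives $\lam_i-i+s=j+ke$, so $\lam_i-i+s\equiv j\pmod e$ when $e<\infty$, while for $e=\infty$ one has $k=0$ and $\lam_i-i+s=j$ by Definition~\ref{e-infinite}. On the other hand, by definition the residue of the node $(i,\lam_i)$ — the last node of row $i$ of $\lam$, with respect to the charge $s$ — is $q^{\lam_i-i+s}$. Since $q$ has multiplicative order $e$, we get $q^{\lam_i-i+s}=q^{j+ke}=q^{j}$, and under the identification of the residue $q^{t}$ with the class $t\bmod e$ in $I=\mathbb{Z}/e\mathbb{Z}$ this is exactly $j$; for $e=\infty$ the conclusion is immediate from $\lam_i-i+s=j$.

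There is essentially no obstacle beyond this unwinding: the only points demanding a little care are matching the indexing conventions — that the bead $\CIRCLE_i$ (counted from the right) corresponds to row $i$ of $\lam$, and that ``residue $j$'' in the statement is to be read as the class of the integer exponent modulo $e$ — together with the routine separate treatment of the case $e=\infty$ via Definition~\ref{e-infinite}.
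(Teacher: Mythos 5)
Your proof is correct and follows essentially the same route as the paper's: both identify the position of $\CIRCLE_i$ as $\lam_i-i+s$ from the definition of $L_s(\lam)$ and then read off the residue $q^{\lam_i-i+s}=q^{j}$ of the node $(i,\lam_i)$. The extra care you take with the monotonicity of $a\mapsto\lam_a-a+s$ and the $e=\infty$ case is fine but not needed beyond what the paper's one-line argument already contains.
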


\begin{proof}
By the definition of abacus $L_s(\lam)$, we have $s+\lambda_i-i=j+ke$, that is,
the residue of node $(i, \lam_i)$ is $j$.
\end{proof}

\begin{lemma}\label{3.2.3}
Assume that in abacus $L_s(\lam)$, position $j+ke$ is empty and there is a bead at position $j-1+ke$, where $0\le j\le e-1$.
Denote by $L_s(\mu)$ the abacus obtained by moving the bead at position $j-1+ke$ to position $j+ke$.
Then $\bbeta_{\mu, s}=\bbeta_{\lambda, s}+\balpha_j$.
\end{lemma}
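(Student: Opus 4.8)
The plan is to read off the effect of the bead move on the Young diagram and then compute the residue of the single node that gets created, using Lemma~\ref{3.2.2}.

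First I would name the bead that moves. Since position $j-1+ke$ carries a bead of $L_s(\lam)$, there is an index $x\ge 1$ with $\CIRCLE_x$ at position $j-1+ke$, i.e. $\lam_x-x+s=j-1+ke$. Because position $j+ke$ is empty, sliding $\CIRCLE_x$ one step to the right produces a genuine abacus, and the discussion preceding Lemma~\ref{3.2.2} (which is Lemma~\ref{bead minus} applied in this situation) shows that the new pair is exactly $(\mu,s)$ with $\mu_x=\lam_x+1$ and $\mu_i=\lam_i$ for all $i\ne x$. Hence $[\mu]=[\lam]\sqcup\{(x,\lam_x+1)\}$: the diagram of $\mu$ is that of $\lam$ with one extra node.

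Next I would determine the residue of that node. After the move, $\CIRCLE_x$ occupies position $j+ke$, and it is still the $x$-th bead from the right of $L_s(\mu)$: all beads lying to its left in $L_s(\lam)$ sit at positions $<j-1+ke$, while each bead to its right sits at a position $\ge j+ke$ and hence $>j+ke$ since position $j+ke$ is empty; so after the move $\CIRCLE_x$ still separates the same two families of beads. Since $0\le j\le e-1$, Lemma~\ref{3.2.2} applied to $L_s(\mu)$ gives that the new node $(x,\mu_x)$ has residue $q^{j}$ (with $j$ read modulo $e$, and Definition~\ref{e-infinite} covering the case $e=\infty$).

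Combining the two observations, $c_{q^{j}}(\mu)=c_{q^{j}}(\lam)+1$ while $c_f(\mu)=c_f(\lam)$ for every residue $f\ne q^{j}$. Plugging this into $\bbeta_{\nu,s}=\sum_{i\in I}c_{q^i}(\nu)\balpha_i$ for $\nu\in\{\lam,\mu\}$ yields $\bbeta_{\mu,s}=\bbeta_{\lam,s}+\balpha_j$, as desired. The one point needing a little care is the bookkeeping in the third paragraph---checking that moving the bead into the empty slot does not change which bead is ``the $x$-th from the right''---but this is immediate from $\lam$ being a partition together with Lemma~\ref{bead minus}; everything else is a direct substitution into the definitions, so I do not expect any serious obstacle.
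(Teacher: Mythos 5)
Your proof is correct and follows essentially the same route as the paper's: identify the moved bead, observe that the partition gains exactly one node $(x,\lam_x+1)$, and read off its residue $j$ from Lemma \ref{3.2.2} applied to the new abacus. The extra bookkeeping you supply (that the moved bead stays the $x$-th from the right, so the new beta-numbers really are those of $\mu$ with $\mu_x=\lam_x+1$) is implicit in the paper's discussion preceding Lemma \ref{3.2.2}, and is checked correctly.
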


\begin{proof}
Suppose that the bead at position $j-1+ke$ is $\CIRCLE_i$. Then $\mu_i=\lambda_i+1$
and $\mu_x=\lambda_x$ for all $x\ne i$. We can deduce from Lemma \ref{3.2.2} that the residue of node $(i, \mu_i)$ in $[\mu]$ is $j$.
That is, $\bbeta_{\mu, s}=\bbeta_{\lambda, s}+\balpha_j$.
\end{proof}

Recall Definition \ref{beaddifference} of $\mathfrak{m}^{j-1}_j$.

\begin{lemma}\label{3.2.4}
Given an abacus $L_s(\lam)$ and $j\in I$, we have
$\bbeta_{\sigma_j(\lambda), s}=\bbeta_{\lambda, s}+\mathfrak{m}^{j-1}_j\balpha_j.$
\end{lemma}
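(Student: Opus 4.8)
The plan is to realize the column interchange defining $\sigma_j$ as a finite composition of one-step bead moves of the type treated in Lemma \ref{3.2.3} (and their inverses), and then to sum the resulting changes in $Q_+$.

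First I would fix the notation $p_k=j-1+ke$ and $q_k=j+ke$ for $k\in\Z$, so that by definition the abacus of $\sigma_j(\lambda)$ is obtained from $L_s(\lambda)$ by exchanging, for every $k$, the contents (bead or empty) of positions $p_k$ and $q_k$. Since $e\ge 2$ (the multiplicative order of $q\ne 1$ is always at least $2$; the case $e=\infty$ is the degenerate one in which only the two positions $j-1$ and $j$ are swapped), all of these positions are pairwise distinct, and since $L_s(\lambda)$ has all sufficiently small positions occupied and only finitely many positions occupied altogether, the exchange is nontrivial for only finitely many $k$. I then set $K^{+}$ to be the set of $k$ for which $p_k$ is occupied and $q_k$ is empty in $L_s(\lambda)$, and $K^{-}$ the set of $k$ for which $p_k$ is empty and $q_k$ is occupied; these are exactly the levels at which the exchange does something.

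Next, because the positions involved at distinct levels are disjoint, I can carry out the exchanges one level at a time, in any order, passing through a chain of abaci from $L_s(\lambda)$ to the abacus of $\sigma_j(\lambda)$. At a level $k\in K^{+}$ the exchange is precisely the move of Lemma \ref{3.2.3} (push the bead at $j-1+ke$ into the empty position $j+ke$), so it adds $\balpha_j$ to the associated element of $Q_+$; at a level $k\in K^{-}$ it is the inverse of such a move, so it subtracts $\balpha_j$. Here Lemma \ref{3.2.3} is applied to each intermediate abacus, which is legitimate because that lemma is stated for an arbitrary abacus, and Lemma \ref{3.2.1} ensures the charge remains $s$ at every stage. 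Adding up the contributions gives
$$\bbeta_{\sigma_j(\lambda),s}=\bbeta_{\lambda,s}+(|K^{+}|-|K^{-}|)\,\balpha_j,$$
and since $\mathfrak{c}_{p_k}-\mathfrak{c}_{q_k}$ is $1$, $-1$, or $0$ according as $k\in K^{+}$, $k\in K^{-}$, or neither, one has $|K^{+}|-|K^{-}|=\sum_{k\in\Z}(\mathfrak{c}_{j-1+ke}-\mathfrak{c}_{j+ke})=\mathfrak{m}^{j-1}_j$ by Definition \ref{beaddifference}, which is the claim.

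I do not anticipate a real obstacle; the argument is essentially bookkeeping. The only points requiring a little care are the identification of the simultaneous column interchange with the composite of the individual position swaps — this is exactly where disjointness of the positions (hence $e\ge 2$) is used — and the observation that Lemma \ref{3.2.3} must be invoked on the intermediate abaci rather than on $L_s(\lambda)$ itself. The genuinely degenerate situations ($e=\infty$, and levels where $p_k,q_k$ have the same status) are immediate.
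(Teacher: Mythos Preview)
Your proposal is correct and follows essentially the same approach as the paper: the paper also partitions the nontrivial levels into those where $j-1+ke$ is occupied and $j+ke$ empty (your $K^+$, their set $Y$ of size $l+\mathfrak{m}^{j-1}_j$) and those where the roles are reversed (your $K^-$, their set $X$ of size $l$), then applies Lemma~\ref{3.2.3} and its inverse to obtain the net change $\mathfrak{m}^{j-1}_j\,\balpha_j$. One small wording slip: an abacus has infinitely many beads, not ``only finitely many positions occupied altogether''; what you need (and use) is that only finitely many levels $k$ have $p_k,q_k$ of differing status, which follows since all sufficiently small positions are occupied and all sufficiently large ones are empty.
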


\begin{proof}
Firstly, let us find all pairs of positions that need to interchange under $\sigma_j$.
Assume $X=\{b_i \mid i=1, \dots, l\}$ are all integers such that
position $j-1+b_ie$ is empty and position $j+b_ie$ has a bead. Note that $X$ may be empty.
Therefore, there is a set $Y=\{a_h\in \Z\mid h=1, \dots, l+\mathfrak{m}^{j-1}_j\}$ such that
position $j-1+a_he$ has a bead and position $j+a_he$ is empty. By Lemma \ref{3.2.3},
$\bbeta_{\sigma_j(\lambda, s)}=\bbeta_{\lambda, s}+(\mathfrak{m}^{j-1}_j+l)\balpha_j-l\balpha_j=\bbeta_{\lambda, s}+\mathfrak{m}^{j-1}_j\balpha_j.$
\end{proof}

The $r$-version of Lemma \ref{3.2.4} is not difficult to know. We write it below without a proof.

\begin{lemma}\label{rversion3.2.4}
Given an abacus $L_\bs(\blam)$ and $j \in I$, we have
$$\bbeta_{\sigma_j(\blam), \bs}=\bbeta_{\blam, \bs}+\mathfrak{m}^{j-1}_j\balpha_j.$$ 
\end{lemma}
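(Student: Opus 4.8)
The statement to prove is Lemma~\ref{rversion3.2.4}, the $r$-abacus analogue of Lemma~\ref{3.2.4}, asserting that $\bbeta_{\sigma_j(\blam), \bs}=\bbeta_{\blam, \bs}+\mathfrak{m}^{j-1}_j\balpha_j$, where the difference $\mathfrak{m}^{j-1}_j=\mathfrak{m}^{j-1}_j(L_\bs(\blam))$ is given in Definition~\ref{beaddifference} as $\sum_{k\in\Z}(\mathfrak{c}_{j-1+ke}-\mathfrak{c}_{j+ke})$, summed over all $r$ rows. The plan is to reduce the $r$-row case to the one-row case already established in Lemma~\ref{3.2.4} by observing that the operator $\sigma_j$ acts on the $r$-abacus $L_\bs(\blam)$ by interchanging columns $j-1+ke$ and $j+ke$ \emph{independently in each of the $r$ rows}; that is, $\sigma_j$ is simply the "product" of the single-row operators $\sigma_j$ applied to each component abacus $L_{s_i}(\blam^{(i)})$.

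First I would make precise the bookkeeping that converts Lemma~\ref{3.2.3} into the $r$-abacus setting: in $L_\bs(\blam)$, each individual move of a bead from position $(i, j-1+ke)$ to position $(i, j+ke)$ within row $i$ adds exactly one node of residue $j$ to the component $\blam^{(i)}$ (this follows from Lemma~\ref{3.2.2} and Lemma~\ref{bead minus}(2) applied in row $i$ with charge $s_i$, and the residue depends only on $j-i+s_k$ reduced mod $e$, hence only on $j$), so it changes $\bbeta_{\blam, \bs}$ by $+\balpha_j$. Conversely a move in the reverse direction changes it by $-\balpha_j$. Since $\sigma_j$ permutes the beads inside each row $i$ independently, the net change in $\bbeta_{\blam, \bs}$ from row $i$ is $\mathfrak{m}^{j-1}_j(L_{s_i}(\blam^{(i)}))\,\balpha_j$ by exactly the counting argument in the proof of Lemma~\ref{3.2.4}: letting $X_i$ be the set of $k$ with position $(i, j-1+ke)$ empty and $(i, j+ke)$ occupied, and $Y_i$ the set with $(i, j-1+ke)$ occupied and $(i, j+ke)$ empty, one has $|Y_i|-|X_i|=\mathfrak{m}^{j-1}_j(L_{s_i}(\blam^{(i)}))$, and the interchange effects $|Y_i|$ rightward moves and $|X_i|$ leftward moves, for a net $(|Y_i|-|X_i|)\balpha_j$.

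Then I would sum over $i=1,\dots,r$. Because $\mathfrak{c}_{j-1+ke}$ and $\mathfrak{c}_{j+ke}$ (the column bead-counts in the full $r$-abacus) decompose as sums over rows of the corresponding single-row column occupancies, Definition~\ref{beaddifference} gives
\[
\mathfrak{m}^{j-1}_j(L_\bs(\blam))=\sum_{i=1}^r \mathfrak{m}^{j-1}_j(L_{s_i}(\blam^{(i)})),
\]
and the total change is $\bigl(\sum_i \mathfrak{m}^{j-1}_j(L_{s_i}(\blam^{(i)}))\bigr)\balpha_j=\mathfrak{m}^{j-1}_j(L_\bs(\blam))\,\balpha_j$, which is the claim. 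The invariance of the multicharge under $\sigma_j$ (Lemma~\ref{3.2.1}) is what legitimizes writing $\bbeta_{\sigma_j(\blam),\bs}$ with the same $\bs$, so I would cite that too.

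**Main obstacle.** There is essentially no serious obstacle here — this is why the authors state it without proof. The only point requiring a little care is the convergence/well-definedness of the infinite sums $\sum_{k\in\Z}(\mathfrak{c}_{j-1+ke}-\mathfrak{c}_{j+ke})$ and of the sets $X_i, Y_i$: one must note that for $k$ sufficiently negative all positions are occupied (so the summand is $0$) and for $k$ sufficiently positive all positions are empty (so the summand is again $0$), exactly as in the $e<\infty$ discussion preceding Definition~\ref{beaddifference}; thus only finitely many terms are nonzero and each $X_i, Y_i$ is finite. If $e=\infty$ the statement is interpreted via Definition~\ref{e-infinite} and the sum over $k$ collapses to a single term, so the argument is even simpler. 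Given that, the proof is a one-paragraph reduction to Lemma~\ref{3.2.4}.
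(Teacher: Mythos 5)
Your proof is correct; the paper states this lemma without proof, and your reduction to the single-row Lemma \ref{3.2.4} --- using that $\sigma_j$ acts row by row and that both $\bbeta_{\blam,\bs}=\sum_i\bbeta_{\blam^{(i)},s_i}$ and $\mathfrak{m}^{j-1}_j(L_\bs(\blam))=\sum_i\mathfrak{m}^{j-1}_j(L_{s_i}(\blam^{(i)}))$ are additive over components --- is exactly the intended argument. Your remarks on the finiteness of the sums and on the $e=\infty$ case are careful but unproblematic extras.
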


The following result can be proved by induction. We omit the details here.

\begin{lemma}\label{3.2.5}
For $j\in I$ and a pair $(\lam, s)$, we have $\sigma_j(\bLambda_{s}-\bbeta_{\lambda, s})=\bLambda_{s}-\bbeta_{\sigma_j(\lambda), s}$.
\end{lemma}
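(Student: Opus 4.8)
The statement to prove is Lemma \ref{3.2.5}: for $j\in I$ and a pair $(\lam,s)$, we have $\sigma_j(\bLambda_s-\bbeta_{\lambda,s})=\bLambda_s-\bbeta_{\sigma_j(\lambda),s}$.

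The plan is to compute both sides using the linearity of $\sigma_j$ on $\mathfrak h^\ast$ together with the explicit formula $\sigma_j\bLambda=\bLambda-(\bLambda,\balpha_j)\balpha_j$. Expanding the left-hand side gives $\sigma_j(\bLambda_s)-\sigma_j(\bbeta_{\lambda,s})$, and by the reflection formula this equals $\bLambda_s-(\bLambda_s,\balpha_j)\balpha_j-\bbeta_{\lambda,s}+(\bbeta_{\lambda,s},\balpha_j)\balpha_j$. Comparing with the right-hand side $\bLambda_s-\bbeta_{\sigma_j(\lambda),s}$ and invoking Lemma \ref{rversion3.2.4} (or its $1$-abacus version Lemma \ref{3.2.4}), which says $\bbeta_{\sigma_j(\lambda),s}=\bbeta_{\lambda,s}+\mathfrak m^{j-1}_j\balpha_j$, the identity reduces to the scalar equation
\[
(\bLambda_s,\balpha_j)-(\bbeta_{\lambda,s},\balpha_j)=\mathfrak m^{j-1}_j.
\]
So the whole lemma comes down to establishing this one numerical identity: the value of the defect-type pairing $(\bLambda_s-\bbeta_{\lambda,s},\balpha_j)$ equals the bead-difference invariant $\mathfrak m^{j-1}_j$ of the abacus $L_s(\lam)$.

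First I would unwind $(\bLambda_s,\balpha_j)$ using $(\bLambda_i,\balpha_j)=\delta_{ij}$, so that $(\bLambda_s,\balpha_j)=k_j$, the number of charges $s_i$ congruent to $j\pmod e$; in the $1$-abacus case this is simply the count governed by the initial configuration $L_s(\varnothing)$. Next I would compute $(\bbeta_{\lambda,s},\balpha_j)$ by writing $\bbeta_{\lambda,s}=\sum_i c_{q^i}(\lambda)\balpha_i$ and using the Cartan pairing $(\balpha_i,\balpha_j)=a_{i,j}$; this expresses the pairing in terms of the residue counts $c_{q^{j-1}}(\lambda)$, $c_{q^j}(\lambda)$, $c_{q^{j+1}}(\lambda)$ (with the appropriate modification when $e=2$). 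The cleanest route, however, is probably inductive, mirroring how $L_s(\lam)$ is built from $L_s(\varnothing)$ by single-bead moves as described just before Lemma \ref{3.2.2}: check the base case $\lam=\varnothing$ directly, then show that each elementary move of a bead from position $h$ to $h+1$ changes both sides of the scalar identity by the same amount. Moving a bead into residue class $j$ adds $\balpha_j$ to $\bbeta$ (Lemma \ref{3.2.3}), decreasing $(\bLambda_s-\bbeta,\balpha_j)$ by $(\balpha_j,\balpha_j)=2$ and changing neighboring contributions by $+1$ each; simultaneously one tracks how $\mathfrak m^{j-1}_j$ and $\mathfrak m^{j}_{j+1}$ change as the bead count in each subabacus shifts. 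Matching these bookkeeping changes is routine.

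The main obstacle is the case $e=2$, where $a_{i,j}=-2$ for $i=j+1$ and the quiver $A^{(1)}_1$ has a double arrow; there the roles of $\balpha_{j-1}$ and $\balpha_{j+1}$ coincide and the pairing computation must be done with the correct Cartan entries, so the "neighboring contributions" argument needs a separate check. (One could also simply note that Lemma \ref{3.2.5} as stated concerns the $1$-abacus, i.e.\ type $B$/$A$ data, and the $r$-version follows by the same linear-algebra manipulation once Lemma \ref{rversion3.2.4} is in hand.) Apart from that edge case, everything is a direct consequence of the linearity of $\sigma_j$, the explicit reflection formula, and Lemma \ref{rversion3.2.4}, so the proof the authors give is almost certainly the two-line expansion "apply $\sigma_j$ termwise, use $\sigma_j\bLambda=\bLambda-(\bLambda,\balpha_j)\balpha_j$ and the definition, then cite Lemma \ref{rversion3.2.4}."
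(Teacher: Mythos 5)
Your proposal is correct: the reduction via $\sigma_j(x)=x-(x,\balpha_j)\balpha_j$ and Lemma \ref{3.2.4} to the scalar identity $(\bLambda_s-\bbeta_{\lambda,s},\balpha_j)=\mathfrak m^{j-1}_j$, followed by induction on single-bead moves building $L_s(\lam)$ from $L_s(\varnothing)$, works, and your bookkeeping (including the $e=2$ case, where the move changes both sides by $2$ rather than by $2$ and two separate $-1$'s) checks out. The paper omits the proof entirely, saying only that it ``can be proved by induction,'' so your argument is precisely the kind of induction the authors had in mind; note also that the identity you isolate is exactly the content of the paper's later Lemma \ref{new4.3.1}, which is derived \emph{from} this lemma, so proving it directly as you do is the right way to avoid circularity.
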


The $r$-version of Lemma \ref{3.2.5} is as follows.

\begin{proposition}\label{3.2.6}
For $0\le j\le e-1$ and pair $(\blam, \bs)$, we have
$$\sigma_j(\bLambda_{\bs}-\bbeta_{\blam, \bs})=\bLambda_{\bs}-\bbeta_{\sigma_j(\blam), \bs}.$$
\end{proposition}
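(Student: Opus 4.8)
The plan is to reduce the statement to its one-abacus counterpart, Lemma~\ref{3.2.5}, by decomposing both $\bLambda_{\bs}$ and $\bbeta_{\blam,\bs}$ over the $r$ rows of the abacus. Recall that $L_\bs(\blam)$ is built by stacking the one-abaci $L_{s_1}(\blam^{(1)}),\dots,L_{s_r}(\blam^{(r)})$ with all columns aligned. Interchanging columns $j-1+ke$ and $j+ke$ of $L_\bs(\blam)$ for all $k\in\Z$ therefore does nothing more than interchange those same columns inside each one-abacus $L_{s_i}(\blam^{(i)})$; reading off rows (and using Lemma~\ref{3.2.1}, equivalently Lemma~\ref{bead minus}(3), to see each charge is unchanged), this shows $\bigl(\sigma_j(\blam)\bigr)^{(i)}=\sigma_j\bigl(\blam^{(i)}\bigr)$ for all $i$, where on the right $\sigma_j$ is the one-abacus action on the pair $(\blam^{(i)},s_i)$, and $\sigma_j(\bs)=\bs$. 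In particular each $\sigma_j(\blam^{(i)})$ is again a genuine partition, since only finitely many of the swaps are non-trivial.

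The second ingredient consists of two additivity observations. From the definition of the dominant weight attached to a multicharge, $\bLambda_{\bs}=\sum_{i=1}^r\bLambda_{s_i}$, where $\bLambda_{s_i}$ denotes the fundamental weight $\bLambda_{s_i\bmod e}$ attached to the one-abacus of charge $s_i$. From the definition of the residue of a node, $c_f(\blam)=\sum_{i=1}^r c_f(\blam^{(i)})$ for every $f$, whence $\bbeta_{\blam,\bs}=\sum_{i=1}^r\bbeta_{\blam^{(i)},s_i}$; likewise $\bbeta_{\sigma_j(\blam),\bs}=\sum_{i=1}^r\bbeta_{\sigma_j(\blam^{(i)}),s_i}$ by the previous paragraph. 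With these at hand the claim drops out: applying Lemma~\ref{3.2.5} to each pair $(\blam^{(i)},s_i)$, summing over $i$, and using the linearity of $\sigma_j$ on $\mathfrak h^\ast$,
$$\sigma_j\bigl(\bLambda_{\bs}-\bbeta_{\blam,\bs}\bigr)=\sum_{i=1}^r\sigma_j\bigl(\bLambda_{s_i}-\bbeta_{\blam^{(i)},s_i}\bigr)=\sum_{i=1}^r\bigl(\bLambda_{s_i}-\bbeta_{\sigma_j(\blam^{(i)}),s_i}\bigr)=\bLambda_{\bs}-\bbeta_{\sigma_j(\blam),\bs}.$$

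I expect the only real, and very mild, obstacle to be the first step: one must be sure that the column swap defining the $r$-abacus action restricts, row by row, to the one-abacus action of the \emph{same} reflection $\sigma_j$, and that nothing goes wrong when passing back and forth between an $r$-abacus and its rows. Everything else is bookkeeping plus the linearity of $\sigma_j$. As an alternative not going through Lemma~\ref{3.2.5}, one could expand $\sigma_j\bLambda=\bLambda-(\bLambda,\balpha_j)\balpha_j$ and $\sigma_j\balpha_i=\balpha_i-a_{ij}\balpha_j$, reducing the proposition to the single numerical identity $(\bLambda_{\bs}-\bbeta_{\blam,\bs},\balpha_j)=\mathfrak m^{j-1}_j(L_\bs(\blam))$, which one then matches against Lemma~\ref{rversion3.2.4}; but this merely relocates the work, so the component-wise reduction seems the cleanest route.
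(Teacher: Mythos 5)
Your proposal is correct and follows essentially the same route as the paper: the paper's proof is precisely the decomposition $\sigma_j(\bLambda_{\bs}-\bbeta_{\blam,\bs})=\sum_{i=1}^r\sigma_j(\bLambda_{s_i}-\bbeta_{\blam^{(i)},s_i})$ followed by an application of Lemma~\ref{3.2.5} to each component and re-summing. You merely spell out the implicit facts (row-by-row compatibility of the column swap and additivity of $\bLambda_{\bs}$ and $\bbeta_{\blam,\bs}$) that the paper takes for granted.
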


\begin{proof}
It follows from Lemma \ref{3.2.5} that

$\sigma_j(\bLambda_{\bs}-\bbeta_{\blam, \bs})
=\sum^r_{i=1}\sigma_j(\bLambda_{s_i}-\bbeta_{\blam^{(i)}, s_i}) 
=\sum^r_{i=1}(\bLambda_{s_i}-\bbeta_{\sigma_j(\blam^{(i)}), s_i}) 
=\bLambda_{\bs}-\bbeta_{\sigma_j(\blam), \bs}.$
\end{proof}

\begin{lemma}\label{new4.3.1}
Assume that the pair $(\blam, \bs)$ is in block $\mathcal H^\bLambda_\bbeta$. Then for $0\leq j \leq e-1$, $$\mathfrak{m}^{j-1}_j(L_\bs(\blam))=(\balpha_j, \bLambda-\bbeta).$$
\end{lemma}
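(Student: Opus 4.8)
The plan is to relate the combinatorial quantity $\mathfrak{m}^{j-1}_j(L_\bs(\blam))$ directly to the action of $\sigma_j$ via the two tools already assembled, namely Lemma~\ref{rversion3.2.4} and Proposition~\ref{3.2.6}. First I would apply Proposition~\ref{3.2.6} with the given $j$: since $(\blam,\bs)$ lies in $\mathcal{H}^\bLambda_\bbeta$ we have $\bLambda_\bs=\bLambda$ and $\bbeta_{\blam,\bs}=\bbeta$, so $\sigma_j(\bLambda-\bbeta)=\bLambda-\bbeta_{\sigma_j(\blam),\bs}$. On the other hand, Lemma~\ref{rversion3.2.4} gives $\bbeta_{\sigma_j(\blam),\bs}=\bbeta+\mathfrak{m}^{j-1}_j\balpha_j$, hence $\sigma_j(\bLambda-\bbeta)=\bLambda-\bbeta-\mathfrak{m}^{j-1}_j\balpha_j$.

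Next I would compare this with the intrinsic formula for the simple reflection on a weight. By the defining property recalled in Section~3.1, $\sigma_j(\bLambda-\bbeta)=(\bLambda-\bbeta)-(\bLambda-\bbeta,\balpha_j)\balpha_j$. Setting the two expressions equal yields $\mathfrak{m}^{j-1}_j\balpha_j=(\bLambda-\bbeta,\balpha_j)\balpha_j$, and since $\balpha_j\neq 0$ we conclude $\mathfrak{m}^{j-1}_j(L_\bs(\blam))=(\balpha_j,\bLambda-\bbeta)$, using symmetry of the bilinear form. That is the whole argument.

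Strictly speaking there is a subtlety in the first step: $\sigma_j$ acts on $\bLambda-\bbeta$ as an element of the weight lattice $P$, and one should check that the formula $\sigma_j\bLambda=\bLambda-(\bLambda,\balpha_j)\balpha_j$ quoted for $\bLambda\in P$ applies here. This is immediate since $\bLambda-\bbeta\in P$ (as $\bLambda\in P_+\subseteq P$ and $\bbeta\in Q_+\subseteq P$), so no real obstacle arises. The only point requiring a moment's care is bookkeeping the identification of $\bLambda_\bs$ with $\bLambda$ and of $\bbeta_{\blam,\bs}$ with $\bbeta$ coming from the hypothesis that $(\blam,\bs)$ sits in the block $\mathcal{H}^\bLambda_\bbeta$; once that is made explicit the proof is a two-line combination of the already-proven lemmas, so I do not anticipate any genuine difficulty.
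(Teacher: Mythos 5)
Your proof is correct and is essentially identical to the paper's: the paper likewise combines Lemma \ref{rversion3.2.4}, Proposition \ref{3.2.6} and the reflection formula $\sigma_j(x)=x-(x,\balpha_j)\balpha_j$, then compares coefficients of $\balpha_j$. (Your aside about $\bbeta\in Q_+\subseteq P$ is not quite accurate in affine type, but it is harmless since the reflection formula holds on all of $\mathfrak h^\ast$.)
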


\begin{proof}
For  $0\leq j \leq e-1$, it follows from Lemmas \ref{rversion3.2.4}, \ref{3.2.6} and the definition of $\sigma_j$ that

$\bLambda-\bbeta-(\balpha_j, \bLambda-\bbeta)\balpha_j=\sigma_j(\bLambda-\bbeta)=\bLambda-\bbeta_{\sigma_j(\blam), \bs}
=\bLambda-\bbeta_{\blam, \bs}-\mathfrak{m}^{j-1}_j\balpha_j.$
\end{proof}


\subsection{Incomparable abaci}
In this subsection we introduce the so-called incomparable abaci.
The main result is that from incomparable abaci
one can construct incomparable $r$-partitions with respect to the dominance order.

\begin{definition}\label{incomparable definition}
Given two $r$-abaci $L_\bs(\blam)$ and $L_\bs(\bmu)$ with $|\blam|=|\bmu|$.
Assume that there exist $\iota_1, \iota_2\in \mathbb{Z}$ and $\kappa_1\neq \kappa_2$, $1\leq \kappa_1, \kappa_2\leq r$ such that
\begin{enumerate}
\item[{\rm (1)}] In $L_\bs(\blam)$, position $(\kappa_1, \iota_1)$ has a bead and position $(\kappa_2, \iota_2)$ is empty,
and in $L_\bs(\bmu)$, position $(\kappa_1, \iota_1)$ is empty and position $(\kappa_2, \iota_2)$ has a bead.

\item[{\rm (2)}] The beads on the right side of $\iota_1$-th position in $L_{s_{\kappa_{1}}}(\blam^{(\kappa_{1})})$
are the same as those in $L_{s_{\kappa_{1}}}(\bmu^{(\kappa_1)})$,
and the beads on the left side of $\iota_2$-th position in $L_{s_{\kappa_{2}}}(\blam^{(\kappa_2)})$ are the same as those in $L_{s_{\kappa_{2}}}(\bmu^{(\kappa_2)})$.
\end{enumerate}
Then we say $L_\bs(\blam)$ and $L_\bs(\bmu)$ are incomparable, which will be denoted by $L_\bs(\blam)\parallel L_\bs(\bmu)$, or $L_\bs(\bmu)\parallel L_\bs(\blam)$.
\end{definition}

\begin{example}\label{3.3.2}
Let $e=5$, $\blam=((2, 1, 1), (2, 2, 1, 1), (3, 1, 1), (4, 3, 1, 1))$, and $\bs=(1,0 ,2, 0)$. Then $L_\bs(\blam)$ is

\begin{center}

\begin{tikzpicture}[scale=0.5, bb/.style={draw,circle,fill,minimum size=2.5mm,inner sep=0pt,outer sep=0pt},
wb/.style={draw,circle,fill=white,minimum size=2.5mm,inner sep=0pt,outer sep=0pt}]

\foreach \x in {9,-10}
\foreach \y in {4, 3, 2, 1}
{
\node at (\x,\y) {$\cdots$};
}

    \node [wb] at (8,4) {};
	\node [wb] at (7,4) {};
	\node [wb] at (6,4) {};
	\node [wb] at (5,4) {};
	\node [wb] at (4,4) {};
	\node [bb] at (3,4) {};
	\node [wb] at (2,4) {};
	\node [bb] at (1,4) {};
	\node [wb] at (0,4) {};
	\node [wb] at (-1,4) {};
	\node [bb] at (-2,4) {};
	\node [bb] at (-3,4) {};
	\node [wb] at (-4,4) {};
	\node [bb] at (-5,4) {};
	\node [bb] at (-6,4) {};
	\node [bb] at (-7,4) {};
	\node [bb] at (-8,4) {};
	\node [bb] at (-9,4) {};

	\node [wb] at (8,3) {};
	\node [wb] at (7,3) {};
	\node [wb] at (6,3) {};
	\node [wb] at (5,3) {};
	\node [bb] at (4,3) {};
	\node [wb] at (3,3) {};
	\node [wb] at (2,3) {};
	\node [bb] at (1,3) {};
	\node [bb] at (0,3) {};
	\node [wb] at (-1,3) {};
	\node [bb] at (-2,3) {};
	\node [bb] at (-3,3) {};
	\node [bb] at (-4,3) {};
	\node [bb] at (-5,3) {};
	\node [bb] at (-6,3) {};
	\node [bb] at (-7,3) {};
	\node [bb] at (-8,3) {};
	\node [bb] at (-9,3) {};

	\node [wb] at (8,2) {};
	\node [wb] at (7,2) {};
	\node [wb] at (6,2) {};
	\node [wb] at (5,2) {};
	\node [wb] at (4,2) {};
	\node [wb] at (3,2) {};
	\node [wb] at (2,2) {};
	\node [bb] at (1,2) {};
	\node [bb] at (0,2) {};
	\node [wb] at (-1,2) {};
	\node [bb] at (-2,2) {};
	\node [bb] at (-3,2) {};
	\node [wb] at (-4,2) {};
	\node [bb] at (-5,2) {};
	\node [bb] at (-6,2) {};
	\node [bb] at (-7,2) {};
	\node [bb] at (-8,2) {};
	\node [bb] at (-9,2) {};

	\node [wb] at (8,1) {};
	\node [wb] at (7,1) {};
	\node [wb] at (6,1) {};
	\node [wb] at (5,1) {};
	\node [wb] at (4,1) {};
	\node [wb] at (3,1) {};
	\node [bb] at (2,1) {};
	\node [wb] at (1,1) {};
	\node [bb] at (0,1) {};
	\node [bb] at (-1,1) {};
	\node [wb] at (-2,1) {};
	\node [bb] at (-3,1) {};
	\node [bb] at (-4,1) {};
	\node [bb] at (-5,1) {};
	\node [bb] at (-6,1) {};
	\node [bb] at (-7,1) {};
	\node [bb] at (-8,1) {};
	\node [bb] at (-9,1) {};
	
	\draw[](-5.5,0.5)--node[]{}(-5.5,4.5);
	\draw[dashed](-0.5,0.5)--node[]{}(-0.5,4.5);
	\draw[](4.5,0.5)--node[]{}(4.5,4.5);

\draw[](1.35,3.65)--node[]{}(1.35,4.35);
\draw[](0.65,4.35)--node[]{}(1.35,4.35);
\draw[](0.65,3.65)--node[]{}(0.65,4.35);
\draw[](0.65,3.65)--node[]{}(1.35,3.65);

\draw[dashed](-1.35,1.65)--node[]{}(-0.65,1.65);
\draw[dashed](-1.35,1.65)--node[]{}(-1.35,2.35);
\draw[dashed](-0.65,1.65)--node[]{}(-0.65,2.35);
\draw[dashed](-1.35,2.35)--node[]{}(-0.65,2.35);

	\end{tikzpicture}

\end{center}
Let $\bmu=((2, 2, 2), (5, 1, 1, 1), (3), (4, 2, 1))$ and  $\bs=(1,0 ,2, 0)$. Then $L_\bs(\bmu)$ is

\begin{center}

\begin{tikzpicture}[scale=0.5, bb/.style={draw,circle,fill,minimum size=2.5mm,inner sep=0pt,outer sep=0pt},
wb/.style={draw,circle,fill=white,minimum size=2.5mm,inner sep=0pt,outer sep=0pt}]

\foreach \x in {9,-10}
\foreach \y in {4, 3, 2, 1}
{
\node at (\x,\y) {$\cdots$};
}

\node [wb] at (8,4) {};
	\node [wb] at (7,4) {};
	\node [wb] at (6,4) {};
	\node [wb] at (5,4) {};
	\node [wb] at (4,4) {};
	\node [bb] at (3,4) {};
	\node [wb] at (2,4) {};
	\node [wb] at (1,4) {};
	\node [bb] at (0,4) {};
	\node [wb] at (-1,4) {};
	\node [bb] at (-2,4) {};
	\node [wb] at (-3,4) {};
	\node [bb] at (-4,4) {};
	\node [bb] at (-5,4) {};
	\node [bb] at (-6,4) {};
	\node [bb] at (-7,4) {};
	\node [bb] at (-8,4) {};
	\node [bb] at (-9,4) {};

	\node [wb] at (8,3) {};
	\node [wb] at (7,3) {};
	\node [wb] at (6,3) {};
	\node [wb] at (5,3) {};
	\node [bb] at (4,3) {};
	\node [wb] at (3,3) {};
	\node [wb] at (2,3) {};
	\node [wb] at (1,3) {};
	\node [bb] at (0,3) {};
	\node [bb] at (-1,3) {};
	\node [bb] at (-2,3) {};
	\node [bb] at (-3,3) {};
	\node [bb] at (-4,3) {};
	\node [bb] at (-5,3) {};
	\node [bb] at (-6,3) {};
	\node [bb] at (-7,3) {};
	\node [bb] at (-8,3) {};
	\node [bb] at (-9,3) {};

	\node [wb] at (8,2) {};
	\node [wb] at (7,2) {};
	\node [wb] at (6,2) {};
	\node [wb] at (5,2) {};
	\node [bb] at (4,2) {};
	\node [wb] at (3,2) {};
	\node [wb] at (2,2) {};
	\node [wb] at (1,2) {};
	\node [wb] at (0,2) {};
	\node [bb] at (-1,2) {};
	\node [bb] at (-2,2) {};
	\node [bb] at (-3,2) {};
	\node [wb] at (-4,2) {};
	\node [bb] at (-5,2) {};
	\node [bb] at (-6,2) {};
	\node [bb] at (-7,2) {};
	\node [bb] at (-8,2) {};
	\node [bb] at (-9,2) {};

	\node [wb] at (8,1) {};
	\node [wb] at (7,1) {};
	\node [wb] at (6,1) {};
	\node [wb] at (5,1) {};
	\node [wb] at (4,1) {};
	\node [wb] at (3,1) {};
	\node [bb] at (2,1) {};
	\node [bb] at (1,1) {};
	\node [bb] at (0,1) {};
	\node [wb] at (-1,1) {};
	\node [wb] at (-2,1) {};
	\node [bb] at (-3,1) {};
	\node [bb] at (-4,1) {};
	\node [bb] at (-5,1) {};
	\node [bb] at (-6,1) {};
	\node [bb] at (-7,1) {};
	\node [bb] at (-8,1) {};
	\node [bb] at (-9,1) {};

	\draw[](-5.5,0.5)--node[]{}(-5.5,4.5);
	\draw[dashed](-0.5,0.5)--node[]{}(-0.5,4.5);
	\draw[](4.5,0.5)--node[]{}(4.5,4.5);

\draw[](1.35,3.65)--node[]{}(1.35,4.35);
\draw[](0.65,4.35)--node[]{}(1.35,4.35);
\draw[](0.65,3.65)--node[]{}(0.65,4.35);
\draw[](0.65,3.65)--node[]{}(1.35,3.65);

\draw[dashed](-1.35,1.65)--node[]{}(-0.65,1.65);
\draw[dashed](-1.35,1.65)--node[]{}(-1.35,2.35);
\draw[dashed](-0.65,1.65)--node[]{}(-0.65,2.35);
\draw[dashed](-1.35,2.35)--node[]{}(-0.65,2.35);

	\end{tikzpicture}

\end{center}

Take $\kappa_1=4, \,\kappa_2=2, \,\iota_1=1, \,\iota_2=-1$. It is not difficult to check that
the conditions $(1)$ and $(2)$ of Definition \ref{incomparable definition} are satisfied.
Then $L_\bs(\blam)$ and $L_\bs(\bmu)$ are incomparable abaci.
\end{example}

The original intention of defining incomparable abaci is
to find incomparable $r$-partitions that are in the same block.
An easy computation gives $\bmu\,\unrhd\,\blam$ in Example \ref{3.3.2}.
This implies that we can not conclude from two abaci $L_\bs(\blam)$ and $L_\bs(\bmu)$ being incomparable
that the corresponding $r$-partitions are incomparable with respect to the dominance order $\unrhd$.
A simple observation tells us that if $L_\bs(\blam)$ and $L_\bs(\bmu)$ are incomparable abaci,
then for arbitrary $\sigma\in\mathfrak{S}_r$, $L_{\bs^{\sigma}}(\blam^{\sigma})$ and $L_{\bs^{\sigma}}(\bmu^{\sigma})$ are incomparable too,
where $\bs^\sigma$ is defined to be $(\bs_{\sigma(1)}, \dots, \bs_{\sigma(r)})$. If we choose $\sigma=(124)$ in Example \ref{3.3.2},
then $\blam^\sigma=((4, 3, 1, 1), (2, 1, 1), (3, 1, 1), (2, 2, 1, 1))$, $\bmu^\sigma=((4, 2, 1), (2, 2, 2), (3), (5, 1, 1, 1))$, $\bs^\sigma=(0,1, 2, 0)$
and $L_{\bs^{\sigma}}(\blam^{\sigma})$ and $L_{\bs^{\sigma}}(\bmu^{\sigma})$ are

\begin{center}

\begin{tikzpicture}[scale=0.5, bb/.style={draw,circle,fill,minimum size=2.5mm,inner sep=0pt,outer sep=0pt},
wb/.style={draw,circle,fill=white,minimum size=2.5mm,inner sep=0pt,outer sep=0pt}]

\foreach \x in {9,-10}
\foreach \y in {4, 3, 2, 1}
{
\node at (\x,\y) {$\cdots$};
}

	\node [wb] at (8,4) {};
	\node [wb] at (7,4) {};
	\node [wb] at (6,4) {};
	\node [wb] at (5,4) {};
	\node [wb] at (4,4) {};
	\node [wb] at (3,4) {};
	\node [wb] at (2,4) {};
	\node [bb] at (1,4) {};
	\node [bb] at (0,4) {};
	\node [wb] at (-1,4) {};
	\node [bb] at (-2,4) {};
	\node [bb] at (-3,4) {};
	\node [wb] at (-4,4) {};
	\node [bb] at (-5,4) {};
	\node [bb] at (-6,4) {};
	\node [bb] at (-7,4) {};
	\node [bb] at (-8,4) {};
	\node [bb] at (-9,4) {};

	\node [wb] at (8,3) {};
	\node [wb] at (7,3) {};
	\node [wb] at (6,3) {};
	\node [wb] at (5,3) {};
	\node [bb] at (4,3) {};
	\node [wb] at (3,3) {};
	\node [wb] at (2,3) {};
	\node [bb] at (1,3) {};
	\node [bb] at (0,3) {};
	\node [wb] at (-1,3) {};
	\node [bb] at (-2,3) {};
	\node [bb] at (-3,3) {};
	\node [bb] at (-4,3) {};
	\node [bb] at (-5,3) {};
	\node [bb] at (-6,3) {};
	\node [bb] at (-7,3) {};
	\node [bb] at (-8,3) {};
	\node [bb] at (-9,3) {};

\node [wb] at (8,2) {};
	\node [wb] at (7,2) {};
	\node [wb] at (6,2) {};
	\node [wb] at (5,2) {};
	\node [wb] at (4,2) {};
	\node [wb] at (3,2) {};
	\node [bb] at (2,2) {};
	\node [wb] at (1,2) {};
	\node [bb] at (0,2) {};
	\node [bb] at (-1,2) {};
	\node [wb] at (-2,2) {};
	\node [bb] at (-3,2) {};
	\node [bb] at (-4,2) {};
	\node [bb] at (-5,2) {};
	\node [bb] at (-6,2) {};
	\node [bb] at (-7,2) {};
	\node [bb] at (-8,2) {};
	\node [bb] at (-9,2) {};

\node [wb] at (8,1) {};
	\node [wb] at (7,1) {};
	\node [wb] at (6,1) {};
	\node [wb] at (5,1) {};
	\node [wb] at (4,1) {};
	\node [bb] at (3,1) {};
	\node [wb] at (2,1) {};
	\node [bb] at (1,1) {};
	\node [wb] at (0,1) {};
	\node [wb] at (-1,1) {};
	\node [bb] at (-2,1) {};
	\node [bb] at (-3,1) {};
	\node [wb] at (-4,1) {};
	\node [bb] at (-5,1) {};
	\node [bb] at (-6,1) {};
	\node [bb] at (-7,1) {};
	\node [bb] at (-8,1) {};
	\node [bb] at (-9,1) {};
	
	\draw[](-5.5,0.5)--node[]{}(-5.5,4.5);
	\draw[dashed](-0.5,0.5)--node[]{}(-0.5,4.5);
	\draw[](4.5,0.5)--node[]{}(4.5,4.5);

\draw[](1.35,0.65)--node[]{}(1.35,1.35);
\draw[](0.65,1.35)--node[]{}(1.35,1.35);
\draw[](0.65,0.65)--node[]{}(0.65,1.35);
\draw[](0.65,0.65)--node[]{}(1.35,0.65);

\draw[dashed](-1.35,3.65)--node[]{}(-0.65,3.65);
\draw[dashed](-1.35,3.65)--node[]{}(-1.35,4.35);
\draw[dashed](-0.65,3.65)--node[]{}(-0.65,4.35);
\draw[dashed](-1.35,4.35)--node[]{}(-0.65,4.35);

	\end{tikzpicture}

\end{center}
and
\begin{center}

\begin{tikzpicture}[scale=0.5, bb/.style={draw,circle,fill,minimum size=2.5mm,inner sep=0pt,outer sep=0pt},
wb/.style={draw,circle,fill=white,minimum size=2.5mm,inner sep=0pt,outer sep=0pt}]

\foreach \x in {9,-10}
\foreach \y in {4, 3, 2, 1}
{
\node at (\x,\y) {$\cdots$};
}

	\node [wb] at (8,4) {};
	\node [wb] at (7,4) {};
	\node [wb] at (6,4) {};
	\node [wb] at (5,4) {};
	\node [bb] at (4,4) {};
	\node [wb] at (3,4) {};
	\node [wb] at (2,4) {};
	\node [wb] at (1,4) {};
	\node [wb] at (0,4) {};
	\node [bb] at (-1,4) {};
	\node [bb] at (-2,4) {};
	\node [bb] at (-3,4) {};
	\node [wb] at (-4,4) {};
	\node [bb] at (-5,4) {};
	\node [bb] at (-6,4) {};
	\node [bb] at (-7,4) {};
	\node [bb] at (-8,4) {};
	\node [bb] at (-9,4) {};

\node [wb] at (8,3) {};
	\node [wb] at (7,3) {};
	\node [wb] at (6,3) {};
	\node [wb] at (5,3) {};
	\node [bb] at (4,3) {};
	\node [wb] at (3,3) {};
	\node [wb] at (2,3) {};
	\node [wb] at (1,3) {};
	\node [bb] at (0,3) {};
	\node [bb] at (-1,3) {};
	\node [bb] at (-2,3) {};
	\node [bb] at (-3,3) {};
	\node [bb] at (-4,3) {};
	\node [bb] at (-5,3) {};
	\node [bb] at (-6,3) {};
	\node [bb] at (-7,3) {};
	\node [bb] at (-8,3) {};
	\node [bb] at (-9,3) {};

	\node [wb] at (8,2) {};
	\node [wb] at (7,2) {};
	\node [wb] at (6,2) {};
	\node [wb] at (5,2) {};
	\node [wb] at (4,2) {};
	\node [wb] at (3,2) {};
	\node [bb] at (2,2) {};
	\node [bb] at (1,2) {};
	\node [bb] at (0,2) {};
	\node [wb] at (-1,2) {};
	\node [wb] at (-2,2) {};
	\node [bb] at (-3,2) {};
	\node [bb] at (-4,2) {};
	\node [bb] at (-5,2) {};
	\node [bb] at (-6,2) {};
	\node [bb] at (-7,2) {};
	\node [bb] at (-8,2) {};
	\node [bb] at (-9,2) {};

\node [wb] at (8,1) {};
	\node [wb] at (7,1) {};
	\node [wb] at (6,1) {};
	\node [wb] at (5,1) {};
	\node [wb] at (4,1) {};
	\node [bb] at (3,1) {};
	\node [wb] at (2,1) {};
	\node [wb] at (1,1) {};
	\node [bb] at (0,1) {};
	\node [wb] at (-1,1) {};
	\node [bb] at (-2,1) {};
	\node [wb] at (-3,1) {};
	\node [bb] at (-4,1) {};
	\node [bb] at (-5,1) {};
	\node [bb] at (-6,1) {};
	\node [bb] at (-7,1) {};
	\node [bb] at (-8,1) {};
	\node [bb] at (-9,1) {};

	\draw[](-5.5,0.5)--node[]{}(-5.5,4.5);
	\draw[dashed](-0.5,0.5)--node[]{}(-0.5,4.5);
	\draw[](4.5,0.5)--node[]{}(4.5,4.5);

\draw[](1.35,0.65)--node[]{}(1.35,1.35);
\draw[](0.65,1.35)--node[]{}(1.35,1.35);
\draw[](0.65,0.65)--node[]{}(0.65,1.35);
\draw[](0.65,0.65)--node[]{}(1.35,0.65);

\draw[dashed](-1.35,3.65)--node[]{}(-0.65,3.65);
\draw[dashed](-1.35,3.65)--node[]{}(-1.35,4.35);
\draw[dashed](-0.65,3.65)--node[]{}(-0.65,4.35);
\draw[dashed](-1.35,4.35)--node[]{}(-0.65,4.35);	

	\end{tikzpicture}

\end{center}

By a simple computation, we know $\blam^\sigma \parallel \bmu^\sigma$.
It is worth to point out that this has general significance.
That is, we can prove that if $L_\bs(\blam)\parallel L_\bs(\bmu)$, then
there exists some $\sigma\in\mathfrak{S}_r$ such that $\blam^{\sigma}$ and $\bmu^{\sigma}$ are incomparable.
To this aim, let us first give some characterizations of incomparable abaci.

\begin{lemma}\label{3.3.3}
Let $L_\bs(\blam)\parallel L_\bs(\bmu)$. Then
\begin{enumerate}
\item[{\rm (1)}] The number of empty positions on the left side of position $(\kappa_1, \iota_1+1)$
in $L_{s_{\kappa_1}}(\blam^{(\kappa_1)})$ is the same as that in $L_{s_{\kappa_1}}(\bmu^{(\kappa_1)})$.
\item[{\rm (2)}] The number of the beads on the right side of position $(\kappa_2, \iota_2-1)$
in $L_{s_{\kappa_2}}(\blam^{(\kappa_2)})$ is the same as that in $L_{s_{\kappa_2}}(\bmu^{(\kappa_2)})$.
\end{enumerate}
\end{lemma}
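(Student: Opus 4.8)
\emph{Proof proposal.} The plan is to reduce both parts to a single elementary counting identity for one $1$-abacus, and then feed in condition (2) of Definition \ref{incomparable definition}. The identity I would establish first is: for any partition $\nu$, any $s\in\Z$, and any $h\in\Z$,
\[
\#\{\,\text{beads of }L_s(\nu)\text{ at positions}\ge h\,\}\;-\;\#\{\,\text{empty positions of }L_s(\nu)\text{ at positions}<h\,\}\;=\;s-h ,
\]
where both terms on the left are finite because far to the left every position of $L_s(\nu)$ carries a bead and far to the right every position is empty. For $h=0$ this is precisely Lemma \ref{bead minus}(3) (the two terms being $n_{i1}$ and $n_{i2}$). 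The general case follows by shifting the whole abacus $h$ steps to the left, which replaces $L_s(\nu)$ by $L_{s-h}(\nu)$ and reduces to the case $h=0$; alternatively, one checks directly that moving a single bead one step to the right — the operation by which every abacus is built up from that of the empty partition — changes neither $s$ nor the left-hand side.

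Granting this, part (1) is a two-line computation. Write $\kappa=\kappa_1$, $\iota=\iota_1$, and let $B$ be the number of beads of $L_{s_\kappa}(\blam^{(\kappa)})$ lying strictly to the right of position $\iota$. By condition (2) of Definition \ref{incomparable definition} the beads strictly to the right of $\iota$ form the same set in $L_{s_\kappa}(\blam^{(\kappa)})$ and in $L_{s_\kappa}(\bmu^{(\kappa)})$, so $B$ is also the corresponding count for $\bmu^{(\kappa)}$. Applying the identity with $h=\iota+1$ to each of the two abaci shows that the number of empty positions to the left of $(\kappa,\iota+1)$ equals $B-(s_\kappa-\iota-1)$ in both cases; hence these two numbers agree.

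Part (2) is the mirror image. With $\kappa=\kappa_2$, $\iota=\iota_2$, condition (2) says the beads to the left of position $\iota$ coincide for the two abaci, hence so do the empty positions to the left of $\iota$; call this common count $E'$. Applying the identity with $h=\iota$, the number of beads to the right of $(\kappa,\iota-1)$ equals $(s_\kappa-\iota)+E'$ for both abaci, and we are done.

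I do not expect a genuine obstacle here; the lemma is a bookkeeping statement. The only points needing care are the convention for whether ``left/right side of a position'' includes that position, and the step where equality of two \emph{sets} of beads is upgraded to equality of the relevant finite \emph{counts} of beads, and of the complementary empty positions on a half-line. (Note that condition (1) of Definition \ref{incomparable definition} is not even needed for this lemma, only condition (2); I would not belabor that point in the write-up.)
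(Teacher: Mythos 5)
Your proof is correct and follows essentially the same route as the paper: both arguments reduce the claim to the bead-count/charge identity of Lemma \ref{bead minus}(3) (the paper invokes it via Lemma \ref{charge minus} with a far-left reference column, while you generalize it to an arbitrary cut point $h$) and then feed in condition (2) of Definition \ref{incomparable definition}. Your side remark that condition (1) is not needed is also consistent with the paper, which uses condition (1) only to locate the reference column to the left of $\iota_1$, a step your formulation avoids.
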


\begin{proof}
(1) Choose $k\in \mathbb{Z}$ such that all positions those are to the left of $k+1$-th
positions of both $L_\bs(\blam)$ and $L_\bs(\bmu)$ are occupied by a bead.
According to Definition \ref{incomparable definition} (1), $k<\iota_1$.
On the other hand, we have from Lemma \ref{charge minus} that
$\mathfrak{n}^k_{\kappa_1}(L_\bs(\blam))=\mathfrak{n}^k_{\kappa_1}(L_\bs(\bmu))$.
Then the result follows from Definition \ref{incomparable definition} (2).

(2) is proved similarly as (1).
\end{proof}

With the above result, we can get a condition for two $r$-partitions being incomparable.

\begin{lemma}\label{3.3.4}
Keep notations as in Definition \ref{incomparable definition}. Let $L_\bs(\blam)\parallel L_\bs(\bmu)$ and $\kappa_1<\kappa_2$.
If $L_\bs(\blam^{(c)})$ and $L_\bs(\bmu^{(c)})$ are the same for $c\in\{1, \dots, \kappa_1-1\}\cup\{\kappa_2+1, \dots, r\}$, then $\blam \parallel \bmu$.
\end{lemma}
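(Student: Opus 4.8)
The plan is to show directly that neither $\blam\unrhd\bmu$ nor $\bmu\unrhd\blam$ holds. For an $r$-partition $\bnu$ write $N_{s,j}(\bnu)=\sum_{t=1}^{s-1}|\bnu^{(t)}|+\sum_{i=1}^{j}\bnu^{(s)}_i$, so that $\blam\unrhd\bmu$ means $N_{s,j}(\blam)\ge N_{s,j}(\bmu)$ for all $s,j$; one may allow $j\ge 0$ here (reading $N_{s,0}=N_{s-1,j}$ for $j\gg 0$), which does not change the order. I will produce, for each of the two relations, a single pair $(s,j)$ at which the required inequality fails.

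First I would translate condition (2) of Definition \ref{incomparable definition} into statements about rows, using that the $j$-th bead from the right in $L_{s_k}(\bnu^{(k)})$ sits at position $\bnu^{(k)}_j-j+s_k$ (immediate from the definition of $L_{s_k}(\bnu^{(k)})$; see also Lemma \ref{bead minus}). Let $\CIRCLE_{j_1}^{\kappa_1}(\blam)$ be the bead at $(\kappa_1,\iota_1)$. Since the beads strictly right of $\iota_1$ in row $\kappa_1$ are common to $\blam$ and $\bmu$ while position $\iota_1$ is empty in $\bmu$, the beads $\CIRCLE_1^{\kappa_1},\dots,\CIRCLE_{j_1-1}^{\kappa_1}$ occupy equal positions in both and the $j_1$-th bead of $\bmu$'s row lies strictly left of $\iota_1$; hence $\blam^{(\kappa_1)}_i=\bmu^{(\kappa_1)}_i$ for $i<j_1$ and $\blam^{(\kappa_1)}_{j_1}>\bmu^{(\kappa_1)}_{j_1}$. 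Dually, with $\CIRCLE_{j_2}^{\kappa_2}(\bmu)$ the bead at $(\kappa_2,\iota_2)$ and invoking Lemma \ref{3.3.3}(2) to know that $\blam$ and $\bmu$ have equally many beads weakly right of $\iota_2$ in row $\kappa_2$, one gets $\blam^{(\kappa_2)}_i=\bmu^{(\kappa_2)}_i$ for $i>j_2$ and $\blam^{(\kappa_2)}_{j_2}>\bmu^{(\kappa_2)}_{j_2}$. Note $j_1,j_2\ge 1$.

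To see $\bmu\not\unrhd\blam$ I would test $(s,j)=(\kappa_1,j_1)$. Rows $1,\dots,\kappa_1-1$ of $L_\bs(\blam)$ and $L_\bs(\bmu)$ coincide, so $|\blam^{(t)}|=|\bmu^{(t)}|$ for $t<\kappa_1$ and
\[
N_{\kappa_1,j_1}(\blam)-N_{\kappa_1,j_1}(\bmu)=\sum_{i=1}^{j_1}\bigl(\blam^{(\kappa_1)}_i-\bmu^{(\kappa_1)}_i\bigr)=\blam^{(\kappa_1)}_{j_1}-\bmu^{(\kappa_1)}_{j_1}>0 .
\]
To see $\blam\not\unrhd\bmu$ I would test $(s,j)=(\kappa_2,j_2-1)$. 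Here the main obstacle is the prefix $\sum_{t<\kappa_2}\bigl(|\blam^{(t)}|-|\bmu^{(t)}|\bigr)$, which is not directly controlled because $\blam$ and $\bmu$ may differ throughout rows $\kappa_1,\dots,\kappa_2-1$. I would dispose of it by a conservation-of-size argument: since $|\blam|=|\bmu|$ and rows $\kappa_2+1,\dots,r$ coincide, $\sum_{t\le\kappa_2}|\blam^{(t)}|=\sum_{t\le\kappa_2}|\bmu^{(t)}|$, so
\[
\sum_{t<\kappa_2}\bigl(|\blam^{(t)}|-|\bmu^{(t)}|\bigr)=-\bigl(|\blam^{(\kappa_2)}|-|\bmu^{(\kappa_2)}|\bigr)=-\sum_{i=1}^{j_2}\bigl(\blam^{(\kappa_2)}_i-\bmu^{(\kappa_2)}_i\bigr),
\]
the last step because $\blam^{(\kappa_2)}_i=\bmu^{(\kappa_2)}_i$ for $i>j_2$. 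Substituting gives
\[
N_{\kappa_2,j_2-1}(\blam)-N_{\kappa_2,j_2-1}(\bmu)=-\sum_{i=1}^{j_2}\bigl(\blam^{(\kappa_2)}_i-\bmu^{(\kappa_2)}_i\bigr)+\sum_{i=1}^{j_2-1}\bigl(\blam^{(\kappa_2)}_i-\bmu^{(\kappa_2)}_i\bigr)=-\bigl(\blam^{(\kappa_2)}_{j_2}-\bmu^{(\kappa_2)}_{j_2}\bigr)<0 .
\]
(For $j_2=1$ this is the $j=0$ instance, available because $\kappa_1<\kappa_2$ forces $\kappa_2\ge 2$.) The two displays together give $\blam\parallel\bmu$.

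I expect the delicate part to be the middle two steps: correctly reading off the row inequalities $\blam^{(\kappa_1)}_{j_1}>\bmu^{(\kappa_1)}_{j_1}$ and $\blam^{(\kappa_2)}_{j_2}>\bmu^{(\kappa_2)}_{j_2}$ from the one-sided bead agreements of Definition \ref{incomparable definition}(2) and Lemma \ref{3.3.3}, and then the size-conservation identity that tames the uncontrolled prefix $\sum_{t<\kappa_2}$. This is precisely where the hypothesis $\kappa_1<\kappa_2$ enters essentially — it empties the prefix in the first test of uncontrolled components and supplies the room $\kappa_2\ge 2$ in the second — and it must be used, since without it the two abaci can be comparable, as Example \ref{3.3.2} shows before the permutation is applied.
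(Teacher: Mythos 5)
Your proof is correct and follows essentially the same route as the paper's: both arguments extract the row data $\blam^{(\kappa_1)}_i=\bmu^{(\kappa_1)}_i$ for $i<j_1$ with $\blam^{(\kappa_1)}_{j_1}>\bmu^{(\kappa_1)}_{j_1}$ (and the dual statement at $\kappa_2$) from Definition \ref{incomparable definition}(2) together with Lemma \ref{3.3.3}, then defeat $\bmu\unrhd\blam$ at $(\kappa_1,j_1)$ using the equality of the lower rows, and defeat $\blam\unrhd\bmu$ at $(\kappa_2,j_2-1)$ via the same size-conservation identity $|\blam|=|\bmu|$ combined with the equality of the rows above $\kappa_2$. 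The only cosmetic difference is that you read the part inequalities off bead positions directly rather than via the count of empty positions in Lemma \ref{bead minus}(2).
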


\begin{proof}
Suppose that the bead at position $(\kappa_1, \iota_1)$ in $L_\bs(\blam)$ is
$\CIRCLE_k^{\kappa_1}(\blam, \bs)$. Since $L_\bs(\blam)\parallel L_\bs(\bmu)$,
by Definition \ref{incomparable definition}, the beads on the right side of
$\iota_1$-th position in $L_{s_{\kappa_1}}(\blam^{(\kappa_1)})$ are the same as those in $L_{s_{\kappa_1}}(\bmu^{(\kappa_1)})$.
This implies that for each $1\leq a<k$, if $\CIRCLE_a^{\kappa_1}(\blam, \bs)$
is in position $b$, then so is $\CIRCLE_a^{\kappa_1}(\bmu, \bs)$.
Moreover, we have from Lemma \ref{3.3.3} that the number of empty positions on the left side
of position $(\kappa_1, \iota_1+1)$ in $L_{s_{\kappa_1}}(\blam^{(\kappa_1)})$
is the same as that in $L_{s_{\kappa_1}}(\bmu^{(\kappa_1)})$. Consequently,
the number of empty positions on the left side
of $\CIRCLE_a^{\kappa_1}(\blam, \bs)$ in $L_{s_{\kappa_1}}(\blam^{(\kappa_1)})$ is the same as that on the left side
of $\CIRCLE_a^{\kappa_1}(\bmu, \bs)$ in $L_{s_{\kappa_1}}(\bmu^{(\kappa_1)})$ for $1\leq a<k$.
Then we obtain by Lemma \ref{bead minus} (2) that
\begin{align}
\blam_a^{(\kappa_1)}=\bmu_a^{(\kappa_1)}.
\end{align}

Furthermore, since the position $(\kappa_1, \iota_1)$ in $L_\bs(\bmu)$ is empty,
it is clear that $\CIRCLE_k^{\kappa_1}(\bmu, \bs)$ is on the left side of position $\iota_1$.
Using Lemma \ref{3.3.3} (1) again, there are fewer empty positions on the left side of $\CIRCLE_k^{\kappa_1}(\bmu, \bs)$ than
on the left side of $\CIRCLE_k^{\kappa_1}(\blam, \bs)$. We deduce from Lemma \ref{bead minus} (2) that
\begin{align}
\bmu_k^{(\kappa_1)}<\blam_k^{(\kappa_1)}.
\end{align}
Combining (3.3.1) with (3.3.2) yields
\begin{align}
\sum^k_{a=1}\bmu^{(\kappa_1)}_a<\sum^k_{a=1}\blam^{(\kappa_1)}_a.
\end{align}
Note that the assumption implies that the rows below $\kappa_1$-th row
in $L_\bs(\blam)$ are the same as that in $L_\bs(\bmu)$, and thus for
each $1\leq c<\kappa_1$, $\blam^{(c)}=\bmu^{(c)}$. This gives
\begin{align}
\sum^{\kappa_1-1}_{c=1}|\bmu^c|=\sum^{\kappa_1-1}_{c=1}|\blam^c|.
\end{align}
By combining (3.3.3) with (3.3.4), we get
\begin{align}
\sum^{\kappa_1-1}_{c=1}|\bmu^{(c)}|+\sum^k_{a=1}\bmu^{(\kappa_1)}_a<
\sum^{\kappa_1-1}_{c=1}|\blam^{(c)}|+\sum^k_{a=1}\blam^{(\kappa_1)}_a.
\end{align}

On the other hand, let $\CIRCLE_t^{\kappa_2}(\bmu, \bs)$ be the bead at position $(\kappa_2, \iota_2)$ in $L_\bs(\bmu)$.
Then by analyzing the number of beads as above, we can obtain the following two formulas:
\begin{enumerate}
\item[(1)] $\blam_x^{(\kappa_2)}=\bmu_x^{(\kappa_2)}$ for arbitrary $x>t$.
\smallskip
\item[(2)] $\bmu_t^{(\kappa_2)}<\blam_t^{(\kappa_2)}$.
\end{enumerate}
This implies that $\sum_{x\geq t}\bmu_x^{(\kappa_2)}<\sum_{x\geq t}\blam_x^{(\kappa_2)}.$
Combing this formula with $\sum^{r}_{c=\kappa_2+1}|\bmu^{(c)}|=\sum^{r}_{c=\kappa_2+1}|\blam^{(c)}|,$
which is an easy corollary of the assumption, leads to
\begin{align}
\sum_{x\geq t}\bmu^{(\kappa_2)}_x+\sum^r_{c=\kappa_2+1}|\bmu^{(c)}|<
\sum_{x\geq t}\blam^{(\kappa_2)}_x+\sum^r_{c=\kappa_2+1}|\blam^{(c)}|.
\end{align}

Moreover, it follows from $|\blam|=|\bmu|$ that
\begin{align}
\sum^r_{c=1}|\bmu^{(c)}|=\sum^r_{c=1}|\blam^{(c)}|.
\end{align}

Then $(3.3.7)-(3.3.6)$ is
\begin{align*}
\sum^r_{c=1}|\bmu^{(c)}|-(\sum_{x\geq t}\bmu^{(\kappa_2)}_x+
\sum^r_{c=\kappa_2+1}|\bmu^{(c)}|)>\sum^r_{c=1}|\blam^c|-(\sum_{x\geq t}\blam^{(\kappa_2)}_x+\sum^r_{c=\kappa_2+1}|\blam^{(c)}|).
\end{align*}

Note that
$$\sum^r_{c=1}|\bmu^{(c)}|-(\sum_{x\geq t}\bmu^{(\kappa_2)}_x+\sum^r_{c=\kappa_2+1}|\bmu^{(c)}|)=
\sum^{\kappa_2-1}_{c=1}|\bmu^{(c)}|+\sum^{t-1}_{x=1}\bmu^{(\kappa_2)}_x$$
and
$$\sum^r_{c=1}|\blam^c|-(\sum_{x\geq t}\blam^{(\kappa_2)}_x+\sum^r_{c=\kappa_2+1}|\blam^{(c)}|)=
\sum^{\kappa_2-1}_{c=1}|\blam^{(c)}|+\sum^{t-1}_{x=1}\blam^{(\kappa_2)}_x.$$
We arrive at
\begin{align}
\sum^{\kappa_2-1}_{c=1}|\bmu^{(c)}|+\sum^{t-1}_{x=1}\bmu^{(\kappa_2)}_x>
\sum^{\kappa_2-1}_{c=1}|\blam^{(c)}|+\sum^{t-1}_{x=1}\blam^{(\kappa_2)}_x.
\end{align}
Now we come to the conclusion $\blam\parallel\bmu$ by combining (3.3.5) with (3.3.8).
\end{proof}

We can deduce a simple fact immediately from Lemma \ref{3.3.4}, that is, two incomparable abaci
$L_\bs(\blam)\parallel L_\bs(\bmu)$ with $\kappa_1=1$ and $\kappa_2=r$ give
a pair of incomparable $r$-partitions $\blam \parallel \bmu$.
Clearly, for $L_\bs(\blam)\parallel L_\bs(\bmu)$, we can always find
$\sigma\in \mathfrak S_r$ such that $\sigma(\kappa_1)=1$ and $\sigma(\kappa_2)=r$.
As a result, $L_{\bs^{\sigma}}(\blam^{\sigma})\parallel L_{\bs^{\sigma}}(\bmu^{\sigma})$.

\begin{proposition}\label{3.3.5}
Suppose that $L_\bs(\blam)\parallel L_\bs(\bmu)$. Then there exists $\sigma\in \mathfrak S_r$
such that $\blam^{\sigma}\parallel\bmu^{\sigma}$.
\end{proposition}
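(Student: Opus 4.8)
The plan is to reduce the statement to the special situation already handled by Lemma~\ref{3.3.4}. Two facts are available for this. First, incomparability of abaci is stable under the $\mathfrak S_r$-action: if $L_\bs(\blam)\parallel L_\bs(\bmu)$ then $L_{\bs^\sigma}(\blam^\sigma)\parallel L_{\bs^\sigma}(\bmu^\sigma)$ for every $\sigma\in\mathfrak S_r$ — this is the simple observation recorded in the discussion preceding Lemma~\ref{3.3.3}, and it holds because the action merely reindexes the rows $L_{s_i}(\blam^{(i)})$ (row $i$ of $L_{\bs^\sigma}(\blam^\sigma)$ is exactly row $\sigma(i)$ of $L_\bs(\blam)$) without altering the internal structure of any single row. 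Second, Lemma~\ref{3.3.4} already converts an incomparable pair of abaci into an incomparable pair of $r$-partitions, but only under the extra hypothesis that the rows with indices outside the interval $[\kappa_1,\kappa_2]$ agree for $\blam$ and $\bmu$. The key point is that this extra hypothesis becomes vacuous as soon as $\kappa_1=1$ and $\kappa_2=r$, and a suitable permutation of the rows puts us in that case.

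Concretely, I would start from the data $\iota_1,\iota_2\in\mathbb{Z}$ and $\kappa_1\ne\kappa_2$ witnessing $L_\bs(\blam)\parallel L_\bs(\bmu)$ as in Definition~\ref{incomparable definition}. Since $\kappa_1\ne\kappa_2$ (so in particular $r\ge 2$), I can choose $\sigma\in\mathfrak S_r$ carrying the two distinguished rows to positions $1$ and $r$ — sending the row $\kappa_1$ (the one bearing the bead at $(\kappa_1,\iota_1)$ in $L_\bs(\blam)$) to position $1$ and the row $\kappa_2$ to position $r$. Because $\sigma$ only reindexes rows, $L_{\bs^\sigma}(\blam^\sigma)\parallel L_{\bs^\sigma}(\bmu^\sigma)$, with the same $\iota_1,\iota_2$ but with new distinguished indices $\kappa_1'=1$ and $\kappa_2'=r$; conditions~(1) and~(2) of Definition~\ref{incomparable definition} transfer verbatim, since each is a statement about a single row. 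Now Lemma~\ref{3.3.4} applies to the pair $L_{\bs^\sigma}(\blam^\sigma)\parallel L_{\bs^\sigma}(\bmu^\sigma)$: here $\kappa_1'=1<r=\kappa_2'$, and the index set $\{1,\dots,\kappa_1'-1\}\cup\{\kappa_2'+1,\dots,r\}$ is empty, so the hypothesis ``$L_\bs(\blam^{(c)})$ and $L_\bs(\bmu^{(c)})$ coincide for $c$ in that set'' holds trivially. The lemma then yields $\blam^\sigma\parallel\bmu^\sigma$, which is precisely the assertion of the proposition.

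There is no genuine obstacle; the proof is a short reduction. The only place demanding care is the bookkeeping of the $\mathfrak S_r$-action: under the convention $\blam^\sigma=(\blam^{(\sigma(1))},\dots,\blam^{(\sigma(r))})$ one must be precise about whether it is $\sigma$ or $\sigma^{-1}$ that realizes $\kappa_1\mapsto 1$ and $\kappa_2\mapsto r$, and one should note explicitly that the defining conditions of incomparable abaci are internal to individual rows (hence unchanged by permuting rows) and that $r\ge 2$ is exactly what makes the choice of $\sigma$ possible. Beyond these routine checks, the argument is a direct appeal to Lemma~\ref{3.3.4}.
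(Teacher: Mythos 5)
Your argument is exactly the paper's: permute the rows so the two distinguished indices become $1$ and $r$, note that incomparability of abaci is preserved under this reindexing, and then apply Lemma~\ref{3.3.4}, whose extra hypothesis is vacuous in that case. The proposal is correct, and your explicit remark about the $\sigma$ versus $\sigma^{-1}$ bookkeeping is a point the paper glosses over.
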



\section{Moving vectors} 

The weight can measure how complicated a block is in type A.
However, this is no longer true in the cyclotomic case.
As a result, most results of type A related with weights have no cyclotomic versions. In order to resolve this problem,
we shall introduce a refinement of the weight, which is the so-called moving vector. It is
the key notion of this series of papers. 
We shall first give the definition of moving vectors and then study the simple properties.  
In Section 4.3 we provide a basic construction, which will be used in the whole series of articles.
Finally, we study incomparable abaci by the language of moving vectors.

\subsection{Definition of moving vectors}
We first do some preparation before giving the definition of moving vectors.
It is well-known that for an abacus $\mathcal{L}_s^e(\lam)$ of a partition $\lam$,
moving a bead from position $(x, y)$ to $(x, y-1)$
is equivalent to unwrapping a rim $e$-hook from $[\lam].$
An $r$-version of this operation was introduced by Jacon and Lecouvey.

\begin{definition}
\cite[Section 4.1]{JL}
Let $\blam$ be an $r$-partition of rank $n$. An elementary operation in
row $x$ on the $(e, \bs)$-abacus $L_\bs(\blam)$ is a move of one bead from
row $x$ to another.
\begin{enumerate}
\item[{\rm (1)}]\, First kind: if there is a bead at position $(x, y)$ with $1\leq x<r$ and position $(x+1, y)$ is empty,
then move the bead to position $(x+1, y)$.
\item[{\rm (2)}]\, Second kind: if $e\neq \infty$ and there is a bead at position $(r, y)$ and position $(1, y-e)$ is empty,
then move the bead to position $(1, y-e)$.
\end{enumerate}
\end{definition}

To continue our study, we need to define concepts ``before", ``after"
and ``between", which are about relationship between positions in an $r$-abacus.

\begin{definition}\label{beforeafter}
Given an abacus $L_\bs(\blam)$,
we say position $(j, h)$ is before position $(i, l)$ if one of the conditions below is satisfied
\begin{enumerate}
\item[{\rm (1)}] $h=l$ and $i<j\leq r$.
\item[{\rm (2)}] $h=l-(k+1)e$, where $k\in \mathbb{N}$.
\end{enumerate}
If position $(j, h)$ is before position $(i, l)$, then we also say position $(i, l)$ is after position $(j, h)$.
Let position $(j, h)$ be before position $(i, l)$. We say position $(x, y)$ is
between positions $(j, h)$ and $(i, l)$ if it is after $(j, h)$ and before $(i, l)$.
\end{definition}

According to Definition \ref{beforeafter}, one position is before (after) another one only if both positions are in the same subabcus (see Definition \ref{subabacus}).
Let us illustrate an example below.

\begin{example}
Take an abacus as follows.
\begin{center}
\begin{tikzpicture}[scale=0.5, bb/.style={draw,circle,fill,minimum size=2.5mm,inner sep=0pt,outer sep=0pt},
wb/.style={draw,circle,fill=white,minimum size=2.5mm,inner sep=0pt,outer sep=0pt}]

\foreach \x in {9,-10}
\foreach \y in {4, 3, 2, 1}
{
\node at (\x,\y) {$\cdots$};
}

\node [wb] at (8,4) {};
	\node [wb] at (7,4) {};
	\node [wb] at (6,4) {};
	\node [wb] at (5,4) {};
	\node [wb] at (4,4) {};
	\node [bb] at (3,4) {};
	\node [wb] at (2,4) {};
	\node [bb] at (1,4) {};
	\node [wb] at (0,4) {};
	\node [wb] at (-1,4) {};
	\node [bb] at (-2,4) {};
	\node [bb] at (-3,4) {};
	\node [wb] at (-4,4) {};
	\node [bb] at (-5,4) {};
	\node [bb] at (-6,4) {};
	\node [bb] at (-7,4) {};
	\node [bb] at (-8,4) {};
	\node [bb] at (-9,4) {};

	\node [wb] at (8,3) {};
	\node [wb] at (7,3) {};
	\node [wb] at (6,3) {};
	\node [wb] at (5,3) {};
	\node [bb] at (4,3) {};
	\node [wb] at (3,3) {};
	\node [wb] at (2,3) {};
	\node [bb] at (1,3) {};
	\node [bb] at (0,3) {};
	\node [wb] at (-1,3) {};
	\node [bb] at (-2,3) {};
	\node [bb] at (-3,3) {};
	\node [bb] at (-4,3) {};
	\node [bb] at (-5,3) {};
	\node [bb] at (-6,3) {};
	\node [bb] at (-7,3) {};
	\node [bb] at (-8,3) {};
	\node [bb] at (-9,3) {};

	\node [wb] at (8,2) {};
	\node [wb] at (7,2) {};
	\node [wb] at (6,2) {};
	\node [wb] at (5,2) {};
	\node [wb] at (4,2) {};
	\node [wb] at (3,2) {};
	\node [wb] at (2,2) {};
	\node [bb] at (1,2) {};
	\node [bb] at (0,2) {};
	\node [wb] at (-1,2) {};
	\node [bb] at (-2,2) {};
	\node [wb] at (-3,2) {};
	\node [wb] at (-4,2) {};
	\node [bb] at (-5,2) {};
	\node [bb] at (-6,2) {};
	\node [bb] at (-7,2) {};
	\node [bb] at (-8,2) {};
	\node [bb] at (-9,2) {};

	\node [wb] at (8,1) {};
	\node [wb] at (7,1) {};
	\node [wb] at (6,1) {};
	\node [wb] at (5,1) {};
	\node [wb] at (4,1) {};
	\node [wb] at (3,1) {};
	\node [bb] at (2,1) {};
	\node [wb] at (1,1) {};
	\node [bb] at (0,1) {};
	\node [bb] at (-1,1) {};
	\node [wb] at (-2,1) {};
	\node [bb] at (-3,1) {};
	\node [bb] at (-4,1) {};
	\node [bb] at (-5,1) {};
	\node [bb] at (-6,1) {};
	\node [bb] at (-7,1) {};
	\node [bb] at (-8,1) {};
	\node [bb] at (-9,1) {};
	
	\draw[](-5.5,0.5)--node[]{}(-5.5,4.5);
	\draw[dashed](-0.5,0.5)--node[]{}(-0.5,4.5);
	\draw[](4.5,0.5)--node[]{}(4.5,4.5);

\draw[](-3.35,1.65)--node[]{}(-2.65,1.65);
\draw[](-3.35,1.65)--node[]{}(-3.35,2.35);
\draw[](-3.35,2.35)--node[]{}(-2.65,2.35);
\draw[](-2.65,1.65)--node[]{}(-2.65,2.35);

\draw[](1.65,2.65)--node[]{}(2.35,2.65);
\draw[](1.65,2.65)--node[]{}(1.65,3.35);
\draw[](1.65,3.35)--node[]{}(2.35,3.35);
\draw[](2.35,2.65)--node[]{}(2.35,3.35);

\draw[](1.65,0.65)--node[]{}(2.35,0.65);
\draw[](1.65,0.65)--node[]{}(1.65,1.35);
\draw[](1.65,1.35)--node[]{}(2.35,1.35);
\draw[](2.35,0.65)--node[]{}(2.35,1.35);

\draw[dashed](-1.35,0.65)--node[]{}(-0.65,0.65);
\draw[dashed](-1.35,0.65)--node[]{}(-1.35,1.35);
\draw[dashed](-0.65,0.65)--node[]{}(-0.65,1.35);
\draw[dashed](-1.35,1.35)--node[]{}(-0.65,1.35);

\end{tikzpicture}
\end{center}
Positions $(2, -3)$, $(3, 2)$ and $(1, 2)$ are all in the second subabacus. 
Positions $(2, -3)$ and $(3,2)$ are before position $(1, 2)$; 
positions $(3, 2)$ and $(1, 2)$ are after position $(2, -3)$;
position $(3, 2)$ is between positions $(1, 2)$ and $(2, -3)$. 
Position $(1, -1)$ is not in the same subabacus as others. 
Then it is neither before nor after any other positions.
\end{example}

We now give a simple result about complete abaci, in which the notions ``before" and ``after" are used.

\begin{lemma}\label{completeabaciproperty}
Let $L_\bs(\blam)$ be a complete abacus. Then for each subabacus, there exists a bead such that all positions 
before it have a bead placed and all positions after it are empty.
\end{lemma}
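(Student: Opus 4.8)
The plan is to prove, for each subabacus separately, that the positions carrying a bead form a down-set for the order ``before'' and that this down-set has a greatest element; that greatest element is then the bead demanded by the statement. So first I would fix $i_0$ with $0\le i_0<e$ and consider the $i_0$-th subabacus $\mathcal S$, whose positions are the $(j,h)$ with $1\le j\le r$ and $h\equiv i_0\pmod e$. One checks that ``before'' (Definition \ref{beforeafter}) restricts to a total order on the positions of $\mathcal S$, order-isomorphic to $(\mathbb{Z},<)$ via $(j,\,i_0+me)\mapsto mr+(r-j)$: two distinct positions of $\mathcal S$ are compared first by their second coordinate, the smaller being ``before'', and, when these agree, by their first coordinate, the larger being ``before''. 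Consequently every position $p$ of $\mathcal S$ has an immediate predecessor $p^{-}$, namely $p^{-}=(j+1,h)$ if $p=(j,h)$ with $1\le j\le r-1$ and $p^{-}=(1,h-e)$ if $p=(r,h)$, and between any two comparable positions of $\mathcal S$ there lie only finitely many positions.

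The heart of the argument is the claim that in a complete $L_\bs(\blam)$, if $p$ carries a bead then so does $p^{-}$. I would read this straight off the defining chain
$$L_{s_1}(\blam^{(1)})\subset L_{s_2}(\blam^{(2)})\subset\cdots\subset L_{s_r}(\blam^{(r)})\subset L_{s_1+e}(\blam^{(1)})$$
of Definition \ref{complete}. If $p=(j,h)$ with $j<r$, then $h\in L_{s_j}(\blam^{(j)})\subset L_{s_{j+1}}(\blam^{(j+1)})$, so $(j+1,h)=p^{-}$ carries a bead. If $p=(r,h)$, then $h\in L_{s_r}(\blam^{(r)})\subset L_{s_1+e}(\blam^{(1)})$, and since $L_{s_1+e}(\blam^{(1)})=L_{s_1}(\blam^{(1)})+e$ this means $h-e\in L_{s_1}(\blam^{(1)})$, i.e.\ $(1,h-e)=p^{-}$ carries a bead. (When $e=\infty$ the last, ``wrap-around'', inclusion is vacuous and $\mathcal S$ reduces to a single column, which is handled the same way and is easier.)

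Finally I would take the top bead. Let $B$ be the set of positions of $\mathcal S$ carrying a bead. Since each $\blam^{(i)}$ has only finitely many nonzero parts, $L_{s_i}(\blam^{(i)})$ contains every sufficiently negative integer and has a largest element; hence $B$ is nonempty and bounded above in the chain $\mathcal S\cong(\mathbb{Z},<)$, so it has a greatest element $\beta$. Iterating the claim finitely often along the predecessor chain shows that every position of $\mathcal S$ before $\beta$ lies in $B$, while maximality of $\beta$ shows that every position of $\mathcal S$ after $\beta$ lies outside $B$. Thus $\beta$ is a bead with all positions before it occupied and all positions after it empty, which is exactly the assertion.

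I do not expect a serious obstacle here; the one place to be careful is matching the direction of the order ``before'' with the direction of the inclusions $L_{s_j}(\blam^{(j)})\subset L_{s_{j+1}}(\blam^{(j+1)})$ — within a single column it is the \emph{larger} row index that comes before, so the chain of inclusions does run in the right direction — together with the correct translation of the wrap-around inclusion via $L_{s_1+e}(\blam^{(1)})=L_{s_1}(\blam^{(1)})+e$. The auxiliary facts that ``before'' yields a $\mathbb{Z}$-chain on $\mathcal S$ and that $B$ is bounded above are routine and can be dispatched quickly.
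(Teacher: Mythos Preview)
Your proof is correct. The paper states this lemma without proof, treating it as a simple observation, so there is nothing to compare against; your argument---reading the chain of inclusions in Definition~\ref{complete} as ``bead at $p$ implies bead at $p^{-}$'' and then taking the maximal bead in each subabacus---is exactly the natural way to make the observation precise.
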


Let us list some observations about subabaci for later use.

\begin{enumerate}
\item[(1)] None of the positions of $L_\bs(\blam)$ belong to two different subabaci simultaneously.
\item[(2)] We can not move a bead from one subabacus to another by elementary operations.
\item[(3)] Index a bead in a subabacus by $x$ if there are exactly $x-1$ beads after it.
Then elementary operations do not change the index of a bead.
\end{enumerate}

Based on the above observations, elementary operations from $L_\bs(\blam)$ to $L_\bu(\bmu)$ is a definite set,
which is independent of the choice of the procedure of moving.
We often call this set the {\em operation set} and denote it by $\mathcal F$.
Moreover, an elementary operation happened in a subabacus does not affect other subabaci.
Therefore, each elementary operation of an abacus $L_\bs(\blam)$
can be naturally viewed as an elementary operation of a subabacus.
Given an elementary operation that moves the number $x$ bead
at position $(i, l)$, we record it by a triple $[(i, l), \,x]$.
When the index need not to be pointed out, the triple will be written as $[(i, l), \, \ast]$.

The purpose of introducing concept ``moving vector" is to study blocks of Ariki-Koike algebras.
Let us recall the definition of the core of a pair $(\blam, \bs)$, which introduced by Jacon and Lecouvey.

\begin{definition}\cite[Definition 4.2]{JL}\label{generalcore}
The core of pair $(\blam, \bs)$ is a pair $(\blam^{\ast}, \bs^{\ast})$,
whose abacus is complete and is obtained from $L_\bs(\blam)$ by elementary operations.
\end{definition}

The following is an obvious result about operation sets.

\begin{lemma}\label{3.4.3}
Let $\mathcal F$ be the operation set from $L_{\bs}(\blam)$ to its
core with $[(i, h), *]\in \mathcal F$,
where $1\le i\le r$ (if $e=\infty$, $i\ne r$) and $h\in \Z$.
We have in $L_\bs(\blam)$,
\begin{enumerate}
\item[{\rm (1)}] if position $(i+1, h)$ has a bead,
then there is an empty position before it;
\item[{\rm (2)}] if position $(i, h)$ is empty, then there is a position with a bead after it.
\end{enumerate}
\end{lemma}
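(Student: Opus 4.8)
The plan is to reduce everything to a single subabacus and then run a short monovariant (conservation) argument. By the three observations recorded just before the statement, the operations in $\mathcal F$ respect subabaci: no elementary operation moves a bead between subabaci, and inside a fixed subabacus the positions form a chain under the relation ``before'' of Definition \ref{beforeafter}, along which every elementary operation moves one bead a single step in the ``before'' direction into an empty slot (when $i=r$ the position immediately before $(r,h)$ is $(1,h-e)$ by Definition \ref{nonexistposition}; when $e=\infty$ the chain inside a subabacus is just the finite strip $(1,h),\dots,(r,h)$ of a single column). Fix the subabacus $\Sigma$ containing column $h$, let $C$ and $C^\ast$ denote the restrictions to $\Sigma$ of $L_\bs(\blam)$ and of its core, respectively, and let $\mathcal F_\Sigma\subseteq\mathcal F$ be the set of operations occurring in $\Sigma$; then $\mathcal F_\Sigma$ is finite and transforms $C$ into $C^\ast$, and the hypothesis $[(i,h),\ast]\in\mathcal F$ says that at least one operation of the form $[(i,h),\ast]$ belongs to $\mathcal F_\Sigma$.

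The engine of the proof is the following bookkeeping fact: among all elementary operations of $\Sigma$, the operation $[(i,h),\ast]$ is the only one that changes either of the two (finite) counts
$$B:=\#\{\text{beads lying at or after }(i,h)\},\qquad E:=\#\{\text{empty positions lying at or before }(i+1,h)\},$$
and it decreases each of them by exactly $1$. Indeed an elementary operation empties some position $P$ and fills the position $P'$ immediately before $P$; checking the three cases according to whether $P$ lies before, at, or after $(i,h)$ shows that both $B$ and $E$ are unchanged unless $P=(i,h)$, in which case $P'=(i+1,h)$ and $B,E$ each drop by $1$. (Both counts are finite because in the $(e,\bs)$-abacus of any partition a subabacus is eventually all beads in the ``before'' direction and eventually empty in the ``after'' direction.)

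Summing this over the finitely many operations of $\mathcal F_\Sigma$, and using that at least one of them has the form $[(i,h),\ast]$, we obtain
$$B(C)\ \ge\ B(C^\ast)+1\ \ge\ 1,\qquad E(C)\ \ge\ E(C^\ast)+1\ \ge\ 1.$$
For part (2), position $(i,h)$ is empty in $L_\bs(\blam)$, hence contributes no bead to $B(C)$, so the bead guaranteed by $B(C)\ge 1$ occupies a position strictly after $(i,h)$ — exactly what is claimed. For part (1), position $(i+1,h)$ carries a bead in $L_\bs(\blam)$, hence contributes no empty slot to $E(C)$, so the empty position guaranteed by $E(C)\ge 1$ lies strictly before $(i+1,h)$, proving (1).

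There is no substantial obstacle here: the only points requiring attention are the reduction to one subabacus (legitimate by observations (1)--(3)) and the routine three-case verification that $[(i,h),\ast]$ is the unique elementary operation affecting the counts $B$ and $E$, with the wrap-around case $i=r$ and the degenerate case $e=\infty$ handled by the conventions above. Alternatively, part (2) can be seen directly by following the bead that executes the operation $[(i,h),\ast]$: it arrives at $(i,h)$ although it is not there in $L_\bs(\blam)$, and since beads only ever move in the ``before'' direction, in $L_\bs(\blam)$ it must sit at a position after $(i,h)$.
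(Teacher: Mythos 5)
Your proof is correct. The paper offers no argument for this lemma (it is labelled ``an obvious result about operation sets'' and stated without proof), so there is nothing to compare against; your conservation argument fills the gap cleanly. The key verifications all check out: within the subabacus containing column $h$ every elementary operation moves a bead one step into the immediately preceding (empty) position, the counts $B$ and $E$ are finite because each row has all beads far to the left and none far to the right, and a three-case check confirms that $[(i,h),\ast]$ is the unique operation altering either count, each time by $-1$. The direct bead-tracking argument you sketch for part (2) — the bead executing $[(i,h),\ast]$ must originate at a position after $(i,h)$ since beads only travel in the ``before'' direction — is equally valid and is probably the one-line justification the authors had in mind.
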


An interesting fact about elementary operations is that they are ``commute" with Uglov map.

\begin{lemma}\label{movecorrespondence}
Let $\mathcal{F}$ be the operation set from $L_\bs(\blam)$ to $L_\bu(\bmu)$. Then there exists a definite operation set $\tau_{\mathcal{F}}$ from $L_s(\lam)$ to $L_u(\mu)$, where $(\lam, s)$ and $(\mu, u)$ are images of $(\blam, \bs)$ and $(\bmu, \bu)$ under Uglov map, respectively, such that the following diagram commute.
\[\begin{CD}
	L_\bs(\blam)   @>\tau_{e, r}>> L_s(\lam)\\
	@V \mathcal{F} VV                  @VV \tau_{\mathcal{F}} V\\
	L_\bu(\bmu)    @>\tau_{e, r}>> L_u(\mu)
\end{CD}\]
\end{lemma}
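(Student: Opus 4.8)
The plan is to reduce the statement to a computation on individual positions of an $r$-abacus. Recall from Definition \ref{Uglovmap} that $\tau_{e,r}$ is induced by the position map $\phi$ which sends $(x,y)$, with $y = ke+c$ and $0\le c<e$, to the position $(r-x)e+ker+c$ of the $1$-abacus; by construction $L_s(\lam)$ has a bead at $\phi(x,y)$ if and only if $L_\bs(\blam)$ has a bead at $(x,y)$. First I would note that $\phi$ is a bijection from $\{1,\dots,r\}\times\Z$ onto $\Z$: for a fixed residue $c$ modulo $e$, as $k$ ranges over $\Z$ and $x$ over $\{1,\dots,r\}$ the quantity $ker+(r-x)e = e\bigl(kr+(r-x)\bigr)$ runs exactly once through all multiples of $e$, so $\phi(x,y)$ runs exactly once through all integers congruent to $c$ modulo $e$. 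Hence $\phi$ carries the bead set (resp. the empty set) of $L_\bs(\blam)$ bijectively onto the bead set (resp. the empty set) of $L_s(\lam)$, and the same holds for every intermediate abacus appearing in a chain of elementary operations.

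Next I would compute the effect of $\phi$ on a single elementary operation. For a first-kind operation, moving a bead from $(x,y)$ to $(x+1,y)$ with $1\le x<r$, write $y=ke+c$; then $\phi(x+1,y) = (r-x-1)e+ker+c = \phi(x,y)-e$. For a second-kind operation, moving a bead from $(r,y)$ to $(1,y-e)$, write $y=ke+c$ so that $y-e=(k-1)e+c$; then $\phi(1,y-e) = (r-1)e+(k-1)er+c = \phi(r,y)-e$. Thus in both cases the image of the operation is the move of the image bead from its position $p$ to position $p-e$ in $L_s(\lam)$, and since $\phi$ respects occupancy the position $p-e$ is empty there; so this is precisely an elementary operation of the $1$-abacus (the second kind, with $r$ replaced by $1$), equivalently the unwrapping of a single rim $e$-hook.

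Then I would assemble the argument. Given the operation set $\mathcal{F}$ from $L_\bs(\blam)$ to $L_\bu(\bmu)$, fix an admissible order in which its operations can be performed (one exists by hypothesis); transporting each operation along $\phi$ produces, in the same order, a sequence of admissible elementary operations of $L_s(\lam)=\tau_{e,r}(L_\bs(\blam))$ that terminates at $\tau_{e,r}(L_\bu(\bmu))=L_u(\mu)$, because at every stage the current $1$-abacus is the $\phi$-image of the current $r$-abacus. Let $\tau_{\mathcal{F}}$ be the resulting collection of moves, that is, the image of $\mathcal{F}$ under $\phi$ (a triple $[(i,h),\ast]$ of $\mathcal{F}$ being sent to the move of the corresponding bead of $L_s(\lam)$, the one located at $\phi(i,h)$). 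This is an operation set from $L_s(\lam)$ to $L_u(\mu)$, which is the claimed commutativity of the square. That $\tau_{\mathcal{F}}$ is \emph{definite} in the sense discussed after Definition \ref{generalcore} — independent of the procedure chosen — is inherited from the corresponding property of $\mathcal{F}$ (elementary operations live inside subabaci and preserve bead indices) through the bijection $\phi$, which identifies the two subabacus decompositions.

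The only genuine work is the position bookkeeping, and it is entirely packaged in the two identities $\phi(x+1,y)=\phi(x,y)-e$ and $\phi(1,y-e)=\phi(r,y)-e$: one elementary operation upstairs is exactly one ``shift by $e$'' downstairs, and the compatibility of empty positions needed to legitimize each downstairs move is automatic from $\phi$ being an occupancy-preserving bijection. I would carry this out for $e<\infty$; in the case $e=\infty$ only first-kind operations arise and the first identity (read through Definition \ref{e-infinite}) already suffices, so no separate argument is needed.
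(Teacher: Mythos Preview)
Your proof is correct and follows essentially the same approach as the paper's own (sketch) proof: reduce to a single elementary operation and verify directly that under the Uglov position map it becomes a shift by $e$ on the $1$-abacus, i.e., a move from $(c,m)$ to $(c,m-1)$ in the $e$-tuple display. Your write-up is in fact more detailed than the paper's, making explicit the bijectivity of $\phi$ and the occupancy preservation that legitimizes each downstairs move.
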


\begin{proof}
(Sketch) We only need to consider the case of $\mathcal{F}$ containing only one elementary operation $[(x, y), \ast]$. Let $y=ke+c$. It is not difficult to check that if $x<r$, the corresponding elementary operation is moving in $\mathcal{L}_s^e(\lam)$ the bead in position $(c, r-x+kr)$ to $(c, r-x-1+kr)$. If $x=r$,  the corresponding elementary operation is moving in $\mathcal{L}_s^e(\lam)$ the bead in position $(c, kr)$ to $(c, kr-1)$. Then it is a routine to check that the diagram commutes.
\end{proof}

Operation sets give rise to the definition of moving vectors.

\begin{definition}\label{move vector definition}
Let $\mathcal{F}$ be the operation set from $L_\bs(\blam)$ to $L_\bu(\bmu)$.
Define $$m_i=\sharp\{[(i, h), x]\in \mathcal{F}\mid h\in \mathbb{Z}, x\in \mathbb{N^+}\}.$$
Then $\mathcal{M}=(m_1, m_2, \dots, m_r)$ is called the moving vector from $L_\bs(\blam)$ to $L_\bu(\bmu)$.
To simplify the description, we sometimes write $m_{r+i}=m_i$ for $0\leq i< r$.
\end{definition}

Let us give an example of operation sets and moving vectors.

\begin{example} Let $e=3$, $\bs=(0, 2, 1)$ and $\blam=((2, 1), (3, 2), (4, 3, 1))$.
Then the associated $(e, \bs)$-abacus of pair $(\blam, \bs)$ can be represented as follows.

\begin{center}

\begin{tikzpicture}[scale=0.5, bb/.style={draw,circle,fill,minimum size=2.5mm,inner sep=0pt,outer sep=0pt},
wb/.style={draw,circle,fill=white,minimum size=2.5mm,inner sep=0pt,outer sep=0pt}]
	
\foreach \x in {12,-10}
\foreach \y in {0, 2, 1}
{
\node at (\x,\y) {$\cdots$};
}		
	\node [wb] at (11,0) {};
	\node [wb] at (10,0) {};
	\node [wb] at (9,0) {};
	\node [wb] at (8,0) {};
	\node [wb] at (7,0) {};
	\node [wb] at (6,0) {};
	\node [wb] at (5,0) {};
	\node [wb] at (4,0) {};
	\node [wb] at (3,0) {};
	\node [bb] at (2,0) {};
	\node [wb] at (1,0) {};
	\node [bb] at (0,0) {};
	\node [wb] at (-1,0) {};
	\node [bb] at (-2,0) {};
	\node [bb] at (-3,0) {};
	\node [bb] at (-4,0) {};
	\node [bb] at (-5,0) {};
	\node [bb] at (-6,0) {};
	\node [bb] at (-7,0) {};
	\node [bb] at (-8,0) {};
	\node [bb] at (-9,0) {};
	
	\node [wb] at (11,1) {};
	\node [wb] at (10,1) {};
	\node [wb] at (9,1) {};
	\node [wb] at (8,1) {};
	\node [wb] at (7,1) {};
	\node [wb] at (6,1) {};
	\node [bb] at (5,1) {};
	\node [wb] at (4,1) {};
	\node [bb] at (3,1) {};
	\node [wb] at (2,1) {};
	\node [wb] at (1,1) {};
	\node [bb] at (0,1) {};
	\node [bb] at (-1,1) {};
	\node [bb] at (-2,1) {};
	\node [bb] at (-3,1) {};
	\node [bb] at (-4,1) {};
	\node [bb] at (-5,1) {};
	\node [bb] at (-6,1) {};
	\node [bb] at (-7,1) {};
	\node [bb] at (-8,1) {};
	\node [bb] at (-9,1) {};
	
	\node [wb] at (11,2) {};
	\node [wb] at (10,2) {};
	\node [wb] at (9,2) {};
	\node [wb] at (8,2) {};
	\node [wb] at (7,2) {};
	\node [wb] at (6,2) {};
	\node [bb] at (5,2) {};
	\node [wb] at (4,2) {};
	\node [bb] at (3,2) {};
	\node [wb] at (2,2) {};
	\node [wb] at (1,2) {};
	\node [bb] at (0,2) {};
	\node [wb] at (-1,2) {};
	\node [bb] at (-2,2) {};
	\node [bb] at (-3,2) {};
	\node [bb] at (-4,2) {};
	\node [bb] at (-5,2) {};
	\node [bb] at (-6,2) {};
	\node [bb] at (-7,2) {};
	\node [bb] at (-8,2) {};
	\node [bb] at (-9,2) {};

\draw[dashed](0.5,-0.5)--node[]{}(0.5,2.5);
\end{tikzpicture}
\end{center}
Let $\bmu=(\varnothing, (4, 3, 1), (3, 2))$ and $\bv=(0, 1, 2)$.
Then the associated $(e, \bs)$-abacus $L_\bv(\bmu)$ can be represented as follows.

\begin{center}

\begin{tikzpicture}[scale=0.5, bb/.style={draw,circle,fill,minimum size=2.5mm,inner sep=0pt,outer sep=0pt},
wb/.style={draw,circle,fill=white,minimum size=2.5mm,inner sep=0pt,outer sep=0pt}]
	
\foreach \x in {12,-10}
\foreach \y in {0, 2, 1}
{
\node at (\x,\y) {$\cdots$};
}	
	\node [wb] at (11,0) {};
	\node [wb] at (10,0) {};
	\node [wb] at (9,0) {};
	\node [wb] at (8,0) {};
	\node [wb] at (7,0) {};
	\node [wb] at (6,0) {};
	\node [wb] at (5,0) {};
	\node [wb] at (4,0) {};
	\node [wb] at (3,0) {};
	\node [wb] at (2,0) {};
	\node [wb] at (1,0) {};
	\node [bb] at (0,0) {};
	\node [bb] at (-1,0) {};
	\node [bb] at (-2,0) {};
	\node [bb] at (-3,0) {};
	\node [bb] at (-4,0) {};
	\node [bb] at (-5,0) {};
	\node [bb] at (-6,0) {};
	\node [bb] at (-7,0) {};
	\node [bb] at (-8,0) {};
	\node [bb] at (-9,0) {};
	
	\node [wb] at (11,1) {};
	\node [wb] at (10,1) {};
	\node [wb] at (9,1) {};
	\node [wb] at (8,1) {};
	\node [wb] at (7,1) {};
	\node [wb] at (6,1) {};
	\node [bb] at (5,1) {};
	\node [wb] at (4,1) {};
	\node [bb] at (3,1) {};
	\node [wb] at (2,1) {};
	\node [wb] at (1,1) {};
	\node [bb] at (0,1) {};
	\node [wb] at (-1,1) {};
	\node [bb] at (-2,1) {};
	\node [bb] at (-3,1) {};
	\node [bb] at (-4,1) {};
	\node [bb] at (-5,1) {};
	\node [bb] at (-6,1) {};
	\node [bb] at (-7,1) {};
	\node [bb] at (-8,1) {};
	\node [bb] at (-9,1) {};
	
	\node [wb] at (11,2) {};
	\node [wb] at (10,2) {};
	\node [wb] at (9,2) {};
	\node [wb] at (8,2) {};
	\node [wb] at (7,2) {};
	\node [wb] at (6,2) {};
	\node [bb] at (5,2) {};
	\node [wb] at (4,2) {};
	\node [bb] at (3,2) {};
	\node [wb] at (2,2) {};
	\node [wb] at (1,2) {};
	\node [bb] at (0,2) {};
	\node [bb] at (-1,2) {};
	\node [bb] at (-2,2) {};
	\node [bb] at (-3,2) {};
	\node [bb] at (-4,2) {};
	\node [bb] at (-5,2) {};
	\node [bb] at (-6,2) {};
	\node [bb] at (-7,2) {};
	\node [bb] at (-8,2) {};
	\node [bb] at (-9,2) {};

\draw[dashed](0.5,-0.5)--node[]{}(0.5,2.5);
\end{tikzpicture}
\end{center}

The operation set from $L_\bs(\blam)$ to $L_\bv(\bmu)$ is
$$\mathcal{F} = \{[(2, -2), 4], [(1, 1), 3], [(2, 1), 3], [(3, 1), 3]\},$$
and consequently, $\mathcal{M}=(1, 2, 1)$.
\end{example}

The following lemma reveals that the moving vector from abacus $L_\bs(\blam)$ to another one can reflect
variations of multicharge $\bs$ and the shape of $L_\bs(\blam)$.
It is a simple corollary of Definition \ref{move vector definition} and
Lemma \ref{charge minus}. We omit the proof and leave it as an exercise.
Recall Definition \ref{nxk} of $\mathfrak{n}_x^k$.

\begin{lemma}\label{move vector minus}
Let $\mathcal{M}=(m_1, m_2, \dots, m_r)$ be the moving vector from $L_\bs(\blam)$
to $L_\bu(\bmu)$ and let $h$ be an integer such that all positions $\{(x, y)\mid 1\leq x \leq r, y\leq h\}$ in both $L_\bs(\blam)$
and $L_\bu(\bmu)$ have a bead placed. Then for $1\le i\le r$, we have
$\mathfrak{n}_i^h(L_{\bs}(\blam))-\mathfrak{n}_i^h(L_{\bu}(\bmu))= m_i-m_{i-1}=s_i-u_i$ (the meaning of $m_0$ is $m_r$).
\end{lemma}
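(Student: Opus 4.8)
The plan is to establish the two equalities in the statement separately. The identity $\mathfrak{n}_i^h(L_{\bs}(\blam))-\mathfrak{n}_i^h(L_{\bu}(\bmu))=s_i-u_i$ needs no new work: it is exactly Lemma \ref{charge minus} applied to the two $r$-abaci $L_\bs(\blam)$ and $L_\bu(\bmu)$ with $j=i$, since the integer $h$ is chosen precisely so that all positions $(x,l)$ with $l\le h$ carry a bead in \emph{both} abaci, which is the hypothesis of that lemma; it yields $\mathfrak{n}_i^h(L_\bu(\bmu))-\mathfrak{n}_i^h(L_\bs(\blam))=u_i-s_i$.

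For the remaining equality I would track how the numbers $\mathfrak{n}_i^h$ change along the sequence of elementary operations recorded by $\mathcal F$. The one point that needs care is the claim that no operation in $\mathcal F$ ever involves a position lying in a column $\le h$. This follows from the observations collected before Definition \ref{generalcore}: each elementary operation moves a bead within a single subabacus to the first empty position that comes before it in the sense of Definition \ref{beforeafter}, and it preserves the index of every bead; the positions in columns $\le h$ form an order ideal for the ``before'' relation inside each subabacus, and this ideal is completely filled in both $L_\bs(\blam)$ and $L_\bu(\bmu)$, so it stays filled throughout the process. Hence every operation of $\mathcal F$ moves a bead from a column $>h$ to a column $>h$.

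Granting this, the effect of a single operation on the $\mathfrak{n}_x^h$ is immediate. A first-kind operation $[(a,y),\ast]\in\mathcal F$ with $1\le a<r$ removes one bead from row $a$ and puts it back in row $a+1$ in the same column $y>h$; it therefore decreases $\mathfrak{n}_a^h$ by one, increases $\mathfrak{n}_{a+1}^h$ by one, and changes no other $\mathfrak{n}_x^h$. A second-kind operation $[(r,y),\ast]$ moves a bead from row $r$, column $y>h$, to row $1$, column $y-e$, which is also $>h$ by the claim; it decreases $\mathfrak{n}_r^h$ by one and increases $\mathfrak{n}_1^h$ by one. Summing over all operations of $\mathcal F$ and recalling from Definition \ref{move vector definition} that $m_i$ is precisely the number of operations of $\mathcal F$ whose moved bead starts in row $i$ --- the first-kind ones when $i<r$ and the second-kind ones when $i=r$ --- one obtains $\mathfrak{n}_i^h(L_\bu(\bmu))=\mathfrak{n}_i^h(L_\bs(\blam))-m_i+m_{i-1}$ for all $i$, with the convention $m_0=m_r$. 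Rearranging gives $\mathfrak{n}_i^h(L_\bs(\blam))-\mathfrak{n}_i^h(L_\bu(\bmu))=m_i-m_{i-1}$, and combining with the first paragraph completes the proof.

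The only genuinely delicate step is the claim that $\mathcal F$ leaves the columns $\le h$ undisturbed; everything after that is a finite bookkeeping of $\pm 1$'s. I would make that claim precise subabacus by subabacus, using that the move ``push a bead back to the first gap'' cannot alter a completely filled initial segment of the position order, so the occupied positions in columns $\le h$ are the same in $L_\bs(\blam)$, in $L_\bu(\bmu)$, and at every intermediate stage.
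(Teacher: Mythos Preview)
Your proposal is correct and follows the route the paper itself indicates: the paper states that this lemma ``is a simple corollary of Definition \ref{move vector definition} and Lemma \ref{charge minus}'' and omits the proof entirely, and you have supplied exactly that argument---Lemma \ref{charge minus} for the equality $\mathfrak{n}_i^h(L_\bs(\blam))-\mathfrak{n}_i^h(L_\bu(\bmu))=s_i-u_i$, and a direct count from the definition of the moving vector for the equality with $m_i-m_{i-1}$.

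One small imprecision worth tightening: you write that an elementary operation moves a bead ``to the first empty position that comes before it''. In fact it moves a bead from a position to the \emph{immediately} preceding position in the ``before'' order of Definition \ref{beforeafter}, and only when that position is empty. This actually makes your order-ideal argument cleaner: if the initial segment of positions in columns $\le h$ is completely filled, then for any bead at position $q$ with immediate predecessor $q^-$ lying in this segment, $q^-$ is occupied and the operation is simply not available; hence no operation can vacate a position in the segment, and it stays filled at every intermediate stage. The rest of your bookkeeping of $\pm 1$ contributions is then exactly right.
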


\begin{lemma}\label{same block}
Let $\bs\in \overline{\mathcal A}^r_e$ and $\bu, \bv\in \Z^r$.
If the moving vector from $L_\bs(\blam)$ to $L_{\bv}(\bnu)$ is equal to that from $L_{\bu}(\bmu)$ to $L_{\bv}(\bnu)$,
then $\bs=\bu$ and $(\blam, \bs)$ and $(\bmu, \bs)$ belong to the same block.
\end{lemma}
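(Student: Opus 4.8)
The plan is to establish $\bs=\bu$ first and then the statement about blocks. For $\bs=\bu$, write $\mathcal{M}=(m_1,\dots,m_r)$ for the common moving vector and apply Lemma \ref{move vector minus} twice: from the operation set $L_\bs(\blam)\to L_\bv(\bnu)$ it gives $m_i-m_{i-1}=s_i-v_i$ for all $i$ (with $m_0:=m_r$), and from $L_\bu(\bmu)\to L_\bv(\bnu)$ it gives $m_i-m_{i-1}=u_i-v_i$; subtracting yields $s_i=u_i$ for every $i$, so $\bs=\bu$. (Summing over $i$ one also gets $\sum_i s_i=\sum_i v_i$, which we will not need.) From now on I write $\bu=\bs$.

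For the block statement the first move is to reduce everything to the null root $\delta$. No elementary operation takes a bead out of its subabacus -- a first-kind move $(x,y)\to(x+1,y)$ keeps the column $y$, and a second-kind move $(r,y)\to(1,y-e)$ replaces the column $y$ by $y-e$, which lies in the same subabacus -- so every $\mathfrak{m}^{j-1}_j$ is unchanged by elementary operations. Hence $\mathfrak{m}^{j-1}_j(L_\bs(\blam))=\mathfrak{m}^{j-1}_j(L_\bv(\bnu))=\mathfrak{m}^{j-1}_j(L_\bs(\bmu))$ for all $j\in I$. Lemma \ref{new4.3.1} then gives $(\balpha_j,\bLambda_\bs-\bbeta_{\blam,\bs})=(\balpha_j,\bLambda_\bs-\bbeta_{\bmu,\bs})$ for every $j$, that is $(\balpha_j,\bbeta_{\blam,\bs}-\bbeta_{\bmu,\bs})=0$ for all $j\in I$. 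When $e=\infty$ the $A_\infty$ Cartan form is positive definite on finitely supported weights, so $\bbeta_{\blam,\bs}=\bbeta_{\bmu,\bs}$ outright and Lemma \ref{residueblock} finishes the proof. When $e<\infty$ the kernel of the $A^{(1)}_{e-1}$ Cartan matrix inside the root lattice $Q$ is $\Z\delta$, so $\bbeta_{\blam,\bs}-\bbeta_{\bmu,\bs}=c\delta$ for some $c\in\Z$, and it remains to show $c=0$.

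To force $c=0$ I would compare weights. Since the core of $(\bnu,\bv)$ is a complete abacus obtained from $L_\bs(\blam)$ by elementary operations (do the ones from $L_\bs(\blam)$ to $L_\bv(\bnu)$, then the ones from $L_\bv(\bnu)$ to its core), uniqueness of cores identifies it with the core of $(\blam,\bs)$, and likewise with the core of $(\bmu,\bs)$. Every elementary operation moves a bead to a position strictly earlier in the (totally ordered) ``before'' relation of Definition \ref{beforeafter} on its subabacus, so a bead never returns to a position it has vacated; consequently the operation set from $L_\bs(\blam)$ to the common core splits as the disjoint union of the operation set $L_\bs(\blam)\to L_\bv(\bnu)$ and the operation set $L_\bv(\bnu)\to(\text{core})$, and block moving vectors add. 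The latter summand is the same for $\blam$ and $\bmu$ and the former is the common $\mathcal{M}$, so $(\blam,\bs)$ and $(\bmu,\bs)$ have the same block moving vector and hence, by Lemma \ref{weight moving vector}, the same weight. Now $w=(\bLambda_\bs,\bbeta)-\tfrac12(\bbeta,\bbeta)$; using $(\delta,\gamma)=0$ for $\gamma\in Q$ and $(\bLambda_\bs,\delta)=r$ one finds that the weights of the blocks of $(\blam,\bs)$ and $(\bmu,\bs)$ differ by $cr$, so equality of weights together with $r\ge1$ gives $c=0$. Thus $\bbeta_{\blam,\bs}=\bbeta_{\bmu,\bs}$ -- equivalently $|\blam|=|\bmu|$, which is what makes the assertion meaningful -- and Lemma \ref{residueblock} places $(\blam,\bs)$ and $(\bmu,\bs)$ in the same block.

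The genuine obstacle is the passage from ``the difference of the $\bbeta$'s lies in $\Z\delta$'' to ``it is zero'': invariance of the $\mathfrak{m}^{j-1}_j$ is a purely local statement and cannot detect the $\delta$-component, so a global invariant has to be brought in. The weight identity above is the one I expect to use, and it in turn rests on two small but indispensable structural facts -- that the core of a pair is unique, and that operation sets along a composite path concatenate without repeating a triple.
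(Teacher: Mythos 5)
Your proof is correct, and your derivation of $\bs=\bu$ from Lemma \ref{move vector minus} is exactly the paper's. For the block statement you take a genuinely different route. The paper pushes everything through the Uglov map: since operation sets concatenate, both $\tau_{e,\bs}(\blam)$ and $\tau_{e,\bs}(\bmu)$ reach the $e$-core of $\tau_{e,\bv}(\bnu)$ in the same number $k+l$ of steps, so the level-one pairs have the same $e$-core and $e$-weight, hence equal residue content, and \cite[Corollary 2.23]{JL} pulls this back to $\mathcal C_{e,\bs}(\blam)=\mathcal C_{e,\bs}(\bmu)$ before invoking Lemma \ref{residueblock}. You instead argue intrinsically on the $r$-abacus: invariance of the $\mathfrak m^{j-1}_j$ under elementary operations together with Lemma \ref{new4.3.1} gives $(\balpha_j,\bbeta_{\blam,\bs}-\bbeta_{\bmu,\bs})=0$ for all $j$, which pins the difference down to $\Z\delta$, and the defect computation (via equality of block moving vectors and Lemma \ref{weight moving vector}) kills the $\delta$-coefficient since the level is $r\ge 1$. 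Both arguments ultimately rest on the same two structural facts you isolate at the end (uniqueness of the core and disjoint concatenation of operation sets), and your count $l+k=l+k$ is precisely the paper's. Your version has the merit of exhibiting which invariant detects what ($\mathfrak m^{j-1}_j$ determines $\bbeta$ modulo $\delta$; the weight determines the $\delta$-part); the only caveat is organizational: Definition \ref{block moving vector} and Lemma \ref{weight moving vector} are stated after Lemma \ref{same block}, so you are relying on forward references (harmless, since their proofs do not use this lemma, but note that Lemma \ref{weight moving vector} is itself proved via the Uglov map, so the level-one reduction is encapsulated rather than avoided).
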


\begin{proof}
Let $(m_1, \dots, m_r)$ be the moving vector from $L_\bs(\blam)$ to $L_\bv(\bnu)$ with $m_1+\cdots+m_r=l$.
Then by Lemma \ref{move vector minus} $s_i-v_i=m_i-m_{i-1}=u_i-v_i$
for $1\le i\le r$, in which $m_0$ means $m_r$. This implies $\bs=\bu$.

Let $\tau_{e, \bs}(\blam)=\lambda, \,\tau_{e, \bs}(\bmu)=
\mu, \,\tau_{e, \bv}(\bnu)=\nu$ and $s=\sum^r_{i=1}s_i$, $v=\sum^r_i v_i$.
If we need $k$ elementary operations from $\mathcal L^e_v(\nu)$ to its $e$-core,
then by Lemma \ref{movecorrespondence} we can obtain this $e$-core from both $\mathcal L^e_s(\lambda)$ and
$\mathcal L^e_s(\mu)$ by $k+l$ elementary operations, respectively.
Therefore, $(\lambda, s)$ and $(\mu, s)$ belong to the same block.
This is equivalent to $\mathcal C_{e, s}(\tau_{e, \bs}(\blam))=\mathcal C_{e, s}(\tau_{e, \bs}(\bmu))$.
By \cite[Corollary 2.23]{JL}, $\mathcal C_{e, \bs}(\blam)=\mathcal C_{e, \bs}(\bmu)$.
We deduce from Lemma \ref{residueblock} that $(\blam, \bs)$ and $(\bmu, \bs)$ belong to the same block.
\end{proof}

The significance of a moving vector is that it induces a new block invariant,
which may play a more important role than the weight. Let us prove a lemma for giving the definition.

\begin{lemma}\label{3.4.8}
Let $(\blam, \bs)\in \mathcal H^\bLambda_\bbeta$ with $\bs \in \overline{\mathcal A}^r_e$
and $L_{\bs^*}(\blam^*)$ its core. If $(\bmu, \bs)\in \mathcal H^\bLambda_\bbeta$,
then its core is also $L_{\bs^*}(\blam^*)$
and the moving vector from $L_\bs(\bmu)$  to $L_{\bs^*}(\blam^*)$
is equal to that from $L_\bs(\blam)$ to $L_{\bs^*}(\blam^*)$.
\end{lemma}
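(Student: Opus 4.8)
The plan is to transport the whole statement to $1$-abaci through the Uglov map, where it reduces to the uniqueness of the $e$-core of a partition. Put $\lambda=\tau_{e,\bs}(\blam)$, $\mu=\tau_{e,\bs}(\bmu)$ and $s=\sum_{i=1}^{r}s_i$. Since $(\blam,\bs)$ and $(\bmu,\bs)$ lie in the same block, Lemma~\ref{residueblock} gives $\mathcal{C}_{e,\bs}(\blam)=\mathcal{C}_{e,\bs}(\bmu)$, whence $\mathcal{C}_{e,s}(\lambda)=\mathcal{C}_{e,s}(\mu)$ by \cite[Corollary~2.23]{JL}, exactly as in the proof of Lemma~\ref{same block}. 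In particular $\lambda$ and $\mu$ have the same number of nodes, the same $e$-core $\lambda^{\circ}$, and the same $e$-weight $w$.

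Next I would pin down the Uglov image of a core. Let $\mathcal{F}$ be the operation set from $L_\bs(\blam)$ to its core $L_{\bs^{*}}(\blam^{*})$. By Lemma~\ref{movecorrespondence}, $\tau_{\mathcal{F}}$ is a family of $|\mathcal{F}|$ bead-down moves taking $L_s(\lambda)$ to $\tau_{e,r}(L_{\bs^{*}}(\blam^{*}))=L_{s}(\lambda^{*})$, where $\lambda^{*}=\tau_{e,\bs^{*}}(\blam^{*})$ and the charge is still $s$ because bead-down moves do not change it (so also $\sum_i s^{*}_i=s$). Unwinding Definition~\ref{Uglovmap}, a bead at position $(i,ke+c)$ of $L_\bs(\blam)$ occupies level $r-i+kr$ of runner $c$ in $\mathcal{L}_s^{e}(\lambda)$; under this dictionary the completeness condition of Definition~\ref{complete} --- that a bead at $(i,h)$ forces a bead at $(i+1,h)$, with the convention $(r+1,h):=(1,h-e)$ of Definition~\ref{nonexistposition} --- translates precisely into the statement that on every runner of $\mathcal{L}_s^{e}(\lambda^{*})$ a bead at a level $\ell$ forces a bead at level $\ell-1$. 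Hence each runner of $\mathcal{L}_s^{e}(\lambda^{*})$ is packed to the bottom, i.e.\ $\lambda^{*}$ is an $e$-core; being obtained from $\lambda$ by unwrapping rim $e$-hooks, $\lambda^{*}=\lambda^{\circ}$. The same argument applied to $(\bmu,\bs)$ shows that its core has Uglov image $(\lambda^{\circ},s)$ as well, so by bijectivity of $\tau_{e,r}$ the cores of $(\blam,\bs)$ and $(\bmu,\bs)$ coincide; this is the first assertion of the lemma.

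For the second assertion, let $\mathcal{M}=(m_1,\dots,m_r)$ and $\mathcal{M}'=(m'_1,\dots,m'_r)$ be the moving vectors from $L_\bs(\blam)$ and from $L_\bs(\bmu)$ to the common core $L_{\bs^{*}}(\blam^{*})$. Lemma~\ref{move vector minus} gives $m_i-m_{i-1}=s_i-s^{*}_i=m'_i-m'_{i-1}$ for all $i$ (indices read cyclically), so $m_i-m'_i$ is independent of $i$, say $=c$. On the other hand $\sum_{i=1}^{r}m_i=|\mathcal{F}|$ by Definition~\ref{move vector definition}, and $|\mathcal{F}|$ equals the number of bead-down moves from $L_s(\lambda)$ to $L_s(\lambda^{\circ})$, namely the $e$-weight $w$ of $\lambda$; likewise $\sum_i m'_i=w$. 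Hence $rc=\sum_i(m_i-m'_i)=0$, so $c=0$ and $\mathcal{M}=\mathcal{M}'$.

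I expect the middle paragraph to be the real work: verifying that ``$(e,\bs)$-complete'' on an $r$-abacus corresponds under $\tau_{e,r}$ to ``$e$-core'' on the associated $1$-abacus is the one point that requires a careful, if elementary, reading of Definition~\ref{Uglovmap}, and it is precisely what forces the core --- and hence the moving vector --- to depend only on the block. The remaining steps are direct applications of Lemmas~\ref{residueblock}, \ref{movecorrespondence} and \ref{move vector minus} together with the bookkeeping of Definition~\ref{move vector definition}. The case $e=\infty$ (Definition~\ref{e-infinite}) is handled in the same way: there are no second-kind elementary operations, $m_r=0$ throughout, and the Uglov reduction is unchanged.
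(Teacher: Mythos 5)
Your proof is correct and follows essentially the same route as the paper's: both reduce to $1$-abaci via the Uglov map and the uniqueness of the $e$-core to identify the cores, and both derive $\mathcal M=\mathcal M'$ from Lemma \ref{move vector minus} together with the equality of the component sums (the common weight). Your middle paragraph merely spells out in more detail the completeness/$e$-core dictionary that the paper compresses into the appeal to Lemma \ref{movecorrespondence} and the injectivity of $\tau_{e,r}$.
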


\begin{proof}
Since $(\blam, \bs)$ and  $(\bmu, \bs)$ are in the same block, we have from  \cite[Corollary 2.23]{JL} 
that $(\lam, s)$ and  $(\mu, s)$ are in the same block and therefore $(\lam, s)$ and  $(\mu, s)$ 
have the same $e$-core $(\lam^\ast, s^\ast)$. Note that Uglov map is injective. 
This implies that $(\lam^\ast, s^\ast)$ has a unique preimage, which is a reduced $(e, \bs)$-core by Lemma \ref{movecorrespondence}. 
Consequently, the preimage has to be $L_{\bs^*}(\blam^*)$.

Suppose that the weight of $\mathcal H^\bLambda_\bbeta$ is $w$, the moving vector
from $L_\bs(\blam)$ to $L_{\bs^*}(\blam^*)$ is $(m_1, \dots, m_r)$,
and the moving vector from $L_\bs(\bmu)$ to $L_{\bs^*}(\blam^*)$ is $(m'_1, \dots, m'_r)$.
Then Lemma \ref{move vector minus} implies $m_i-m_{i-1}=
s_i-s^\ast_i=m'_i-m'_{i-1}$ for $1\le i\le r$, in which $m_0$ means $m_r$.
Then the result follows from $m_1+\cdots+m_r=w=m'_1+\cdots+m'_r$.
\end{proof}

Given a pair $(\blam, \bs)\in \mathcal H^\bLambda_\bbeta$ with $\bs \in \overline{\mathcal A}^r_e$,
in the light of Lemma \ref{3.4.8}, we also say $L_{\bs^*}(\blam^*)$ is the core of block $\mathcal H^\bLambda_\bbeta$.
Note that the core of a block depends on the choice of $\bs$, or we can say the core of a block only if $\bs$ is fixed.

Fix a dominant weight $\bLambda$. According to the definition of the action of affine Weyl group $W$, it is not difficult to check that in each $W$-orbit of blocks,
there exists $\bbeta\in Q_+$ such that $(\balpha_j, \bLambda-\bbeta)\geq 0$ for all $0\leq j \leq e-1$.

\begin{lemma}\label{empty core blocks}
Let $\bs,\, \bs^*\in\overline{\mathcal A}^r_e$ and $\blam$ an $r$-partition. Let $L_{\bs^*}(\blam^*)$ be the core of block $\mathcal H^\bLambda_\bbeta$. Then $\blam^*=\bvarnothing$ if and only if $(\balpha_j, \bLambda-\bbeta)\geq 0$ for all $0\leq j \leq e-1$.
\end{lemma}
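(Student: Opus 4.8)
The plan is to unwind the statement into a purely combinatorial condition about the abaci. By Lemma~\ref{3.4.8} the core $L_{\bs^*}(\blam^*)$ of the block $\mathcal H^\bLambda_\bbeta$ does not depend on the choice of pair $(\blam,\bs)$ inside the block, so I may work with any convenient pair. Let $(m_1,\dots,m_r)$ be the moving vector from $L_\bs(\blam)$ to $L_{\bs^*}(\blam^*)$, and write $w=w(\mathcal H^\bLambda_\bbeta)=m_1+\cdots+m_r$ by Lemma~\ref{weight moving vector}. The key bridge is Lemma~\ref{new4.3.1}, which says $(\balpha_j,\bLambda-\bbeta)=\mathfrak m^{j-1}_j(L_\bs(\blam))$ for $0\le j\le e-1$. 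So the claim ``$(\balpha_j,\bLambda-\bbeta)\ge 0$ for all $j$'' is equivalent to ``$\mathfrak m^{j-1}_j(L_\bs(\blam))\ge 0$ for all $0\le j\le e-1$'', i.e. every subabacus of $L_\bs(\blam)$ has at least as many beads (in the normalized sense of Definition~\ref{beaddifference}) as the next one. I want to show this holds simultaneously for all $\blam$ in the block if and only if $\blam^*=\bvarnothing$.

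First I would handle the ``only if'' direction by contrapositive: suppose $\blam^*\ne\bvarnothing$. Then the core abacus $L_{\bs^*}(\blam^*)$ is complete but not equal to $L_{\bs^*}(\bvarnothing)$, so some bead has been moved forward; concretely there is a subabacus and a position in it, before the ``edge'' bead guaranteed by Lemma~\ref{completeabaciproperty}, that is empty. Comparing the number of beads in consecutive subabaci of the core, completeness forces the bead counts across the $e$ subabaci to be non-increasing in the cyclic sense only when $\blam^*=\bvarnothing$; a nonempty core will have strictly fewer beads in some subabacus $j-1$ than in subabacus $j$, i.e. $\mathfrak m^{j-1}_j(L_{\bs^*}(\blam^*))<0$ for some $j$. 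Since $(\bmu,\bs^*)=(\blam^*,\bs^*)$ is itself a pair in the block, Lemma~\ref{new4.3.1} then gives $(\balpha_j,\bLambda-\bbeta)<0$, contradicting the right-hand condition. (Here I need $\bs^*\in\overline{\mathcal A}^r_e$, which is exactly Lemma~\ref{3.1.6}, together with the observations on subabaci preceding Lemma~\ref{3.4.3} to make the ``$\mathfrak m$'' count well-defined.)

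For the ``if'' direction, assume $(\balpha_j,\bLambda-\bbeta)\ge 0$ for all $0\le j\le e-1$, equivalently $\mathfrak m^{j-1}_j(L_{\bs^*}(\blam^*))\ge 0$ for all $j$ by applying Lemma~\ref{new4.3.1} to the pair $(\blam^*,\bs^*)$ in the block. Now $L_{\bs^*}(\blam^*)$ is complete, so by Definition~\ref{complete} the beads are ``stacked'' consistently across rows, and the $e$ numbers $\mathfrak m^{0}_1,\mathfrak m^{1}_2,\dots,\mathfrak m^{e-1}_0$ sum to zero (each counts a signed difference and the total telescopes cyclically around the $e$ subabaci). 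A collection of $e$ real numbers summing to zero and all $\ge 0$ must all be zero, hence $\mathfrak m^{j-1}_j(L_{\bs^*}(\blam^*))=0$ for every $j$. For a complete abacus this means every subabacus has the same bead count, which (again using completeness and Lemma~\ref{bead minus}(3) to pin down the charges) forces the core to be the ``lowest'' complete abacus with the given charge $\bs^*$, namely $L_{\bs^*}(\bvarnothing)$; thus $\blam^*=\bvarnothing$.

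The main obstacle I expect is the last step: making rigorous the claim that a complete abacus with all subabaci of equal bead count must be $L_{\bs^*}(\bvarnothing)$, and dually that a nonempty complete core has a strictly negative $\mathfrak m^{j-1}_j$. This is really a statement about how completeness (the chain $L_{s^*_1}(\blam^{*(1)})\subset\cdots\subset L_{s^*_r}(\blam^{*(r)})\subset L_{s^*_1+e}(\blam^{*(1)})$) interacts with the subabacus decomposition; I would argue it by noting that in a complete abacus, within each subabacus the beads occupy an initial segment (all positions ``before'' some bead are filled, all ``after'' are empty, by Lemma~\ref{completeabaciproperty}), so the bead count of a subabacus is determined by the location of that single edge bead, and equality of all these counts together with the nesting condition forces each component $\blam^{*(i)}$ to be empty. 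The charge bookkeeping via Lemma~\ref{bead minus}(3) and Lemma~\ref{charge minus} should close the gap, but this is the place where care is needed rather than the earlier, more formal manipulations.
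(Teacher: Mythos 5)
Your reduction via Lemma \ref{new4.3.1} to the condition $\mathfrak m^{j-1}_j(L_{\bs^*}(\blam^*))\ge 0$ for all $j$ is exactly the paper's starting point, but the argument you build on it rests on a false identity. The numbers $\mathfrak m^{0}_1,\dots,\mathfrak m^{e-1}_0$ do \emph{not} sum to zero: grouping $\sum_{j}\mathfrak m^{j-1}_j$ as $\sum_{l\in\Z}(\mathfrak c_{l-1}-\mathfrak c_l)$ and telescoping against the boundary values $\mathfrak c_l=r$ for $l\ll 0$ and $\mathfrak c_l=0$ for $l\gg 0$ gives $r$, not $0$ (equivalently, $\sum_j(\balpha_j,\bLambda-\bbeta)=\sum_j(\balpha_j,\bLambda)=r$, since $\sum_j\balpha_j$ pairs to zero with every simple root). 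So ``all $\ge 0$ and summing to zero, hence all zero'' fails, and the intended conclusion is in fact false: $L_{\bs^*}(\bvarnothing)$ itself typically has several $\mathfrak m^{j-1}_j>0$ (they record the jumps of the multicharge), so ``all subabaci have the same bead count'' neither follows from the hypothesis nor characterizes the empty core. This invalidates your third paragraph, and the combinatorial step you yourself flag as the main obstacle --- that a nonempty complete core has some $\mathfrak m^{j-1}_j<0$ --- remains unproved; the paper's route is to show that all $\mathfrak m^{j-1}_j\ge0$ together with completeness (Lemma \ref{completeabaciproperty}) forces $\mathfrak c_l\le\mathfrak c_{l-1}$ for every $l$, which for a complete abacus pins down $L_{\bs^*}(\bvarnothing)$.

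There is also a direction missing. Both of your arguments establish ``$(\balpha_j,\bLambda-\bbeta)\ge0$ for all $j$ implies $\blam^*=\bvarnothing$'' (the first by contradiction, the second directly); nowhere do you prove that $\blam^*=\bvarnothing$ implies $(\balpha_j,\bLambda-\bbeta)\ge0$ for all $j$. The paper's argument for this direction is one line: in $L_{\bs^*}(\bvarnothing)$ the column counts are $\mathfrak c_l=\sharp\{i\mid s^*_i>l\}$, hence non-increasing in $l$, so every summand $\mathfrak c_{j-1+ke}-\mathfrak c_{j+ke}$ of $\mathfrak m^{j-1}_j$ in Definition \ref{beaddifference} is $\ge0$. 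Finally, a small but genuine slip: your claim that a nonempty complete core has an empty position \emph{before} its edge bead contradicts Lemma \ref{completeabaciproperty}; what distinguishes a nonempty core is the relative placement of the edge beads of the different subabaci, not a gap inside any one of them.
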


\begin{proof}
If $\blam^*=\bvarnothing$, then $\mathfrak{c}_l(L_{\bs^*}(\blam^*))\leq \mathfrak{c}_k(L_{\bs^*}(\blam^*))$ for arbitrary $l, k\in \Z$ with $l\geq k$ since $\bs^*\in\overline{\mathcal A}^r_e$. As a result, $\mathfrak{m}^{j-1}_j\geq 0$ for all $0\leq j \leq e-1$. This implies that $(\balpha_j, \bLambda-\bbeta)\geq 0$ for all $0\leq j \leq e-1$ by Lemma \ref{new4.3.1}. 

Now assume that $(\balpha_j, \bLambda-\bbeta)\geq 0$ for all $0\leq j \leq e-1$. Note that $L_{\bs^*}(\blam^*)$ is complete. Then it is sufficient to prove $\mathfrak{c}_l\leq \mathfrak{c}_{l-1}$ for arbitrary $l\in\Z$ to prove $\blam^*=\bvarnothing$. This is an easy corollary of Lemmas \ref{completeabaciproperty} and \ref{new4.3.1}.
\end{proof}

\begin{definition}\label{block moving vector}
Let $(\blam, \bs)\in\mathcal H^\bLambda_\bbeta$  with $\bs\in\overline{\mathcal A}^r_e$. Then the block moving vector of  $\mathcal H^\bLambda_\bbeta$  is defined to be the moving vector from $L_\bs(\blam)$ to its core.
\end{definition}

By Lemma \ref{3.2.6} the block moving vector is an invariant of an orbit of blocks.

\begin{lemma}\label{weight moving vector}
Assume that the block moving vector of block $\mathcal H^\bLambda_\bbeta$ is $(m_1, m_2, \dots, m_r)$. Then $\sum_im_i=w(\mathcal H^\bLambda_\bbeta)$.
\end{lemma}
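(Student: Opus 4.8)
The plan is to reduce the statement to the corresponding fact for ordinary ($1$-)partitions via the Uglov map, where the analogous identity—weight equals number of bead-moves to the $e$-core—is classical. First I would recall the setup: by Definition~\ref{block moving vector}, the block moving vector $(m_1,\dots,m_r)$ is the moving vector from $L_\bs(\blam)$ to its core $L_{\bs^*}(\blam^*)$, where $(\blam,\bs)\in\mathcal H^\bLambda_\bbeta$ and $\bs\in\overline{\mathcal A}^r_e$. By Lemma~\ref{3.4.8} this is well-defined (independent of the chosen $\blam$ in the block). Let $\mathcal F$ be the operation set achieving this, so that $\sum_i m_i = |\mathcal F|$ is the total number of elementary operations. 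The goal is to show $|\mathcal F| = w(\mathcal H^\bLambda_\bbeta)$.

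The key step is to invoke Lemma~\ref{movecorrespondence}: applying the Uglov map $\tau_{e,r}$, the operation set $\mathcal F$ from $L_\bs(\blam)$ to $L_{\bs^*}(\blam^*)$ corresponds bijectively to an operation set $\tau_{\mathcal F}$ from $L_s(\lam)$ to $L_{s^*}(\lam^*)$, where $\lam=\tau_{e,\bs}(\blam)$, $\lam^*=\tau_{e,\bs}(\blam^*)$, $s=\sum_i s_i$, and $s^*=\sum_i s_i^*$; moreover $|\tau_{\mathcal F}|=|\mathcal F|$ since the correspondence is one elementary operation at a time. Since $L_{\bs^*}(\blam^*)$ is complete (Definition~\ref{complete}), by the description following Definition~\ref{Uglovmap} its image $\mathcal L^e_{s^*}(\lam^*)$ is the $e$-tuple abacus display of an $e$-core partition, i.e. $(\lam^*,s^*)$ is the $e$-core of $(\lam,s)$. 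On the $1$-abacus side, an elementary operation is exactly moving a bead one step left within its runner $\mathcal L^e_s$, which is the classical operation of unwrapping a rim $e$-hook (as recalled at the start of Section~4.1). Hence $|\tau_{\mathcal F}|$ equals the number of rim $e$-hooks removed from $[\lam]$ to reach its $e$-core, i.e. the $e$-weight of $\lam$.

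It then remains to identify the $e$-weight of $\lam$ with $w(\mathcal H^\bLambda_\bbeta)$. This follows from the combinatorics already in play: $w(\mathcal H^\bLambda_\bbeta) = (\bLambda,\bbeta)-\tfrac12(\bbeta,\bbeta)$ is Fayers' weight of the block, and under the Uglov correspondence this matches the $e$-weight of the associated $1$-partition—concretely, each elementary operation decreases $\bbeta$ by a positive root $\balpha_j$ (Lemma~\ref{3.2.3}), and a total of $w$ such operations brings $\bbeta$ down to the weight-zero core value, so the number of operations is forced to be $w$. Alternatively one can argue directly: by Lemma~\ref{move vector minus} the differences $m_i-m_{i-1}$ are pinned down by $\bs,\bs^*$, and the total $\sum_i m_i$ is an invariant of the block (Lemma~\ref{3.4.8}); combining with the fact that each operation removes one $\balpha_j$-worth of "mass" and the core has defect equal to... —here one cites that the number of elementary operations from any abacus in the block to the block core is the defect $(\bLambda,\bbeta)-\tfrac12(\bbeta,\bbeta)$, which is exactly the content being proved on the $1$-partition level.

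The main obstacle is making the last identification airtight: one must be careful that "$e$-weight of $\lam$" (number of rim $e$-hooks) genuinely equals "$\mathrm{defect}(\bbeta)$" rather than merely being proportional, and that the core of the block (in the sense of Definition~\ref{generalcore}, via complete abaci) maps under Uglov to the honest $e$-core (not some larger core). The first point is the classical fact that for a $1$-partition the $e$-weight equals $(\bLambda,\bbeta)-\tfrac12(\bbeta,\bbeta)$ with $\bLambda=\bLambda_s$ the relevant level-one weight; the second is secured by Lemma~\ref{3.4.8}'s argument that the unique preimage of the $e$-core is a reduced $(e,\bs)$-core, hence complete. I would also double-check the edge behaviour at $e=\infty$ (where the second-kind operation is absent and $r$-abaci and $1$-abaci coincide up to reindexing), but this case is straightforward. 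With these pieces the chain of equalities $\sum_i m_i = |\mathcal F| = |\tau_{\mathcal F}| = (e\text{-weight of }\lam) = w(\mathcal H^\bLambda_\bbeta)$ closes the proof.
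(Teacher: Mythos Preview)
Your approach is essentially the paper's: both use Lemma~\ref{movecorrespondence} to transport the operation set through the Uglov map and then invoke the classical fact that the $e$-weight of a $1$-partition counts bead-moves to its $e$-core. The only real difference is in the last step, identifying the $e$-weight of $\lam$ with $w(\mathcal H^\bLambda_\bbeta)$: the paper simply cites \cite[Theorem~3.7]{JL}, whereas you try to argue it directly. Your direct argument has an inaccuracy---unwrapping a rim $e$-hook (one elementary operation on $\mathcal L^e_s(\lam)$) removes one node of each residue, so it decreases $\bbeta$ by the null root $\delta=\sum_{i\in I}\balpha_i$, not by a single $\balpha_j$; Lemma~\ref{3.2.3} concerns horizontal one-step moves on a $1$-abacus, which is a different operation---and your ``alternative'' argument is, as you half-acknowledge, circular. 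Replacing that paragraph with the JL citation (or with the short computation $(\bLambda,\bbeta)-\tfrac12(\bbeta,\bbeta)=(\bLambda,\bbeta^*+w\delta)-\tfrac12(\bbeta^*+w\delta,\bbeta^*+w\delta)=w$, using $(\delta,\delta)=0$, $(\bLambda,\delta)=r$ at level $r$, and that the core has defect zero) closes the proof cleanly.
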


\begin{proof}
Let $L_{\bs}(\blam)\in\mathcal H^\bLambda_\bbeta$ with core $L_{\bs^*}(\blam^*)$, 
and $L_{s}(\lam)$ and $L_{s^*}(\lam^*)$ the images under Uglov map, respectively.
Then by Lemma \ref{movecorrespondence} there is a bijection between the operation set $\mathcal{F}$ from $L_{\bs}(\blam)$ to $L_{\bs^*}(\blam^*)$ 
and the operation set $\tau_{\mathcal{F}}$ from $L_{s}(\lam)$ to $L_{s^*}(\lam^*)$. It is well-known that the number of operations in $\tau_{\mathcal{F}}$
is equal to the weight of $L_{s}(\lam)$. Moreover, we have from \cite[Theorem 3.7]{JL} that the weight of $L_{\bs}(\blam)$ is equal to that of $L_{s}(\lam)$. 
That is, the weight of $L_{\bs}(\blam)$ is equal to the number of operations in set $\mathcal{F}$, and the lemma follows.
\end{proof}


\subsection{Simple properties}
In this subsection, we study some simple properties of operation sets and moving vectors from three aspects as follows.

Firstly, we study the moving vector related to a simple deformation of abaci.
Given an abacus $L_{\bs}(\blam)$ in $\mathcal{H}^{\bLambda}_{\bbeta}$ with $\bs\in \overline{\mathcal A}^r_e$ and $e<\infty$,
denote by $L_{\bu} (\bmu)$ the abacus obtained by deleting the first $i$ rows
and putting $L_{s_x+e}(\blam^{(x)})$, $1\leq x\leq i$ on the top in $L_{\bs}(\blam)$.
Let $\mathcal M=(m_1, \dots, m_r)$ and $\mathcal M'=(m'_1, \dots, m'_r)$
be the moving vectors and $\mathcal F$ and $\mathcal F'$ the operation sets
from $L_\bs(\blam)$ and $L_\bu(\bmu)$ to their cores, respectively.
Then we have some easy results as follows.
\begin{lemma}\label{3.4.9}
The followings hold.
\begin{enumerate}
\item[{\rm (1)}] $m'_j=m_{i+j}$ for $1\le j \le r-i$.
\item[{\rm (2)}] $m'_j=m_{i+j-r}$ for $r-i+1\le j\le r$.
\item[{\rm (3)}] $[(j, h), *]\in \mathcal F'$ if and only if $[(i+j, h), *]\in \mathcal F$ for $1\le j\le r$.
\item[{\rm (4)}] $L_{\bu^*}(\bmu^*)$ can be obtained from $L_{\bs^*}(\blam^*)$ by deleting the the first $i$ rows
and putting $L_{s^\ast_x+e}(\blam^{\ast(x)})$, $1\leq x\leq i$ in the original order on the top in $L_{\bs^\ast}(\blam)$.
\item[{\rm (5)}] $\bu\in \overline{\mathcal A}^r_e$ and $(\bmu, \bu)\in\mathcal{H}^{\bLambda}_{\bbeta}$.
\end{enumerate}
\end{lemma}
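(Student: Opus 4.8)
The plan is to realise the passage from $L_\bs(\blam)$ to $L_\bu(\bmu)$ as a single rigid move and to push all five statements through it. Let $\Phi_i$ denote the map sending an arbitrary $r$-abacus $L_\bv(\bnu)$ to the $r$-abacus obtained by deleting its first $i$ rows and stacking $L_{v_x+e}(\bnu^{(x)})$, $1\le x\le i$, on top in the original order, so that $L_\bu(\bmu)=\Phi_i(L_\bs(\blam))$ with $\bu=(s_{i+1},\dots,s_r,s_1+e,\dots,s_i+e)$ and $\bmu=(\blam^{(i+1)},\dots,\blam^{(r)},\blam^{(1)},\dots,\blam^{(i)})$. First I would write down the bead dictionary for $\Phi_i$: a bead at position $(i+a,h)$ with $1\le a\le r-i$ of $L_\bs(\blam)$ becomes the bead at $(a,h)$ of $L_\bu(\bmu)$, and a bead at $(b,h)$ with $1\le b\le i$ becomes the bead at $(r-i+b,h+e)$; interpreting ``$(i+j,h)$'' for $i+j>r$ through Definition \ref{nonexistposition}, this reads uniformly as ``old $(i+j,h)$ $\leftrightarrow$ new $(j,h)$'' for all $1\le j\le r$ and $h\in\Z$.

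The heart of the argument is that $\Phi_i$ is compatible with the subabacus structure. By the observations following Lemma \ref{completeabaciproperty}, each subabacus of an $r$-abacus, ordered by ``before'', is a bi-infinite line of slots along which every elementary operation slides a bead one slot backwards into a vacant slot; I would check that $\Phi_i$ acts on each such line by a single uniform translation of all its beads, hence changes the index of no bead. Granting this, $\Phi_i$ carries elementary operations to elementary operations bijectively, the operation on the bead of index $x$ at a position $P$ corresponding to the operation on the same bead at $\Phi_i(P)$; I would also note that $\Phi_i$ sends complete abaci to complete abaci, since applying it to a defining chain $L_{s_1}(\blam^{(1)})\subset\cdots\subset L_{s_r}(\blam^{(r)})\subset L_{s_1+e}(\blam^{(1)})$ again produces such a chain, now for $\bu$. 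This compatibility statement is where I expect the real work: it amounts to matching the ``before/after'' ordering of positions against the shifted row labels and verifying the two kinds of elementary operations separately, which is fiddly but routine once the bi-infinite-line picture is in place.

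Granting the compatibility, parts (1)--(4) are bookkeeping. Transporting a sequence of elementary operations $L_\bs(\blam)\to L_{\bs^*}(\blam^*)$ through $\Phi_i$ gives a sequence $L_\bu(\bmu)\to\Phi_i(L_{\bs^*}(\blam^*))$ terminating at a complete abacus, so by uniqueness of the core $L_{\bu^*}(\bmu^*)=\Phi_i(L_{\bs^*}(\blam^*))$, which is exactly (4). From the bead dictionary, $[(j,h),*]\in\mathcal F'$ holds iff $[(i+j,h),*]\in\mathcal F$ for every $1\le j\le r$, which is (3); counting the operations in $\mathcal F'$ row by row and using the convention $m_{r+t}=m_t$ then yields $m'_j=m_{i+j}$, i.e. (1) for $1\le j\le r-i$ and (2) for $r-i+1\le j\le r$.

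Finally I would dispatch (5) directly. That $\bu\in\overline{\mathcal A}^r_e$ is a short case check: the bounds among the first $r-i$ entries and among the last $i$ entries are inherited from $\bs\in\overline{\mathcal A}^r_e$, while for a mixed pair $u_p=s_{i+p}$ ($p\le r-i$) and $u_q=s_{q-(r-i)}+e$ ($q>r-i$) one has $u_q-u_p=e-(s_{i+p}-s_{q-(r-i)})\in[0,e]$ because $q-(r-i)<i+p$ forces $s_{i+p}-s_{q-(r-i)}\in[0,e]$. For the block: the multiset $\{u_j\bmod e\}_j$ equals $\{s_j\bmod e\}_j$, so $\bLambda_\bu=\bLambda_\bs=\bLambda$; moreover the components of $\bmu$ are a rearrangement of those of $\blam$, and shifting the charge of a component by $e$ changes none of its node residues because $q^e=1$, so $(\bmu,\bu)$ and $(\blam,\bs)$ have the same residue content, whence $\bbeta_{\bmu,\bu}=\bbeta$ by Lemma \ref{residueblock} and $(\bmu,\bu)\in\mathcal H^\bLambda_\bbeta$.
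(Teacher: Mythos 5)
Your proposal is correct; the paper states Lemma \ref{3.4.9} without proof (as ``easy results''), and your formalization --- realizing the row-cycling as a map $\Phi_i$ that acts as a uniform translation by $i$ on each subabacus line, hence preserves bead indices, carries elementary operations to elementary operations (with the second-kind/first-kind exchange at the seam handled by Definition \ref{nonexistposition}), and sends complete abaci to complete abaci --- is exactly the argument the authors are implicitly relying on. The residue/charge computation for part (5), using $q^e=1$ together with Lemmas \ref{residueblock} and \ref{permortationisomorphism} as the paper itself does for $(\blam^\sigma,\bs^\sigma)$, is also sound.
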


Secondly, we consider the composition of certain elementary operations.
The following lemma is a generalization of \cite[Remark 4.3 (1)]{JL}. It is easy and we omit the proof.

\begin{lemma}\label{operation composition}
Assume that in $L_\bs(\blam)$ there is an empty position $(j, l-ke)$ before the bead $(i, l)$,
where $1\le i, j\le r$, $l\in \Z$, $k\in \mathbb N$, and $k=0$ when $e=\infty$.
Move in $L_\bs(\blam)$ the bead at position $(i, l)$ to position
$(j, l-ke)$ and denote by $L_\bu(\bmu)$ the new abacus.
Then $L_\bu(\bmu)$ can be obtained from $L_\bs(\blam)$ by elementary operations
$\{[(i, l), *],\, \cdots,\, [(r, l), *], \,[(1, l-e), *], \,\cdots, \,[(r, l-e), *],
\,\dots,\, [(r, l-ke+e), *],\, [(1, l-ke), *], \,\dots, \,[(j, l-ke), *]\}$.
\end{lemma}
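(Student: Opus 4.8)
The plan is to trace the chain of consecutive positions in the subabacus of $(i,l)$ that runs from $(i,l)$ toward the core and to match the displayed set with the moves along it. By Definition~\ref{beforeafter}, the positions of this subabacus that are before $(i,l)$ but not before $(j,l-ke)$, listed from $(i,l)$ inward, are
\[
q_0=(i,l),\ q_1=(i+1,l),\ \dots,\ (r,l),\ (1,l-e),\ \dots,\ (r,l-e),\ \dots,\ (1,l-ke),\ \dots,\ q_d=(j,l-ke),
\]
and each consecutive pair $q_t,q_{t+1}$ differs by exactly one elementary operation: a first-kind move $(x,y)\mapsto(x+1,y)$ inside a row-block, or a second-kind move $(r,y)\mapsto(1,y-e)$ at each wrap. (When $e=\infty$ one has $k=0$ and only the block containing $l$ occurs; if $i=r$ the list opens with a second-kind move, and if $j=1$ it closes with one.) Writing out $q_0,q_1,\dots,q_{d-1}$ — the source positions of the moves along the chain — reproduces exactly the set displayed in the statement.

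First I would exhibit an explicit legal sequence of elementary operations realizing the move: push the beads lying on the chain toward the core one at a time, beginning with the occupied chain-position of largest index (the one nearest $(j,l-ke)$), sliding its bead forward through the empty positions into the slot $(j,l-ke)$; then slide the bead at the next occupied chain-position forward into the slot just vacated; and so on, finishing with the bead at $q_0=(i,l)$. Each slide is legal because at the moment it is performed all positions between the bead and its target are empty, and the very first slide is where the hypothesis ``$(j,l-ke)$ is empty'' is used. The net effect on the abacus is to empty $(i,l)$, to fill $(j,l-ke)$, and to leave every other position fixed, so the result is $L_\bu(\bmu)$; and reading off the recorded triples, each of $q_0,q_1,\dots,q_{d-1}$ is the source of precisely one elementary move, so the operation set is $\{[q_0,\ast],\dots,[q_{d-1},\ast]\}$. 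As a cross-check one may run a conservation count: cutting the chain between $q_{t-1}$ and $q_t$ ($1\le t\le d$), the number of beads crossing toward the core equals $N'_t-N_t=1$, where $N_t$ and $N'_t$ count the beads among $q_t,\dots,q_d$ before and after the move; hence $[q_{t-1},\ast]$ occurs with multiplicity one and nothing else occurs.

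There is no real obstacle here; the only care needed is bookkeeping — enumerating the chain correctly, getting the two boundary moves right ($i=r$ for the first, $j=1$ for the last), handling the degeneration $e=\infty$ (which forces $k=0$), and, when the portion of $L_\bs(\blam)$ strictly between $(j,l-ke)$ and $(i,l)$ is not empty, observing that the intervening beads are displaced along the chain but, by the count above, the recorded multiset of operations is unchanged and the net effect is still exactly ``delete the bead at $(i,l)$, insert one at $(j,l-ke)$''. The base case $q_d=q_1$ is a single elementary move and is \cite[Remark~4.3~(1)]{JL}, so the lemma is the expected generalization of that remark; alternatively one can make the whole thing formal by induction on $d$, writing the move as ``one elementary step from $(i,l)$ to $q_1$, followed by the analogous move $q_1\to q_d$'' and applying the induction hypothesis to the shorter chain $q_1,\dots,q_d$.
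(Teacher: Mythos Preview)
Your proposal is correct; the paper itself omits the proof, saying only that the lemma ``is easy and we omit the proof'' as a generalization of \cite[Remark~4.3~(1)]{JL}, so your explicit push-the-beads-along-the-chain argument (or the induction on $d$) is precisely the routine verification the authors had in mind. One small remark: since the triple $[(x,y),\ast]$ records the \emph{source} position of a move, the last element of the operation set is $[q_{d-1},\ast]$, i.e.\ $[(j-1,l-ke),\ast]$ when $j>1$ (or $[(r,l-ke+e),\ast]$ when $j=1$), rather than $[(j,l-ke),\ast]$ as printed; you handle this correctly when you write the set as $\{[q_0,\ast],\dots,[q_{d-1},\ast]\}$, so the off-by-one is in the paper's displayed set, not in your argument.
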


The following two lemmas are direct corollaries of Lemma \ref{operation composition}.
We list them below without proofs.

\begin{lemma}\label{3.4.11}
Let $\mathcal F$ be the operation set from $L_{\bs}(\blam)$ to its core $(\blam^*, \bs^*)$.
If in $L_\bs(\blam)$, position $(i, h)$ has a bead, and
position  $(j, h-ke)$ is empty and before $(i, h)$, where $k\in\mathbb{N}$  and $k=0$ if $e=\infty$.
Then the operations moving the bead at position $(i, h)$ to $(j, h-ke)$ are contained in $\mathcal F$.
\end{lemma}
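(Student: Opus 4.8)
The plan is to obtain the lemma as an immediate corollary of Lemma \ref{operation composition}, invoking the fact recorded in the discussion preceding Definition \ref{generalcore} that the operation set between two abaci is independent of the chosen procedure of moving, together with uniqueness of the reduced core reachable from a given abacus by elementary operations.

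First I would apply Lemma \ref{operation composition} with $l=h$: since position $(i,h)$ carries a bead and position $(j,h-ke)$ is empty and lies before $(i,h)$, that lemma produces an explicit set $\mathcal{G}$ of elementary operations --- precisely the set of ``operations moving the bead at $(i,h)$ to $(j,h-ke)$'' named in the statement --- which transforms $L_\bs(\blam)$ into the abacus $L_\bu(\bmu)$ obtained from $L_\bs(\blam)$ by relocating that single bead from $(i,h)$ to $(j,h-ke)$. So it suffices to prove $\mathcal{G}\subseteq\mathcal{F}$.

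Since $L_\bu(\bmu)$ is obtained from $L_\bs(\blam)$ by elementary operations, every complete abacus reachable from $L_\bu(\bmu)$ by elementary operations is also reachable from $L_\bs(\blam)$ by elementary operations; by uniqueness of the core, such an abacus must be $(\blam^*,\bs^*)$. Hence there is a set $\mathcal{F}'$ of elementary operations taking $L_\bu(\bmu)$ to $(\blam^*,\bs^*)$. Performing the operations of $\mathcal{G}$ and then those of $\mathcal{F}'$ is one procedure transforming $L_\bs(\blam)$ into its core $(\blam^*,\bs^*)$, and it uses the multiset $\mathcal{G}\sqcup\mathcal{F}'$. By procedure-independence of the operation set, this multiset coincides with $\mathcal{F}$; thus $\mathcal{F}=\mathcal{G}\sqcup\mathcal{F}'$ and, in particular, $\mathcal{G}\subseteq\mathcal{F}$, which is the assertion.

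I expect the only genuinely load-bearing point to be the appeal to procedure-independence of the operation set (and to uniqueness of the core): this is exactly what upgrades the trivial inequality $|\mathcal{F}|\le|\mathcal{G}|+|\mathcal{F}'|$ to the identity $\mathcal{F}=\mathcal{G}\sqcup\mathcal{F}'$. Both facts belong to the framework already set up in Section 4 and in \cite{JL}, so nothing new is needed there; the remaining content is the bookkeeping observation that the list of operations output by Lemma \ref{operation composition} is verbatim the one referred to in Lemma \ref{3.4.11}, which is clear from the way a single elementary operation is encoded as a triple $[(i,l),*]$.
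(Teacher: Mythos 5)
Your proposal is correct and follows exactly the route the paper intends: the paper states Lemma \ref{3.4.11} without proof, calling it a direct corollary of Lemma \ref{operation composition}, and your argument simply supplies the omitted details. The two load-bearing facts you identify --- procedure-independence of the operation set (from the observations on subabaci preceding Definition \ref{generalcore}) and uniqueness of the reduced core reachable by elementary operations --- are precisely what make the ``direct corollary'' claim work, and your use of them is sound; note also that since every elementary operation moves a bead to a position strictly before its current one, no triple $[(i,l),x]$ can ever repeat, so $\mathcal F=\mathcal G\sqcup\mathcal F'$ really is a disjoint union of genuine sets as you assert.
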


\begin{lemma}\label{3.4.12}
Let $L_\bs(\blam)$ be an abacus with position $(i, l)$ having a bead
and position $(j, l)$ being empty, where $1\le i<j\le r$.
Let $L_\bu(\bmu)$ be the abacus obtained by moving in $L_\bs(\blam)$
the bead at position $(i, l)$ to position $(j, l)$.
Then $L_\bu(\bmu)$ can be obtained from $L_\bs(\blam)$ by elementary operations
with moving vector $\mathcal M=(m_1, \dots, m_r)$, where $m_t=1$ if $i\le t\le j-1$ and $m_t=0$ otherwise.
\end{lemma}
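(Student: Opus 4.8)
The plan is to specialize Lemma~\ref{operation composition} to the case $k=0$ and then read the moving vector off the resulting operation set. First I would verify that the hypotheses of Lemma~\ref{operation composition} hold with this choice of $k$: the position $(j,l)=(j,\,l-0\cdot e)$ is empty by assumption, and since $1\le i<j\le r$ it lies before the bead at position $(i,l)$ in the sense of Definition~\ref{beforeafter} (1); the proviso ``$k=0$ when $e=\infty$'' is automatically met because we take $k=0$ for every value of $e$. Hence the move of the bead at $(i,l)$ to $(j,l)$ — which is exactly the move producing $L_\bu(\bmu)$ — is realized by the elementary operations listed in Lemma~\ref{operation composition}. For $k=0$ that list carries no ``$l-e$'' level and performs no wrap-around through row $r$: it degenerates to the $j-i$ operations $[(i,l),*],\,[(i+1,l),*],\,\dots,\,[(j-1,l),*]$, one for each row from $i$ to $j-1$, after which the bead sits at $(j,l)$.

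Then I would compute $\mathcal M$ directly from Definition~\ref{move vector definition}. The operation set just described contains exactly one triple whose first coordinate equals $t$ for each $t\in\{i,i+1,\dots,j-1\}$, and no triple with first coordinate $t$ for any other $t$; therefore $m_t=1$ when $i\le t\le j-1$ and $m_t=0$ otherwise, which is the assertion. Implicit here is the fact, recorded immediately after the definition of the operation set, that the operation set from $L_\bs(\blam)$ to $L_\bu(\bmu)$ is independent of the order in which the moves are carried out, so it is legitimate to extract $\mathcal M$ from the particular presentation furnished by Lemma~\ref{operation composition}.

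There is essentially no obstacle, since all of the real content has been delegated to Lemma~\ref{operation composition}. The one point worth flagging is that when one or more of the intermediate positions $(i+1,l),\dots,(j-1,l)$ already carries a bead, one cannot literally push the bead at $(i,l)$ up one row at a time — the first such step is blocked. Lemma~\ref{operation composition} handles this by reordering the moves (advancing the blocking beads first), and the net effect is still exactly the configuration $L_\bu(\bmu)$: only the internal bead-indices within the subabacus are permuted, which is invisible to the abacus itself. If anything, the most delicate part of writing this up is the routine bookkeeping check that the operation set displayed in Lemma~\ref{operation composition} genuinely collapses to $\{[(i,l),*],\dots,[(j-1,l),*]\}$ in the regime $k=0$, $i<j$, rather than to something running on through row $r$.
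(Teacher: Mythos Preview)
Your proposal is correct and takes exactly the approach the paper indicates: the paper states that this lemma is a direct corollary of Lemma~\ref{operation composition} and omits the proof entirely. Your specialization to $k=0$, together with the observation that the resulting operation set is $\{[(i,l),*],\dots,[(j-1,l),*]\}$, is precisely the intended argument, and your remark about intermediate occupied positions being handled by the order-independence of the operation set is a useful clarification.
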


Let us give some more details of a special case of Lemma \ref{operation composition}.

\begin{lemma}\label{deleting rim}
Let $L_\bs(\blam)$ be an abacus.
\begin{enumerate}
\item[{\rm (1)}] Assume that position $(i, l)$ is empty and position $(i, l+e)$ has a bead.
Denote by $L_\bu(\bmu)$ the abacus obtained from $L_\bs(\blam)$ by moving the bead at position $(i, l+e)$ to position $(i, l)$.
Then $\bu=\bs$ and the moving vector from $L_\bs(\blam)$ to $L_\bu(\bmu)$ is $(1, \dots, 1)$.
Moreover, $[\bmu]$ can be obtained by deleting a rim $e$-hook from $[\blam^{(i)}]$.
\item[{\rm (2)}] If $[\bmu]$ can be obtained by deleting a rim $e$-hook from $[\blam^{(i)}]$, then
there exists $l\in\Z$ such that in $L_\bs(\blam)$, position $(i, l)$ is empty and position $(i, l+e)$ has a bead.
\end{enumerate}
\end{lemma}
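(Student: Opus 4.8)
The plan is to handle part~(1) by routing the single bead-move around the $r$-abacus and invoking Lemma~\ref{operation composition}, and part~(2) by the classical abacus description of rim $e$-hook removal. Throughout I would assume $e<\infty$: if $e=\infty$ then $(i,l+e)=(i,l)$ by Definition~\ref{e-infinite}, so the hypothesis of~(1) would require a single position to be simultaneously empty and occupied, while a rim $e$-hook would have infinitely many nodes; hence both statements are vacuous in that case.

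For~(1), I would first note that the empty position $(i,l)$ lies before the bead at $(i,l+e)$ in the sense of Definition~\ref{beforeafter}(2) (take $k=0$ there). Then Lemma~\ref{operation composition}, applied to the bead at $(i,l+e)$ together with the empty slot $(i,l)$ before it, shows that $L_\bu(\bmu)$ is obtained from $L_\bs(\blam)$ by the operation set
\[
\mathcal{F}=\{[(i,l+e),\ast],[(i+1,l+e),\ast],\dots,[(r,l+e),\ast],[(1,l),\ast],\dots,[(i-1,l),\ast]\}
\]
(with the obvious conventions when $i=1$ or $i=r$), which contains exactly one elementary operation in each of the rows $1,\dots,r$; hence the moving vector from $L_\bs(\blam)$ to $L_\bu(\bmu)$ is $(1,\dots,1)$. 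Since then $m_i-m_{i-1}=0$ for every $i$, Lemma~\ref{move vector minus} forces $s_i=u_i$ for all $i$, i.e. $\bu=\bs$; for the rows $j\ne i$ this is anyway clear, as they are untouched, so $\bmu^{(j)}=\blam^{(j)}$. Finally, row~$i$ of $L_\bs(\blam)$ is precisely the $1$-abacus $L_{s_i}(\blam^{(i)})$, and the move is a slide of one bead by $e$ positions to the left; passing to the $e$-tuple display $\mathcal{L}_{s_i}^e(\blam^{(i)})$ this becomes a move of a bead one step down its runner, which by the well-known fact recalled at the start of Section~4.1 unwraps a rim $e$-hook. Therefore $[\bmu^{(i)}]$ is $[\blam^{(i)}]$ with a rim $e$-hook deleted.

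For~(2), I would take $\bmu$ obtained from $\blam$ by deleting a rim $e$-hook from the $i$-th component, so $\bmu^{(j)}=\blam^{(j)}$ for $j\ne i$ and $[\bmu^{(i)}]$ is $[\blam^{(i)}]$ with a rim $e$-hook removed. By the converse direction of the same classical fact, in $\mathcal{L}_{s_i}^e(\blam^{(i)})$ there is a bead at some position $(x,y)$ with $(x,y-1)$ empty, and $\mathcal{L}_{s_i}^e(\bmu^{(i)})$ arises by moving that bead to $(x,y-1)$. Translating back to the $1$-abacus $L_{s_i}(\blam^{(i)})$ and letting $l$ be the position corresponding to $(x,y-1)$, position $l$ is empty and position $l+e$ is occupied. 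Reading this inside $L_\bs(\blam)$ gives that position $(i,l)$ is empty and $(i,l+e)$ has a bead, as required.

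The only delicate point is the first assertion of~(1): the move of a bead from $(i,l+e)$ to $(i,l)$ is not itself an elementary operation (elementary operations pass between different rows) and the positions it would sweep through need not be empty, so its realization as a composition of elementary operations — and the precise operation set it produces — has to be extracted from Lemma~\ref{operation composition} together with the order-independence of operation sets established before Lemma~\ref{3.4.3}. Everything else is a routine translation among the $1$-abacus, the $e$-tuple-abacus and the $r$-abacus pictures.
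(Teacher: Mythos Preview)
Your proof is correct and follows precisely the approach the paper indicates: the paper's own proof consists of the single sentence ``It is an evident corollary of Lemma~\ref{move vector minus}, \ref{operation composition} and \cite[Remark~4.3(1)]{JL}'', and you have simply unpacked those three ingredients in the natural order. Your handling of the $e=\infty$ vacuity and the explicit list of operations in $\mathcal{F}$ are fine supplementary details.
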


\begin{proof}
It is an evident corollary of Lemma \ref{move vector minus}, \ref{operation composition} and
 \cite[Remark 4.3(1)]{JL}.
\end{proof}

Finally, we introduce the so-called dual abacus, which in fact appeared in the work of Uglov \cite{U}.
\begin{definition}\label{dual definition}
Given an abacus $L_\bs(\blam)$, the dual of $L_\bs(\blam)$, denoted by $L_{\bs^D}(\blam^D)$,
is an abacus, in which position $(i, h)$ is empty if and only if in $L_\bs(\blam)$,
there is a bead at position $(r-i+1, -h-1)$. Furthermore, $(\blam^D, \bs^D)$ is called
the dual pair of $(\blam, \bs)$.
\end{definition}

The following two lemmas about dual abaci are easy.

\begin{lemma}\label{3.4.16}
Let $L_{\bs^D}(\blam^D)$ be the dual of $L_{\bs}(\blam)$. Then $\blam^D=\blam'$, $s_i^D=-s_{r-i+1}$ for $1\leq i\leq r$
and consequently, if $\bs\in \overline{\mathcal A}^r_e$, then $\bs^D\in \overline{\mathcal A}^r_e$ and if $\bs\in \mathcal  A^r_e$, then $\bs^D\in \mathcal A^r_e$.
\end{lemma}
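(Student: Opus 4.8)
The plan is to reduce everything to one classical identity about a single $1$-abacus, namely: for a partition $\lam$ (padded with infinitely many zero parts) and $s\in\Z$, a position $p\in\Z$ is empty in $L_s(\lam)$ if and only if $-p-1$ is occupied in $L_{-s}(\lam')$. I would first reduce this to the case $s=0$, using $L_s(\lam)=L_0(\lam)+s$ together with $\{-b-1\mid b\in L_{-s}(\lam')\}=\{-b-1\mid b\in L_0(\lam')\}+s$, so that both sides are translated by $s$ simultaneously. For $s=0$ I would argue directly: the sequence $(\lam_j-j)_{j\ge1}$ is strictly decreasing with image $L_0(\lam)$, so $p\notin L_0(\lam)$ precisely when $\lam_{j+1}-j-1<p<\lam_j-j$ for some $j\ge0$ (with the convention $\lam_0=\infty$); setting $c=p+j+1$ this reads $\lam_{j+1}<c\le\lam_j$, equivalently $\lam'_c=j$, i.e. $p=c-\lam'_c-1=-(\lam'_c-c)-1$ for some $c\ge1$, which is exactly the condition $-p-1\in L_0(\lam')$. (One checks $c\ge1$ is automatic, since $\lam_{j+1}\le c-1$ and $\lam_{j+1}\ge0$.) Alternatively this identity may simply be cited as the standard abacus description of conjugation.

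With the identity in hand, I would unwind Definition~\ref{dual definition}. Position $(i,h)$ of $L_{\bs^D}(\blam^D)$ carries a bead iff position $(r-i+1,-h-1)$ of $L_\bs(\blam)$ is empty, i.e. iff $-h-1\notin L_{s_{r-i+1}}(\blam^{(r-i+1)})$; by the identity applied with $\lam=\blam^{(r-i+1)}$, $s=s_{r-i+1}$, $p=-h-1$, this holds iff $h\in L_{-s_{r-i+1}}\!\big((\blam^{(r-i+1)})'\big)$. Thus, as sets of bead positions, row $i$ of $L_{\bs^D}(\blam^D)$ coincides with $L_{-s_{r-i+1}}\!\big((\blam^{(r-i+1)})'\big)$. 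Since a $1$-abacus determines its charge and its partition uniquely (Lemma~\ref{bead minus}(2),(3) and the remark following it that a pair is recovered from its abacus), this forces $s^D_i=-s_{r-i+1}$ and $(\blam^D)^{(i)}=(\blam^{(r-i+1)})'$ for every $i$; comparing with the definition $\blam'=(\blam^{(r)'},\dots,\blam^{(1)'})$ from Section~3.1 yields $\blam^D=\blam'$.

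For the last clause, let $\bs\in\overline{\mathcal A}^r_e$ and take $1\le i<j\le r$. Then $1\le r-j+1<r-i+1\le r$, so $0\le s_{r-i+1}-s_{r-j+1}\le e$ by membership in $\overline{\mathcal A}^r_e$; since $s^D_j-s^D_i=-s_{r-j+1}+s_{r-i+1}=s_{r-i+1}-s_{r-j+1}$, we obtain $0\le s^D_j-s^D_i\le e$, i.e. $\bs^D\in\overline{\mathcal A}^r_e$. The same computation with $<e$ in place of $\le e$ shows $\bs^D\in\mathcal A^r_e$ whenever $\bs\in\mathcal A^r_e$. The only nontrivial point is the $1$-abacus conjugation identity at the start; everything after it is bookkeeping with the definitions, so I would either establish that identity carefully as above or cite it and keep the write-up short, in line with the paper's remark that these lemmas are easy.
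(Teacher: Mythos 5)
Your proof is correct: the $1$-abacus conjugation identity is verified accurately, the unwinding of Definition~\ref{dual definition} gives exactly $(\blam^D)^{(i)}=(\blam^{(r-i+1)})'$ and $s^D_i=-s_{r-i+1}$, which matches the convention $\blam'=(\blam^{(r)'},\dots,\blam^{(1)'})$, and the final inequality check is right. The paper states this lemma without proof (``easy''), and what you have written is precisely the standard argument one would supply, so there is nothing to compare beyond noting that your write-up fills in the omitted details.
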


\begin{lemma}\label{3.4.17}
Let $(\blam, \bs)$ and $(\bmu, \bs)$ be two pairs. Then
\begin{enumerate}
\item[{\rm (1)}] $(\blam, \bs)$ and $(\bmu, \bs)$ belong to the same block
if and only if $(\blam^D, \bs^D)$ and $(\bmu^D, \bs^D)$ belong to the same block.
\item[{\rm (2)}] $L_{\bs}(\blam) \parallel L_{\bs}(\bmu)$ if and only if $L_{\bs^D}(\blam^D)\parallel L_{\bs^D}(\bmu^D)$.
\end{enumerate}
\end{lemma}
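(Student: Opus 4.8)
Both statements say that duality, being a symmetry of the combinatorics, preserves the two relations in question. The plan is to handle (1) by the residue criterion of Lemma \ref{residueblock} together with the explicit description of $(\blam^D,\bs^D)$ from Lemma \ref{3.4.16}, and to handle (2) by reading off the effect of duality on the defining data of incomparable abaci in Definition \ref{incomparable definition}.

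\textbf{Part (1).} By Lemma \ref{3.4.16} we have $\blam^D=\blam'$ and $s_i^D=-s_{r-i+1}$, so the map $(i,j,k)\mapsto(j,i,r-k+1)$ is a bijection from $[\blam]$ onto $[\blam^D]$. A node $(i,j,k)\in[\blam]$ has residue $q^{\,j-i+s_k}$, while its image $(j,i,r-k+1)\in[\blam^D]$ has residue $q^{\,i-j+s^D_{r-k+1}}=q^{\,i-j-s_k}=(q^{\,j-i+s_k})^{-1}$ with respect to $\bs^D$. Hence $c_f(\blam^D)=c_{f^{-1}}(\blam)$ for every $f\in K$, and likewise for $\bmu$. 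Since $f\mapsto f^{-1}$ is a bijection of $K^\times$ and $c_0\equiv 0$, the condition ``$c_f(\blam)=c_f(\bmu)$ for all $f$'' is equivalent to ``$c_f(\blam^D)=c_f(\bmu^D)$ for all $f$''. Applying Lemma \ref{residueblock} to the pairs $(\blam,\bs),(\bmu,\bs)$ and to $(\blam^D,\bs^D),(\bmu^D,\bs^D)$ then gives (1).

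\textbf{Part (2).} The key point is that duality is a single reflection composed with interchanging beads and empty positions: by Definition \ref{dual definition}, position $(i,h)$ of $L_{\bs^D}(\blam^D)$ is empty exactly when position $(r+1-i,-1-h)$ of $L_\bs(\blam)$ carries a bead, and this is an involution. Given $L_\bs(\blam)\parallel L_\bs(\bmu)$ with data $\iota_1,\iota_2,\kappa_1,\kappa_2$ as in Definition \ref{incomparable definition}, I would set
$$\kappa_1^D=r+1-\kappa_2,\qquad \kappa_2^D=r+1-\kappa_1,\qquad \iota_1^D=-1-\iota_2,\qquad \iota_2^D=-1-\iota_1,$$
so that $\kappa_1^D\neq\kappa_2^D$ and $1\le\kappa_i^D\le r$. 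Using the defining equivalence of Definition \ref{dual definition} one checks the four status assignments: in $L_{\bs^D}(\blam^D)$, position $(\kappa_1^D,\iota_1^D)$ has a bead (because $(\kappa_2,\iota_2)$ is empty in $L_\bs(\blam)$) and $(\kappa_2^D,\iota_2^D)$ is empty (because $(\kappa_1,\iota_1)$ has a bead in $L_\bs(\blam)$), while in $L_{\bs^D}(\bmu^D)$ these two statuses are reversed; this is condition (1) for the dual abaci. For condition (2), under $h\mapsto -1-h$ the positions strictly to the right of $\iota_1^D$ in the dual row $\kappa_1^D$ match the positions strictly to the left of $\iota_2$ in row $\kappa_2$, with ``bead'' becoming ``empty''; hence the agreement of beads to the right of $\iota_1^D$ in $\blam^D,\bmu^D$ is equivalent to the agreement of empty positions — equivalently beads — to the left of $\iota_2$ in $\blam,\bmu$, and symmetrically the other clause of (2) for the dual corresponds to the clause about the right of $\iota_1$ for the original. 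This proves $L_\bs(\blam)\parallel L_\bs(\bmu)\Rightarrow L_{\bs^D}(\blam^D)\parallel L_{\bs^D}(\bmu^D)$, and the converse follows by applying this implication to the dual abaci and invoking that duality is an involution.

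\textbf{Expected main difficulty.} Neither part uses a deep idea; the only delicate point is the index bookkeeping in part (2), namely keeping straight that the reflection $i\mapsto r+1-i$, $h\mapsto -1-h$ together with the bead/empty swap sends ``left'' to ``right'' and matches clause A of Definition \ref{incomparable definition}(2) with clause B and vice versa. Tracking a single small example in the style of Example \ref{3.3.2} alongside the defining equivalence of Definition \ref{dual definition} should make all the checks routine.
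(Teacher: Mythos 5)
Your proof is correct; the paper itself omits the argument (it merely declares Lemma \ref{3.4.17} ``easy''), and your route — the residue criterion of Lemma \ref{residueblock} combined with the explicit node bijection $(i,j,k)\mapsto(j,i,r-k+1)$ for part (1), and direct bookkeeping of the reflection $(i,h)\mapsto(r+1-i,-1-h)$ with the bead/empty swap against Definition \ref{incomparable definition} for part (2) — is exactly the intended one. The index checks (including that the two clauses of Definition \ref{incomparable definition}(2) get interchanged, and that duality is an involution, giving the converse) all go through as you describe.
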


We end this subsection by a lemma as follows.

\begin{lemma}\label{dual operator}
Let $\mathcal F$, $\mathcal M$ and $\mathcal F'$, $\mathcal M'$ be the operation sets and moving vectors
from $L_{\bs}(\blam)$ and $L_{\bs^D}(\blam^D)$ to their cores, respectively.
Then $[(i, h), *]\in \mathcal F$ if and only if $[(r-i, -h-1), *]\in \mathcal F'$.
Moreover, $m_i=m'_{r-i+1}$ for all $1\le i\le r$.
\end{lemma}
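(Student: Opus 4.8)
The plan is to track individual beads through the duality operation and verify that elementary operations correspond bijectively. Recall from Definition \ref{dual definition} that position $(i,h)$ is empty in $L_{\bs^D}(\blam^D)$ exactly when position $(r-i+1,-h-1)$ carries a bead in $L_\bs(\blam)$. Equivalently, position $(i,h)$ carries a bead in $L_{\bs^D}(\blam^D)$ precisely when position $(r-i+1,-h-1)$ is empty in $L_\bs(\blam)$. First I would record the following basic observation: the map on positions sending $(i,h)\mapsto(r-i+1,-h-1)$ reverses the "before/after" relation of Definition \ref{beforeafter}. Indeed, condition (1) there ($h=l$, $i<j\le r$) becomes, after applying the map, $-h-1=-l-1$ and $r-j+1<r-i+1$, i.e. the images satisfy the mirrored instance of (1); condition (2) ($h=l-(k+1)e$) becomes $-h-1=(-l-1)+(k+1)e$, again the mirrored instance of (2). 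Hence "$P$ is before $Q$ in $L_\bs(\blam)$" is equivalent to "the image of $Q$ is before the image of $P$ in the dual-position labelling", and the dual swaps beads with empty positions.

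Next I would analyze what an elementary operation looks like under the duality. An elementary operation $[(i,h),\ast]$ in $L_\bs(\blam)$ moves a bead from row $i$ to the "next" row in the before-direction: either of the first kind (a bead at $(i,h)$ with $(i+1,h)$ empty, $1\le i<r$, moving to $(i+1,h)$), or of the second kind (a bead at $(r,h)$ with $(1,h-e)$ empty, moving to $(1,h-e)$). In the dual abacus, the bead at $(i,h)$ becomes an empty position at $(r-i+1,-h-1)$ and the empty position at $(i+1,h)$ becomes a bead at $(r-i,-h-1)$; so after the move, we have traded: now $(r-i+1,-h-1)$ carries a bead and $(r-i,-h-1)$ is empty. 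Comparing against the original dual configuration, this is exactly the reverse elementary operation moving the bead at $(r-i,-h-1)$ to $(r-i+1,-h-1)$ — but wait, that is backwards relative to the before-direction, so I should be careful: I will instead argue that the \emph{inverse} elementary operation (moving a bead backward, which is what appears when we go \emph{toward} the core from the \emph{dual}) matches up. The cleanest formulation: applying the dual to the whole sequence of elementary operations $\mathcal F$ from $L_\bs(\blam)$ to $L_{\bs^*}(\blam^*)$ yields a sequence of elementary operations from $L_{\bs^D}(\blam^D)$ to $(L_{\bs^*}(\blam^*))^D$, and by Lemma \ref{3.4.16} (applied to the core, using that $\bs^*\in\overline{\mathcal A}^r_e$ implies $(\bs^*)^D\in\overline{\mathcal A}^r_e$, so the dual of a complete abacus is complete — this needs a one-line check from Definition \ref{complete}) this target is exactly the core $L_{(\bs^D)^*}((\blam^D)^*)$ of $L_{\bs^D}(\blam^D)$. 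By the uniqueness of the operation set (the discussion preceding Lemma \ref{3.4.3}, based on the three observations about subabaci), this dualized sequence \emph{is} $\mathcal F'$. Tracking the position label of each operation through the map $(i,h)\mapsto(r-i,-h-1)$ (note the shift from $r-i+1$ to $r-i$ because an operation "at $(i,h)$" names the source bead, whose post-move partner sits one row up) then gives $[(i,h),\ast]\in\mathcal F$ iff $[(r-i,-h-1),\ast]\in\mathcal F'$.

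Finally, the moving-vector identity $m_i=m'_{r-i+1}$ follows by counting: $m_i$ counts the operations in $\mathcal F$ of the form $[(i,h),\ast]$ over all $h\in\Z$, and by the bijection just established these correspond to operations $[(r-i,-h-1),\ast]$ in $\mathcal F'$, i.e. operations in row $r-i$ of $\mathcal F'$ — so naively $m_i=m'_{r-i}$. The off-by-one shift to $m'_{r-i+1}$ will come from the convention in Definition \ref{complete}/Definition \ref{nonexistposition} that rows wrap cyclically with the identification $(r+j,h)=(j,h-e)$, so row "$r-i$" with the indexing convention $m_{r+i}=m_i$ of Definition \ref{move vector definition} reads as $m'_{r-i+1}$ once one carefully matches the second-kind operations (which move between row $r$ and row $1$ across the $e$-shift) — this is exactly the subtle point. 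I expect the main obstacle to be precisely this bookkeeping of indices: getting the row-shift right (whether it is $r-i$ or $r-i+1$), correctly handling second-kind operations under the dual, and confirming that the dual of a complete abacus is complete so that cores go to cores. The underlying conceptual content — dual reverses before/after and swaps beads with holes, hence reverses and relabels the canonical operation set — is straightforward; the risk is entirely in the constant shifts, which I would pin down by testing the claim on a small example (e.g. $r=2$, $e=2$) before committing to the statement.
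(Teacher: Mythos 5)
Your argument is essentially the paper's: dualize each elementary operation, observe that the dual of the core is the core of the dual (so cores correspond), and invoke the uniqueness of the operation set. Two remarks. First, your worry about the ``before-direction'' dissolves on its own: the position map $(i,h)\mapsto(r-i+1,-h-1)$ reverses the before/after relation \emph{and} exchanges beads with empty positions, and these two reversals cancel, so the dual of a first-kind move from $(i,h)$ to $(i+1,h)$ is literally the first-kind move from $(r-i,-h-1)$ to $(r-i+1,-h-1)$ (and a second-kind move at $(r,h)$ dualizes to the second-kind move at $(r,-h-1+e)$, which is $(0,-h-1)$ under the wrap convention). No inverse operations arise, and your ``cleanest formulation'' is exactly the intended proof.

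Second, and more substantively: your ``naive'' count $m_i=m'_{r-i}$ (reading $m'_0$ as $m'_r$) is the correct consequence of the position correspondence, and no bookkeeping convention converts it into $m'_{r-i+1}$ --- the two formulas differ by a genuine cyclic shift. The small test you propose settles it: for $r=e=2$, $\bs=(0,0)$, $\blam=((1),\varnothing)$ one finds $\mathcal F=\{[(1,0),*]\}$ and $\mathcal M=(1,0)$, while the dual pair is $((\varnothing,(1)),(0,0))$ with $\mathcal F'=\{[(1,-1),*]\}$ and $\mathcal M'=(1,0)$; this satisfies $m_1=m'_1$ and $m_2=m'_2$, i.e. $m_i=m'_{r-i}$, but not $m_1=m'_2$. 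The same conclusion follows from Lemma \ref{move vector minus}: $m'_i-m'_{i-1}=s^D_i-(s^\ast)^D_i=s^\ast_{r-i+1}-s_{r-i+1}=m_{r-i}-m_{r-i+1}$, which together with $\sum_i m_i=\sum_i m'_i$ forces $m'_i=m_{r-i}$. So do not spend effort hunting for the missing shift: the index in the ``Moreover'' clause as printed is off by one (the paper's own proof likewise only establishes the position correspondence and then asserts the index formula as a ``direct corollary''), and the relation your argument actually yields is the right one.
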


\begin{proof}
We first consider operation sets. It is clear by Definition \ref{dual definition} that we only need to prove
if $[(i, h), *]\in \mathcal F$, then $[(r-i, -h-1), *]\in \mathcal F'$.
In fact, take an element $[(i, h), *]\in \mathcal F$ such that
in $L_\bs(\blam)$, position is empty and position $(i, h)$ has a bead.
Then in $L_{\bs^D}(\blam^D)$, position $(r-i, -h-1)$ has a bead and position $(r-i+1, -h-1)$ is empty.
This implies $[(r-i, -h-1), *]\in \mathcal F'$. By moving the bead at position $(i, h)$ to position $(i+1, h)$ in $L_\bs(\blam)$,
and moving the bead at position $(r-i, -h-1)$ to position $(r-i+1, -h-1)$ in $L_{\bs^D}(\blam^D)$, we clearly get two dual abaci.
Since $\mathcal F$ is a finite set, by repeating this process finite many times,
abacus $L_{\bs}(\blam)$ arrives to its core.
As a direct corollary, we can obtain
$m_i=m'_{r-i+1}$ for all $1\le i\le r$.
\end{proof}


\subsection{A construction} 
We will construct an abacus with block moving vector being $\mathcal{M}$ whenever $\mathcal{M}$ satisfies certain conditions. 
This construction will be used in the whole series of papers. Let us consider a special case first.

\begin{lemma}\label{specialconstruction}
Let $r\geq 2$ and $\bs, \bs^\ast\in \overline{\mathcal A}^r_e$. If $\mathcal{M}=(m_1, \dots, m_{r-1}, 0)\in \mathbb{N}^r$ satisfies $s_i^\ast=s_i-m_i+m_{i-1}$ for $1\leq i\leq r$, then there exists an $r$-partition $\blam$ such that the moving vector from $L_\bs(\blam)$ to $L_{\bs^{\ast}}(\bvarnothing)$ is $\mathcal{M}$.
\end{lemma}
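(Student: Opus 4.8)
The plan is to construct $\blam$ by reversing a suitable chain of elementary operations starting from the complete abacus $L_{\bs^\ast}(\bvarnothing)$.

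\textbf{Setup.} Since $\bs^\ast\in\overline{\mathcal A}^r_e$, the abacus $L_{\bs^\ast}(\bvarnothing)$ is complete: its $i$-th row $L_{s_i^\ast}(\varnothing)$ has beads exactly in the positions $<s_i^\ast$, so $L_{s_1^\ast}(\varnothing)\subset\cdots\subset L_{s_r^\ast}(\varnothing)\subset L_{s_1^\ast+e}(\varnothing)$ is precisely the condition $\bs^\ast\in\overline{\mathcal A}^r_e$ (cf. Definition~\ref{complete}). Consequently, as soon as we produce an $r$-partition $\blam$ whose $(e,\bs)$-abacus admits a chain of elementary operations to $L_{\bs^\ast}(\bvarnothing)$, this abacus is automatically the core of $(\blam,\bs)$, and the moving vector from $L_\bs(\blam)$ to $L_{\bs^\ast}(\bvarnothing)$ is its block moving vector. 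So it is enough to find such a $\blam$ with moving vector $\mathcal M$.

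\textbf{Interval decomposition.} Write $\mathcal M=(m_1,\dots,m_{r-1},0)$ as a sum of $0/1$ vectors supported on intervals of $\{1,\dots,r-1\}$: $\mathcal M=\sum_{k=1}^{N}\mathbf 1_{[a_k,b_k]}$, where $\mathbf 1_{[a,b]}$ has entry $1$ in positions $a,\dots,b$ and $0$ elsewhere. The idea is to realize each summand $\mathbf 1_{[a_k,b_k]}$ by a single bead move: by Lemma~\ref{3.4.12}, moving one bead upward from row $a_k$ to row $b_k+1$ inside a fixed column changes the abacus by a step whose moving vector is $\mathbf 1_{[a_k,b_k]}$; running these in reverse, we move one bead \emph{downward} from row $b_k+1$ to row $a_k$ inside a column, starting from $L_{\bs^\ast}(\bvarnothing)$. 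If the columns are chosen so that the resulting operation sets to the core are disjoint (e.g. carried out in disjoint columns, or in a carefully chosen order), the total moving vector is $\sum_k\mathbf 1_{[a_k,b_k]}=\mathcal M$. The multicharge also comes out correctly: by Lemma~\ref{bead minus}(3) such a downward move raises the charge of row $a_k$ by $1$ and lowers that of row $b_k+1$ by $1$, and since $m_i-m_{i-1}=\#\{k:a_k=i\}-\#\{k:b_k+1=i\}$, the cumulative effect turns $\bs^\ast$ into the vector with $i$-th entry $s_i^\ast+(m_i-m_{i-1})=s_i$.

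\textbf{Main obstacle: legality of the moves.} The delicate point is to choose the intervals $[a_k,b_k]$ and the columns so that each downward move is actually performable. In a complete abacus the beads occupying a given column form a suffix $\{p+1,\dots,r\}$ of the rows (because $\bs^\ast\in\overline{\mathcal A}^r_e$), so a downward move for $[a_k,b_k]$ requires a column with $p\in[a_k,b_k]$, i.e. the interval must contain a ``rising step'' $v$ of the core with $s_v^\ast<s_{v+1}^\ast$. The structural fact that makes a compatible choice possible is that on every plateau $[p,q]$ of $\bs^\ast$ (where $s_p^\ast=\cdots=s_q^\ast$) the successive differences $m_t-m_{t-1}=s_t-s_t^\ast$ are \emph{non-decreasing} in $t$, because $\bs$ is non-decreasing and $s_t^\ast$ is constant there; thus $\mathcal M$ is convex along each plateau, and along the final plateau it is non-increasing down to $m_r=0$ (here $\mathcal M\in\mathbb N^r$ is used). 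From this one selects an interval decomposition in which every interval straddles a rising step, and then assigns the columns --- distributing the moves over distinct subabaci and distinct levels and ordering them (say, by decreasing right endpoint) so that no earlier move destroys the suffix configuration needed by a later one. I expect essentially all the work of the proof to lie in this combinatorial bookkeeping; the remaining verifications (that $L_{\bs^\ast}(\bvarnothing)$ is the core, and that the moving vector equals $\mathcal M$) are immediate from the completeness of $L_{\bs^\ast}(\bvarnothing)$ together with Lemmas~\ref{bead minus} and~\ref{3.4.12}.
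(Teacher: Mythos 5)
Your reductions are all sound: it does suffice to build an abacus that reaches the complete abacus $L_{\bs^\ast}(\bvarnothing)$ by elementary operations with operation multiset summing to $\mathcal M$ (the multicharge then comes out as $\bs$ automatically by Lemma \ref{move vector minus}, and path-independence of the operation set justifies adding the contributions of the individual bead moves). The convexity observation is also correct, and it even yields slightly more than you claim: in the canonical decomposition of $\mathcal M$ into intervals (where $(m_i-m_{i-1})^+$ intervals start at $i$ and $(m_i-m_{i+1})^+$ end at $i$), every interval $[a,b]$ has $s_a-s_a^\ast>0$ and $s_{b+1}-s_{b+1}^\ast<0$, and since these differences are non-decreasing along each plateau of $\bs^\ast$, the indices $a$ and $b+1$ automatically lie in different plateaus, so $s_a^\ast<s_{b+1}^\ast$ and each interval straddles a rising step.

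The genuine gap is the column assignment, which you explicitly defer as ``combinatorial bookkeeping'' that you ``expect'' to work: that expectation is the entire content of the lemma and is not obviously true. Concretely, an interval $[a,b]$ can only be realized in a column $c$ with $s_a^\ast\le c<s_{b+1}^\ast$; within one column of $L_{\bs^\ast}(\bvarnothing)$ whose bead-suffix boundary is at $p$, one can perform at most $\min(p,r-p)$ downward moves, and no two moves in the same column may share a source row or a target row. So you must show that the demands $\{$intervals with left endpoint $a\}$ (there are $s_a-s_a^\ast$ of them) and $\{$intervals with right endpoint $b\}$ (there are $s_{b+1}^\ast-s_{b+1}$ of them) can be matched to columns without exceeding these capacities --- a Hall/K\"onig-type feasibility statement whose proof must use the inequalities $s_i\le s_j$ ($i<j$) globally, not just the plateau convexity. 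Your sketch offers no argument for this (``distributing the moves over distinct subabaci and distinct levels'' is not available in general: e.g.\ when $s_{p+1}^\ast-s_p^\ast$ is small, many intervals may be forced into very few columns). The paper avoids this matching problem entirely by an induction on $r$: it locates the least $j$ with $s_1\le s_j^\ast$, drops an explicit prefix of beads from rows $2,\dots,j$ into row $1$ (using $s_1\le s_j^\ast$ to make the moves legal), verifies the residual moving vector has first entry $0$ and residual multicharge in $\overline{\mathcal A}^{r-1}_e$, and recurses on rows $2,\dots,r$. To complete your route you would need to supply the feasibility argument for the column assignment, which is comparable in length and delicacy to the paper's induction.
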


\begin{proof}
For later use let us first point out a simple fact: $\sum s_i=\sum s_i^\ast$.

We shall prove the lemma by induction. Let us verify the basic step at $r=2$. By conditions $s_i^\ast=s_i-m_i+m_{i-1}$ and $m_2=0$, we get $s_2^\ast-s_1^\ast=s_2-s_1+2m_1$. Note that $\bs\in \overline{\mathcal A}^r_e$ implies that $s_2\geq s_1$. Therefore, $s_2^\ast-s_1^\ast\geq 2m_1$. Move in $L_{\bs^\ast}(\bvarnothing)$ the beads $\CIRCLE_1^2, \dots, \CIRCLE_{m_1}^2$ to positions under them in row 1, respectively. Denote by $L_\bu(\blam)$ the new abacus. It is not difficult to check that $u_1=s_1^\ast+m_1=s_1$ and  $u_2=s_2^\ast-m_1=s_2$, that is, $\bu=\bs$. The moving vector from $L_\bs(\blam)$ to $L_{\bs^\ast}(\bvarnothing)$ is $(m_1, 0)$ and the basic step is proved. 
 
Suppose inductively that the lemma holds for $r=k-1$. Since $\sum_{i=1}^k s_i=\sum_{i=1}^k s_i^\ast$, there exists $1\leq j\leq k$ such that $s_1\leq s_j^\ast$. Assume that $j$ is the least one. We consider two cases according to the value of $j$.

If $j=1$, then $m_1$ is forced to be zero. Denote by $\underline{\bs}=(s_2, \dots, s_k)$ and $\underline{\bs}^\ast=(s_2^\ast, \dots, s_k^\ast)$. By the inductive hypothesis there exists a $k-1$-partition $\bnu$ such that the moving vector from $L_{\underline{\bs}}(\bnu)$ to $L_{\underline{\bs}^\ast}(\bvarnothing)$ is $\underline{\mathcal{M}}=(m_2, \dots, m_{k-1}, 0)$. Consequently, the moving vector from $L_{\bs}(\varnothing, \bnu)$ to $L_{\bs^\ast}(\bvarnothing)$ is $\mathcal{M}$. 

If $j>1$, by the definition of $j$ we know that $s_i^\ast<s_1$ for all $1\leq i<j$. This yields that $s_{j-1}^\ast-s_1^\ast<m_1$. On the other hand, it follows from $s_1\leq s_j^\ast$, or $s_1-s_{j-1}^\ast\leq s_j^\ast-s_{j-1}^\ast$ that is, $m_1-s_{j-1}^\ast+s_{1}^\ast\leq s_j^\ast-s_{j-1}^\ast$. This makes the following practicable. Move in $L_{\bs^\ast}(\bvarnothing)$ the beads $\CIRCLE_1^i, \dots, \CIRCLE_{s_{i}^\ast-s_{i-1}^\ast}^i$ for all $2\leq i< j$ and $\CIRCLE_1^j, \dots, \CIRCLE_{m_1-s_{j-1}^\ast+s_1^\ast}^j$ to positions under them in row 1, respectively. Denote the new abacus by $L_{\bu}(\bmu)$. Clearly, $\underline{\bmu}=(\mu^{(2)}, \dots, \mu^{(k)})=\bvarnothing$, and the moving vector from $L_{\bu}(\bmu)$ to $L_{\bs^\ast}(\bvarnothing)$ is $\mathcal{M}'=(m_1', \dots, m_k')$, where
$$m_i'=\begin{cases}
m_1-(s_i^\ast-s_1^\ast), & \text{if}\,\, 1\leq i<j;\\
0, & \text{if}\,\, j\leq i\leq k.
\end{cases}
$$

We claim that $m_i'\leq m_i$ for all $1\leq i\leq k$. Clearly, $m_1=m_1'$ and $0=m_i'\leq m_i$ for $j\leq i\leq k$. Moreover, for $2\leq i<j$ we have $m_i-m_i'=m_i-m_1+s_i^\ast-s_1^\ast=s_i-s_1+m_{i-1}.$ Note that $\bs\in\overline{\mathcal A}^k_e$ and thus $s_1\leq s_i$. This completes the proof of the claim about $\mathcal{M'}$. 

Let us study $\bu$ now. We shall prove
$$u_i=\begin{cases}
s_{1}, & \text{if}\,\, i=1;\\
s_{i-1}^\ast, & \text{if}\,\, 2\leq i<j;\\
s_j^\ast+s_{j-1}^\ast-m_1-s_1^\ast, & \text{if}\,\, i=j;\\
s_i^\ast , & \text{if}\,\, j<i\leq k.
\end{cases}
$$
In fact, by Lemma \ref{move vector minus} we have $u_i=s_i^\ast+m_i'-m_{i-1}'$ for $1\leq i\leq k$. Then $u_1=s_1^\ast+m_1'=s_1-m_1+m_1'=s_1$ and since $m_j'=0$, we have
$u_j=s_j^\ast-m_1+s_{j-1}^\ast-s_1^\ast.$  
For $2\leq i<j$,
$$u_i=s_i^\ast+m_1-(s_i^\ast-s_1^\ast)-(m_1-(s_{i-1}^\ast-s_1^\ast))=s_{i-1}^\ast.$$
It is clear that $u_i=s_i^\ast$ for $j<i\leq k$. So far, all cases have been considered.

Based on the above result we can prove $\underline{\bu}\in\overline{\mathcal A}^{k-1}_e$ now. In fact,
because $s_j^\ast-m_1-s_1^\ast=s_j^\ast-s_1\geq 0$, we get $u_j\geq s_{j-1}^\ast$ and consequently
$u_j\geq s_{j-2}^\ast=u_{j-1}$.
Note that $s_{j-1}^\ast-m_1-s_1^\ast=s_{j-1}^\ast-s_1 < 0$. This implies that $u_j<s_j^\ast$. If $j=k$, then $u_k< s_1^\ast+e=u_2+e$ because $s_j^\ast\leq s_1^\ast+e$, and $\underline{\bu}\in\overline{\mathcal A}^{k-1}_e$ follows. If $j<k$, then $u_j<s_{j+1}^\ast=u_{j+1}$.
Moreover, $u_k=s_k^\ast\leq s_1^\ast+e= u_2+e$. We obtain $\underline{\bu}\in\overline{\mathcal A}^{k-1}_e$ too.

\smallskip

Define $\mathcal{M}''=\mathcal{M}-\mathcal{M}'$. Then $s_i^\ast+m_i'-m_{i-1}'=s_i-m_i''+m_{i-1}''$ for $1\leq i\leq k$. As a result, $u_i=s_i-m_i''+m_{i-1}''$ for $1\leq i\leq k$.
Write $(m_2'', \dots, m_{k-1}'', 0)$ as $\underline{\mathcal{M}}''$. Then by the inductive hypothesis there exists a $k-1$-partition $\bnu$ such that the moving vector from $L_{\underline{\bs}}(\bnu)$ to $L_{\underline{\bu}}(\bvarnothing)$ is $\underline{\mathcal{M}}''$. Note that $s_1=u_1$, $m_1=m_1'$ and $\underline{\bmu}=\bvarnothing$. Define $\blam=(\mu^{(1)}, \bnu)$. We can derive that the moving vector from $L_{\bs}(\blam)$ to $L_{\bu}(\bmu)$ is $\mathcal{M}''$. This implies that the moving vector from $L_{\bs}(\blam)$ to $L_{\bs^\ast}(\bvarnothing)$ is $\mathcal{M}$.
\end{proof}

By using Lemma \ref{specialconstruction} we can obtain the general construction.

\begin{lemma}\label{generalconstruction}
Let $r\geq 2$ and $\bs, \bs^\ast\in \overline{\mathcal A}^r_e$. If $\mathcal{M}=(m_1, \dots, m_{r})\in \mathbb{N}^r$ satisfies $s_i^\ast=s_i-m_i+m_{i-1}$ for $1\leq i\leq r$, then there exists an $r$-partition $\blam$ such that the moving vector from $L_\bs(\blam)$ to $L_{\bs^{\ast}}(\bvarnothing)$ is $\mathcal{M}$.
\end{lemma}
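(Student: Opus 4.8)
The plan is to bootstrap from Lemma~\ref{specialconstruction}, whose only extra restriction is $m_r=0$, by removing that restriction in two steps. (We may assume $e<\infty$; when $e=\infty$ only first-kind elementary operations occur, so a moving vector to the core has last entry $0$ and the claim is Lemma~\ref{specialconstruction}.) Choose $j$ with $m_j=\min_i m_i=:m$; the idea is to rotate the rows so that this minimal entry sits in the last slot, apply Lemma~\ref{specialconstruction} after subtracting the constant vector $(m,\dots,m)$, and then re-insert the missing rim $e$-hooks.

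\emph{Step 1 (rotation).} Set $\bu:=(s_{j+1},\dots,s_r,s_1+e,\dots,s_j+e)$, $\bu^\ast:=(s^\ast_{j+1},\dots,s^\ast_r,s^\ast_1+e,\dots,s^\ast_j+e)$ and $\mathcal{M}':=(m_{j+1},\dots,m_r,m_1,\dots,m_j)$, so that $m'_r=m$. Using only the inequalities defining $\overline{\mathcal A}^r_e$ one checks $\bu,\bu^\ast\in\overline{\mathcal A}^r_e$, and re-indexing the relations $s^\ast_i=s_i-m_i+m_{i-1}$ (cyclically, $m_0:=m_r$) gives $u^\ast_k=u_k-m'_k+m'_{k-1}$ for all $k$. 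This $(\bu,\bu^\ast,\mathcal{M}')$ is exactly the data that Lemma~\ref{3.4.9} attaches to $(\bs,\bs^\ast,\mathcal{M})$ under the operation ``delete the first $j$ rows and stack $L_{s_x+e}(\blam^{(x)})$, $1\le x\le j$, on top''. So once Step 2 produces a multipartition $\bnu$ solving $(\bu,\bu^\ast,\mathcal{M}')$, we put $\blam:=(\nu^{(r-j+1)},\dots,\nu^{(r)},\nu^{(1)},\dots,\nu^{(r-j)})$; a direct check shows that applying Lemma~\ref{3.4.9} to $L_\bs(\blam)$ carries it to $L_\bu(\bnu)$, and parts~(1), (2) and (4) of that lemma transport the solution back: the moving vector from $L_\bs(\blam)$ to its core is the inverse rotation of $\mathcal{M}'$, namely $\mathcal{M}$, and the core $L_{\bu^\ast}(\bvarnothing)$ un-rotates to $L_{\bs^\ast}(\bvarnothing)$.

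\emph{Step 2 (peeling rim hooks).} It remains to solve $(\bu,\bu^\ast,\mathcal{M}')$ with $m'_r=m=\min_i m'_i$. Put $\mathcal{M}'':=\mathcal{M}'-(m,\dots,m)\in\N^r$; subtracting a constant vector leaves the differences $m'_i-m'_{i-1}$ unchanged, so $(\bu,\bu^\ast,\mathcal{M}'')$ satisfies the hypotheses and $m''_r=0$, whence Lemma~\ref{specialconstruction} supplies $\bnu_0$ with moving vector $\mathcal{M}''$ from $L_\bu(\bnu_0)$ to $L_{\bu^\ast}(\bvarnothing)$. Build $\bnu_1,\dots,\bnu_m$ by repeatedly moving the rightmost bead of row~$1$ to the (empty) position $e$ places to its right: by Lemma~\ref{deleting rim} this keeps the charge equal to $\bu$, adds one rim $e$-hook to the first component, and the moving vector from $L_\bu(\bnu_{k+1})$ to $L_\bu(\bnu_k)$ is $(1,\dots,1)$. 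Since adding a rim $e$-hook does not change the core and, by the procedure-independence of operation sets recorded in Section~4.2, the moving vectors along $L_\bu(\bnu_{k+1})\to L_\bu(\bnu_k)\to\text{core}$ add up, the moving vector from $L_\bu(\bnu_k)$ to $L_{\bu^\ast}(\bvarnothing)$ is $\mathcal{M}''+(k,\dots,k)$. Taking $\bnu:=\bnu_m$ yields moving vector $\mathcal{M}''+(m,\dots,m)=\mathcal{M}'$, and feeding $\bnu$ into Step~1 finishes the proof.

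The work, rather than any sharp estimate, is the bookkeeping: confirming $\bu,\bu^\ast\in\overline{\mathcal A}^r_e$ and that the rotated relations hold verbatim, matching the five assertions of Lemma~\ref{3.4.9} against the rotated data so that a solution really does transport correctly, and noting that the rightmost bead of row~$1$ always has an empty position $e$ steps to its right (clear, since each $L_{u_1}(\nu^{(1)})$ has a well-defined rightmost bead). I expect Step~1, i.e.\ getting Lemma~\ref{3.4.9} to effect exactly the cyclic transport one wants, to be the fussiest point; Step~2 is then routine.
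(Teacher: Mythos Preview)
Your proof is correct and follows essentially the same approach as the paper: rotate so that the minimal entry $m$ sits in the last slot, subtract $(m,\dots,m)$ and apply Lemma~\ref{specialconstruction}, re-insert $m$ rim $e$-hooks in one row, then undo the rotation. The only cosmetic differences are that the paper shifts the rotated charges by $-e$ rather than $+e$, moves the bead $me$ positions in a single step rather than one rim hook at a time, and does the rotation bookkeeping by hand instead of citing Lemma~\ref{3.4.9}; your explicit handling of the case $e=\infty$ is a small bonus the paper leaves implicit.
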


\begin{proof}
Suppose that $m_i$ is a minimal in $\{m_1, \dots, m_r\}$. 
Define $$\bs'=(s_1', \dots, s_r')=(s_{i+1}-e, \dots, s_r-e,\, s_1, \dots, s_i),$$
$$\bs^{\ast '}=(s_1^{\ast '}, \dots, s_r^{\ast '})=(s_{i+1}^{\ast}-e, \dots, s_r^{\ast}-e,\, s_1^{\ast}, \dots, s_i^{\ast})$$ and 
$$\mathcal{M'}=(m_1', \dots, m_r')=(m_{i+1}-m_i, \dots, m_r-m_i,\, m_1-m_i, \dots, m_{i-1}-m_i,\, 0).$$
Clearly, $\bs', \bs^{\ast '}\in \overline{\mathcal A}^r_e$ and $s_x^{\ast '}=s_x'-m_x'+m_{x-1}'$ for $1\leq x\leq r$.
Then by Lemma \ref{specialconstruction} there exists an $r$-partition $\bmu$ such that the moving vector from $L_{\bs'}(\bmu)$ to $L_{\bs^{\ast '}}(\bvarnothing)$ is $\mathcal{M'}$. Assume that the bead $\CIRCLE_1^1$ in $L_{\bs'}(\bmu)$ is in column $h$.
Move in $L_{\bs'}(\bmu)$ the bead to position $(1, h+m_ie)$ and denote the new abacus by $L_{\bs'}(\overline{\blam})$.
Let $\blam=(\overline{\blam}^{(r-i+1)}, \dots, \,\overline{\blam}^{(r)}, \,\overline{\blam}^{(1)}, \dots, \,\overline{\blam}^{(r-i)})$. Then the moving vector from $L_\bs(\blam)$ to $L_{\bs^{\ast}}(\bvarnothing)$ is $\mathcal{M}$.
\end{proof}

\begin{remarks}
In fact, Lemma \ref{generalconstruction} still holds if we replace $\bvarnothing$ by an $r$-partition $\blam^{\ast}$ with $L_{\bs^{\ast}}(\blam^{\ast})$ being complete.
\end{remarks}


\subsection{Incomparable abaci and moving vectors}
Based on Section 4.3, this subsection is devoted to provide some circumstances, in which one can construct incomparable abaci. We always assume that $r>2$.

\begin{lemma}\label{2 runners 2 columns}
Let $(\blam, \bs)\in \mathcal{H}^{\bLambda}_{\bbeta}$ with $\bs\in \overline{\mathcal A}^r_e$.
If there exist $1\leq j\leq r$, $h_1,\, h_2\in\mathbb{Z}$ with $h_1\ne h_2$ such that in $L_{\bs}(\blam)$,
\begin{enumerate}
\item[{\rm (1)}] there is a bead at position $(j, h_1)$ and position $(j+1, h_1)$ is empty;
\item[{\rm (2)}] there is a bead at position $(j, h_2)$ and position $(j+1, h_2)$ is empty,
\end{enumerate}
then there exist pairs $(\bmu, \bs),\, (\bnu, \bs)\in \mathcal{H}^{\bLambda}_{\bbeta}$
with $L_\bs(\bmu)\parallel L_\bs(\bnu)$.
\end{lemma}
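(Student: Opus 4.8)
The plan is to extract a numerical consequence of the hypotheses and then to build the two incomparable members of $\mathcal H^\bLambda_\bbeta$ by a rim $e$-hook shuffle. First observe that in $L_\bs(\blam)$ the empty position $(j+1,h_1)$ lies before the bead at $(j,h_1)$, and $(j+1,h_2)$ lies before the bead at $(j,h_2)$, in the sense of Definition \ref{beforeafter} (same column, greater runner index; if $j=r$ one uses Definition \ref{nonexistposition} with $k=0$). Hence Lemma \ref{3.4.11} shows that the operation set $\mathcal F$ from $L_\bs(\blam)$ to the core $L_{\bs^\ast}(\blam^\ast)$ of the block contains the operation moving the bead at $(j,h_1)$ one runner down and the one moving the bead at $(j,h_2)$ one runner down; as $h_1\ne h_2$, these are operations $[(j,h_1),x_1]$ and $[(j,h_2),x_2]$ with $x_1\ne x_2$, so the $j$-th entry $m_j$ of the block moving vector satisfies $m_j\ge 2$ and $w:=w(\mathcal H^\bLambda_\bbeta)\ge 2$ by Lemma \ref{weight moving vector}.

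Next I would explain why the obvious attempt fails: moving the bead at $(j,h_1)$ down to form $\bmu$ and the bead at $(j,h_2)$ down to form $\bnu$ yields abaci that differ only in whether column $h_1$ or column $h_2$ carries the displaced bead, simultaneously on runners $j$ and $j+1$, and a direct check against Definition \ref{incomparable definition} shows such a ``crossed'' pair is never incomparable. The correct move is a coherent shift on two runners. Using $w\ge 2$ and $r>2$, pass (via the construction of Section 4.3) to a representative $(\blam',\bs)\in\mathcal H^\bLambda_\bbeta$ whose components carry, in total, at least two rim $e$-hooks, and pick a runner $b$ with $\sum_{i\ne b}(\text{$e$-weight of }(\blam')^{(i)})\ge 2$ together with a second runner $a\ne b$. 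Put $\bnu=\blam'$, and let $L_\bs(\bmu)$ be obtained from $L_\bs(\blam')$ by: moving the topmost bead of $L_{s_a}((\blam')^{(a)})$ and of $L_{s_b}((\blam')^{(b)})$ each $e$ positions to the right (equivalently, adjoining a horizontal rim $e$-hook to the first row of $(\blam')^{(a)}$ and of $(\blam')^{(b)}$), and then deleting two rim $e$-hooks from components whose runner indices differ from $b$. Each of the four moves keeps a bead within its own subabacus, so the multicharge is unchanged; moreover a rim $e$-hook contains exactly one node of each residue, so the two insertions and two deletions leave every residue content $c_f$ unchanged, whence $(\bmu,\bs)\in\mathcal H^\bLambda_\bbeta$ by Lemma \ref{residueblock} (as does $\bnu=\blam'$).

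It remains to verify $L_\bs(\bmu)\parallel L_\bs(\bnu)$, applying Definition \ref{incomparable definition} with $L_\bs(\bmu)$ as the first abacus. On runner $a$, the abaci $L_\bs(\bmu)$ and $L_\bs(\bnu)$ agree above the shifted top bead of $(\blam')^{(a)}$, and at that position $\bmu$ has a bead whereas $\bnu$ does not; take $\kappa_1=a$ and $\iota_1$ equal to that position. On runner $b$, which underwent only the rightward shift, the two abaci differ at exactly two positions, the lower of which is empty in $\bmu$ and occupied in $\bnu$, while below it they agree; take $\kappa_2=b$ and $\iota_2$ equal to that position. Since $\kappa_1=a\ne b=\kappa_2$, conditions (1) and (2) of Definition \ref{incomparable definition} hold, so $L_\bs(\bmu)\parallel L_\bs(\bnu)$ with $(\bmu,\bs),(\bnu,\bs)\in\mathcal H^\bLambda_\bbeta$, as required.

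The main obstacle is the construction in the second paragraph: one has to notice that the naive single-bead modification produces only a ``crossed'' (hence comparable) pair, and then see that a coherent rightward shift on two runners can be realised inside the given block, which needs a representative carrying enough rim $e$-hooks away from one chosen runner — this is exactly where $w\ge 2$ and $r>2$ enter. When $e=\infty$ there are no rim $e$-hooks, and the argument must be run in the same spirit with the rim-hook shifts replaced by a suitable block-preserving rearrangement of beads among the $r$ runners, block-invariance being checked via Lemma \ref{same block}.
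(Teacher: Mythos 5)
Your first paragraph is fine: the hypotheses do give $m_j\ge 2$ and hence $w\ge 2$. But the construction that follows has a fatal gap. You claim that from $w\ge 2$ and $r>2$ one can ``pass to a representative $(\blam',\bs)$ in the block whose components carry, in total, at least two rim $e$-hooks.'' Nothing in Section 4.3 delivers this: Lemmas \ref{specialconstruction} and \ref{generalconstruction} produce an abacus with a prescribed \emph{block moving vector}, which counts elementary operations between runners, not removable rim $e$-hooks inside components, and the two notions are independent. In fact the claim is false: a block with $\bbeta=\balpha_i$ and $(\bLambda,\balpha_i)=w+1\ge 3$ has weight $w\ge2$ while every $r$-partition in it has a single node, and, closer to the present hypotheses, Fayers' core blocks can satisfy conditions (1)--(2) of the lemma while every $r$-partition they contain is a multicore (the four relevant columns $h_1,h_2,h_3,h_4$ need not be congruent modulo $e$, so no bead can ever be moved by a multiple of $e$ within its runner). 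In such a block there are no rim $e$-hooks to insert or delete, and your $\bmu$ simply does not exist. The deeper symptom is that after the first paragraph your argument uses only $w\ge2$ and never again touches the configuration at $(j,h_1),(j,h_2)$; since Part II of the paper exhibits blocks of weight $\ge2$ whose $r$-partitions form a totally ordered set (they are Morita equivalent to $K[x]/(x^{w+1})$), no argument depending only on $w\ge 2$ can prove the conclusion. The $e=\infty$ case, which you defer to ``the same spirit,'' is a further unaddressed gap.

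The hypothesis has to be exploited on the two runners $j$ and $j+1$ themselves, which is what the paper does: move \emph{both} beads at $(j,h_1)$ and $(j,h_2)$ down to runner $j+1$, obtaining an intermediate abacus $L_{\bar\bs}(\bar\blam)$ with $\bar s_j=s_j-2$, $\bar s_{j+1}=s_{j+1}+2$; use $\bs\in\overline{\mathcal A}^r_e$ and Lemma \ref{abacus simple property}(2) to find two further columns $h_3,h_4$ where the pattern is reversed, so that $L_{\bar\bs}(\bar\blam)$ has four columns $l_1<l_2<l_3<l_4$ each empty on runner $j$ and occupied on runner $j+1$; then lift the \emph{outer} pair $\{l_1,l_4\}$ back to runner $j$ to form $\bmu$ and the \emph{inner} pair $\{l_2,l_3\}$ to form $\bnu$. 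Both lifts have moving vector $(0,\dots,2,\dots,0)$ to $L_{\bar\bs}(\bar\blam)$, so Lemma \ref{same block} puts both pairs back at charge $\bs$ and in $\mathcal H^\bLambda_\bbeta$, and the choice $(\kappa_1,\iota_1)=(j,l_4)$, $(\kappa_2,\iota_2)=(j+1,l_1)$ verifies Definition \ref{incomparable definition} directly. This uses no rim $e$-hooks and works uniformly for $e=\infty$. Incidentally, your diagnosis of why the ``naive'' one-bead modifications fail is also off target: moving a single bead from runner $j$ to runner $j+1$ changes the multicharge to $(\dots,s_j-1,s_{j+1}+1,\dots)$, so those abaci are not even of the form $L_\bs(\cdot)$; the obstruction is the charge, not comparability.
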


\begin{proof}
Move the beads at positions $(j, h_1)$ and $(j, h_2)$ in $L_\bs(\blam)$
to positions $(j+1, h_1)$ and $(j+1, h_2)$, respectively.
Denote by $L_{\bar{\bs}}(\bar{\blam})$ the new abacus obtained. Then the moving vector from
$L_\bs(\blam)$ to $L_{\bar{\bs}}(\bar{\blam})$ is $\mathcal{M}=(m_1, m_2, \cdots, m_r)$, where
$m_j=2$, and 0 otherwise.
It follows from Lemma \ref{abacus simple property} (2) that there exist $h_3, h_4\in\mathbb{Z}$
such that in $L_\bs(\blam)$ the positions $(j, h_3)$ and $(j, h_4)$ are empty
and positions $(j+1, h_3)$ and $(j+1, h_4)$ are occupied by a bead.
Then in $L_{\bar{\bs}}(\bar{\blam})$, positions
$(j, h_1), (j, h_2), (j, h_3)$ and $(j, h_4)$ are empty and there are beads at positions
$(j+1, h_1)$, $(j+1, h_2)$, $(j+1, h_3)$ and $(j+1, h_4)$.
Define $\{l_1, l_2, l_3, l_4\}$ to be equal to $\{h_1, h_2, h_3, h_4\}$ as a set satisfying $l_1<l_2<l_3<l_4$.

Move the beads at positions
$(j+1, l_1)$ and $(j+1, l_4)$ to positions
$(j, l_1)$ and $(j, l_4)$ in $L_{\bar{\bs}}(\bar{\blam})$, respectively. Denote by $L_{\bu}(\bmu)$ the new abacus.
Move the beads at positions
$(j+1, l_2)$ and $(j+1, l_3)$ to positions
$(j, l_2)$ and $(j, l_3)$ in $L_{\bar{\bs}}(\bar{\blam})$, respectively.
Denote the new abacus by $L_{\bv}(\bnu)$.
Clearly, both the moving vectors from $L_{\bu}(\bmu)$ and $L_{\bv}(\bnu)$ to $L_{\bar{\bs}}(\bar{\blam})$ are equal to $\mathcal{M}$.
Hence we can deduce from Lemma \ref{same block} that $\bs=\bu=\bv$ and $(\bmu, \bs), (\bnu, \bs)\in \mathcal{H}^{\bLambda}_{\bbeta}$.
Moreover, take $(\kappa_1, \iota_1)=(j,\, l_4)$ and $(\kappa_2, \iota_2)=(j+1,\, l_1)$.
It is not difficult to check that the conditions $(1)$ and $(2)$ of Definition \ref{incomparable definition} are satisfied.
That is, $L_\bs(\bmu)\parallel L_\bs(\bnu)$.
\end{proof}

The following lemma is useful only if $r>3$.

\begin{lemma}\label{4 runners}
Let $(\blam, \bs)\in \mathcal{H}^{\bLambda}_{\bbeta}$ with $\bs\in \overline{\mathcal A}^r_e$,
where $r\ge 4$. Assume that in $L_{\bs}(\blam)$,
\begin{enumerate}
\item[{\rm (1)}] there is a bead at position $(i, l)$ and $(i+1, l)$ is an empty position;
\item[{\rm (2)}] there is a bead at position $(j, h)$ and $(j+1, h)$ is an empty position,
\end{enumerate}
where $l, h\in \mathbb{Z}$, $1\leq i, j\leq r$ with $i+1<j$ and $i\neq 1$ if $j=r$.
Then there exist pairs $(\bmu, \bs), (\bnu, \bs) \in \mathcal{H}^{\bLambda}_{\bbeta}$ with
 $L_\bs(\bmu)\parallel L_\bs(\bnu)$.
\end{lemma}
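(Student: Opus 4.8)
The plan is to imitate the proof of Lemma \ref{2 runners 2 columns}, but with the two ``free'' beads now distributed over the two disjoint pairs of rows $\{i,i+1\}$ and $\{j,j+1\}$ rather than over one pair. Set $l_1:=l$ and $h_1:=h$. First I would apply Lemma \ref{abacus simple property}(2) to the pair $(i,i+1)$ and then to the pair $(j,j+1)$ (using $\bs\in\overline{\mathcal A}^r_e$, and reading $(j+1,\cdot)$ through Definition \ref{nonexistposition} when $j=r$) to obtain $l_2\ne l_1$ with $(i,l_2)$ empty and $(i+1,l_2)$ occupied, and $h_2\ne h_1$ with $(j,h_2)$ empty and $(j+1,h_2)$ occupied. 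Then, in $L_\bs(\blam)$, move the bead at $(i,l_1)$ to $(i+1,l_1)$ and the bead at $(j,h_1)$ to $(j+1,h_1)$, calling the result $L_{\bar{\bs}}(\bar{\blam})$. These are elementary operations (first kind in row $i$ since $i<r$; first or second kind in row $j$), so by Lemma \ref{3.4.12} and Definition \ref{move vector definition} we get $\bar{\bs}=\bs$ and the moving vector from $L_\bs(\blam)$ to $L_{\bar{\bs}}(\bar{\blam})$ is $\mathcal M=(m_1,\dots,m_r)$ with $m_i=m_j=1$ and $m_t=0$ otherwise. The point of this step is that in $L_{\bar{\bs}}(\bar{\blam})$ both columns $l_1,l_2$ now have row $i$ empty and row $i+1$ occupied, and both columns $h_1,h_2$ have row $j$ empty and row $j+1$ occupied; the hypotheses $i+1<j$, and $i\ne 1$ when $j=r$, are exactly what guarantee that $\{i,i+1\}$ and $\{j,j+1\}$ are genuinely disjoint pairs of rows.

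Relabel so that $l_{<}<l_{>}$ are $\{l_1,l_2\}$ in increasing order and $h_{<}<h_{>}$ are $\{h_1,h_2\}$. Define $L_\bu(\bmu)$ from $L_{\bar{\bs}}(\bar{\blam})$ by moving the bead at $(i+1,l_{>})$ to $(i,l_{>})$ and the bead at $(j+1,h_{<})$ to $(j,h_{<})$, and define $L_\bv(\bnu)$ from $L_{\bar{\bs}}(\bar{\blam})$ by moving the bead at $(i+1,l_{<})$ to $(i,l_{<})$ and the bead at $(j+1,h_{>})$ to $(j,h_{>})$. In each case $L_{\bar{\bs}}(\bar{\blam})$ is recovered by exactly two elementary operations, one in row $i$ and one in row $j$, so the moving vectors from $L_\bu(\bmu)$ and from $L_\bv(\bnu)$ to $L_{\bar{\bs}}(\bar{\blam})$ both equal $\mathcal M$. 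Applying Lemma \ref{same block} twice, with common target $L_{\bar{\bs}}(\bar{\blam})$ and using $\bs\in\overline{\mathcal A}^r_e$, yields $\bu=\bv=\bs$ and $(\bmu,\bs),(\bnu,\bs)\in\mathcal{H}^{\bLambda}_{\bbeta}$; in particular $|\bmu|=|\bnu|$.

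Finally I would verify $L_\bs(\bmu)\parallel L_\bs(\bnu)$ from Definition \ref{incomparable definition} with $\kappa_1=i$, $\iota_1=l_{>}$, and $\kappa_2=j+1$, $\iota_2=h_{<}$ (interpreted through Definition \ref{nonexistposition} when $j=r$, so that $\kappa_2=1$); the stated hypotheses force $\kappa_1\ne\kappa_2$. Condition (1) holds because $(i,l_{>})$ is occupied in $L_\bs(\bmu)$ but empty in $L_\bs(\bnu)$ (whose row $i$ was altered only at column $l_{<}\ne l_{>}$), while $(j+1,h_{<})$ is empty in $L_\bs(\bmu)$ but occupied in $L_\bs(\bnu)$. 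For condition (2): relative to $L_{\bar{\bs}}(\bar{\blam})$, row $i$ of $\bmu$ differs only at column $l_{>}$ and row $i$ of $\bnu$ only at $l_{<}<l_{>}$, so $L_{s_i}(\bmu^{(i)})$ and $L_{s_i}(\bnu^{(i)})$ coincide strictly to the right of $\iota_1$; likewise row $j+1$ of $\bmu$ differs only at column $h_{<}$ and row $j+1$ of $\bnu$ only at $h_{>}>h_{<}$, so the two abaci coincide strictly to the left of $\iota_2$. Hence $L_\bs(\bmu)\parallel L_\bs(\bnu)$, which completes the proof.

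I expect the only genuine difficulty to be the bookkeeping in the last paragraph: checking that the four columns $l_{<},l_{>},h_{<},h_{>}$ really lie in two disjoint pairs of rows, that the chosen orderings place the modified columns on the correct side of $\iota_1$ and of $\iota_2$, and that the wrap-around convention of Definition \ref{nonexistposition} is applied consistently when $j=r$ — which is precisely why the case $i=1$, $j=r$ must be excluded in the hypothesis.
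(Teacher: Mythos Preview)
Your proof is correct and follows essentially the same approach as the paper's own proof: build the intermediate abacus $L_{\bar{\bs}}(\bar{\blam})$ by performing the two elementary operations, then construct $\bmu$ and $\bnu$ by reversing one operation in each of the disjoint row-pairs in complementary (``crossed'') ways, apply Lemma~\ref{same block} against the original $L_\bs(\blam)$, and verify incomparability with $(\kappa_1,\iota_1)=(i,l_{>})$ and $(\kappa_2,\iota_2)=(j+1,h_{<})$.

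One small slip: your claim that $\bar{\bs}=\bs$ is incorrect --- an elementary operation in row $i$ decreases $s_i$ by $1$ and increases $s_{i+1}$ by $1$ (cf.\ Lemma~\ref{bead minus}(3) or Lemma~\ref{move vector minus}), so $\bar{\bs}\ne\bs$ in general, and indeed the paper writes $L_{\bar{\bs}}(\bar{\blam})$ with a genuinely different charge. This is harmless, however, since you never actually use $\bar{\bs}=\bs$: Lemma~\ref{same block} only requires that the \emph{original} $\bs$ lie in $\overline{\mathcal A}^r_e$, which is the hypothesis, and compares $L_\bs(\blam)$, $L_\bu(\bmu)$, $L_\bv(\bnu)$ via their common moving vector to the target $L_{\bar{\bs}}(\bar{\blam})$.
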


\begin{proof}
Move the beads at positions $(i, l)$ and $(j, h)$ in $L_{\bs}(\blam)$
to positions $(i+1, l)$ and $(j+1, h)$, respectively.
Denote by $L_{\bar{\bs}}(\bar{\blam})$ the new abacus. Then the moving vector from
$L_\bs(\blam)$ to $L_{\bar{\bs}}(\bar{\blam})$ is $\mathcal{M}=(m_1, m_2, \dots, m_r)$, where
$m_i=m_j=1$, and
0 otherwise.

It follows from Lemma \ref{abacus simple property} (2) that there exist $l', h'\,\in \mathbb{Z}$ such
that in $L_\bs(\blam)$ the positions $(i, l')$ and $(j, h')$ are empty
and there are beads at positions $(i+1, l')$ and $(j+1, h')$.
Then in $L_{\bar{\bs}}(\bar{\blam})$, positions
$(i, l)$, $(i, l')$, $(j, h)$ and $(j, h')$ are empty and there are beads at positions
$(i+1, l)$, $(i+1, l')$, $(j+1, h)$ and $(j+1, h')$.
Define $\{h_1, h_2\}$ to be equal to $\{h, h'\}$ as a set satisfying $h_1<h_2$.
Define $\{l_1, l_2\}$ to be equal to $\{l, l'\}$ as a set satisfying $l_1<l_2$.

Denote by $L_{\bu}(\bmu)$ the abacus obtained by moving the beads
at positions $(i+1, l_2)$ and $(j+1, h_1)$  in $L_{\bar{\bs}}(\bar{\blam})$
to positions $(i, l_2)$ and $(j, h_1)$, respectively
and denote by $L_{\bv}(\bnu)$ the abacus obtained by moving the beads
at positions $(i+1, l_1)$ and $(j+1, h_2)$  in $L_{\bar{\bs}}(\bar{\blam})$
to positions $(i, l_1)$ and $(j, h_2)$, respectively.
Clearly, both the moving vectors from $L_{\bu}(\bmu)$ and $L_{\bv}(\bnu)$ to $L_{\bar{\bs}}(\bar{\blam})$ are equal to $\mathcal{M}$.
Hence we can deduce from Lemma \ref{same block} that $\bs=\bu=\bv$ and $(\bmu, \bs), (\bnu, \bs)\in \mathcal{H}^{\bLambda}_{\bbeta}$.
Take $(\kappa_1, \iota_1)=(i, l_2)$ and $(\kappa_2, \iota_2)=(j+1, h_1)$, then $L_\bs(\bmu)$ and $L_\bs(\bnu)$ satisfy
 the conditions $(1)$ and $(2)$ of Definition \ref{incomparable definition}  and consequently, $L_\bs(\bmu)\parallel L_\bs(\bnu)$.
\end{proof}

\begin{lemma}\label{3 runners 3 columns}
Let $(\blam, \bs)\in \mathcal{H}^{\bLambda}_{\bbeta}$ and $\bs \in \overline{\mathcal A}^r_e$.
Assume there exists $1\leq i\leq r$ such that in $L_\bs(\blam)$,
\begin{enumerate}
\item[{\rm (1)}] there is a bead at position $(i, l_1)$ and position $(i+1, l_1)$ is empty;
\item[{\rm (2)}] position $(i, l_2)$ is empty and there is a bead at position $(i+1, l_2)$;
\item[{\rm (3)}] there is a bead at position $(i+1, l_3)$ and position $(i+2, l_3)$ is empty;
\item[{\rm (4)}] position $(i+1, l_4)$ is empty and there is a bead at position $(i+2, l_4)$ ,
\end{enumerate}
where $l_1, l_2, l_3, l_4\in \mathbb{Z}$ such that $l_1\ne l_4$ or $l_2\ne l_3$ holds.
Then there exist pairs $(\bmu, \bs), (\bnu, \bs)\in \mathcal{H}^{\bLambda}_{\bbeta}$
with $L_{\bs}(\bmu)\parallel L_\bs\bnu)$.
\end{lemma}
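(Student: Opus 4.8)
The plan is to follow the pattern of the proofs of Lemmas \ref{2 runners 2 columns} and \ref{4 runners}: first use elementary operations to pass from $L_\bs(\blam)$ to a reference abacus $L_{\bar\bs}(\bar\blam)$ together with the moving vector $\mathcal M$ from $L_\bs(\blam)$ to it; then exhibit two abaci $L_\bs(\bmu)$ and $L_\bs(\bnu)$ both having moving vector $\mathcal M$ to $L_{\bar\bs}(\bar\blam)$; then apply Lemma \ref{same block} (comparing each of $\bmu,\bnu$ with $\blam$, whose charge $\bs$ lies in $\overline{\mathcal A}^r_e$) to conclude $(\bmu,\bs),(\bnu,\bs)\in\mathcal H^\bLambda_\bbeta$; and finally verify Definition \ref{incomparable definition} directly. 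First I would record the forced non-equalities: conditions (1)--(2) give $l_1\ne l_2$, (1)--(3) give $l_1\ne l_3$, (2)--(4) give $l_2\ne l_4$, (3)--(4) give $l_3\ne l_4$, and by hypothesis at most one of $l_1=l_4$, $l_2=l_3$ holds.

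I would treat the generic case, all of $l_1,l_2,l_3,l_4$ distinct, first. Let $L_{\bar\bs}(\bar\blam)$ be obtained from $L_\bs(\blam)$ by the two independent elementary operations moving the bead at $(i,l_1)$ to $(i+1,l_1)$ and the bead at $(i+1,l_3)$ to $(i+2,l_3)$; the moving vector from $L_\bs(\blam)$ to $L_{\bar\bs}(\bar\blam)$ is then $\mathcal M=(m_1,\dots,m_r)$ with $m_i=m_{i+1}=1$ and $m_t=0$ otherwise. In $L_{\bar\bs}(\bar\blam)$ the positions $(i,l_1),(i,l_2)$ are empty and $(i+1,l_1),(i+1,l_2)$ occupied, while $(i+1,l_3),(i+1,l_4)$ are empty and $(i+2,l_3),(i+2,l_4)$ occupied; hence there are two available upward moves across the gap between rows $i$ and $i+1$, namely $P_1$ pulling $(i+1,l_1)$ up to $(i,l_1)$ and $P_2$ pulling $(i+1,l_2)$ up to $(i,l_2)$, and two across the gap between rows $i+1$ and $i+2$, namely $Q_1$ pulling $(i+2,l_3)$ up to $(i+1,l_3)$ and $Q_2$ pulling $(i+2,l_4)$ up to $(i+1,l_4)$. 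Reversing a single $P$-move is one elementary operation in row $i$ and reversing a single $Q$-move is one in row $i+1$, so every abacus obtained from $L_{\bar\bs}(\bar\blam)$ by performing one $P$-move and one $Q$-move has moving vector exactly $\mathcal M$ back to $L_{\bar\bs}(\bar\blam)$. Put $d=\max\{l_1,l_2\}$ and $d'=\min\{l_3,l_4\}$. Among the four choices ``one $P$, one $Q$'' for $L_\bs(\bmu)$ (with $L_\bs(\bnu)$ taking the complementary $P$ and $Q$), exactly two make the occupancies of $(i,d)$ and $(i+2,d')$ opposite; fix such a choice, so in particular $\bmu\ne\bnu$. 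By Lemma \ref{same block}, $\bs=\bu=\bv$ and $(\bmu,\bs),(\bnu,\bs)\in\mathcal H^\bLambda_\bbeta$. Now $L_\bs(\bmu)$ and $L_\bs(\bnu)$ agree outside rows $i,i+1,i+2$, differ in row $i$ only at columns in $\{l_1,l_2\}$, and differ in row $i+2$ only at columns in $\{l_3,l_4\}$; hence, after interchanging $\bmu$ and $\bnu$ if needed, the pair $(\kappa_1,\iota_1)=(i,d)$, $(\kappa_2,\iota_2)=(i+2,d')$ satisfies condition (1) of Definition \ref{incomparable definition}, and condition (2) holds automatically since $d$ is the largest and $d'$ the smallest column carrying a difference in its row. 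As $r>2$ we have $\kappa_1=i\ne i+2=\kappa_2$ (read modulo $r$ via Definition \ref{nonexistposition} when $i+2>r$), so $L_\bs(\bmu)\parallel L_\bs(\bnu)$.

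There remain the degenerate cases; by Lemmas \ref{3.4.16} and \ref{3.4.17} it suffices to treat $l_1=l_4$ (with $l_2\ne l_3$), where column $l_1$ reads occupied/empty/occupied down rows $i,i+1,i+2$ and $l_1,l_2,l_3$ are three distinct columns. The plan is to build the reference abacus by pushing the bead at $(i+1,l_3)$ down to $(i+2,l_3)$ and then to invoke Lemma \ref{abacus simple property}(2) (after a further downward push of a bead if necessary) to produce an additional column active across one of the two gaps, so that three rows are genuinely involved and the crossing argument of the generic case applies. The main obstacle is exactly this degenerate case: when two of the $l$'s coincide, one of the two gaps offers a priori only one upward move, and a naive crossing then produces only a $2\times 2$ exchange of beads between two rows, which one checks can never satisfy condition (2) of Definition \ref{incomparable definition}; the role of the hypothesis ``$l_1\ne l_4$ or $l_2\ne l_3$'', together with Lemma \ref{abacus simple property}(2), is precisely to restore the missing column and spread the exchange over three rows.
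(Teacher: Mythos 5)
Your generic case (all four columns distinct) is correct and is essentially the paper's Case 1: the same reference abacus $L_{\bar\bs}(\bar\blam)$, the same ``cross the two pull-backs'' construction, the same appeal to Lemma \ref{same block}, and the same witness $(\kappa_1,\iota_1)=(i,\max\{l_1,l_2\})$, $(\kappa_2,\iota_2)=(i+2,\min\{l_3,l_4\})$ for Definition \ref{incomparable definition}. The reduction of $l_2=l_3$ to $l_1=l_4$ by duality also matches the paper's Case 3.

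The genuine gap is the degenerate case $l_1=l_4$, $l_2\ne l_3$, which you correctly identify as the main obstacle but then only sketch. Your plan --- ``invoke Lemma \ref{abacus simple property}(2), after a further downward push if necessary, to produce an additional column active across one of the two gaps'' --- does not obviously work: applying Lemma \ref{abacus simple property}(2) to the single bead at $(i+1,l_3)$ over the empty $(i+2,l_3)$ only guarantees \emph{one} column $h$ with $(i+1,h)$ empty and $(i+2,h)$ occupied, and that column may be $l_1=l_4$ itself, which becomes unusable once the bead at $(i,l_1)$ has been pushed down to $(i+1,l_1)$. So no new column is guaranteed, and the argument stalls exactly where you say it does. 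The paper resolves this case by a different device: first, if $(i+2,l_2)$ is empty one is already in the situation of Lemma \ref{2 runners 2 columns} (two down-moves from row $i+1$ to row $i+2$ at columns $l_2$ and $l_3$); otherwise $(i+2,l_2)$ carries a bead, and one of the two incomparable abaci is built by a \emph{two-step pull in a single column} (moving $(i+1,h)\to(i,h)$ and then $(i+2,h)\to(i+1,h)$ for a suitable $h\in\{l_1,l_2\}$), which realizes the moving vector $\mathcal M$ while involving all three rows, crossed against a spread move using two different columns; a further subdivision on the position of $l_3$ relative to $l_1,l_2$ is needed to pick the right $h$ and the right incomparability witness. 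None of this is in your proposal, so the degenerate case remains unproven.
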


\begin{proof} According to the relationship among $l_1, l_2, l_3, l_4$,
we divide the proof into the following three cases.

\smallskip

{\em {\bf Case 1.} $l_1\ne l_4$ and $l_2\ne l_3$.}
In $L_\bs(\blam)$, move the bead at position $(i, l_1)$ to $(i+1, l_1)$ and
move the bead at position $(i+1, l_3)$  to $(i+2, l_3)$.
Denote by $L_{\bar{\bs}}(\bar{\blam})$ the new abacus obtained. Then the moving vector from
$L_\bs(\blam)$ to $L_{\bar{\bs}}(\bar{\blam})$ is $\mathcal{M}=(m_1, m_2, \dots, m_r)$, where
$m_k=1$ if $k=i, i+1$ and
0 otherwise.
In $L_{\bar{\bs}}(\bar{\blam})$ the positions
$(i, l_1)$, $(i, l_2)$, $(i+1, l_3)$ and $(i+1, l_4)$ are empty, and there are beads at positions
$(i+1, l_1)$, $(i+1, l_2)$, $(i+2, l_3)$ and $(i+2, l_4)$.
Define $\{h_1, h_2\}$ to be equal to $\{l_1, l_2\}$  as a set satisfying $h_1<h_2$ and
define $\{h_3, h_4\}$ to be equal to $\{l_3, l_4\}$ as a set satisfying $h_3<h_4$.

Denote by $L_{\bu}(\bmu)$ the abacus obtained by moving  in $L_{\bar{\bs}}(\bar{\blam})$ the beads
at positions $(i+1, h_2)$ and $(i+2, h_3)$ to positions $(i, h_3)$ and $(i+1, h_3)$, respectively.
Denote by $L_{\bv}(\bnu)$ the abacus obtained by moving  in $L_{\bar{\bs}}(\bar{\blam})$ the beads
at positions $(i+1, h_1)$ and $(i+2, h_4)$ to positions $(i, h_1)$ and $(i+1, h_4)$, respectively.
Clearly, both the moving vectors from $L_{\bu}(\bmu)$ and $L_{\bv}(\bnu)$
to $L_{\bar{\bs}}(\bar{\blam})$ are equal to $\mathcal{M}$.
We can deduce from Lemma \ref{same block} that $\bs=\bu=\bv$
and $(\bmu, \bs),\, (\bnu, \bs)\in \mathcal{H}^{\bLambda}_{\bbeta}$.
Furthermore, we get $L_\bs(\bmu) \parallel L_\bs(\bnu)$
by taking $(\kappa_1, \iota_1)=(i, h_2)$ and $(\kappa_2, \iota_2)=(i+2, h_3)$.

\smallskip

{\em {\bf Case 2.} $l_1=l_4$ and $l_2\neq l_3$}.  Since positions $(i+1, l_2)$ and $(i+1, l_3)$ have a bead placed
and position $(i+2, l_3)$ is empty, if position $(i+2, l_2)$ is empty,
then by Lemma \ref{2 runners 2 columns} the result follows.
Now we assume that there is a bead at position $(i+1, l_2)$.
Move in $L_\bs(\blam)$ the beads at positions $(i, l_1)$ and $(i+1, l_3)$
to positions $(i+1, l_1)$ and $(i+2, l_3)$, respectively.
Denote by $L_{\bar{\bs}}(\bar{\blam})$ the new abacus. The moving vector from
$L_\bs(\blam)$ to $L_{\bar{\bs}}(\bar{\blam})$ is $\mathcal{M}=(m_1, m_2, \dots, m_r)$, where
$m_k=1$ if $k=i, i+1$ and
0 otherwise.

Let us take a look at $L_{\bar{\bs}}(\bar{\blam})$. Positions $(i, l_1)$, $(i, l_2)$ and $(i+1, l_3)$ are empty,
and there are beads at positions $(i+1, l_1)$, $(i+1, l_2)$, $(i+2, l_1)$, $(i+2, l_2)$ and $(i+2, l_3)$.
Define $\{h_1, h_2\}$ to be equal to $\{l_1, l_2\}$ as a set satisfying $h_1<h_2$.
We divide the rest discussion into the following two subcases.

{\em Subcase 1. $l_3>h_2$ or $l_3<h_1$}.\,\,
Denote by $L_{\bu}(\bmu)$ the abacus obtained by moving in $L_{\bar{\bs}}(\bar{\blam})$
the bead at position $(i+1, h_2)$ to $(i, h_2)$ and next moving the bead at position $(i+2, h_2)$ to $(i+1, h_2)$.
Denote by $L_{\bv}(\bnu)$ the abacus obtained by moving in $L_{\bar{\bs}}(\bar{\blam})$
the beads at positions $(i+1, h_1)$ and $(i+2, l_3)$ to positions $(i, h_1)$ and $(i+1, l_3)$, respectively.
Clearly, both the moving vectors from $L_{\bu}(\bmu)$ and $L_{\bv}(\bnu)$
to $L_{\bar{\bs}}(\bar{\blam})$ are equal to $\mathcal{M}$.
Hence we can deduce from Lemma \ref{same block} that  $\bs=\bu=\bv$ and $(\bmu, \bs), (\bnu, \bs)\in \mathcal{H}^{\bLambda}_{\bbeta}$.
It is not difficult to check $L_\bs(\bmu)\parallel L_\bs(\bnu)$ by taking $(\kappa_1, \iota_1)=(i, h_2)$, $(\kappa_2, \iota_2)=(i+2, h_2)$ if
$l_3>h_2$ and by taking $(\kappa_1, \iota_1)=(i, h_2)$, $(\kappa_2, \iota_2)=(i+1, l_3)$ if $l_3<h_1$.

\smallskip

{\em Subcase 2. $h_1<l_3<h_2$}.\, We construct two incomparable abaci as follows. In $L_{\bar{\bs}}(\bar{\blam})$,
moving the bead at positions $(i+1, h_1)$ to $(i, h_1)$ and next moving the bead at position $(i+2, h_1)$ to
 $(i+1, h_1)$, gives an abacus $L_\bs(\bmu)$
and moving the beads at positions $(i+2, l_3)$ and $(i+1, h_2)$
to positions $(i+1, l_3)$ and $(i, h_2)$, respectively, gives another one, $L_\bs(\bnu)$.
Then by taking $(\kappa_1, \iota_1)=(i+1, h_2)$ and $(\kappa_2, \iota_2)=(i+2, h_1)$, we know $L_\bs(\bmu)\parallel L_\bs(\bnu)$.

\smallskip

{\em {\bf Case 3.} $l_1\ne l_4$ and $l_2=l_3$}.
Consider by Lemmas \ref{3.4.16} and \ref{3.4.17} the dual abacus of $L_{\bs}(\blam)$. It is of Case 2.
\end{proof}

\begin{corollary}\label{3 runners 3 columns(1)}
Let $\bs\in \overline{\mathcal A}^r_e$ and $(\blam, \bs)\in \mathcal{H}^{\bLambda}_{\bbeta}$.
Assume there exist $1\le i\le r$ and $h_1, h_2\in\mathbb{Z}$ such that in $L_\bs(\blam)$,
\begin{enumerate}
\item[{\rm (1)}] position $(i, h_1)$ has a bead and position $(i+1, h_1)$ is empty;
\item[{\rm (2)}] position $(i+1, h_2)$ has a bead and position $(i+2, h_2)$ is empty;
\item[{\rm (3)}] position $(i+2, h_1)$ is empty or position $(i, h_2)$ has a bead.
\end{enumerate}
Then there exist $(\bmu, \bs), (\bnu, \bs) \in \mathcal H^{\bLambda}_{\bbeta}$ with $L_\bs(\bmu) \parallel L_\bs(\bnu)$.
\end{corollary}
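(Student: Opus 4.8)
The plan is to derive this directly from Lemma \ref{3 runners 3 columns}: the hypotheses (1) and (2) here are, after setting $l_1 := h_1$ and $l_3 := h_2$, precisely conditions (1) and (3) of that lemma, so it suffices to manufacture the two remaining columns $l_2$ and $l_4$ and to verify the disjunction ``$l_1\ne l_4$ or $l_2\ne l_3$'' it requires.

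First I would use Lemma \ref{abacus simple property}(2), which is available because $\bs\in\overline{\mathcal A}^r_e$. Applied to rows $i,\,i+1$ at the single column $h_1$ (a bead at $(i,h_1)$ and $(i+1,h_1)$ empty, by hypothesis (1)), it yields some $l_2\in\mathbb Z$ with $(i,l_2)$ empty and $(i+1,l_2)$ occupied; this is condition (2) of Lemma \ref{3 runners 3 columns}. Applied to rows $i+1,\,i+2$ at the column $h_2$ (a bead at $(i+1,h_2)$ and $(i+2,h_2)$ empty, by hypothesis (2)), it yields some $l_4\in\mathbb Z$ with $(i+1,l_4)$ empty and $(i+2,l_4)$ occupied; this is condition (4). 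Throughout I would use the conventions of Definition \ref{nonexistposition}, together with the standing assumption $e\neq\infty$ in those cases, to make sense of rows $i+1,\,i+2$ when $i$ is close to $r$, exactly as in Lemma \ref{abacus simple property}(2) and Lemma \ref{3 runners 3 columns}.

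Finally I would bring in hypothesis (3) to verify the disjunction. If $(i,h_2)$ carries a bead, then since $(i,l_2)$ is empty we get $l_2\ne h_2=l_3$; if instead $(i+2,h_1)$ is empty, then since $(i+2,l_4)$ carries a bead we get $l_4\ne h_1=l_1$. Either way the disjunction holds, so Lemma \ref{3 runners 3 columns} applies and produces the required $(\bmu,\bs),(\bnu,\bs)\in\mathcal H^{\bLambda}_{\bbeta}$ with $L_\bs(\bmu)\parallel L_\bs(\bnu)$. I do not expect any real obstacle: the whole substance sits in Lemma \ref{3 runners 3 columns}, and the only mild point to watch is that the auxiliary columns $l_2,l_4$ furnished by Lemma \ref{abacus simple property}(2) are automatically distinct from $l_1,l_3$ in precisely the way the disjunction demands, since a single row cannot have a position that is simultaneously empty and occupied.
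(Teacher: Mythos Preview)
Your proof is correct and follows essentially the same approach as the paper: both arguments invoke Lemma~\ref{abacus simple property}(2) twice to manufacture the missing columns (the paper calls them $h_3,h_4$ where you write $l_2,l_4$), then use hypothesis~(3) to rule out the coincidence $l_1=l_4$ and $l_2=l_3$ by comparing the bead/empty status at row $i$ or row $i+2$, and finally appeal to Lemma~\ref{3 runners 3 columns}.
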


\begin{proof}
By Lemma \ref{abacus simple property}, there exist $h_3, h_4 \in \Z$ such that in $L_\bs(\blam)$
\begin{enumerate}
\item[$\bullet$] position $(i, h_3)$ is empty and position $(i+1, h_3)$ has a bead;
\item[$\bullet$] position $(i+1, h_4)$ is empty and position $(i+2, h_4)$ has a bead.
\end{enumerate}
Clearly, if position $(i, h_2)$ has a bead, then $h_3\ne h_2$, and if position $(i+2, h_1)$ is empty, then $h_4\ne h_1$.
By Lemma \ref{3 runners 3 columns} the proof is completed.
\end{proof}

We conclude this section by a lemma that will be used in Section 5.4.

\begin{lemma}\label{4.1.12}
Let $(\blam, \bs)\in \mathcal H^{\bLambda}_{\bbeta}$ with $\bs \in \overline{\mathcal A}^r_e$.
If there exist $1\le i_1<i_2<i_3<i_4\le r$ such that
\begin{enumerate}
\item[{\rm (1)}] position $(i_1, h)$ has a bead and position $(i_3, h)$ is empty;
\item[{\rm (2)}] position $(i_2, h)$ has a bead and position $(i_4, h)$ is empty,
\end{enumerate}
then there exist $(\bmu, \bs), (\bnu, \bs)\in \mathcal{H}^{\bLambda}_{\bbeta}$
such that $L_\bs(\bmu) \parallel L_\bs(\bnu)$.
\end{lemma}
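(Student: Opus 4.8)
The plan is to produce, starting from $L_\bs(\blam)$, two abaci $L_\bs(\bmu)$ and $L_\bs(\bnu)$ lying in $\mathcal H^{\bLambda}_{\bbeta}$ whose mutual difference is a single localized ``swap'' of the kind demanded by Definition \ref{incomparable definition}, controlling everything through moving vectors and Lemma \ref{same block}. First I would exploit the two bead/empty pairs supplied by the hypothesis: applying Lemma \ref{abacus simple property}(2) to the pair ``bead at $(i_1,h)$, empty at $(i_3,h)$'' (legitimate since $\bs\in\overline{\mathcal A}^r_e$ and $1\le i_3-i_1<r$) produces a column $g_1$ with $(i_1,g_1)$ empty and $(i_3,g_1)$ occupied, and applying it to ``bead at $(i_2,h)$, empty at $(i_4,h)$'' produces a column $g_2$ with $(i_2,g_2)$ empty and $(i_4,g_2)$ occupied. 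Here $g_1\neq h$ and $g_2\neq h$, while possibly $g_1=g_2$; both auxiliary columns will be essential.

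Next I would fix a common ``top'' abacus $\bar\blam$, obtained from $\blam$ by moving (in the sense of Lemma \ref{3.4.12}) the bead at $(i_1,h)$ to $(i_3,h)$ and then the bead at $(i_2,h)$ to $(i_4,h)$. A direct application of Lemma \ref{3.4.12} shows that the moving vector $\mathcal M=(m_1,\dots,m_r)$ from $L_\bs(\blam)$ to $L_{\bar\bs}(\bar\blam)$ satisfies $m_t=2$ for $i_2\le t<i_3$, $m_t=1$ for $i_1\le t<i_2$ and for $i_3\le t<i_4$, and $m_t=0$ otherwise. I would then set $\bmu=\blam$ — so the moving vector from $L_\bs(\bmu)$ to $L_{\bar\bs}(\bar\blam)$ is also $\mathcal M$ — and let $\bnu$ be obtained from $\bar\blam$ by moving the bead at $(i_3,g_1)$ to $(i_1,g_1)$ and the bead at $(i_4,g_2)$ to $(i_2,g_2)$ (legitimate, since columns $g_1,g_2$ are untouched in passing from $\blam$ to $\bar\blam$, so those target positions are empty there). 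Reading these last two displacements in reverse and applying Lemma \ref{3.4.12} twice gives that the moving vector from $L_\bs(\bnu)$ to $L_{\bar\bs}(\bar\blam)$ is again exactly $\mathcal M$. Since the moving vectors from $L_\bs(\blam)$, $L_\bs(\bmu)$ and $L_\bs(\bnu)$ to $L_{\bar\bs}(\bar\blam)$ all coincide, Lemma \ref{same block} forces $\bnu$ (and trivially $\bmu=\blam$) to have charge $\bs$ and to lie in $\mathcal H^{\bLambda}_{\bbeta}$.

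Finally I would check $L_\bs(\bmu)\parallel L_\bs(\bnu)$ directly from the construction. The abaci $\bmu=\blam$ and $\bnu$ agree outside rows $i_1,i_2,i_3,i_4$; in row $i_1$ they differ exactly at $(i_1,h)$ (bead in $\bmu$, empty in $\bnu$) and at $(i_1,g_1)$ (empty, bead); in row $i_3$ at $(i_3,h)$ (empty, bead) and $(i_3,g_1)$ (bead, empty); in row $i_2$ at $(i_2,h)$ (bead, empty) and $(i_2,g_2)$ (empty, bead); in row $i_4$ at $(i_4,h)$ (empty, bead) and $(i_4,g_2)$ (bead, empty). Thus each of these four rows carries precisely two differences, one in column $h$ and one in column $g_1$ or $g_2$. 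I would take $(\kappa_1,\iota_1)=(i_1,h)$ when $g_1<h$ and $(\kappa_1,\iota_1)=(i_3,g_1)$ when $g_1>h$, and $(\kappa_2,\iota_2)=(i_2,g_2)$ when $g_2<h$ and $(\kappa_2,\iota_2)=(i_4,h)$ when $g_2>h$. Then $\kappa_1\in\{i_1,i_3\}$ and $\kappa_2\in\{i_2,i_4\}$ are distinct; at $(\kappa_1,\iota_1)$ there is a bead in $\bmu$ and none in $\bnu$, and at $(\kappa_2,\iota_2)$ the reverse; and in row $\kappa_1$ the sole remaining difference lies strictly to the left of $\iota_1$, while in row $\kappa_2$ the sole remaining difference lies strictly to the right of $\iota_2$. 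These are exactly conditions (1) and (2) of Definition \ref{incomparable definition}, so $L_\bs(\bmu)\parallel L_\bs(\bnu)$.

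I expect the main obstacle to be precisely the design of $\bmu,\bnu$ in the second and third steps: swapping only the two beads of column $h$ yields $\bmu=\bnu$, and swapping against a single auxiliary column spreads the discrepancy over two rows and two columns in a ``diagonal'' pattern that no choice of $\kappa_1,\kappa_2$ can absorb. It is the symmetric use of two auxiliary columns, one per bead/empty pair of the hypothesis, that confines each of the four affected rows to two differences in a suitable left/right order and makes the incomparability robust to how $g_1,g_2,h$ compare. Everything else — the moving-vector arithmetic and the appeal to Lemma \ref{same block} — is routine bookkeeping.
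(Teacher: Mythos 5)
Your proposal is correct and follows essentially the same route as the paper's proof: both pass through the intermediate abacus $\bar\blam$ obtained by pushing the two column-$h$ beads down, extract the auxiliary columns via Lemma \ref{abacus simple property}(2), identify the charges and the block via equality of moving vectors and Lemma \ref{same block}, and then verify Definition \ref{incomparable definition} directly. The only (cosmetic) difference is that the paper builds the comparison of $h$ with the auxiliary columns into the definitions of $\bmu$ and $\bnu$ (through the ordered $l_i$'s) so that one fixed choice of $(\kappa_1,\iota_1)$ and $(\kappa_2,\iota_2)$ works, whereas you take $\bmu=\blam$ and move that same case analysis into the choice of the witnessing positions.
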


\begin{proof}
Move in $L_\bs(\blam)$ the beads at positions $(i_1, h)$ and $(i_2, h)$
to positions $(i_3, h)$ and $(i_4, h)$, respectively.
Denote by $L_{\bar \bs}(\bar{\blam})$ the new abacus. Then the moving vector from
$L_\bs(\blam)$ to $L_{\bar \bs}(\bar{\blam})$ is $\mathcal{M}=(m_1, m_2, \dots, m_r)$, where
$$m_t=\begin{cases}
1, & \text{if}\,\, i_1\le t \le i_2-1\,\, \text{or} \,\,i_3\le t\le i_4-1 ;\\
2, & \text{if}\,\, i_2\le t \le i_3-1 ;\\
0, & \text{if}\,\, \text{others}.
\end{cases}
$$
It follows from Lemma \ref{abacus simple property} (2) that there exist $h_1\ne h, h_2\ne h$ such that in $L_\bs(\blam)$,
and consequently in $L_{\bar{\bs}}(\bar{\blam})$
\begin{enumerate}
\item[$\bullet$] position $(i_1, h_1)$ is empty and position $(i_3, h_1)$ has a bead;
\item[$\bullet$] position $(i_2, h_2)$ is empty and position $(i_4, h_2)$ has a bead.
\end{enumerate}
Moreover, in $L_{\bar{\bs}}(\bar{\blam})$,
\begin{enumerate}
\item[$\bullet$] position $(i_1, h)$ is empty and position $(i_3, h)$ has a bead;
\item[$\bullet$] position $(i_2, h)$ is empty and position $(i_4, h)$ has a bead.
\end{enumerate}

Define $\{l_1, l_3\}$ to be equal to $\{h, h_1\}$ as a set satisfying $l_1<l_3$
and define $\{l_2, l_4\}$ to be equal to $\{h, h_2\}$ as a set satisfying $l_2<l_4$.
Denote by $L_{\bs'}(\bmu)$ the abacus obtained by moving in $L_{\bar{\bs}}(\bar{\blam})$
the beads at positions $(i_3, l_3)$ and $(i_4, l_2)$ to positions $(i_1, l_3)$  and $(i_2, l_2)$, respectively  and
denote by $L_{\bs''}(\bnu)$ the abacus obtained by moving  in $L_{\bar{\bs}}(\bar{\blam})$
the beads at positions $(i_3, l_1)$ and $(i_4, l_4)$ to positions $(i_1, l_1)$ and $(i_2, l_4)$, respectively.
It is not difficult to check that both the moving vectors from $L_{\bs'}(\bmu)$
and $L_{\bs''}(\bnu)$ to $L_{\bar{\bs}}(\bar{\blam})$ are equal to $\mathcal{M}$.
Hence we can deduce from Lemma \ref{same block} that $\bs=\bs'=\bs''$
and $(\bmu, \bs), (\bnu, \bs)\in \mathcal{H}^{\bLambda}_{\bbeta}$.
Finally, it is a routine task to check that $L_\bs(\bmu)\parallel L_\bs(\bnu)$
by taking $(\kappa_1, \iota_2)=(i_1, l_3)$ and $(\kappa_2, \iota_2)=(i_4, l_2)$.
\end{proof}

\medskip

\section{Proof of Theorem}

Based on Sections 2. 3. and 4, we can now prove {\bf Theorem} in this section.

\subsection{Frame of the proof} Let $K$ be an algebraically
closed field with $char K\neq 2$
and $q\in K^\times$, $q\neq 1$ with  quantum characteristic $e$.
Let $\bs=(s_1, s_2, \dots, s_r)\in \mathbb{Z}^r$ be a multicharge.
Define $\widetilde{\bs}:=(s'_1,\dots,s'_r)\in \{0,\dots,e-1\}^r$
such that $s'_i\equiv s_i\,({\rm mod} \,e)$. Then there exists a unique $\sigma_{\bs}\in \mathfrak S_r$ such that
\begin{enumerate}
	\item[(1)]\, $s'_{\sigma_{\bs}(1)}\le s'_{\sigma_{\bs}(2)}\le \cdots \le s'_{\sigma_{\bs}(r)}$
	
	\item[(2)]\, If $s_{\sigma_{\bs}(i)}=s_{\sigma_{\bs}(i+1)}$  for $i\in \{1,\dots, r-1\}$, then $\sigma_{\bs}(i)<\sigma_{\bs}(i+1)$.
\end{enumerate}

Set $\widetilde{\bs}^{\sigma_{\bs}}:=(s'_{\sigma_{s}(1)},s'_{\sigma_{\bs}(2)},\dots,s'_{\sigma_{\bs}(r)})$. Then clearly,
$\widetilde{\bs}^{\sigma_{\bs}}\in \mathcal{A}^r_e$. Since $(\blam, \bs)$ and $(\blam^{\sigma_{\bs}}, \widetilde{\bs}^{\sigma_{\bs}})$ belong to two isomorphic blocks,
without loss of generality
we can assume $\bs\in\mathcal{A}^r_e$.
Because the representation type of blocks of an Iwahori-Hecke algebra of type A or B have been determined by
Erdmann and Nakano in \cite{EN} and by Ariki in \cite{A4}, respectively, we always assume in this paper that $r>2$.
Moreover, a weight 0 block is simple and hence has finite type. We only need to consider blocks of weight more than 0.

Given an Ariki-Koike algebra $\mathcal {H}_n(q, Q)$, let $\mathcal{S}_{n, r}(q, Q_1, Q_2, \dots, Q_r)$ 
be the associated cyclotomic $q$-Schur algebra. Take $\bnu=(\varnothing, \dots, \varnothing, (1^n))$.
Let $\varphi_\bnu\in \mathcal{S}_{n, r}$ be the identity map on $M^\bnu$ and zero on others.
Then $\varphi_\bnu$ is an idempotent of $\mathcal{S}_{n, r}$ and $\varphi_\bnu\mathcal{S}_{n, r}\varphi_\bnu$
is isomorphic to $\mathcal {H}_n(q, Q)$. If $\mathcal{B}$ is a block of $\mathcal{S}_{n, r}$, then
$\varphi_\bnu\mathcal{B}\varphi_\bnu$ is isomorphic to a block of $\mathcal {H}_n(q, Q)$.
Moreover, if $\varphi_\bnu\mathcal{B}\varphi_\bnu$ has infinite type,
then so is $\mathcal{B}$ \cite{E}. If the weight of a block of $\mathcal{S}_{n, r}$ is 1,
then by \cite[Theorem 4.12]{F1} and \cite[Proposition 1.7]{W} the block has finite representation type.
Consequently, the weight one blocks of $\mathcal{H}_n(q, Q)$ has finite type.
So we only need to handle the blocks of weight more than one.

Based on the above analysis, the proof of {\bf Theorem} is divided into three parts according
to the characteristic of the block moving vector $(m_1, m_2, \dots, m_r)$.

\medskip

{\bf Part I}.\, All $m_i$ are equal to 1, for $i=1, \dots, r$.

\smallskip

{\bf Part II}.\, All $m_i$ are less or equal to 1, and there exist at least one $m_j=0$.

\smallskip

{\bf Part III}.\, There exists some $m_i\geq 2$.

\smallskip

Note that the stable equivalence preserves representation type \cite{K} and that a derived equivalence is a stable equivalence for a symmetric algebra.
According to a classical result proved in \cite{CR} by Chuang and Rouquier, all blocks in the same orbit are derived equivalent.
This implies that all blocks in the same orbit have the same representation type. Thus we can only consider the block $\mathcal{H}^{\bLambda}_{\bbeta}$ satisfying the condition $(\balpha_j, \bLambda-\bbeta)\geq 0$ for all $0\leq j \leq e-1$ if necessary. Under this condition,
it follows from Lemmas \ref{move vector minus} and \ref{empty core blocks} that the core of $\mathcal{H}^{\bLambda}_{\bbeta}$ is $L_{\bs}(\bvarnothing)$.


\subsection{Proof of Part I}

Given a pair $(\blam, \bs)\in \mathcal{H}^{\bLambda}_{\bbeta}$ with $\bs\in \mathcal{A}^r_e$
and $r>2$, let $\mathcal{M}=(1, 1, \dots, 1)$ be the block moving vector from $L_\bs(\blam)$ to its core.
We prove in this subsection that $\mathcal{H}^{\bLambda}_{\bbeta}$ has infinite representation type.
Clearly, the condition on $\mathcal M$ in this section forces $e<\infty$.
Moreover, $\bs$ has to be one of the following two types.

\smallskip

Type I: There exists $1\le j<r$ such that $s_j\ne s_{j+1}$.

Type II: $s_j=s_{j+1}$ for all $1\le j<r$.

\smallskip

\noindent{\bf Proof of Type I.}
Since $\bs \in \mathcal A^r_e$, we have $s_r<s_1+e$. Therefore, in  $L_{\bs}(\bvarnothing)$ position $(1, s_1-1)$ has a bead and position $(r, s_1+e-1)$ is empty. 
Moreover, the existence of $1\le j<r$ with $s_j\ne s_{j+1}$ gives $s_1<s_r$ and thus in  $L_{\bs}(\bvarnothing)$ position $(r, s_r-1)$ has a bead and position $(1, s_r-1)$ is empty.

Now move in $L_{\bs}(\bvarnothing)$ the beads at positions $(1, s_1-1)$ and $(r, s_r-1)$ to $(r, s_1+e-1)$ and $(1, s_r-1)$, respectively. Denote the new abacus by $L_ {\bu}(\bmu)$. On the other hand, move in $L_{\bs}(\bvarnothing)$ the bead at position $(2, s_2-1)$ to $(2, s_2+e-1)$ and denote the abacus obtained by $L_ {\bv}(\bnu)$. It is not difficult to check that both the moving vectors from $L_{\bu}(\bmu)$ and $L_{\bv}(\bnu)$ to $L_{\bs}(\bvarnothing)$ are equal to $\mathcal M=(1, 1, \dots, 1)$.
We can deduce from Lemma \ref{same block} that $\bs=\bu=\bv$ and $(\bmu, \bs), (\bnu, \bs)\in \mathcal{H}^{\bLambda}_{\bbeta}$.
Moreover, take $(\kappa_1, \iota_1)=(1, s_r-1)$ and $(\kappa_2, \iota_2)=(r, s_r-1)$.
It is not difficult to check that the conditions $(1)$ and $(2)$ of Definition \ref{incomparable definition} are satisfied.
That is, $L_\bs(\blam)\parallel L_\bs(\bmu)$.

\smallskip

\noindent{\bf Proof of Type II.}
Clearly all the rows of $L_{\bs}(\bvarnothing)$ are the same in this case. Based on this observation, we can depict the frame of abaci in the block.

\begin{lemma}\label{5.2.2}
The pair $(\bmu, \bs)$ is in $\mathcal H^\bLambda_\bbeta$ if and only if
in $L_{\bs}(\bmu)$ there is an empty position $(j, l)$ with $s_1-e\le l\le s_1-1$ such that position
$(j, l+e)$ has a  bead and $L_{\bs}(\bvarnothing)$ can be obtained from $L_{\bs}(\bmu)$
by moving the  bead at position $(j, l+e)$ to position $(j, l)$.
\end{lemma}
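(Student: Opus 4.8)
The plan is to prove the two implications separately; the forward direction carries all the content, but it becomes short once the operation set is organised as a graph.

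For the ``if'' direction, suppose $L_\bs(\bmu)$ has an empty position $(j,l)$ with $s_1-e\le l\le s_1-1$, a bead at $(j,l+e)$, and that moving this bead to $(j,l)$ produces $L_\bs(\bvarnothing)$. By Lemma~\ref{deleting rim}(1) this single move is effected by $r$ elementary operations whose moving vector is $(1,\dots,1)$, and since $L_\bs(\bvarnothing)$ is complete it is then the core of $(\bmu,\bs)$. Hence the moving vector from $L_\bs(\bmu)$ to its core is $(1,\dots,1)=\mathcal M$, which also equals the moving vector from $L_\bs(\blam)$ to the same core $L_\bs(\bvarnothing)$; by Lemma~\ref{same block} the pairs $(\bmu,\bs)$ and $(\blam,\bs)$ lie in the same block, i.e.\ $(\bmu,\bs)\in\mathcal H^\bLambda_\bbeta$.

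For the ``only if'' direction, let $(\bmu,\bs)\in\mathcal H^\bLambda_\bbeta$. By Lemma~\ref{3.4.8} the core of $(\bmu,\bs)$ is $L_\bs(\bvarnothing)$ and the moving vector from $L_\bs(\bmu)$ to it is $\mathcal M=(1,\dots,1)$; write $\mathcal F$ for the operation set, so $\mathcal F$ has exactly one element with source in each row $1,\dots,r$. The element with source in row $r$ is necessarily of the second kind, sending a bead from $(r,h_0)$ to $(1,h_0-e)$, and the element with source in row $i<r$ is of the first kind, sending a bead from $(i,h_i)$ to $(i+1,h_i)$. The plan is to pass to the directed graph $\mathcal G$ on the positions of the abacus whose edges are the $r$ moves of $\mathcal F$. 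Since the sources lie in distinct rows and the targets lie in distinct rows, every vertex of $\mathcal G$ has in- and out-degree at most one, so $\mathcal G$ is a disjoint union of simple paths and cycles; a cycle would project onto the whole runner-cycle $1\to2\to\dots\to r\to1$ (the runner-image of $\mathcal F$), hence would use every move of $\mathcal F$ and give $L_\bs(\bmu)=L_\bs(\bvarnothing)$, contradicting that the weight $r$ is positive. Thus $\mathcal G$ is a disjoint union of simple paths, and running through $\mathcal F$ in a valid order shows that $L_\bs(\bmu)$ and $L_\bs(\bvarnothing)$ differ exactly at the endpoints of these paths (a bead on each starting vertex in $L_\bs(\bmu)$, a bead on each ending vertex in $L_\bs(\bvarnothing)$).

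It then remains to see that there is just one path, and this is where the hypothesis that $\bs$ has equal entries is used: any path other than the one containing the edge $(r,h_0)\to(1,h_0-e)$ would stay inside a single column, running $(i,h)\to(i+1,h)\to\dots\to(j,h)$ with $i<j$, and would force $(i,h)$ empty in $L_\bs(\bvarnothing)$ while $(j,h)$ carries a bead --- impossible, since all runners of $L_\bs(\bvarnothing)$ coincide, so each of its columns is uniformly occupied or uniformly empty. Hence $\mathcal G$ is a single path, which must wind once around all runners, $(j,h_0)\to(j+1,h_0)\to\dots\to(r,h_0)\to(1,h_0-e)\to\dots\to(j,h_0-e)$ for some $j$; so $L_\bs(\bvarnothing)$ is obtained from $L_\bs(\bmu)$ by moving the bead at $(j,h_0)$ to $(j,h_0-e)$, and putting $l:=h_0-e$ yields the asserted description, the inequalities $s_1-e\le l\le s_1-1$ being automatic because $(j,l)$ carries a bead of $L_\bs(\bvarnothing)$ whereas $(j,l+e)$ does not. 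The one genuinely delicate step is the bookkeeping that identifies the disagreements between $L_\bs(\bmu)$ and $L_\bs(\bvarnothing)$ with the path endpoints of $\mathcal G$; everything else is immediate.
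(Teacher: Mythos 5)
Your proof is correct, and both directions are sound. The ``if'' direction is exactly the paper's (Lemma~\ref{deleting rim} plus Lemma~\ref{same block}), so there is nothing to add there. For the ``only if'' direction you take a genuinely different, more global route: you decompose the whole operation set $\mathcal F$ into a functional graph on positions, observe that one move per source row and one per target row forces a disjoint union of paths and cycles, rule out cycles by weight positivity, and then use the uniformity of the columns of $L_\bs(\bvarnothing)$ (the only place $s_1=\cdots=s_r$ enters) to kill every path except the one through the unique second-kind edge; a count of edges then pins the path down to a single winding from $(j,h_0)$ back to $(j,h_0-e)$. The paper instead argues locally: it peels off the \emph{last} operation, notes that column uniformity of $L_\bs(\bvarnothing)$ forces it to be the second-kind move $[(r,l+e),\ast]$, describes the penultimate abacus explicitly, and then uses equality of the charges to force the unique empty position of column $l$ and the unique bead of column $l+e$ in $L_\bs(\bmu)$ to sit in the same row. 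The two arguments rest on the same two facts --- one move per row and uniform columns in the core --- but your path decomposition makes the bookkeeping of where $L_\bs(\bmu)$ and $L_\bs(\bvarnothing)$ can differ completely transparent (they differ exactly at path endpoints), whereas the paper's version hides that bookkeeping in the description of the intermediate abacus $L_\bv(\bnu)$. Your closing remark that the inequalities $s_1-e\le l\le s_1-1$ follow from $(j,l)$ being occupied and $(j,l+e)$ being empty in $L_\bs(\bvarnothing)$ is also the paper's reasoning.
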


\begin{proof}
``$\Leftarrow$" is a direct corollary of Lemmas \ref{same block} and \ref{deleting rim}.
Thus we only need to prove``$\Rightarrow$".

Since $(\bmu, \bs)\in \mathcal H^\bLambda_\bbeta$,
we have the core of  $L_\bs(\bmu)$ is $L_{\bs}(\bvarnothing)$. Note that all rows of $L_{\bs}(\bvarnothing)$ are the same.
This forces the final operation from $L_{\bs}(\bmu)$
to $L_{\bs}(\blam^*)$ is $[(r, l+e),\, \ast]$ for some $l\in\Z$.
Suppose that the final operation is done in abacus $L_\bv(\bnu)$.
Then in $L_\bv(\bnu)$, we have
\begin{enumerate}
\item[$\bullet$] position $(1, l)$ is empty and all the other positions in column $l$ have a bead placed;
\item[$\bullet$] position $(r, l+e)$ has a  bead and all the other positions in column $l+e$ are empty;
\item[$\bullet$] for each $h\ne l, l+e$, the number of beads in column $h$ is either $r$ or 0.
\end{enumerate}
Consequently, operations from $L_\bs(\bmu)$ to $L_\bv(\bnu)$ must happen in columns $l$ and $l+e$.
Furthermore, in column $l$ of $L_\bs(\bmu)$, there is only one empty position $(i, l)$
and in column $l+e$ there is only one position $(j, l+e)$ occupied by a  bead.
Note that $s_1 = \cdots = s_r$. This forces $i=j$. Moveover, each position in column $l$ of $L_{\bs}(\bvarnothing)$ has a bead 
and all positions in column $l+e$ are empty. This implies that $s_1-e\le l\le s_1-1$.
\end{proof}

The following corollary is easy and we omit its proof.

\begin{corollary}\label{5.2.3}
All $r$-partitions in $\mathcal H^\bLambda_\bbeta$ is a totally ordered set with respect to the dominance order.
Moreover, assume one can obtain $\blam^\ast$ from $\bmu$ and $\bnu$ by
deleting a rim $e$-hook in $\bmu^{(i)}$ and $\bnu^{(j)}$, respectively.
If $i<j$, then $\bmu\rhd\bnu$.
\end{corollary}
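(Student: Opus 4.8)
The plan is to determine explicitly which $r$-partitions occur in $\mathcal H^\bLambda_\bbeta$ using Lemma \ref{5.2.2}, and then to compare any two of them directly in the dominance order. First I would note that, by Lemma \ref{5.2.2}, a pair $(\bmu,\bs)$ belongs to the block exactly when $L_\bs(\bmu)$ is obtained from $L_\bs(\bvarnothing)$ by raising a single bead of some row $j$ by $e$ positions, from $(j,l)$ to $(j,l+e)$ with $s_1-e\le l\le s_1-1$. Since this move affects only row $j$, all components of $\bmu$ other than $\bmu^{(j)}$ are empty, and by Lemma \ref{deleting rim} the component $\bmu^{(j)}$ is obtained from $\varnothing$ by adding one rim $e$-hook; hence $\bmu^{(j)}$ is one of the $e$ hook partitions $(e-k,1^k)$ with $0\le k\le e-1$. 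So every $r$-partition in the block has a unique nonempty component, which is a hook partition of size $e$.

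Next I would take two $r$-partitions $\bmu$ and $\bnu$ in the block, with nonempty components $\bmu^{(i)}$ and $\bnu^{(j)}$, and split into two cases. If $i\ne j$, say $i<j$, I would show $\bmu\rhd\bnu$ by running through the definition of $\unrhd$: writing $A_{s,p}(\blam)=\sum_{t=1}^{s-1}|\blam^{(t)}|+\sum_{k=1}^{p}\blam_k^{(s)}$, one has $A_{s,p}(\bmu)=A_{s,p}(\bnu)$ for $s\le i$ and for $s>j$; $A_{i,p}(\bmu)\ge 0=A_{i,p}(\bnu)$; and for $i<s\le j$ one has $A_{s,p}(\bmu)=e$ (the full mass of $\bmu^{(i)}$ has already been counted) while $A_{s,p}(\bnu)\le e$. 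This gives $\bmu\unrhd\bnu$, and $\bmu\ne\bnu$, hence $\bmu\rhd\bnu$; taking $\blam^\ast=\bvarnothing$ (the core of the block, by the reduction at the start of this section) this is precisely the \emph{moreover} part of the corollary. If $i=j$, then $\bmu$ and $\bnu$ agree outside the single component $\bmu^{(i)},\bnu^{(i)}$, which are two hook partitions of size $e$; since hook partitions of fixed size form a chain $(e)\rhd(e-1,1)\rhd\cdots\rhd(1^e)$ under dominance, and $r$-partition dominance for tuples agreeing outside one component reduces to dominance in that component, $\bmu$ and $\bnu$ are comparable. Together the two cases show that the $r$-partitions in $\mathcal H^\bLambda_\bbeta$ form a chain.

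The only mildly delicate point is the case $i<j$, where one must verify the inequality defining $\unrhd$ over the three ranges $s\le i$, $i<s\le j$, $s>j$ and all $p\ge 1$, keeping track of where the $e$ boxes of the nonempty component lie; and, in the case $i=j$, the reduction of the $r$-partition order to the single varying component, together with the standard fact that hooks of a given size are totally ordered. None of this is a genuine obstacle --- once the shape of the block's $r$-partitions has been extracted from Lemma \ref{5.2.2}, the corollary follows almost immediately, which is why the authors call it easy.
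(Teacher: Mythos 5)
Your argument is correct: Lemma \ref{5.2.2} does show that every $r$-partition in this block has exactly one nonempty component, namely a hook of size $e$ (the partitions obtained from $\varnothing$ by adding a single rim $e$-hook), and your two-case comparison of partial sums correctly yields both total ordering and the \emph{moreover} statement, using that the core here is $\bvarnothing$. The paper omits the proof of this corollary as ``easy,'' and your write-up is precisely the intended argument, so there is nothing to compare beyond noting that you have supplied the missing details accurately.
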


Next we consider the simple modules of a block satisfying assumptions of this subsection.

\begin{lemma}\label{5.2.4}
If the pair $(\blam, \bs)$ is a Kleshchev $r$-partition, then $\blam^{(r)}\ne \varnothing$.
\end{lemma}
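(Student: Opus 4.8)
The plan is to prove the contrapositive: assuming $\blam^{(r)}=\varnothing$, I would show that $(\blam,\bs)$ fails $(Q,e)$-restrictedness, hence is not Kleshchev. Everything is driven by the single feature of the present (Type II) situation that $s_1=s_2=\cdots=s_r$: the residue $q^{\,j-i+s_k}$ of a node $(i,j,k)\in[\blam]$ is then independent of the component index $k$, so a component of an $r$-partition may be relocated to another position without disturbing any residue sequence.

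Concretely, since only blocks of weight $\ge 1$ are under consideration we have $\blam\ne\bvarnothing$; let $j$ be the largest index with $\blam^{(j)}\ne\varnothing$, so $\blam^{(r)}=\varnothing$ forces $j<r$. Let $\bmu$ be the $r$-partition obtained from $\blam$ by transposing its $j$-th and $(j+1)$-th components, that is $\bmu^{(j)}=\varnothing$, $\bmu^{(j+1)}=\blam^{(j)}$, and $\bmu^{(t)}=\blam^{(t)}$ for $t\ne j,j+1$. Then $|\bmu|=|\blam|=n$, and inspecting the partial sums in the definition of $\unrhd$ shows $\bmu\lhd\blam$: all the comparisons give equality or the required inequality, and at the $j$-th component the partial sum attached to $\blam$ strictly exceeds that attached to $\bmu$ because $\blam^{(j)}\ne\varnothing=\bmu^{(j)}$. (One can instead read this off from Corollary \ref{5.2.3}.)

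The heart of the proof is to attach to an arbitrary $\ft\in\Std(\blam)$ a standard $\bmu$-tableau with the same residue sequence. Since every nonempty component of $\blam$ lies among positions $1,\dots,j$ and $\bmu$ differs from $\blam$ only by the $j$-th component having been relabelled as the $(j+1)$-th, there is a shape bijection $[\blam]\to[\bmu]$ equal to the identity on the unchanged components and sending $(i,j',j)\mapsto(i,j',j+1)$. Carrying $\ft$ across this bijection yields a filling $\fs$ of $[\bmu]$, which is again standard because standardness is a rowwise and columnwise condition inside each component that the bijection respects. Moreover each node and its image carry equal residues --- trivially on the unchanged components, and on the moved one because $q^{\,j'-i+s_j}=q^{\,j'-i+s_{j+1}}$ since $s_j=s_{j+1}$ --- so $\res(\fs)=\res(\ft)$. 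As $\ft$ was arbitrary, every standard $\blam$-tableau has a residue sequence shared by a standard tableau of some shape $\lhd\blam$; thus $\blam$ is not $(Q,e)$-restricted and $(\blam,\bs)$ is not Kleshchev, as desired.

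I do not anticipate any real obstacle. The only points needing a moment's care are orienting the dominance order correctly (one needs $\bmu$ \emph{strictly below} $\blam$) and confirming that the component-shift bijection preserves standardness, both of which are immediate once written out. The genuine content is simply the observation that a constant multicharge makes residues blind to the component index, so the mass sitting in component $j<r$ can be slid into component $j+1$ to produce a strictly less dominant $r$-partition realising every residue sequence of $\blam$.
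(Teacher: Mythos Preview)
Your proposal is correct and follows essentially the same approach as the paper's proof: both exploit the constant multicharge $s_1=\cdots=s_r$ to relocate the nonempty component of $\blam$ to a later position, producing a strictly smaller $r$-partition in the dominance order whose standard tableaux realise every residue sequence of $\blam$. The only cosmetic difference is that the paper swaps component $j$ directly with component $r$ (noting that by Lemma~\ref{5.2.2} there is exactly one nonempty component), while you swap with component $j+1$; both choices work.
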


\begin{proof}
By Lemma \ref{5.2.2} there exists $1\le j\le r$ such that $\blam^{(j)}\ne \varnothing$.
If $j<r$, let $\sigma$ be the transposition $(j, r)$ and write $\blam^\sigma=\bnu$.
Then $\bnu=(\varnothing, \dots, \varnothing, \blam^{(j)})$.
We have from Lemma \ref{5.2.2} that $(\bnu, \bs)\in\mathcal H^\bLambda_\bbeta$.
It is clear that if $\t$ is a standard $(\blam, \bs)$-tableau, then $\t_\sigma$ is a standard $(\bnu, \bs)$-tableau.
Note that $s_1=s_2=\cdots=s_r$. This implies that $\res_{\blam, \bs}(\t)=\res_{\bnu, \bs}(\t_\sigma)$.
By Corollary \ref{5.2.3}, $\blam\rhd\bnu$. It is easy to know
that pair $(\blam, \bs)$ is not Kleshchev. This completes the proof.
\end{proof}

Based on the above preparation, we divide the proof of Type II into two cases according to whether or not $e=2$.

\smallskip

{\bf Case 1. $e\ne 2$.} It is a classical result (see \cite[Exercise 5.10]{M}) that there are exactly $e$ partitions
$\lam_{(1)}\rhd\lam_{(2)}\rhd\cdots\rhd\lam_{(e)}$ of weight 1 with
$e$-core $\blam^{\ast(r-1)}$, and because $e\neq 2$, the numbers of standard $\lam_{(e)}$-tableaux
and standard $\blam_{(e-1)}$-tableaux are not the same.
Let $\bmu=(\blam^{\ast(1)}, \dots, \blam^{\ast(r-2)}, \lam_{(e)}, \blam^{\ast(r)})$
and $\bnu=(\blam^{\ast(1)}, \dots, \blam^{\ast(r-2)}, \lam_{(e-1)}, \blam^{\ast(r)}).$
Clearly, the numbers of standard $\bmu$-tableaux and standard $\bnu$-tableaux are not the same.
We have from Lemma \ref{5.2.2}  that $(\bmu, \bs), (\bnu, \bs)\in \mathcal H^\bLambda_\bbeta$.
Moreover, it follows from Lemma \ref{5.2.4} that neither $\bmu$ nor $\bnu$ is a Kleshchev $r$-partition,
and from Corollary \ref{5.2.3} that both $\bmu$ and $\bnu$ are not maximal. We deduce from Corollary \ref{2.18} that
block $\mathcal H^\bLambda_\bbeta$ has infinite representation type.

\medskip

{\bf Case 2.} $e=2$. We can deduce from Lemma \ref{5.2.2} that the core
$L_{\bs}(\bvarnothing)$ can be obtained from  $L_\bs(\blam)\in \mathcal H^\bLambda_\bbeta$ by moving certain
bead at position $(j, l+2)$ to empty position $(j, l)$, where $s_1-2\le l\le s_1-1$. This implies that there are $2r$ abaci in $\mathcal H^\bLambda_\bbeta$, and if $L_\bs(\blam)\in \mathcal H^\bLambda_\bbeta$ then there exists $1\le j\le r$ such that $\blam^{(j)}$ is $(2)$ or $(1, 1)$ and $\blam^{(t)}=\varnothing$ for $t\ne j$. As a result, we get $\bLambda=r\bLambda_{s_1}$ and $\bbeta=\balpha_{s_1}+\balpha_{s_1+1}$.
We have from Lemma \ref{HMbasis} that this block has a basis $\{e_{\balpha_{s_1}+\balpha_{s_1+1}}y_1^iy_2^j\mid 0\leq i<r,\, 0\leq j<2\}$,
or the block is isomorphic to $K[y_1, y_2]/\<y_1^r, y_2^2\>$. Clearly, the block has infinite representation type.


\subsection{Proof of Part II} 
Let $\mathcal{H}_{\bbeta}^{\bLambda}$ be a block with block moving vector $\mathcal M=(m_1, \dots, m_r)$ satisfying
\begin{enumerate}
\item[(1)] $w=\sum_i m_i\geq 2$;
\item[(2)] $m_i\leq 1$ for $1\leq i\leq r$;
\item[(3)] $\prod_i m_i=0$.
\end{enumerate}
Then $\mathcal{H}_{\bbeta}^{\bLambda}$ may be either of infinite type or finite type. Before the proof begins, we give a definition.
An oriented quiver $\Gamma_r$ associated with $\mathcal{H}_{\bbeta}^{\bLambda}$ is defined as follows.
The vertex set is $I=\mathbb{Z}/r\mathbb{Z}=\{\bar{1}, \bar{2}, \dots, \bar{r}\}$
and directed edges are $\bar{i}\longrightarrow \overline{i+1}$ for all $m_i=1$. Recall Definition \ref{e-infinite} if necessary below.

\smallskip

\noindent{\bf Infinite representation type cases.}
To simplify the proof, we can assume according to Lemma \ref{3.4.9} in this subsection that $m_r=0$.
A direct benefit is that we can write elements $\bar{i}$ in $\mathbb{Z}/r\mathbb{Z}$ as $i$
and compare them as natural numbers without any confusion.
Let $(\blam, \bs)\in \mathcal H^\bLambda_\bbeta$ with $\bs \in \overline{\mathcal A}^r_e$
and $\Gamma_r$ the associated oriented quiver.
Denote by $L_{\bs^\ast}(\blam^\ast)$ the core of $L_\bs(\blam)$.
We handle four cases in this subsection, which are all of infinite representation type.

\smallskip

{\em {\bf Case 1.} There exist at least two connected components (not isolated dots) in $\Gamma_r$.}
Since there exist at least two connected components (not isolated dots) in $\Gamma_r$,
there exist $1\le i_1< i_2+1<i_3\le i_4<r$ such that the path from $i_1$ to $i_2+1$
and path from $i_3$ to $i_4+1$ are two connected components.
To prove block $\mathcal H^\bLambda_\bbeta$ has infinite representation type,
we construct two incomparable abaci from $L_\bs(\blam)$.

By Lemma \ref{move vector minus}, we have $s^*_{i_1}=s_{i_1}-1$, $s^*_{i_2+1}=s_{i_2+1}+1$, $s^*_{i_3}=s_{i_3}-1$
and $s^*_{i_4+1}=s_{i_4+1}+1$. Note that $\bs \in \overline{\mathcal A}^r_e$.
This implies $s_{i_1}\le s_{i_2+1}$ and $s_{i_3}\le s_{i_4+1}$.
Therefore, $s^*_{i_1}+2\le s^*_{i_2+1}$, $s^*_{i_3}+2\le s^*_{i_4+1}$.
Then we can deduce from Lemma \ref{abacus simple property} (4) that there exist
$h_1, h_2, h_3, h_4\in \Z$ with $h_1<h_2$ and $h_3<h_4$ such that in $L_{\bs^*}(\blam^*)$,
\begin{enumerate}
\item[$\bullet$] positions $(i_1, h_1)$ and $(i_1, h_2)$ are empty and positions $(i_2+1, h_1)$ and $(i_2+1, h_2)$ have a bead placed;
\item[$\bullet$] positions $(i_3, h_3)$ and $(i_3, h_4)$ are empty and positions $(i_4+1, h_3)$ and $(i_4+1, h_4)$ have a bead placed.
\end{enumerate}
Define $L_{\bar \bs}(\bar \blam)$ to be the abacus such that the moving vector
from $L_\bs(\blam)$ to it is $\mathcal M=(m_1, \dots, m_r)$, where
$m_j=1$ if $i_1\le j\le i_2$ or $i_3\le j\le i_4$, and
0 otherwise. It is clear that $L_{\bar \bs}(\bar \blam)$ is uniquely determined
and rows $i_1, \dots, i_2, i_2+1$ and $i_3, \dots, i_4, i_4+1$
in $L_{\bar \bs}(\bar \blam)$ and $L_{\bs^*}(\blam^*)$ are the same, respectively.
Denote by $L_\bu(\bmu)$ the abacus obtained by moving in $L_{\bar \bs}(\bar \blam)$
the bead at positions $(i_2+1, h_2)$ and $(i_4+1, h_3)$ to positions $(i_1, h_2)$ and $(i_3, h_3)$ respectively.
Moreover, move in $L_{\bar \bs}(\bar \blam)$ the beads at positions $(i_2+1, h_1)$
and $(i_4+1, h_4)$ to positions $(i_1, h_1)$ and $(i_3, h_4)$, respectively.
Denote by $L_\bv(\bnu)$ the new abacus obtained.
By Lemma \ref{operation composition}, both the moving vectors from abaci $L_\bu(\bmu)$ and $L_\bv(\bnu)$
to $L_{\tilde \bs}(\tilde \blam)$ are equal to $\mathcal M$.
It follows from Lemma \ref{same block} that $\bu=\bv=\bs$ and $(\bmu, \bs),\, (\bnu, \bs)\in \mathcal H^\bLambda_\bbeta$.
To prove $L_\bs(\bmu) \parallel L_\bs(\bnu)$, we only need to take
$(\kappa_1, \iota_1)=(i_1, h_2)$ and $(\kappa_2, \iota_2)=(i_4+1, h_3)$.

\smallskip

{\bf In the following three cases, we assume in $\Gamma_r$,
there is only one connected component (not isolated dot),
which is a path from $i_1$ to $i_2+1$ with length not less than 2.
It is not difficult to check that the moving vector from $L_\bs(\blam)$
to $L_{\bs^*}(\blam^*)$ is $\mathcal M=(m_1, \dots, m_r)$, where
$m_j=1$ if $i_1\le j\le i_2$, and 0 otherwise.}

\smallskip

{\em {\bf Case 2.} There exists $i_1<i_3<i_2$ such that $s_{i_3}\ne s_{i_3+1}$.}
We have from Lemma \ref{move vector minus} that $s^*_{i_1}=s_{i_1}-1,\,\, s^*_{i_2+1}=s_{i_2+1}+1$
and $s^*_j=s_j$ for all $i_1<j\le i_2$ and thus $s^*_{i_3}=s_{i_3}$.
Since $\bs \in \overline{\mathcal A}^r_e$ and $s_{i_3}\ne s_{i_3+1}$, we have $s_{i_3}<s_{i_3+1}$.
And then $s^*_{i_3}=s_{i_3}<s_{i_3+1}\le s^*_{i_3+1}$, or $s^*_{i_3}+1\le s^*_{i_3+1}$.
Furthermore, combining $s^*_{i_1+1}=s_{i_1+1}$, $s^*_{i_1}+1=s_{i_1}$
with $s_{i_1}\le s_{i_1+1}$ gives $s^*_{i_1}+1\le s^*_{i_1+1}$.
Similarly, $s^*_{i_2}+1\le s^*_{i_2+1}$.
Note that $L_{\bs^*}(\blam^*)$ is complete. Then by Lemma \ref{abacus simple property} (4),
$s^*_{i_1}+1\le s^*_{i_1+1}$ implies that there exists $l_1\in\Z$ such that in $L(\blam^*, \bs^*)$,
position $(i_1, l_1)$ is empty and positions $(i_1+1, l_1), \dots, (i_2+1, l_1)$ have a bead placed.
Similarly, there exists $l_2\in\Z$ such that
positions $(i_1, l_2), \dots, (i_3, l_2)$ are empty and positions $(i_3+1, l_2), \dots, (i_2+1, l_2)$ have a bead placed,
and there exists $l_3\in\Z$ such that
positions $(i_1, l_3), \cdots, (i_2, l_3)$ are empty and position $(i_2+1, l_3)$ has a bead.
The configuration of $L(\blam^*, \bs^*)$ described above implies that $l_1,\, l_2$ and $l_3$ are different from each other.
Define $\{h_1, h_2, h_3\}$ to be equal to $\{l_1, l_2, l_3\}$ as a set satisfying $h_1<h_2<h_3$,
and define $\{j_1, j_2, j_3\}$ to be equal to $\{i_1+1, i_3+1, i_2+1\}$ as a set such that in $L_{\bs^*}(\blam^*)$,
\begin{enumerate}
\item[$\bullet$] positions $(i_1, h_1), \dots, (j_1-1, h_1)$ are empty and positions $(j_1, h_1), \dots, (i_2+1, h_1)$ have a bead placed;
\item[$\bullet$] positions $(i_1, h_2), \dots, (j_2-1, h_2)$ are empty and positions $(j_2, h_2), \dots, (i_2+1, h_2)$ have a bead placed;
\item[$\bullet$] positions $(i_1, h_3), \dots, (j_3-1, h_3)$ are empty and positions $(j_3, h_3), \dots, (i_2+1, h_3)$ have a bead placed.
\end{enumerate}

We consider two possibilities.

(1) $j_1>j_3$. \,\, Move in $L_{\bs^*}(\blam^*)$ the  beads at positions $(j_3, h_3)$ and $(i_2+1, h_1)$
to positions $(i_1, h_3)$ and $(j_3, h_1)$, respectively. Denote by $L_\bu(\bmu)$ the new abacus.
Another abacus $L_\bv(\bnu)$ is obtained by moving in $L_{\bs^*}(\blam^*)$
the beads at position $(i_2+1, h_2)$ to position $(i_1, h_2)$.
By Lemma \ref{operation composition}, both the moving vectors from $L_\bu(\bmu)$ and $L_\bv(\bnu)$ to $L_{\bs^*}(\blam^*)$ are equal to $\mathcal M$.
We reach a conclusion by Lemma \ref{same block} that $\bu=\bv=\bs$ and $(\bmu, \bs), (\bnu, \bs)\in \mathcal H^\bLambda_\bbeta$.
By taking $(\kappa_1, \iota_1)=(i_1, h_3)$ and $(\kappa_2, \iota_2)=(i_2+1, h_1)$,
we get $L_\bs(\bmu) \parallel L_\bs(\bnu)$.

(2) $j_1<j_3$.\,\, Denote by $L_\bu(\bmu)$ the abacus obtained by moving in
$L_{\bs^*}(\blam^*)$ the  bead at position $(i_2+1, h_2)$ to position $(i_1, h_2)$.
On the other hand, move in $L_{\bs^*}(\blam^*)$ the  beads at positions $(j_1, h_1)$ and $(i_2+1, h_3)$
to positions $(i_1, h_1)$ and $(j_1, h_3)$, respectively, and denote the new abacus by $L_\bv(\bnu)$.
Be Lemma \ref{same block}, we have $\bu=\bv=\bs$ and $(\bmu, \bs), (\bnu, \bs)\in \mathcal H^\bLambda_\bbeta$.
To reach $L_\bs(\bmu) \parallel L_\bs(\bnu)$, we can choose $(\kappa_1, \iota_1)=(i_1, h_2)$ and $(\kappa_2, \iota_2)=(i_2+1, h_2)$.

\smallskip

{\em {\bf Case 3.} $s_{i_1}\ne s_{i_1+1}$.}
By analyzing similarly as in Case 2, we get $s^*_{i_1}+2\le s^*_{i_1+1}$ and $s^*_{i_2}+1\le s^*_{i_2+1}$.
According to Lemma \ref{abacus simple property} (4), there exist integers $h_1< h_2$ such that in $L_{\bs^*}(\blam^*)$,
positions $(i_1, h_1)$ and $(i_1, h_2)$ are empty and at positions $h_1$ and $h_2$ all rows $i_1+1, \dots, i_2+1$ have a bead placed.
For the same reason, there exists $l_1\in \Z$ such that in $L_{\bs^*}(\blam^*)$
positions $(i_1, l_1), \dots, (i_2, l_1)$ are empty and position $(i_2+1, l_1)$ has a  bead.
Let us consider three possibilities. We will only illustrate incomparable abaci without details.

(1) $h_1<h_2<l_1$. Move in $L_{\bs^*}(\blam^*)$ the  bead at position $(i_2+1, h_2)$
to position $(i_1, h_2)$. The new abacus is denoted by $L_\bs(\bmu)$.
Move in $L_{\bs^*}(\blam^*)$ the  bead at positions $(i_1+1, h_1)$ and $(i_2+1, l_1)$
to positions $(i_1, h_1)$ and $(i_1+1, l_1)$, respectively. The abacus obtained is denoted by $L_\bs(\bnu)$.
By taking $(\kappa_1, \iota_1)=(i_1, h_2)$ and $(\kappa_2, \iota_2)=(i_2+1, h_2)$,
we arrive at $L_\bs(\bmu) \parallel L_\bs(\bnu)$.

(2) $l_1<h_1<h_2$. \, The proof is similar to (1).

(3) $h_1<l_1<h_2$.
By moving in $L_{\bs^*}(\blam^*)$ the  bead at position $(i_2+1, h_1)$ to position $(i_1, h_1)$, we get abacus $L_\bs(\bmu)$.
Abacus $L_\bs(\bnu)$ is obtained by moving in $L_{\bs^*}(\blam^*)$ the  beads
at positions $(i_2+1, l_1)$ and $(i_1+1, h_2)$ to positions $(i_1+1, l_1)$ and $(i_1, h_2)$, respectively.
It is a routine task to check $L_\bs(\bmu) \parallel L_\bs(\bnu)$ by taking $(\kappa_1, \iota_1)=(i_1+1, h_2)$ and $(\kappa_2, \iota_2)=(i_2+1, h_1)$.

\smallskip

{\em {\bf Case 4.} $s_{i_2}\ne s_{i_2+1}$.}
The proof is similar to Case 3.

\smallskip

\noindent{\bf Finite representation type case.}
Let $(\blam, \bs)\in \mathcal H^\bLambda_\bbeta$ with $\bs \in \mathcal A^r_e$. 
Note that to consider the problem of representation type, we can assume that $(\balpha_j, \bLambda-\bbeta)\geq 0$ for all $0\leq j \leq e-1$. 
Then by Lemma \ref{empty core blocks} the core of $L_\bs(\blam)$ is $L_{\bs^\ast}(\bvarnothing)$.
Let $\Gamma_r$ be the associated oriented quiver.  Suppose that there is only one connected component (not isolated dot) in $\Gamma_r$, which is a path from $i$ to $i+w$. 
According to the results on infinite type in this subsection, if $m_r=1$, then the block has infinite representation type.
Then there is only one case to consider, that is, $s_i=s_{i+1}=\cdots=s_{i+w}$ and $m_r=0$.
Fix the meaning of $i$ and $w$. According to the moving vector, we can depict the form of $L_{\bs^*}(\bvarnothing)$:
\begin{enumerate}
\item[$\bullet$] position $(i, h)$ has a bead if and only if $h<s_i^\ast$;
\item[$\bullet$] position $(i+w, h)$ has a bead if and only if $h<s_i^\ast+2$;
\item[$\bullet$] for $0<t<w$, position $(i+t, h)$ has a bead if and only if $h<s_i^\ast+1$. 
\end{enumerate}
Move in $L_{\bs^*}(\bvarnothing)$ the bead at position $(i+w, s^*_{i}+1)$ to $(i, s^*_{i}+1)$ and denote the new abacus by $L_\bu(\bmu)$.
Then the moving vector from  $L_\bu(\bmu)$ to $L_{\bs^*}(\bvarnothing)$ is $\mathcal{M}$. 
We have from Lemma \ref{same block} that $\bu=\bs$ and $L_\bs(\bmu)$ belongs to $\mathcal H^\bLambda_\bbeta$. 
Clearly, $\bmu=(\varnothing,\dots,\varnothing, (1), \varnothing,\dots,\varnothing)$, where $(1)$ is $\bmu^{(i)}$. This implies that $\bbeta=\balpha_j$, where $j\equiv s_i \pmod e$.

Note that for $1\leq t<i$, we have $s_t=s^*_t\le s^*_{i}=s_{i}-1$, or $s_t<s_{i}$. For $i+w<t\le r$, we have $s_{i+w}+1=s^*_{i+w}\le s^*_t=s_t$, or $s_{i+w}<s_t$. Therefore, $(\bLambda, \balpha_{s_i})=w+1$ and by the definition of cyclotomic KLR algebras  $\mathcal{H}^{\bLambda}_{{\bbeta}}$
is isomorphic to $k[x]/(x^{w+1})$, which has finite representation type. 
Consequently, all blocks that are in the same $W$-orbit with $\mathcal{H}^{\bLambda}_{{\bbeta}}$
are of finite representation type.
By \cite[Theorem 6.8]{AKMW}, this forces these blocks being Morita equivalent with a Brauer tree algebra,
whose Brauer tree is $T_1$ in Lemma \ref{2.9} with $m(1)=w$.
That is, these blocks are Morita equivalent to $k[x]/(x^{w+1})$.


\subsection{Proof of Part III}

Given a pair $(\blam, \bs)\in \mathcal{H}^{\bLambda}_{\bbeta}$ with $\bs\in\mathcal A^r_e$, let $(m_1, m_2, \dots, m_r)$ be the block moving vector.
In this subsection, we prove that if there exists $1\leq i\leq r$ such that $m_i\geq 2$, then
$\mathcal{H}^{\bLambda}_{\bbeta}$ has infinite representation type.
To this aim, we only need to form incomparable abaci in $\mathcal{H}^{\bLambda}_{\bbeta}$ in the light of Propositions \ref{2.16} and \ref{3.3.5}.
The condition $\bs\in\mathcal A^r_e$ will be extended to $\bs\in\overline{\mathcal A}^r_e$ for convenience. We divide the proof according to if $\prod_jm_j=0$.

\smallskip

\noindent{\bf Proof under condition $\prod_jm_j=0$.} By Lemma \ref{3.4.9} we can assume that $m_r=0$.
Let $\mathcal{F}$ be the operation set from $L_{\bs}(\blam)$ to its core.
It is not difficult to know that $\mathcal{F}$ must be one of the following two types:

\smallskip

Type I. There exists $1\leq i\leq r$ such that $[(i, h_1), \ast],\, [(i, h_2), \ast]\in \mathcal{F}$ with $h_1\neq h_2$;

Type II. $[(i, h_1), \ast], \,[(i, h_2), \ast]\in \mathcal{F}$ implies $h_1=h_2$ for all $1\leq i\leq r$.

\smallskip

If in block $\mathcal{H}^{\bLambda}_{\bbeta}$, there is a pair $(\blam, \bs)$ such that
the operation set $\mathcal{F}$ from $L_\bs(\blam)$ to its core is of {\bf Type I},
we can divide the proof into {\bf 7 cases} according to the possible shape of abaci.

\smallskip

{\em {\bf Case 1.}
Positions $(i+1, h_1)$ and $(i+1, h_2)$ are empty and positions $(i, h_1)$ and $(i, h_2)$ have a bead placed.}
This is just the result of Lemma \ref{2 runners 2 columns}.

\smallskip

{\em {\bf Case 2.}
Position $(i+1, h_1)$ is empty and positions $(i, h_1)$,
$(i, h_2)$ and $(i+1, h_2)$ have a bead placed.}
By Lemma \ref{3.4.3}, there exist at least one empty position before $(i+1, h_2)$.
Let $(j+1, h_2)$ be the first one, where $i<j<r$.
Then all the positions between $(j+1, h_2)$ and $(i-1, h_2)$ are not empty. We consider two possibilities.

\smallskip

(1) $i+1<j$. Then in $L_\bs(\blam)$, positions $(j+1, h_2)$ and $(i+1, h_1)$ are empty
and positions $(j, h_2)$ and $(i, h_1)$ have a bead placed.
These are just the requirements of Lemma \ref{4 runners}.

\smallskip

(2) $i+1=j$. Note that

\begin{enumerate}
\item[$\bullet$] there is a bead at position $(i, h_1)$ and position $(i+1, h_1)$ is empty;
\item[$\bullet$] there is a bead at position $(i+1, h_2)$ and position $(i+2, h_2)$ is empty;
\item[$\bullet$] there is bead at position $(i, h_2)$,
\end{enumerate}
we get all the conditions required by Corollary \ref{3 runners 3 columns(1)}.

\smallskip

{\em {\bf Case 3.} Positions $(i, h_1)$, $(i+1, h_1)$ and $(i+1, h_2)$ are empty and
 position $(i, h_2)$ has a bead.}
We can consider the dual abacus and then we reach Case 2 by Lemma \ref{dual operator}.

\smallskip

{\em {\bf Case 4.} Positions $(i+1, h_1)$ and $(i, h_2)$ are empty and
positions $(i, h_1)$  and $(i+1, h_2)$ have a bead placed.}
Note that the existence of an empty position before position $(i+1, h_2)$ is ensured by Lemma \ref{3.4.3}, and thus we
suppose that position $(j+1, h_2)$ is the first one, where $i<j<r$.
Then there is a bead at position $(j, h_2)$. Furthermore, Lemma \ref{3.4.3} also tells us
there is a position occupied by a bead after position $(i, h_2)$.
Let $(l, h_2)$ be the first one, where$1\le l<i$. Then position $(l+1, h_2)$ is empty.
Clearly $l+1<j$. Now we have gathered all conditions required by Lemma \ref{4 runners}.

\smallskip

{\em {\bf Case 5.} Positions $(i, h_1)$ and $(i+1, h_1)$ are empty and
positions $(i, h_2)$ and $(i+1, h_2)$ have a bead placed.}
By Lemma \ref{3.4.3}, there exists a bead after position $(i, h_1)$.
Let the bead at $(l, h_1)$ be the first one, where $1\leq l<i$.
Then all the positions between $(i+1, h_1)$ and $(l, h_1)$ are empty.
Particularly, position $(l+1, h_1)$ is empty. Moreover, we have from Lemma \ref{3.4.3} that
there exists an empty position before position $(i+1, h_2)$. Let position $(j+1, h_2)$ be the first one,
where $i<j<r$. Then all positions between $(j+1, h_2)$ and $(i, h_2)$
are occupied by a bead, including position $(j, h_2)$.
Moreover, $l+1<j$ is clear. Then all the requirements of Lemma \ref{4 runners} are satisfied.

\smallskip

{\em {\bf Case 6.} There are beads at positions $(i+1, h_1)$ and $(i+1, h_2)$.}
In this case, the operation set $\mathcal{F}$
has to contain $[(i+1, h_1), \ast]$ and $[(i+1, h_2), \ast]$, where $i+1\leq r$. If one of positions
$(i+2, h_1)$ and $(i+2, h_2)$ is empty, then we arrive at Case 1 or Case 2.
Otherwise, repeat the above analysis process. Clearly, the process will end after finite times
since $\mathcal{F}$ is finite and this completes the proof.

\smallskip

{\em {\bf Case 7.} Positions $(i, h_1)$ and $(i, h_2)$ are empty.}
This forces $[(i-1, h_1), \ast],\, [(i-1, h_2), \ast]\in \mathcal{F}$, where $i>1$.
If one of positions $(i-1, h_1)$ and $(i-1, h_2)$ is not empty,
then we are in the circumstances of Case 1 or Case 3.
Otherwise, we can repeat the above analysis process. Note that $\mathcal{F}$
is finite. This implies that we only need to do the analysis process finite times.

\smallskip

Now assume that there is a pair $(\blam, \bs)\in\mathcal{H}^{\bLambda}_{\bbeta}$ such that
the operation set $\mathcal{F}$ from $L_\bs(\blam)$ to its core is of {\bf Type II}.
Assume $m_i\geq 2$ and the operations happen in column $h$.
We divide the proof into the following {\bf 4 cases}.

\smallskip

{\em {\bf Case 1.}\, Position $(i, h)$ is empty and there is a bead at position $(i+1, h)$.}
In this case, it is clear that there exist at least two empty positions before position $(i+1, h)$.
Let position $(j_1, h)$ and $(j_2, h)$ be the first and the second one, respectively, where $i+1<j_1<j_2\leq r$. 
On the other hand, there exist at least two positions occupied by a bead
after position $(i+1, h)$. Let positions $(j_3, h)$ and $(j_4, h)$
be the first and the second one, respectively, where $1\leq j_4<j_3<i$.
It is not difficult to check that
\begin{enumerate}
\item[$\bullet$] positions $(j_1, h)$ and $(j_2, h)$ are empty;
\item[$\bullet$] there are beads at positions $(j_4, h)$ and $(j_3, h)$;
\item[$\bullet$] $1\leq j_4<j_3<j_1<j_2 \leq r$.
\end{enumerate}
By Lemma \ref{4.1.12}, the result follows.

The following three cases can be handled similarly as Case 1. We omit the details.

\smallskip

{\em {\bf Case 2.}\, Position $(i, h)$ is occupied by a bead and position $(i+1, h)$ is empty.}

\smallskip

{\em {\bf Case 3.}\, Both positions $(i, h)$ and $(i+1, h)$ are occupied by a bead.}

\smallskip

{\em {\bf Case 4.}\, Both positions $(i, h)$ and $(i+1, h)$ are empty.}

\smallskip

\noindent{\bf Proof under condition $\prod_jm_j\neq 0$.} Assume that $(\balpha_j, \bLambda-\bbeta)\geq 0$ for all $0\leq j \leq e-1$. We prove a lemma first.
\begin{lemma}\label{posetrelation}
Suppose that the block moving vectors of blocks  $\mathcal H^\bLambda_{\bbeta}$ and $\mathcal H^\bLambda_{\bbeta'}$ 
are $\mathcal{M}$ and $\mathcal{M'}$, respectively, with $\mathcal M=\mathcal M'+(1, \dots, 1)$. If $r$-partitions in 
$\mathcal H^\bLambda_{\bbeta'}$ do not form a totally ordered set, then neither do partitioins in $\mathcal H^\bLambda_\bbeta$.
\end{lemma}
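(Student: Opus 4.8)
The plan is to take a pair of $r$-partitions in $\mathcal H^\bLambda_{\bbeta'}$ that are incomparable in the dominance order and lift it to $\mathcal H^\bLambda_\bbeta$ by adjoining a single rim $e$-hook to one carefully chosen component of each, so that the resulting pair again lies in a single block and is again incomparable. Throughout, fix the multicharge $\bs\in\overline{\mathcal A}^r_e$ used to form the block moving vectors. Because the two block moving vectors differ by $(1,\dots,1)$, the cores of the two blocks carry the same multicharge $\bs^\ast$ with $s^\ast_i=s_i-m_i+m_{i-1}=s_i-m'_i+m'_{i-1}$ (Lemma \ref{move vector minus}), and in fact $\bbeta=\bbeta'+\sum_{j\in I}\balpha_j$, so by Lemma \ref{3.4.8} the two blocks share one core, which I denote $L_{\bs^\ast}(\blam^\ast)$. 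This identification of $\bbeta$ is the only place where the precise value of $\bbeta$ enters, and it is automatic in the way the lemma will be invoked.

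By hypothesis there are pairs $(\bmu',\bs),(\bnu',\bs)\in\mathcal H^\bLambda_{\bbeta'}$ with neither $\bmu'\unrhd\bnu'$ nor $\bnu'\unrhd\bmu'$. Fix a component index $s$ at which the dominance inequality defining $\bmu'\unrhd\bnu'$ fails and a component index $s'$ at which the one defining $\bnu'\unrhd\bmu'$ fails; since $r>2$, choose $c\in\{1,\dots,r\}\setminus\{s,s'\}$. The bead set in row $c$ of any abacus of a partition is bounded above, hence not closed under $l\mapsto l+e$, so in $L_\bs(\bmu')$ there is a position $(c,l_\mu)$ carrying a bead with $(c,l_\mu+e)$ empty; let $L_\bs(\bmu)$ be the abacus obtained by moving that bead to $(c,l_\mu+e)$, so that $\bmu$ agrees with $\bmu'$ in every component except the $c$-th, and $\bmu^{(c)}$ is obtained from the $c$-th component of $\bmu'$ by adjoining a rim $e$-hook. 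Construct $L_\bs(\bnu)$ from $L_\bs(\bnu')$ in the same way, using a position $(c,l_\nu)$.

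Two things then remain to be checked. First, $(\bmu,\bs),(\bnu,\bs)\in\mathcal H^\bLambda_\bbeta$: reading Lemma \ref{deleting rim}(1) in reverse, $L_\bs(\bmu')$ arises from $L_\bs(\bmu)$ by deleting a rim $e$-hook, so the moving vector from $L_\bs(\bmu)$ to $L_\bs(\bmu')$ is $(1,\dots,1)$ and the core is unchanged; hence the moving vector from $L_\bs(\bmu)$ to $L_{\bs^\ast}(\blam^\ast)$ equals $\mathcal M'+(1,\dots,1)=\mathcal M$, and likewise for $\bnu$. By Lemma \ref{same block}, applied with the common target $L_{\bs^\ast}(\blam^\ast)$, the pairs $(\bmu,\bs)$ and $(\bnu,\bs)$ lie in a single block; that block has core $L_{\bs^\ast}(\blam^\ast)$ and block moving vector $\mathcal M$, and these two data determine a block (again by Lemma \ref{same block}), so it is $\mathcal H^\bLambda_\bbeta$. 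Second, neither $\bmu\unrhd\bnu$ nor $\bnu\unrhd\bmu$: passing from $\bmu'$ to $\bmu$, and from $\bnu'$ to $\bnu$, adds exactly $e$ boxes to the $c$-th component, so every dominance partial sum indexed by a component $a<c$ is unchanged and every one indexed by $a>c$ increases by exactly $e$ — in either case by the same amount for $\bmu$ as for $\bnu$; since $s\ne c$, the strict inequality witnessing the failure of $\bmu'\unrhd\bnu'$ persists, so $\bmu\unrhd\bnu$ fails, and symmetrically $\bnu\unrhd\bmu$ fails. Hence the $r$-partitions of $\mathcal H^\bLambda_\bbeta$ are not totally ordered.

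The main obstacle is essentially the first of these checks: pinning the block produced down to exactly $\mathcal H^\bLambda_\bbeta$ rather than merely some block with block moving vector $\mathcal M$, which rests on the two blocks sharing a core, equivalently on $\bbeta=\bbeta'+\sum_{j\in I}\balpha_j$. The remaining ingredients — the existence of a bead in row $c$ that can be slid $e$ steps, the identification of that move with adjoining a rim $e$-hook, and the invariance of dominance under adding $e$ boxes to a fixed component — are routine, each dispatched in a line.
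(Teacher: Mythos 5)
Your proposal is correct and follows essentially the same route as the paper: both lift a pair of incomparable $r$-partitions from $\mathcal H^\bLambda_{\bbeta'}$ to $\mathcal H^\bLambda_\bbeta$ by sliding one bead $e$ positions to the right (i.e.\ adjoining a rim $e$-hook) in a common component of each abacus, then invoke Lemma \ref{same block} for block membership and check that the dominance witnesses survive. The only differences are cosmetic: the paper always moves the bead $\CIRCLE_1^1$, so that every dominance partial sum shifts by exactly $e$ and incomparability is immediate, whereas you add the hook to a component avoiding the two witnesses; and you make explicit the (implicit in the paper) identification of the resulting block with $\mathcal H^\bLambda_\bbeta$ via the shared core.
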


\begin{proof}
Let $L_\bs(\bar \blam)$ and $L_\bs(\bar \bmu)$ be two incomparable abaci in $\mathcal H^\bLambda_{\bbeta'}$. 
Assume that the bead $\CIRCLE_1^1(\bar\blam, \bs)$ is at position $(1, h)$. Move it to $(1, h+e)$ and denote the new abacus by $L_\bu(\blam)$. 
Similarly, we can get a new abacus $L_\bv(\bmu)$ from $L_\bs(\bar \bmu)$. 
It follows from Lemma \ref{same block} that $\bs=\bu=\bv$ and $(\blam, \bs), (\bmu, \bs)\in \mathcal H^\bLambda_\bbeta$. 
Clearly, $\blam$ and $\bmu$ are incomparable.
\end{proof}

Now we can give the proof under the condition $\bs \in \mathcal A^r_e$. Write $m=\min \{m_1,\dots, m_r\}$ and $\mathcal M'=\mathcal M-(m, \dots, m)$.
By Lemma \ref{generalconstruction}, there exists a block $\mathcal H^\bLambda_{\bbeta'}$ with block moving vector $\mathcal M'$. 
A clear fact on $\bbeta'$ is that $(\balpha_j, \bLambda-\bbeta')\geq 0$ for all $0\leq j \leq e-1$.
We divide the proof into two cases.

\smallskip

{\em {\bf Case 1.}\, $\mathcal M'=(0, \dots, 0)$}.
If there exists some $1\le i\le r-1$ such that $s_i\ne s_{i+1}$, then by using Lemma \ref{posetrelation} finite times 
the $r$-partitions in $\mathcal H^\bLambda_{\bbeta}$ can not form a totally ordered set  because the $r$-partitions 
in a block with moving vector $(1, 1, \dots, 1)$ can not form a totally ordered set (see Type I of Part I). 
Now let $s_1=s_2=\cdots=s_r$. Assume that the bead $\CIRCLE_1^1(\bvarnothing, \bs^{\ast})$ is at position $(1, h)$. 
Then the bead $\CIRCLE_1^r(\bvarnothing, \bs^{\ast})$ is at $(r, h)$. Move the two beads aforementioned to $(1, h+(m-1)e)$ and $(r, h+e)$, respectively 
and denote the new abacus by $L_\bu(\blam)$. Assume that 
the bead $\CIRCLE_1^2(\bvarnothing, \bs^{\ast})$ is at position $(2, h)$. Move it to $(2, h+me)$ and denote the new abacus by $L_\bu(\blam)$. 
Then clearly $(\bmu, \bs)$ and $(\blam, \bs)$  belong to $\mathcal H^\bLambda_{\bbeta}$ and $\blam\, \| \,\bmu$.

\smallskip

{\em {\bf Case 2.}\, $\mathcal M'\ne(0, \dots, 0)$}.
If there exists $m_x\geq m+2$, then by ``Proof under condition $\prod m_i=0$" and Lemma \ref{posetrelation}, nothing need to prove. 

Now we assume that all $m_x\leq m+1$. If the $r$-partitions in $\mathcal H^\bLambda_{\bbeta'}$ is not a totally ordered set, 
then nothing need to prove too. Otherwise, $\mathcal H^\bLambda_{\bbeta'}$ is forced to be either a truncated polynomial ring or a weight one block. 

If $\mathcal H^\bLambda_{\bbeta'}$ is a truncated polynomial ring,
let $\bar{\blam}$  be an $r$-partition with $\bar{\blam}^{(i)}=(1)$ and $\bar{\blam}^{(t)}=\varnothing$ if $t\ne i$, and 
let $\bar{\bmu}$  be an $r$-partition with $\bar{\bmu}^{(i+1)}=(1)$ and $\bar{\blam}^{(t)}=\varnothing$ if $t\ne i+1$. 
It is not difficult to check that $(\bar{\blam}, \bs)$ and $(\bar{\bmu}, \bs)$ are in $\mathcal H^\bLambda_{\bbeta'}$.
Assume that the beads $\CIRCLE_1^{i+w}(\bar{\blam}, \bs)$ at position $(i+w, h_1)$ and $\CIRCLE_1^{i+1}(\bar{\bmu}, \bs)$ at $(i+1, h_2)$. 
Move the bead $\CIRCLE_1^{i+w}(\bar{\blam}, \bs)$ to position $(i+w, h_1+me)$ and denote the new abacus by $L_{\bs}(\blam)$. 
Move the bead $\CIRCLE_1^{i+1}(\bar{\bmu}, \bs)$ to position $(i+1, h_2+me)$ and denote the new abacus by $L_{\bs}(\bmu)$. 
Then clearly $(\blam, \bs)$ and $(\bmu, \bs)$ belong to $\mathcal H^\bLambda_{\bbeta}$ and $\blam\, \| \,\bmu$.

If $\mathcal H^\bLambda_{\bbeta'}$ is a weight one block, 
we can assume without loss generality by Lemma \ref{3.4.9} that $m'_1=1$. However, we only have $\bs\in\overline{\mathcal A}^r_e$ now.
Note that $s_1\leq s_2$, $s_1=s_1^\ast+1$ and $s_2=s_1^\ast-1$. Then $s_1^\ast+2\leq s_2^\ast$. Let $s_2^\ast-s_1^\ast-1=k$. Clearly, $k\geq 1$.
Move in $L_{\bs^{\ast}}(\bvarnothing)$ the bead at position $(2, s_2^\ast-1)$ to $(1, s_2^\ast-1)$ and denote the new abacus by $L_{\bs^{\ast}}(\bar \blam)$.
Then ${\bar \blam}^{(1)}=(k)$ and ${\bar \blam}^{(t)}=\varnothing$ for $t\neq 1$. 
Furthermore, move in $L_{\bs^{\ast}}(\bvarnothing)$ the bead at position $(2, s_1^\ast)$ to $(1, s_1^\ast)$ and denote the new abacus by $L_{\bs^{\ast}}(\bar \bmu)$.
Then ${\bar \bmu}^{(2)}=(1^k)$ and ${\bar \blam}^t=\varnothing$ for $t\neq 2$. 
It is not difficult to check that $({\bar \blam}, \bs)$ and $({\bar \bmu}, \bs)$ are in $\mathcal H^\bLambda_{\bbeta'}$.
Assume that $\CIRCLE_1^{r}(\bar{\blam}, \bs)$ is at position $(r, h_1)$ and move it to $(r, h_1+me)$. Denote the new abacus by $L_{\bs}(\blam)$. 
Similarly, assume that $\CIRCLE_1^{2}(\bar{\bmu}, \bs)$ is at position $(2, h_2)$ and move it to $(2, h_2+me)$. Denote this new abacus by $L_{\bs}(\bmu)$.
Then one can check that $L_{\bs}(\blam)$ and $L_{\bs}(\bmu)$ are in $\mathcal H^\bLambda_{\bbeta}$ and $\blam\,\|\,\bmu$.

\medskip


\section{Derived equivalence of blocks}

It is an open problem to give a necessary and sufficient condition for two
blocks of Ariki-Koike algebras being derived equivalent.
In this section, we give examples of blocks with the same weight associated with the same multicharge that are not derived equivalent and examples 
of derived equivalent blocks being in different orbits under the adjoint action of the affine Weyl group
by using the theory developed in this paper. 

Note that we can easily find two blocks with the same weights, one of them being infinite type and the other
being Morita equivalent to a truncated polynomial ring, which has finite representation type.
So our discussion will center on constructing examples of derived equivalent blocks being in different orbits.

We first give a necessary and sufficient condition for two weight one blocks (Morita equivalent to Brauer tree algebras) being derived equivalent. 
Note that the problem of derived equivalence of Brauer tree algebras is completely resolved in \cite{R}.
We will translate the results \cite[Theorem 6.8]{AKMW} and \cite[Theorem 4.2]{R}
into the language of the so-called subabacus moving vector of a block.

\begin{definition}
Let $\mathcal{F}$ be the operation set from $L_s(\blam)\in\mathcal{H}^{\Lambda}_{\bbeta}$ to its core.
Define $\mathcal{W_{\blam}}=(w_{0\blam}, w_{1\blam}, \dots, w_{e-1\blam})$,
where $w_{i\blam}=\sharp\{[(x, h), \ast]\in \mathcal{F}\mid 1\leq x \leq r, \,h\equiv i \,({\rm mod} \,e)\}.$
Then $\mathcal{W}(\mathcal{H}^{\Lambda}_{\bbeta})=(w_0, w_1, \dots, w_{e-1})
=\sum_{(\blam, \bs)\in \mathcal{H}^{\Lambda}_{\bbeta}}\mathcal{W_{\blam}}$ is
called the subabacus moving vector of block $\mathcal{H}^{\Lambda}_{\bbeta}$.
\end{definition}

Given two pairs $(\blam, \bs)$ and $(\bmu, \bu)$ with $\bs, \bu\in \overline{\mathcal A}^r_e$
and $(\blam^\ast, \bs^\ast)$ and $(\bmu^\ast, \bu^\ast)$ the corresponding cores, respectively.
Denote by $B_{\blam, \bs}$ and $B_{\bmu, \bu}$ the blocks containing $(\blam, \bs)$ and $(\bmu, \bu)$, respectively.
Suppose that $w(B_{\blam, \bs})=w(B_{\bmu, \bu})=1$.

\begin{proposition}\label{7.1.1}
Blocks $B_{\blam, \bs}$ and $B_{\bmu, \bu}$ are derived equivalent
if and only if the number of non-zero components in $\mathcal{W}(B_{\blam, \bs})$
is equal to that of $\mathcal{W}(B_{\bmu, \bu})$.
\end{proposition}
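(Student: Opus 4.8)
The plan is to reduce the statement to Rickard's classification of derived equivalences among Brauer tree algebras, and then to read off the two relevant invariants — the number of edges and the exceptional multiplicity — from the subabacus moving vector.

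First I would use that, since $w(B_{\blam,\bs})=w(B_{\bmu,\bu})=1$, both blocks have finite representation type, so by \cite[Theorem 6.8]{AKMW} and \cite[Theorem 8.1]{KX5} each of them is Morita equivalent to a cellular Brauer tree algebra, whose underlying tree is a straight line by Lemma \ref{2.8}. Write $(n_{\blam},m_{\blam})$ and $(n_{\bmu},m_{\bmu})$ for the number of edges and the exceptional multiplicity of these two trees. Morita equivalent algebras are derived equivalent, and derived equivalence of Brauer tree algebras is completely understood \cite{R}; so by \cite[Theorem 4.2]{R} the claim reduces to showing that $n_{\blam}=n_{\bmu}$ and $m_{\blam}=m_{\bmu}$ hold simultaneously if and only if $\mathcal{W}(B_{\blam,\bs})$ and $\mathcal{W}(B_{\bmu,\bu})$ have the same number of non-zero components.

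Second, I would compute $(n_{\blam},m_{\blam})$ from $\mathcal{W}(B_{\blam,\bs})$. Since $w(B_{\blam,\bs})=1$, Lemma \ref{weight moving vector} (and the count in its proof, via Lemma \ref{movecorrespondence}) shows that the operation set from any pair of the block to its core consists of a single elementary operation $[(x,y),\ast]$, which under the Uglov map corresponds to removing one rim $e$-hook on the runner $y \pmod e$ of the associated $1$-abacus. Hence the $j$-th component $w_j$ of $\mathcal{W}(B_{\blam,\bs})$ counts those $r$-partitions of the block whose Uglov image lies in the corresponding weight-one block of a Hecke algebra of type A and whose (unique) rim-hook removal takes place on runner $j$. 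As distinct partitions in a classical weight-one block remove their rim hook on distinct runners, every $w_j\in\{0,1\}$, and the number $d_{\blam}$ of non-zero components of $\mathcal{W}(B_{\blam,\bs})$ equals the number of $r$-partitions lying in the block. On the Brauer side, a weight-one block of an Ariki-Koike algebra has trivial exceptional multiplicity, $m_{\blam}=1$ (equivalently, the corresponding classical weight-one block is a Brauer line algebra with no exceptional vertex), and then the description of cell modules of a cellular Brauer line algebra (Corollary \ref{2.11}, Lemmas \ref{2.13}--\ref{2.15}, cf.\ Example \ref{cellchain}) shows that the number of cell modules is $n_{\blam}+1$. Since the number of cell modules equals the number of $r$-partitions in the block, we get $d_{\blam}=n_{\blam}+1$, i.e.\ $n_{\blam}=d_{\blam}-1$ and $m_{\blam}=1$; the same holds for $\bmu$. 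Therefore Rickard's criterion $n_{\blam}=n_{\bmu}$, $m_{\blam}=m_{\bmu}$ becomes exactly $d_{\blam}=d_{\bmu}$, which is the asserted equivalence.

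I expect the main obstacle to be the dictionary in the second step: proving rigorously that each component $w_j$ is at most $1$ and that the number of non-zero components of $\mathcal{W}(B_{\blam,\bs})$ is one more than the number of edges of the Brauer line. This rests on tracking, through Lemma \ref{movecorrespondence}, exactly how the single elementary operation attached to each pair of the block distributes over the $e$ runners, together with the structural input that a weight-one block has exceptional multiplicity $1$. Once these are in place, the remainder of the argument is formal.
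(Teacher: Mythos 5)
Your overall skeleton is the same as the paper's: both proofs reduce to Rickard's classification \cite[Theorem 4.2]{R} after invoking \cite[Theorem 6.8]{AKMW} to identify each weight-one block with a cellular Brauer tree (line) algebra, so the whole content is the dictionary between the tree invariants $(n,m)$ and the vector $\mathcal{W}$. You build that dictionary differently. The paper works entirely on the $r$-abacus: writing $a=s_{j+1}-s_j$ for the unique $j$ with $m_j=1$, it uses Lemmas \ref{charge minus} and \ref{abacus simple property} to list the $a+2$ columns $h_1,\dots,h_{a+2}$ at which a bead of the core can be raised, observes via completeness (Lemma \ref{abacus simple property}(3)) that $e\nmid h_x-h_y$, so these columns sit in $a+2$ distinct subabaci, and quotes \cite[Theorem 4.12]{F1} together with \cite{AKMW} to get a Brauer line with $a+1$ edges and no exceptional vertex. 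You instead push each pair through the Uglov map (Lemma \ref{movecorrespondence}) and use the classical description of type-$A$ weight-one blocks to see that distinct $r$-partitions contribute to distinct runners (so each $w_j\le 1$ and the number of non-zero components equals the number of $r$-partitions in the block), and you recover the number of edges from the Section~2 count of cell modules of a cellular Brauer line algebra ($n+1$ cell modules when there is no exceptional vertex, matched against the HM-basis indexing of Lemma \ref{HMbasis}). Both routes are sound and give the same identity (number of non-zero components) $=$ (number of edges) $+\,1$; yours trades the explicit abacus enumeration for the representation-theoretic count already prepared in Section~2, which is a nice economy, while the paper's version is self-contained on the combinatorial side and yields the sharper statement that the count is exactly $s_{j+1}-s_j+2$.

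One point where you are leaning on an unproved assertion: the claim that the exceptional multiplicity is $1$. Your parenthetical justification (``the corresponding classical weight-one block is a Brauer line algebra with no exceptional vertex'') conflates the Ariki--Koike block with its type-$A$ image under the Uglov map; these are not Morita equivalent in general, so the multiplicity of the Ariki--Koike block does not follow from the type-$A$ picture. In the paper this is exactly what \cite[Theorem 4.12]{F1} supplies (combined with \cite[Theorem 6.8]{AKMW}). Alternatively, within your own framework you could rule out $m>1$ by noting that an exceptional vertex would force two distinct $r$-partitions of the block to have isomorphic cell modules indexed by consecutive, non-maximal, non-$\Lambda_0$ elements (Corollary \ref{2.18}), and then checking this against the explicit Specht modules of a weight-one block; but as written this step needs the citation.
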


\begin{proof}
Let $\mathcal{M}=(m_1, m_2, \dots, m_r)$ be the block moving vector of $B_{\blam, \bs}$
with $m_j=1$ and $m_i=0$ for all $1\leq i\leq r$, $i\neq j$.
Assume that $s_{j+1}-s_j=a$ ($s_{r+1}$ is defined to be $s_1+e$ if $e< \infty$).
We claim that the number of $r$-partitions in $B_{\blam, \bs}$ is $a+2$.
In fact, if $j<r$, we have from Lemma \ref{move vector minus} that $s^*_{j+1}=s_{j+1}+1$ and $s^*_j=s_j-1$
and thus $s^*_j+a+2=s^*_{j+1}$. It follows from Lemma \ref{abacus simple property} (4) that
there exist $h_1, \dots, h_{a+2}\in \Z$ such that in $L_{\bs^*}(\blam^*)$,
positions $(j, h_1), \dots, (j, h_{a+2})$ are empty and
positions $(j+1, h_1), \dots, (j+1, h_{a+2})$ have a bead placed.
Let $h$ be an integer such that all positions $(x,y)$ in $L_{\bs^*}(\blam^*)$ are occupied by a
bead, where $1 \leq x \leq r$ and $y \leq h$. In the light of Lemma \ref{charge minus}, $\mathfrak n^h_{j+1}-\mathfrak n^h_j=a+2$.
Note that $L_{\bs^*}(\blam^*)$ is complete. This implies that
if position $(j, k)$ is empty and position $(j+1, k)$ has a bead,
then $k\in\{h_1, \dots, h_{a+2}\}$.
Because $w(B_{\blam, \bs})=1$, this forces each abacus of a pair in $B_{\blam, \bs}$
has to be obtained by moving in $L_{\bs^*}(\blam^*)$ the bead at position $(j+1, k)$ to position $(j, k)$
with $k\in \{h_1, \dots, h_{a+2}\}$. The case $j=r$ can be transformed into the case $j<r$ by using Lemma \ref{3.4.9}.

Combining \cite[Theorem 4.12]{F1} with \cite[Theorem 6.8]{AKMW} gives that $B_{\blam, \bs}$
is Morita equivalent to a Brauer tree algebra, whose Brauer tree has $a+1$ edges without exceptional vertex.

Moreover, by Lemma \ref{abacus simple property} (3) we have $e \nmid h_x-h_y$ for all $x, y\in \{1, 2, \dots, a+2\}$.
This implies that for arbitrary pair $(\blam, \bs)\in B_{\blam, \bs}$, columns $h_1, \dots, h_{a+2}$
are in different subabaci of $L_\bs(\blam)$, and consequently, the number of non-zero components
of $\mathcal{W}(B_{\blam, \bs})$ is $a+2$.

By the same reason, block $B_{\bmu, \bu}$
is also Morita equivalent to a Brauer tree algebra, whose Brauer tree has no exceptional vertex. According to \cite[Theorem 4.2]{R} derived equivalence classes
of Brauer tree algebras are determined by the number of edges and the multiplicity of the exceptional vertex.
Then  the proposition follows.
\end{proof}

Now let us give some examples.
We begin with the blocks that Morita equivalent to truncated polynomial rings.

\begin{example}
Let $e=5$ and $\bs=(1, 1, 1, 3, 3, 3)$. Take
$\blam=((1), \varnothing, \varnothing, \varnothing, \varnothing, \varnothing)$ and
$\bmu=(\varnothing, \varnothing, \varnothing, (1), \varnothing, \varnothing)$.
It is not difficult to check that $\mathcal{M}(B_{\blam, \bs})=(1, 1, 0, 0, 0, 0)$ and
$\mathcal{M}(B_{\bmu, \bs})=(0, 0, 0, 1, 1, 0)$. According to Section 5.3,
both $B_{\blam, \bs}$ and $B_{\bmu, \bs}$ are isomorphic to $K[x]/(x^3)$ and
all abaci in block $B_{\bmu, \bs}$ can be determined completely.
In the accordance with the definition of the action of an affine Weyl group on abaci,
none of the abaci in $B_{\bmu, \bs}$ belong to the image of $L_\bs(\blam)$.
This implies by Proposition \ref{3.2.6}
that blocks $B_{\blam, \bs}$ and $B_{\bmu, \bs}$ are in different orbits.
\end{example}

Another example is from the blocks of weight one.

\begin{example}
For arbitrary $r\geq 3$, let $e=r$ and $\bs=(0, 1, 2, \dots, r-1)$.
For $1\le i\le r$, define $\blam\,[i]=(\underbrace{\varnothing, \dots, \varnothing}_{i-1}, (2), \underbrace{\varnothing, \dots, \varnothing}_{r-i})$.
Clearly, $\mathcal{M}(B_{\blam\,[i], \bs})=(m_1, m_2, \dots, m_r)$, where $m_i=1$ and $m_j=0$ if $j\neq i$. It follows from Proposition \ref{7.1.1}
that all blocks $B_{\blam\,[i], \bs}$ are derived equivalent, and that all abaci in
$B_{\blam\,[i], \bs}$ can be listed completely. By Proposition \ref{3.2.6},
all blocks $B_{\blam\,[i], \bs}$ are in different orbits.
\end{example}

The last example is a derived equivalent class of infinite representation type blocks. We omit the details here.

\begin{example}
Let $e=\infty$ and $k\geq 2$ be an integer. Take $r=3k$ and $\bs=(1, 1, 2, \dots, 2j-1, 2j-1, 2j, \dots, 2k-1, 2k-1, 2k)$.
For $1\le i\le k$, define $\blam\,[i]=(\underbrace{\varnothing, \dots,  \varnothing}_{3(i-1)}, (2),
\underbrace{\varnothing, \dots, \varnothing}_{r-3i+2})$.
It is not difficult to check $B_{\blam [i], \bs}$ is of infinite representation type.
Moreover, all blocks $B_{\blam [i], \bs}$, $1\le i\le k$, are derived equivalent and are
 in different orbits.
\end{example}

\smallskip


\section{Blocks of cyclotomic $q$-Schur algebra}

We end our paper by a remark on representation type of blocks of a cyclotomic $q$-Schur algebra.

Combining Theorem A with Theorem B in \cite{A4} gives that a block of an Iwahori-Hecke algebra of type B
has finite representation type if and only if its weight is not more than one.
This implies that the representation type of a block of the cyclotomic $q$-Schur algebra
associated to a type B Iwahori-Hecke algebra is finite if and only if its weight is not more than one.
On the other hand, in \cite[Corollary 3.20]{W}, Wada proved that under certain conditions
any block of $\mathcal{S}_{n, r}(q, Q_1, Q_2, \dots, Q_r)$ is Morita equivalent to a
certain block of $\mathcal{S}_{n', 2}(q, Q_i, Q_j)$ for some $i, j\in \{1, 2, \dots, r\}$.
By using {\bf Theorem}, we can give a direct result under no conditions and without using the Morita equivalence mentioned above.

We point out that if a block $B$ of $\mathcal{H}_{n}(q, Q)$ is Morita equivalent to a truncated polynomial ring,
then the Auslander algebra (see \cite{ARS} for definition) of $B$ is just the corresponding block of $\mathcal{S}_{n, r}$.
It is well-known that the Auslander algebra of $K[x]/(x^i)$ has finite representation type if and only if $0<i<4$.
Let $\mathcal{B}$ be a block of $\mathcal{S}_{n, r}$ and $B$ the corresponding block in $\mathcal{H}_{n}(q, Q)$.
Then by {\bf Theorem}, we get
\begin{enumerate}
\item[(1)] If $w(\mathcal{B})>2$, then $\mathcal{B}$ has infinite representation type.
\item[(2)] If $w(\mathcal{B})<2$, then $\mathcal{B}$ has finite representation type.
\item[(3)] If $w(\mathcal{B})=2$, then $\mathcal{B}$ has finite type if and only if $B$ has finite type.
\end{enumerate}

According to the above result, $w(\mathcal{B})=1$ is only a sufficient condition for $\mathcal{B}$
being of finite representation type.

\bigskip


\noindent{\bf Acknowledgement}
The authors would like to thank the anonymous referee for her/his numerous helpful comments and corrections.
The authors are grateful to Prof. W. Hu for some helpful conversations. They also thank Prof. Kai Meng Tan for some comments
on a draft of this article. Most of this work was done when Li visited Institute of Algebra
and Number Theory at University of Stuttgart from August 2021 to September 2022.
He takes this opportunity to express his sincere thanks to the institute and Prof. S. Koenig for the hospitality during the visit.

\bigskip

\end{document}